\newtheorem{theorem}{Theorem}[section]
\newtheorem{definition}[theorem]{Definition}
\newtheorem{lemma}[theorem]{Lemma}
\newtheorem{proposition}[theorem]{Proposition}
\newtheorem{remark}[theorem]{Remark}
\newtheorem{corollary}[theorem]{Corollary}
\renewcommand\theequation{\thesection.\arabic{equation}}
\numberwithin{equation}{section}
\begin{document}
\begin{center}
\textbf{Inertial manifolds for the incompressible Navier-Stokes equations}\\
\end{center}
\begin{center}
{\sc Xinhua Li$^\dag$ and Chunyou Sun$^\dag$$^*$}
\end{center}

\renewcommand{\theequation}{\arabic{section}.\arabic{equation}}
\numberwithin{equation}{section}

\vspace{-0.3cm}
\begin{abstract}
In this article, we devote to the existence of an $N$-dimensional inertial manifold for the incompressible Navier-Stokes equations in $\mathbb{T}^{d}$ ($d=2,3$).
\par
Our results can be summarized as two aspects: Firstly, we construct an $N$-dimensional inertial manifold for the Navier-Stokes equations in $\mathbb{T}^{2}$; Secondly, we extend slightly the spatial averaging method to the abstract case: $\partial_{t}u+A^{1+\alpha}u+A^{\alpha}F(u)=f$ (here $0<\alpha<1$, $A>0$ is a self-adjoint operator with compact inverse and $F$ is Lipschitz from a Hilbert space $\mathbb{H}$ to $\mathbb{H}$), and then verify the existence of an $N$-dimensional inertial manifold for the hyperviscous Navier-Stokes equation with the hyperviscous index $5/4$ in $\mathbb{T}^{3}$.

\textbf{Keywords:} Inertial manifolds; 2D Navier-Stokes equations;  3D hyperviscous Navier-Stokes equations; Spectral gap condition; Spatial averaging method
\footnote[0]{\hspace*{-7.4mm}
$^*$Corresponding author\\
$^\dag$School of Mathematics and Statistics, Lanzhou University,
Lanzhou, 730000, P.R. China\\
AMS Subject Classification: 35B33, 35B40, 35B42, 35Q30, 76F20
\\{E-mails: xhli2014@lzu.edu.cn; sunchy@lzu.edu.cn}\\
This work was supported by the NSFC
(Grants No. 11471148, 11522109, 11871169).
}
\end{abstract}
\tableofcontents
\section{Introduction}\label{sec1}
\noindent

We study the $d$-dimensional incompressible Navier-Stokes equations (NSEs)
\begin{equation}\label{1.1}
\begin{cases}
\partial_{t}u+\nu(-\Delta)^{\theta} u+(u\cdot\nabla)u+\nabla p=f(x),~(x,t)\in\mathbb{T}^{d}\times\mathbb{R}^{+},\\
\nabla\cdot u=0,\\
u(0)=u_{0},
\end{cases}
\end{equation}
where $d=2~\text{or}~3$, the viscous index $\theta=(d+2)/4$, the constant kinematic viscosity $\nu>0$; $u(x,t)$ and $p(x,t)$ are unknowns, the initial condition $u(x,0)=u_{0}(x)$ is given and $f(x)$ is a known forcing function which is assumed to be time independent; $u=(u^{1},\cdots,u^{d})$ is the fluid velocity field and scalar $p$ represents the pressure. The nonlocal operator $(-\Delta)^{\theta}$ is defined in this paper via the Fourier multiplier with symbol $|j|^{2\theta}$, i.e.
\begin{equation*}
(-\Delta)^{\theta}\varphi(x)=\sum_{j\in\mathbb{Z}^{d}}|j|^{2\theta}\widehat{\varphi}_{j}e^{ij\cdot x},~~~\mbox{where}~~
\varphi(x)=\sum_{j\in\mathbb{Z}^{d}}\widehat{\varphi}_{j}e^{ij\cdot x}.
\end{equation*}
\par
The long-time behaviors of dissipative partial differential equations (PDEs) can be described by the so-called attractors, which is a compact subset of infinite dimensional phase space that attracts the images of all bounded sets when time tends to infinity. In many cases these attractors have finite Hausdorff and box-counting dimension \cite{R01,T97,MZ08}. Then we may believe that many dissipative PDEs in bounded domains is essentially finite-dimensional. The theory of inertial manifolds (IMs) give an evidence that some dissipative PDEs can be reduced to an ordinary differential equations (ODEs) in terms of the long-time dynamical behaviors, see \cite{FST85,M-PS88,GC05,FST88,HGT15,Z14} and references therein.
\par
To the best of our knowledge, the IM was firstly proposed by Foias, Sell and Temam \cite{FST85} in 1985. An inertial manifold (IM) is a smooth finite-dimensional invariant manifold that contains the global attractor and that attracts all the orbits at an exponential rate \cite{FST85,FST88,Z14}. The idea was employed on a large class of dissipative equations \cite{FST88} (see also \cite{T97,Z14}). A number of dynamical systems possess inertial manifolds, such as certain nonlinear reaction-diffusion equations in two \cite{CFNT89,CFNT89S,FST88} and three \cite{M-PS88} dimensions, the Kuramoto-Sivashinsky equation \cite{FNST88}, the Cahn-Hilliard equation \cite{CFNT89,KZ15}, modified Navier-Stokes equations \cite{HGT15,K18,GG18,LS19} and the von K\'{a}rm\'{a}n plate equations \cite{CL02}. One may refer to \cite{CFNT89S,Z14} for the study of inertial manifolds for many dissipative PDEs.
\par
It is worth mentioning that an original motivation for the theory of inertial manifolds was treating the NSEs. Unfortunately, the problems of existence of inertial manifolds for the two or three-dimensional NSEs are still unsolved \cite{Z14,T89}. The attempts of constructing the IMs for the 2D NSEs have been done by using the so-called Kwak transform \cite{K92,TW93}.

This paper is devoted to prove the existence of an $N$-dimensional inertial manifold for the incompressible NSEs \eqref{1.1} in $\mathbb{T}^{d}$ ($d=2,3$).

The classical theory \cite{FST88} of IMs shows that if the abstract model
\begin{equation}\label{1.2}
\partial_{t}u+\mathcal{A} u=\mathcal{F}(u),~~u|_{t=0}=u_{0}\in \mathbb{H}~(\text{Hilbert~space})
\end{equation}
satisfies the spectral gap condition
\begin{equation}\label{1.3}
\lambda_{N+1}-\lambda_{N}>2L,
\end{equation}
then \eqref{1.2} has an $N$-dimensional IM, here $\{\lambda_{N}\}_{N\in\mathbb{N}}$ are the eigenvalues of $\mathcal{A}$ ($\mathcal{A}>0$, self-adjoint and $\mathcal{A}^{-1}$ is compact), and $L>0$ is the Lipschitz constant of $\mathcal{F}$ ($\mathcal{F}$ is globally Lipschitz continuous from $\mathbb{H}$ to $\mathbb{H}$).

As $d=2$, the theorem of number theory from Richards \cite{R82} (see Lemma \ref{lem3.11} below) shows that the eigenvalues $\{\lambda_{N}\}_{N\in\mathbb{N}}$ of Stokes operator $P_{\sigma}(-\Delta)$ in $\mathbb{T}^{2}$ satisfies
\begin{equation*}
\lambda_{N+1}-\lambda_{N}\geq c\ln\lambda_{N}
\end{equation*}
for infinitely many $N\in \mathbb{N}$ and $c>0$, thus the problem \eqref{1.1} (with $d=2$) will possesses an $N$-dimensional IM if we can prove that $\mathcal{F}(\cdot)$ (here $\mathcal{F}(u)=(u\cdot\nabla)u$) is globally Lipschitz continuous from $H$ to $H$ with some Lipschitz constant $L$ (the function spaces $H$ and $H^{s}$ ($s\in \mathbb{R}$) are defined in Section \ref{sec2}). Unfortunately, we can not verify this. The successful example of using Richards's theorem to construct the IMs, we refer to Hamed, Guo and Titi \cite{HGT15}, in which, for the simplified Baridina model (which was introduced by Bardina, Ferziger and Reynolds in \cite{BFR80} and simplified by Layton and Lewandowski \cite{LL06})
\begin{equation}\label{1.4}
\begin{cases}
\partial_{t}u-\nu\Delta u+(\bar{u}\cdot\nabla)\bar{u}+\nabla p=f,\\
\nabla\cdot u=0,~~u=\bar{u}-\alpha^{2}\Delta\bar{u},
\end{cases}
\end{equation}
the authors in \cite{HGT15} built an $N$-dimensional IM for (\ref{1.4}) in $\mathbb{T}^{2}$.

On the existence of IMs for 3D case, the situations are more complex. It is not only the Lipschitz continuity of nonlinear term, but also Richards's theorem does not work as $d=3$. Fortunately, Mallet-Paret and Sell \cite{M-PS88} introduced the principle of spatial averaging/spatial averaging method (PSA/SAM) and proved the existence of an IM for the following abstract model
\begin{equation}\label{1.5}
\begin{cases}
\partial_{t}u+\mathcal{A}u+\mathcal{F}(u)=g,~~(t,x)\in\mathbb{R}_{+}\times\mathbb{T}^{3},\\
u|_{t=0}=u_{0}(x)
\end{cases}
\end{equation}
and applied it to the scalar reaction diffusion equation in $\mathbb{T}^{3}$.

Since moving a stationary point does not affect the long time behavior of \eqref{1.1}, here we have done that, and we will put our attention to
\begin{equation}\label{1.6}
\begin{cases}
\nu (-\Delta)^{\theta}w+B(w,w)+B(v,w)+B(w,v)=-\partial_{t}u,\\
w(0)=u_{0}-v:=w_{0},
\end{cases}
\end{equation}
where $w=u-v$, $u$ solves the problem \eqref{1.1} and $v$ is a fixed stationary solution of \eqref{1.1}. This approach, the so-called asymptotic regularity method (see e.g., \cite{S10}), can effectively improve the regularity of solution under low regular external forces. Indeed, the solution $w$ of \eqref{1.6} will be at least in $H^{9/2}$ if $d=2$ with $f\in H^{1}$ (see Theorem \ref{thm3.4} below) and $d=3$ with $f\in H$ only (see Theorem \ref{thm5.4}).
\par
In this paper, the basic idea for constructing an IM is that although we cannot directly prove that the original equation \eqref{1.6} (or \eqref{1.1}) has an inertial manifold, we can define a properly smooth truncation operator $W: H\rightarrow H$
\begin{equation}\label{1.7}
W(w)=\sum_{\stackrel{j\in\mathbb{Z}^{d}}{j\neq0}}
\frac{\varrho_{d}}{|j|^{9/2}}P^{j}_{\sigma}\vec{\eta}\bigg(\frac{|j|^{9/2}\widehat{w}_{j}}{\varrho_{d}}\bigg)e^{ij\cdot x},
\end{equation}
where $\varrho_{d}$ is the radius of the absorbing ball $\mathscr{B}_{d}$ defined by \eqref{3.18} for $d=2$ (or \eqref{5.50} for $d=3$), $P^{j}_{\sigma}$ are the Leray projector matrices defined by (\ref{2.1}) (or \eqref{2.2} w.r.t. $d=3$) and  $\vec{\eta}\in C^{\infty}_{0}(\mathbb{C}^{d})$ is a smooth cut-off function (this ideal follows from \cite{K18}); By this definition, for $d=2$ we can prove that $W(w)=w$ whenever $w\in\mathscr{B}_{2}$, $W$ is globally Lipschitz from $H$ to $H$ and $W$ maps $H$ into $H^{\frac{7}{2}-\varepsilon}$ continuously (see Theorem \ref{thm3.10} below); For $d=3$ we can verify that $W(w)=w$ as $w\in\mathscr{B}_{3}$, $W$ is globally Lipschitz from $H$ to $H$ and $W$ maps $H$ into $H^{3-\varepsilon}$ continuously (see Theorem \ref{thm5.11} below).
\par
Furthermore, as that in \cite{K18},  we define the so-called ``prepared'' equation as follows:
\begin{equation}\label{1.8}
\partial_{t}w+\nu A^{\theta}w+A^{(\theta-1)}F_{d}(W(w))=0,
\end{equation}
where $A:=P_{\sigma}(-\Delta)$ is the Stokes operator and the modified nonlinearity is defined by the following formula
\begin{equation*}
F_{d}(W(w)):=A^{(1-\theta)}\left[B(W(w),W(w))+B(W(w),v)+B(v,W(w))\right].
\end{equation*}

\noindent\textbf{Main results}

At first, for the case $d=2,3$, we prove that the ``prepared'' equation \eqref{1.8} has the same long-time behavior as the original problem \eqref{1.1} or \eqref{1.6}.
\begin{theorem}\label{thm1.1}
Let $d=2~\text{or}~3$. Then the ``prepared'' equation \eqref{1.8} has the same long-time behavior as the problem \eqref{1.1} or \eqref{1.6}.
\end{theorem}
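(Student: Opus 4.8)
The plan is to exploit the single structural fact that makes the whole construction work: by Theorem \ref{thm3.10} (for $d=2$) and Theorem \ref{thm5.11} (for $d=3$), the truncation operator $W$ acts as the identity on the absorbing ball $\mathscr{B}_{d}$, so that the ``prepared'' equation \eqref{1.8} and the original equation \eqref{1.6} generate flows whose vector fields coincide on $\mathscr{B}_{d}$. Since both flows will be shown to be dissipative with global attractor lying inside $\mathscr{B}_{d}$, the two attractors — and hence the asymptotic dynamics — must agree. Throughout I treat $d=2$ ($\theta=1$) and $d=3$ ($\theta=5/4$) in parallel, the only difference being which pair of earlier results is invoked.

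First I would record the dissipativity of the original system. The asymptotic–regularity estimates of Theorem \ref{thm3.4} ($d=2$) and Theorem \ref{thm5.4} ($d=3$) show that the set $\mathscr{B}_{d}$ defined in \eqref{3.18}/\eqref{5.50} is a bounded absorbing set for \eqref{1.6}: every trajectory $w(t)$ enters $\mathscr{B}_{d}$ after a finite time depending only on $\|w_{0}\|$ and then remains there. Consequently \eqref{1.6} generates a dissipative semigroup $S(t)$ possessing a global attractor $\mathscr{A}$, and $\mathscr{A}\subset\mathscr{B}_{d}$ since the attractor is the $\omega$-limit set of the absorbing set. Next I would verify that the two vector fields literally coincide on $\mathscr{B}_{d}$. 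Recalling that $A^{\theta-1}F_{d}(W(w))=B(W(w),W(w))+B(W(w),v)+B(v,W(w))$ and using $W(w)=w$ for $w\in\mathscr{B}_{d}$, the prepared nonlinearity reduces to $B(w,w)+B(w,v)+B(v,w)$ on $\mathscr{B}_{d}$; since $v$ is stationary we have $\partial_{t}u=\partial_{t}w$, so on $\mathscr{B}_{d}$ equation \eqref{1.8} is exactly \eqref{1.6}. Hence $S(t)$ and the prepared semigroup $\widetilde{S}(t)$ agree for as long as a trajectory stays in $\mathscr{B}_{d}$; in particular every complete bounded trajectory of \eqref{1.6} contained in $\mathscr{B}_{d}$ is a complete bounded trajectory of \eqref{1.8}, so $\mathscr{A}$ is invariant under $\widetilde{S}(t)$ and, being bounded, satisfies $\mathscr{A}\subseteq\widetilde{\mathscr{A}}$.

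The step I expect to be the crux is to show that \eqref{1.8} is itself dissipative with $\mathscr{B}_{d}$ absorbing. Here I would use the global Lipschitz continuity of $W$ from $H$ to $H$ together with the boundedness of its range: because the cut-off $\vec{\eta}$ has compact support, $W$ maps $H$ into a fixed bounded subset of $H^{\frac{7}{2}-\varepsilon}$ ($d=2$), resp. $H^{3-\varepsilon}$ ($d=3$). Therefore $\|B(W(w),W(w))\|$, $\|B(W(w),v)\|$ and $\|B(v,W(w))\|$ are all bounded in $H$ by a constant independent of $w$, so testing \eqref{1.8} against $w$ yields an energy inequality of the schematic form $\tfrac12\tfrac{d}{dt}\|w\|^{2}+\nu\lambda_{1}\|w\|^{2}\le C$, from which an absorbing ball for $\widetilde{S}(t)$ follows at once. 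The delicate point is precisely that the modified nonlinearity must not destroy the dissipation $\nu A^{\theta}w$; the uniform bound on the range of $W$ is what guarantees this, and one checks that the resulting absorbing radius can be taken no larger than $\varrho_{d}$, so the absorbing ball of $\widetilde{S}(t)$ sits inside $\mathscr{B}_{d}$.

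Granting the third step, $\widetilde{S}(t)$ has a global attractor $\widetilde{\mathscr{A}}\subset\mathscr{B}_{d}$. For $w_{0}\in\widetilde{\mathscr{A}}$ the complete $\widetilde{S}$-trajectory stays in $\widetilde{\mathscr{A}}\subset\mathscr{B}_{d}$, where the two vector fields agree, so it is also a complete bounded $S$-trajectory; by maximality of $\mathscr{A}$ this gives $\widetilde{\mathscr{A}}\subseteq\mathscr{A}$. Combined with $\mathscr{A}\subseteq\widetilde{\mathscr{A}}$ from the second step, we conclude $\widetilde{\mathscr{A}}=\mathscr{A}$, and moreover the two semigroups coincide on a full neighbourhood of the common attractor. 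Thus the prepared equation \eqref{1.8} reproduces the long-time behaviour of \eqref{1.1}/\eqref{1.6}, which is the assertion of the theorem. The only genuine work is the energy estimate of the third step; everything else is the soft attractor argument built on the identity $W|_{\mathscr{B}_{d}}=\mathrm{id}$.
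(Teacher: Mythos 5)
Your steps 1--2 are correct, and they are in essence the paper's entire proof: the paper verifies that $\|w\|_{H^{9/2}}\leq\varrho_{d}$ forces $|j|^{9/2}|\widehat{w}_{j}|/\varrho_{d}\leq 1$ for all $j\neq 0$, hence $W(w)=w$ on $\mathscr{B}_{d}$, and it reads ``same long-time behavior'' in exactly the weak sense your first two steps deliver: every trajectory of \eqref{1.6} is absorbed into $\mathscr{B}_{d}$ (Theorems \ref{thm3.4} and \ref{thm5.4}), after which it solves \eqref{1.8}, and the attractor $\mathscr{A}\subset\mathscr{B}_{d}$ is an invariant set of the prepared flow.

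The step you single out as the crux, however, contains a genuine gap, and it cannot be repaired along the lines you sketch. The set $\mathscr{B}_{d}$ is a ball in $H^{9/2}$ (see \eqref{3.18} and \eqref{5.50}), whereas the energy inequality $\tfrac{1}{2}\tfrac{d}{dt}\|w\|^{2}_{H}+\nu\lambda_{1}\|w\|^{2}_{H}\leq C$ produces an absorbing ball in the metric of $H$. No ball of $H$, of whatever radius, is contained in $\mathscr{B}_{d}$, since an $H$-ball contains elements that do not belong to $H^{9/2}$ at all; the inclusion between the two balls runs in the opposite direction. So the assertion ``the absorbing ball of $\widetilde{S}(t)$ sits inside $\mathscr{B}_{d}$'' is impossible as stated, independently of the constants. (The radius comparison is also unjustified: the global bound on $F_{d}(W(\cdot))$ comes from $M_{\varepsilon}$ and the norms of $v$, which bear no a priori relation to $\varrho_{d}$.) Parabolic smoothing does not close the gap either: since $W$ maps $H$ only into $H^{7/2-\varepsilon}$ ($d=2$), resp. $H^{3-\varepsilon}$ ($d=3$), the truncated nonlinearity is uniformly bounded only in $H^{2}$ ($d=2$; the term $B(W(w),v)$ with $v\in H^{3}$ sets the cap), resp. the forcing $A^{1/4}F_{3}(W(w))$ only in $H^{3/2}$ ($d=3$), and the smoothing of $e^{-\nu tA^{\theta}}$ then yields uniform bounds at best in $H^{4-\epsilon}$, strictly below $H^{9/2}$; moreover $F_{d}(W(w))$ sees $w$ only through $W(w)$, so no bootstrap can improve this. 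Consequently trajectories of \eqref{1.8} issued from general data in $H$ cannot be shown to enter $\mathscr{B}_{d}$, the inclusion $\widetilde{\mathscr{A}}\subseteq\mathscr{A}$ is not established, and the equality of attractors you aim for may in fact fail: the prepared equation can carry spurious invariant sets in the region where the cut-off is active. For the theorem in the sense the paper proves it, you should drop steps 3--4 and conclude directly from steps 1--2.
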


For the case $d=2$, the Eqs. \eqref{1.8} will be
\begin{equation}\label{1.9}
\partial_{t}w+\nu Aw+F_{2}(W(w))=0
\end{equation}
with
\begin{equation*}
F_{2}(W(w)):=B(W(w),W(w))+B(W(w),v)+B(v,W(w)).
\end{equation*}
Thanks to the truncation \eqref{1.7}, we can prove that the modified nonlinearity $F_{2}(W(\cdot)): H\rightarrow H$ is globally bounded, globally Lipschitz continuous with some Lipschitz constant $L$ (also in Theorem \ref{thm3.10}). Combining above analysis with Richards's theorem, we will see that the spectral gap condition \eqref{1.3} holds, which ensures us to build an IM for NSEs in $\mathbb{T}^{2}$.
\begin{theorem}\label{thm1.2}
Let $d=2$ with $\theta=1$. Assume that $f\in H^{1}$ and $u_{0}\in H$. Then the ``prepared'' equation \eqref{1.8} of the original problem \eqref{1.1} possesses an $N$-dimensional inertial manifold $\mathcal{M}$ in the sense of Definition \ref{def2.4} which contains the global attractor $\mathscr{A}$.
\end{theorem}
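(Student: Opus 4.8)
The plan is to recognize that the prepared equation \eqref{1.9} already fits the abstract framework \eqref{1.2} for which the classical inertial manifold theory applies, so that the proof reduces to checking the two structural hypotheses of that theory and then transferring the conclusion back to the original dynamics.

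First I would write \eqref{1.9} in the form $\partial_{t}w+\mathcal{A}w=\mathcal{F}(w)$ with $\mathcal{A}:=\nu A=\nu P_{\sigma}(-\Delta)$ and $\mathcal{F}(w):=-F_{2}(W(w))$. The linear operator $\mathcal{A}$ is self-adjoint, strictly positive and has compact inverse on $H$, with eigenvalues $\{\nu\lambda_{N}\}_{N\in\mathbb{N}}$, where $\{\lambda_{N}\}$ are the eigenvalues of the Stokes operator. The nonlinearity $\mathcal{F}$ is, by Theorem \ref{thm3.10}, globally bounded and globally Lipschitz continuous from $H$ to $H$ with some constant $L>0$; this is precisely the role of the truncation $W$ in \eqref{1.7}, which tames the genuinely quadratic (hence non-Lipschitz) term $B(\cdot,\cdot)$ into a globally Lipschitz map. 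Thus \eqref{1.9} is an instance of \eqref{1.2}.

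Next I would verify the spectral gap condition \eqref{1.3} for $\mathcal{A}$. By Richards's theorem (Lemma \ref{lem3.11}) there exist a constant $c>0$ and infinitely many $N\in\mathbb{N}$ for which $\lambda_{N+1}-\lambda_{N}\geq c\ln\lambda_{N}$. Since the Lipschitz constant $L$ is fixed while $\lambda_{N}\to\infty$, any sufficiently large $N$ in this infinite set satisfies $\nu c\ln\lambda_{N}>2L$, whence $\nu\lambda_{N+1}-\nu\lambda_{N}>2L$. Fixing one such $N$, the spectral gap condition \eqref{1.3} holds for the eigenvalues of $\mathcal{A}$ relative to the Lipschitz constant $L$ of $\mathcal{F}$. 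With both hypotheses in place, the classical construction of \cite{FST88} recalled around \eqref{1.3} produces an $N$-dimensional Lipschitz inertial manifold $\mathcal{M}$ for \eqref{1.9} that is exponentially attracting and contains the global attractor of \eqref{1.9}. Finally I would transfer this to the original problem: on the absorbing ball $\mathscr{B}_{2}$ one has $W(w)=w$ (Theorem \ref{thm3.10}), so \eqref{1.9} and \eqref{1.6} coincide there; since $\mathscr{B}_{2}$ absorbs all trajectories, the two systems share the same global attractor $\mathscr{A}$, which is the content of Theorem \ref{thm1.1}. Consequently $\mathcal{M}$ contains $\mathscr{A}$, as claimed.

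The bulk of the difficulty has already been discharged in the preparatory results: the genuinely hard steps are the regularity estimate placing the attractor in $H^{9/2}$ and the verification in Theorem \ref{thm3.10} that the $|j|^{9/2}$-weighted cut-off $W$ is simultaneously globally Lipschitz from $H$ to $H$, smoothing into $H^{7/2-\varepsilon}$, and equal to the identity on $\mathscr{B}_{2}$. Given those, the only delicate point in the present proof is that Richards's gap holds merely for infinitely many $N$ rather than for all $N$; but because $L$ is a fixed constant independent of $N$ while the gaps $c\ln\lambda_{N}$ grow without bound along the Richards sequence, a single admissible $N$ always exists, and that is all the abstract theorem requires.
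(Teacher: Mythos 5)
Your proposal is correct and takes essentially the same route as the paper: global boundedness and Lipschitz continuity of $F_{2}(W(\cdot))$ from Theorem \ref{thm3.10}, the spectral gap supplied by Richards's theorem (Lemma \ref{lem3.11}) for infinitely many $N$, application of the abstract spectral-gap inertial manifold theorem, and the inclusion of the attractor via $W(w)=w$ on the absorbing ball $\mathscr{B}_{2}$ (Theorem \ref{thm1.1}). The only cosmetic difference is that you invoke the classical result of \cite{FST88} directly, while the paper cites its Theorem \ref{thm3.9} (the strong-cone formulation from \cite{Z14}); the two are interchangeable in this setting, and your explicit handling of the factor $\nu$ in the eigenvalues is if anything slightly more careful than the paper's.
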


As $d=3$, the Eqs. \eqref{1.8} should be
\begin{equation}\label{1.10}
\partial_{t}w+\nu A^{5/4}w+A^{1/4}F_{3}(W(w))=0
\end{equation}
with
\begin{equation}\label{1.11}
F_{3}(W(w)):=A^{-1/4}\left[B(W(w),W(w))+B(W(w),v)+B(v,W(w))\right].
\end{equation}
Thanks to the truncation \eqref{1.7} again, we can verify that the modified nonlinearity $F_{3}(W(\cdot)): H\rightarrow H$ is globally bounded, globally Lipschitz continuous and Gateaux differential (see also Theorem \ref{thm5.11}). However, since $d=3$ and the structure of the Eqs. \eqref{1.10} is obviously different from \eqref{1.9}, we can not expect to construct an IM for \eqref{1.10} by adopting the Richards's theorem and the spectral gap theory of IMs. In the present work, we extend slightly SAM to the following abstract model
\begin{equation}\label{1.12}
\begin{cases}
\partial_{t}u+\mathcal{A}^{1+\alpha}u+\mathcal{A}^{\alpha}\mathcal{F}(u)=g,~~(t,x)\in\mathbb{R}_{+}\times\mathbb{T}^{3},\\
u|_{t=0}=u_{0},
\end{cases}
\end{equation}
and prove the existence of IMs for \eqref{1.12}, where $\alpha\in(0,1)$ (see Section \ref{sec4}; it is not only applicable to 3D hyperviscous NSEs but also is of independent interest):
\begin{theorem}\label{thm1.3}
For the abstract problem \eqref{1.12} defined in the Hilbert space $\mathbb{H}$, let $u_{1}$ and $u_{2}$ be two solutions of \eqref{1.12} in $\mathbb{H}$. Set $v=u_{1}-u_{2}$. Assume that $\mathcal{F}(\cdot): \mathbb{H}\rightarrow \mathbb{H}$ is Gateaux differentiable with $\|\mathcal{F}'(u)\|_{\mathcal{L}(\mathbb{H},\mathbb{H})}\leq L$ for any $u\in \mathbb{H}$, and for some $L\geq 1$. Suppose that there exist $N\in\mathbb{N}$ and $k\in[\hbar \log\lambda_{N},\frac{\lambda_{N}}{2})$ for some $\hbar\in(0,1/2]$ such that $\lambda_{N}\geq e^{60L^{2}/\hbar}$ with $1\leq\lambda_{N+1}-\lambda_{N}\leq 2L$, and the spatial averaging condition holds:
\begin{equation}\label{1.13}
\|R_{k,N}\mathcal{F}'(u)R_{k,N}v\|\leq\delta\|v\|,~~\mbox{for all}~~u\in \mathbb{H},
\end{equation}
for some $\delta\leq\frac{1}{30}$, here $R_{k,N}$ is the projector defined by \eqref{4.3}. Then the \eqref{1.12} possesses an $N$-dimensional inertial manifold $\mathcal{M}$ in the sense of Definition \ref{def2.4}.
\end{theorem}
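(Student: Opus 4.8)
The plan is to reduce the existence of the inertial manifold to the verification of a strong cone condition for the equation of variations, and then to establish that cone condition by playing the spatial averaging assumption \eqref{1.13} on the spectral band against the two large ``side gaps'' that a band of half-width $k$ creates. First I would linearise: given two solutions $u_{1},u_{2}$ of \eqref{1.12} with $v=u_{1}-u_{2}$, the bound $\|\mathcal{F}'(u)\|_{\mathcal{L}(\mathbb{H},\mathbb{H})}\le L$ and Gateaux differentiability let me write $\mathcal{F}(u_{1})-\mathcal{F}(u_{2})=\ell(t)v$ with $\ell(t)=\int_{0}^{1}\mathcal{F}'(su_{1}+(1-s)u_{2})\,ds$ and $\|\ell(t)\|_{\mathcal{L}(\mathbb{H},\mathbb{H})}\le L$, so that $\partial_{t}v+\mathcal{A}^{1+\alpha}v+\mathcal{A}^{\alpha}\ell(t)v=0$. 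Since \eqref{1.13} holds for every frozen $u$, Minkowski's inequality transfers it to the average, giving $\|R_{k,N}\ell(t)R_{k,N}v\|\le\delta\|v\|$. I then fix the orthoprojector $P$ onto $\mathrm{span}\{e_{j}:\lambda_{j}\le\lambda_{N}\}$, set $Q=\mathrm{Id}-P$, choose the middle rate $\mu=\tfrac12(\lambda_{N}^{1+\alpha}+\lambda_{N+1}^{1+\alpha})$, and pass to $\tilde v=e^{\mu t}v$, which solves $\partial_{t}\tilde v=(\mu-\mathcal{A}^{1+\alpha})\tilde v-\mathcal{A}^{\alpha}\ell(t)\tilde v$.

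The heart of the proof is the differential inequality
\[
\tfrac12\tfrac{d}{dt}\big(\|P\tilde v\|^{2}-\|Q\tilde v\|^{2}\big)\ge\gamma\|\tilde v\|^{2}
\]
for some $\gamma>0$ and \emph{all} $v$, which is precisely the strong cone condition for $V(\tilde v)=\|P\tilde v\|^{2}-\|Q\tilde v\|^{2}$. On the linear side, $\mu-\mathcal{A}^{1+\alpha}$ has a positive margin $\beta:=\tfrac12(\lambda_{N+1}^{1+\alpha}-\lambda_{N}^{1+\alpha})\ge\tfrac12(1+\alpha)\lambda_{N}^{\alpha}$ across the gap (using $\lambda_{N+1}-\lambda_{N}\ge1$), while on the complement $\mathrm{Id}-R_{k,N}$ of the band the margin is the much larger $|\mu-\lambda_{j}^{1+\alpha}|\ge c\,\lambda_{N}^{\alpha}k$. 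For the nonlinear term $\langle\mathcal{A}^{\alpha}\ell\tilde v,\,P\tilde v-Q\tilde v\rangle$ I would decompose both the input $\tilde v$ and the output $P\tilde v-Q\tilde v$ through $R_{k,N}$ and $\mathrm{Id}-R_{k,N}$, replacing $\mathcal{A}^{\alpha}$ by $\lambda_{N}^{\alpha}$ on the band (the replacement error has operator norm $\le 2\lambda_{N}^{\alpha-1}k$ there, since $k<\lambda_{N}/2$). The genuinely dangerous band-to-band block is then $\lambda_{N}^{\alpha}\langle R_{k,N}\ell R_{k,N}\tilde v,\cdot\rangle$, controlled by $\delta\le\tfrac1{30}$; every block in which the input or the output lies in $\mathrm{Id}-R_{k,N}$ is absorbed by Young's inequality into the corresponding side margin $c\lambda_{N}^{\alpha}k$; and the band-to-band part of the replacement error again carries the factor $R_{k,N}\ell R_{k,N}$, hence the smallness $\delta$.

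The place where all constants must be pinned down — and the main obstacle — is exactly this absorption: it requires $k\gtrsim L^{2}$, which is guaranteed by $k\ge\hbar\log\lambda_{N}$ together with $\lambda_{N}\ge e^{60L^{2}/\hbar}$ (so that $k\ge 60L^{2}$ and $L^{2}/k\le\tfrac1{60}$), while the residual band contributions stay strictly below the middle margin $\beta$ thanks to $\delta\le\tfrac1{30}$. Collecting the estimates yields the displayed inequality with $\gamma=\beta-C\delta\lambda_{N}^{\alpha}-o(\lambda_{N}^{\alpha})>0$, i.e. the strong cone condition with middle rate $\mu$ and gap $\gamma$. Since it holds for the difference of any two solutions and for every $v$, it simultaneously provides the cone invariance and the exponential dichotomy with spectral gap demanded by the Lyapunov--Perron scheme; the $N$-dimensional inertial manifold $\mathcal{M}$ is then realised as the Lipschitz graph over $P\mathbb{H}$ in the sense of Definition \ref{def2.4}, exactly as in the standard cone-condition construction of inertial manifolds via spatial averaging (cf. \cite{M-PS88,Z14}). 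I expect the only further routine steps to be the well-posedness making $\ell(t)$ meaningful and the justification that the differentiated inequality holds for a.e. $t$ along trajectories.
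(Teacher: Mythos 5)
Your proposal is correct and takes essentially the same route as the paper: it reduces Theorem \ref{thm1.3} to a strong cone condition for the linearized equation, proves that condition by splitting the nonlinear pairing through the band projector $R_{k,N}$ (spatial averaging hypothesis on the band--band block, Young absorption of all blocks touching $\mathrm{Id}-R_{k,N}$ into the $O(k\lambda_N^{\alpha})$ side margins, with $k\ge\hbar\log\lambda_N\ge 60L^{2}$ and $\delta\le\tfrac1{30}$ closing the arithmetic against the middle margin $\tfrac12(\lambda_{N+1}^{1+\alpha}-\lambda_N^{1+\alpha})$), and then passes to cone invariance, squeezing, and the standard graph/trajectory construction --- exactly the paper's chain Theorem \ref{thm4.2} $+$ Proposition \ref{pro4.4} $+$ Theorem \ref{thm4.5}. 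The only substantive bookkeeping difference is the critical $\mathcal{A}^{\alpha}$-weighted output on the far high modes: you absorb $\|\mathcal{A}^{\alpha}Q_{k,N}\tilde v\|$ by mode-wise Young against the full mode-dependent margin $\sum_{\lambda_j\ge\lambda_N+k}(\lambda_j^{1+\alpha}-\mu)|v_j|^{2}$ (valid precisely because $2\alpha<1+\alpha$, and note the crude scalar bound $ck\lambda_N^{\alpha}\|Q_{k,N}\tilde v\|^{2}$ would \emph{not} suffice here), whereas the paper reaches the same end by retaining a $\hbar\log\lambda_N/\lambda_N$ fraction of the dissipation in Lemma \ref{lem6.4} --- the same mechanism in different clothing.
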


However, note that the SAM will not work for vector-nonlinearity, because the PSA is based on a crucial property of the Schr\"{o}dinger operator $\Delta+\upsilon(x)$ that it can be well approximated by the constant coefficient problem $\Delta+\langle\upsilon\rangle$ over large segments of $H$ (see \cite{M-PS88}), where $\upsilon(x)=F'(u(x))$ and $\langle\upsilon\rangle=\frac{1}{(2\pi)^{3}}\int_{\mathbb{T}^{3}}\upsilon dx$, thus, it is impossible to find a scalar operator $\langle F'(u)\rangle$ to fulfil the property mentioned above for the vector equations. Fortunately, the SAM can successfully apply to our problem thanks to the specific form of $F_{3}(W(w))$ (see \eqref{1.11}) which allows us to treat $F'(W(w))$ as a finite sum of multiplication operators which possess zero mean of their generators (see Section \ref{sec5.3.3} below). Finally, owing to another result of number theory (see Proposition \ref{pro5.12}), we can verify that the SAC (\ref{5.60}) holds (see Theorem \ref{thm5.13} below). Based on the above analysis, we can construct an $N$-dimensional IM for NSEs \eqref{1.1} in the case $d=3$.
\begin{theorem}\label{thm1.4}
Let $d=3$ with $\theta=\frac{5}{4}$. Assume that $f\in H$ and $u_{0}\in H$. Then the ``prepared'' equation \eqref{1.8} of the problem \eqref{1.1} possesses an $N$-dimensional inertial manifold $\mathcal{M}$ in the sense of Definition \ref{def2.4} which contains the global attractor $\mathscr{A}$.
\end{theorem}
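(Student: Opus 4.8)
The plan is to recognize the prepared equation \eqref{1.10} as an instance of the abstract model \eqref{1.12} with $\mathcal{A}=A$ the Stokes operator and $\alpha=1/4$, so that $\mathcal{A}^{1+\alpha}=A^{5/4}$, $\mathcal{A}^{\alpha}=A^{1/4}$, and with the bounded nonlinearity $\mathcal{F}(\cdot):=F_{3}(W(\cdot))$ and $g=0$. Then the existence of an $N$-dimensional inertial manifold for \eqref{1.10} will follow from the abstract spatial averaging theorem, Theorem \ref{thm1.3}, once its hypotheses are checked; finally Theorem \ref{thm1.1} transfers the conclusion to the original problem \eqref{1.1}, and the containment of the global attractor $\mathscr{A}$ follows from the fact that $W(w)=w$ on the absorbing ball $\mathscr{B}_{3}$, so that on $\mathscr{A}$ the prepared equation coincides with \eqref{1.6} and the constructed manifold, attracting all orbits exponentially, must contain $\mathscr{A}$.

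First I would record the analytic properties of the modified nonlinearity. By Theorem \ref{thm5.11}, the map $\mathcal{F}=F_{3}\circ W:H\to H$ is globally bounded, globally Lipschitz, and Gateaux differentiable with $\|\mathcal{F}'(u)\|_{\mathcal{L}(H,H)}\le L$ for some $L\ge 1$ uniform in $u$; this is exactly the derivative bound demanded by Theorem \ref{thm1.3}. The regularity gain is essential here: since $W$ maps $H$ into $H^{3-\varepsilon}$ and the factor $A^{-1/4}$ in \eqref{1.11} recovers the derivative lost by the bilinear term $B(\cdot,\cdot)$, the operator $\mathcal{F}'(u)$ genuinely makes sense as a bounded operator on $H$.

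The heart of the argument is the verification of the spatial averaging condition \eqref{1.13} (equivalently \eqref{5.60}). The usual obstruction is that, for a vector nonlinearity, $\mathcal{F}'(u)$ is not a scalar multiplication operator, so there is no scalar mean $\langle\mathcal{F}'(u)\rangle$ to compare against as in the scalar principle of spatial averaging. I would resolve this by exploiting the explicit structure of $F_{3}$ in \eqref{1.11}: differentiating $A^{-1/4}[B(W(w),W(w))+B(W(w),v)+B(v,W(w))]$ by the chain rule expresses $\mathcal{F}'(w)$ as the composition $A^{-1/4}\circ\mathcal{B}'(W(w))\circ W'(w)$, where $\mathcal{B}'(W(w))$ is a finite sum of first-order operators $g\mapsto P_{\sigma}\big((g\cdot\nabla)b\big)$ and $g\mapsto P_{\sigma}\big((b\cdot\nabla)g\big)$ with $b\in\{W(w),v\}$, i.e. a finite sum of multiplication operators whose generators are the components of $W(w)$, $v$ and their first derivatives. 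The decisive point is that every such generator has zero spatial mean: the truncation \eqref{1.7} omits the $j=0$ Fourier mode, so $W(w)$ has zero mean; the space $H$ itself consists of mean-free divergence-free fields, so $v$ has zero mean; and the remaining generators are spatial derivatives of periodic functions, which integrate to zero over $\mathbb{T}^{3}$. Hence each multiplication operator appearing in $\mathcal{F}'(w)$ has vanishing mean, which is precisely the configuration the principle of spatial averaging can control, while the smoothing $A^{-1/4}$ keeps the whole operator bounded on $H$.

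It then remains to choose $N$, $k$ and to bound $R_{k,N}\mathcal{F}'(u)R_{k,N}$. Here I would invoke the number-theoretic input, Proposition \ref{pro5.12}, on the distribution of lattice points of $\mathbb{Z}^{3}$ on and near spheres, to produce infinitely many $N$ for which the eigenvalues $\lambda_{N}=|j|^{2}$ of the Stokes operator satisfy $1\le\lambda_{N+1}-\lambda_{N}\le 2L$ and $\lambda_{N}\ge e^{60L^{2}/\hbar}$ for a suitable $\hbar\in(0,1/2]$, and for which, with $k\in[\hbar\log\lambda_{N},\tfrac{\lambda_{N}}{2})$, each zero-mean multiplication operator sandwiched as $R_{k,N}(\cdot)R_{k,N}$ is small in operator norm; summing over the finitely many generators yields $\|R_{k,N}\mathcal{F}'(u)R_{k,N}v\|\le\delta\|v\|$ with $\delta\le\tfrac{1}{30}$, which is exactly the content of Theorem \ref{thm5.13}. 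With all hypotheses of Theorem \ref{thm1.3} in force, \eqref{1.10} possesses an $N$-dimensional inertial manifold $\mathcal{M}$ in the sense of Definition \ref{def2.4}, and the transfer via Theorem \ref{thm1.1} completes the proof. The main obstacle is precisely this spatial averaging step: both the algebraic reduction of the vector derivative $\mathcal{F}'$ to a finite family of zero-mean multiplication operators, and the delicate three-dimensional lattice estimate needed to exploit the merely $O(1)$ (non-growing) spectral gaps available on $\mathbb{T}^{3}$, in contrast to the $\ln\lambda_{N}$ gaps that Richards's theorem supplies in the two-dimensional case.
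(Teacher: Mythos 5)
Your proposal is correct and follows essentially the same route as the paper: you identify the prepared equation with the abstract model \eqref{1.12} for $\alpha=1/4$, use Theorem \ref{thm5.11} for the global boundedness, Lipschitz continuity and Gateaux differentiability of $F_{3}(W(\cdot))$, verify the spatial averaging condition by reducing $F_{3}'$ to a finite sum of zero-mean multiplication operators and invoking Proposition \ref{pro5.12} (exactly the content of Theorem \ref{thm5.13}), and then apply the abstract SAM result Theorem \ref{thm1.3} (which the paper invokes in the packaged form of Corollary \ref{cor5.9}, Proposition \ref{pro4.4} and Theorem \ref{thm4.5}), with Theorem \ref{thm1.1} handling the transfer to \eqref{1.1} and the containment of $\mathscr{A}$. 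This matches the paper's proof in Section \ref{sec5.3.4}.
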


To our knowledge, the best known result for the 3D hyperviscous NSEs \eqref{1.1} was given in Gal and Guo \cite{GG18} with the assumption $\theta\geqslant \frac{3}{2}$.

\begin{remark}\label{rem1.5}
It follows from Theorem \ref{thm1.2} and Theorem \ref{thm1.4} that there exists a Lipschitz map $\Phi: H\rightarrow H$ such that the attractor $\mathscr{A}$ is entirely determined by a $N$-dimensional system of ODEs:
\begin{equation}\label{1.14}
\frac{dp}{dt}+\nu A^{\theta}p=P_{N}(A^{(1-\theta)}F(W(p+\Phi(p)))),
\end{equation}
where $P_{N}$ denotes the orthogonal projector defined by \eqref{2.8}, $p=P_{N}w$ and this ODEs is called `inertial form' of \eqref{1.8}. Moreover, we point out that the asymptotic behaviors of \eqref{1.1} is completely determined by \eqref{1.14} as well. Indeed, we know that $w=p+\Phi(p)$ when $w\in\mathcal{M}$ and that the attractor $\mathscr{A}$ is a subset of $\mathcal{M}$ and $\mathcal{M}$ is positive invariant, \eqref{1.14} must have a global attractor $\mathscr{A}_{p}=P_{N}\mathscr{A}$, and so the global attractor of \eqref{1.1} $\mathscr{A}_{0}=\mathscr{A}+v(x)$ (see Corollary \ref{cor3.8} for 2D and Corollary \ref{cor5.8} for 3D) is now completely determined by the dynamics on $\mathscr{A}_{p}$ because $v(x)$ is fixed.
\end{remark}
\par
Throughout this paper, we denote by $\mathbb{R}$, $\mathbb{Z}$ and $\mathbb{C}$ the set of real numbers, integer numbers and complex numbers, respectively. $C$, $c$ and $C_{i}$ without a subscript will stand for some constants that may change line to line.

\section{Preliminaries}\label{sec2}
\noindent

Denote the $d$ dimensional torus by $\mathbb{T}^{d}:=[-\pi,\pi]^{d}$ and set $\mathbb{Z}^{d}_{\ast}=\mathbb{Z}^{d}\setminus\{0\}$ (recall $d=2,3$). Let $H^{s}(\mathbb{T}^{d})$ be the classical Sobolev space on $\mathbb{T}^{d}$. Then each $u\in(L^{2}(\mathbb{T}^{d}))^{d}$ can be written as the Fourier series
\begin{equation*}
u(x)=\sum_{j\in\mathbb{Z}^{d}}\widehat{u}_{j}e^{ij\cdot x},~~\widehat{u}_{j}=\frac{1}{(2\pi)^{d}}\int_{\mathbb{T}^{d}}u(x)e^{ix\cdot j}\in\mathbb{C}^{d}.
\end{equation*}
Due to the Plancherel theorem,
\begin{equation*}
\|u\|^{2}_{H^{s}(\mathbb{T}^{d})}=|\widehat{u}_{0}|^{2}+\sum_{j\in\mathbb{Z}^{d}_{\ast}}|j|^{2s}|\widehat{u}_{j}|^{2},
\end{equation*}
where $|j|^{2}=j^{2}_{1}+\cdots+j^{2}_{d}$.
The Helmholtz-Leray orthoprojector $P_{\sigma}: (L^{2}(\mathbb{T}^{d}))^{d}\rightarrow H:=P_{\sigma}(L^{2}(\mathbb{T}^{d}))^{d}$ to divergent free vector fields with zero mean can be defined as follows
\begin{equation*}
P_{\sigma}u:=\sum_{j\in\mathbb{Z}^{d}_{\ast}}P^{j}_{\sigma}\widehat{u}_{j}e^{ij\cdot x},~~u=\sum_{j\in\mathbb{Z}^{d}}\widehat{u}_{j}e^{ij\cdot x}
\end{equation*}
and the $d\times d$-matrices $P^{j}_{\sigma}$ are defined by
\begin{equation}\label{2.1}
P^{j}_{\sigma}:=\frac{1}{|j|^{2}}\left(
                          \begin{array}{cc}
                            j^{2}_{2} & -j_{1}j_{2} \\
                            -j_{1}j_{2} & j^{2}_{1} \\
                          \end{array}
                        \right)~~\text{as}~d=2;
\end{equation}
\begin{equation}\label{2.2}
P^{j}_{\sigma}:=\frac{1}{|j|^{2}}\left(
                          \begin{array}{ccc}
                            j^{2}_{2}+j^{2}_{3} & -j_{1}j_{2} & -j_{1}j_{3} \\
                            -j_{1}j_{2} & j^{2}_{1}+j^{2}_{3} & -j_{2}j_{3} \\
                            -j_{1}j_{3} & -j_{2}j_{3} & j^{2}_{1}+j^{2}_{2} \\
                          \end{array}
                        \right)~~\text{as}~d=3.
\end{equation}
\par
We recall that the Stokes operator $A:=-P_{\sigma}\Delta$ can be viewed as the restriction of Laplacian to the divergence free vector fields and the domain $D(A)$ is given by
\begin{equation*}
D(A):=\left\{u\in (H^{2}(\mathbb{T}^{d}))^{d}: \nabla\cdot u=0,~\langle u\rangle=0\right\}.
\end{equation*}
Define the work space $H^{s}:=D(A^{s/2})$, $s\in\mathbb{R}$. Then
\begin{equation*}
Au=-P_{\sigma}\Delta u=-\Delta u,~\forall~u\in D(A);~~H^{s}=\left\{u\in (H^{s}(\mathbb{T}^{d}))^{d}: \nabla\cdot u=0,~\langle u\rangle=0\right\}
\end{equation*}
(see, e.g., \cite{T97}) and due to the Parseval equality \cite{G08},
\begin{equation*}
\|u\|^{2}_{H^{s}}=\sum_{j\in\mathbb{Z}^{d}_{\ast}}|j|^{2s}|\widehat{u}_{j}|^{2},~~u\in H^{s}.
\end{equation*}
Denote the standard bilinear form associated with the NSEs by
\begin{equation*}
B(u,v):=P_{\sigma}((u\cdot\nabla)v),~~\forall~u,v\in H^{1}.
\end{equation*}
Applying the Helmholtz-Leray orthogonal projection $P_{\sigma}$ to 2D NSEs, that is, the Eqs. \eqref{1.1} in the case $d=2$, we obtain
\begin{equation}\label{2.3}
\begin{cases}
\partial_{t}u+\nu A u+B(u,u)=f,\\
u(0)=u_{0};
\end{cases}
\end{equation}
and applying the Helmholtz-Leray orthogonal projection $P_{\sigma}$ to Eqs. (\ref{1.1}) in the case $d=3$ gives that
\begin{equation}\label{2.4}
\begin{cases}
\partial_{t}u+\nu A^{5/4} u+B(u,u)=f,\\
u(0)=u_{0}.
\end{cases}
\end{equation}
\begin{lemma}[\cite{R01,T97}]\label{lem2.1}
For $d=2,3$, the bilinear form $B:H^{1}\times H^{1}\rightarrow H^{-1}$ is continuous and satisfies the following estimates:
\begin{equation}\label{2.5}
|(B(u,v),w)|\leq c\|u\|_{H^{1}}\|v\|^{1/2}_{H^{1}}\|Av\|^{1/2}_{H}\|w\|_{H},~\forall~u\in H^{1},v\in H^{2},w\in H;
\end{equation}
\begin{equation}\label{2.6}
|(B(u,v),w)|\leq c\|u\|_{H^{1/2}}\|v\|_{H^{1}}\|w\|_{H^{1}},~\forall~u\in H^{1/2},v,w\in H^{1};
\end{equation}
\begin{equation}\label{2.7}
(B(u,v),v)=0,~\forall~u,v\in H^{1}.
\end{equation}
\end{lemma}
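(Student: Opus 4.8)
The plan is to reduce all three claims to elementary manipulations of the trilinear form $b(u,v,w):=((u\cdot\nabla)v,w)=\sum_{i,k}\int_{\mathbb{T}^{d}}u^{i}(\partial_{i}v^{k})w^{k}\,dx$. The key observation is that $P_{\sigma}$ is the orthogonal projection onto $H$, so for any test field $w\in H$ one has $(B(u,v),w)=(P_{\sigma}((u\cdot\nabla)v),w)=((u\cdot\nabla)v,w)=b(u,v,w)$; this applies in each of \eqref{2.5}--\eqref{2.7} since $H^{1}\subset H$. After this reduction the three statements become assertions about $b$, and continuity of $B:H^{1}\times H^{1}\to H^{-1}$ will follow from \eqref{2.6} by duality.

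First I would establish \eqref{2.7}. Writing $b(u,v,v)=\sum_{i,k}\int u^{i}(\partial_{i}v^{k})v^{k}=\tfrac{1}{2}\sum_{i}\int u^{i}\partial_{i}|v|^{2}=-\tfrac{1}{2}\int(\nabla\cdot u)|v|^{2}=0$, using $\nabla\cdot u=0$ and integration by parts with no boundary contribution on the torus. Polarizing this identity (replace $v$ by $v+w$ and expand) yields the antisymmetry $b(u,v,w)=-b(u,w,v)$, which is the natural tool for redistributing derivatives in \eqref{2.6}.

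For \eqref{2.5} I would apply H\"older's inequality to $b(u,v,w)$ and then interpolate, with a dimension-dependent choice of exponents. In $d=3$ the triple $\|u\|_{L^{6}}\|\nabla v\|_{L^{3}}\|w\|_{L^{2}}$ (H\"older exponents $\tfrac16+\tfrac13+\tfrac12=1$), combined with the Sobolev embedding $H^{1}\hookrightarrow L^{6}$ and the Gagliardo--Nirenberg inequality $\|\nabla v\|_{L^{3}}\le c\|\nabla v\|_{L^{2}}^{1/2}\|\nabla^{2}v\|_{L^{2}}^{1/2}$, produces exactly $\|u\|_{H^{1}}\|v\|_{H^{1}}^{1/2}\|Av\|_{H}^{1/2}\|w\|_{H}$, since $\|\nabla^{2}v\|_{L^{2}}\simeq\|Av\|_{H}$ for zero-mean divergence-free fields. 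In $d=2$ the same scaling follows from the triple $\|u\|_{L^{4}}\|\nabla v\|_{L^{4}}\|w\|_{L^{2}}$ together with Ladyzhenskaya's inequality $\|\phi\|_{L^{4}}\le c\|\phi\|_{L^{2}}^{1/2}\|\nabla\phi\|_{L^{2}}^{1/2}$ applied to $\phi=u$ and to $\phi=\nabla v$.

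For \eqref{2.6} I would again use H\"older, now as $\|u\|_{L^{3}}\|\nabla v\|_{L^{2}}\|w\|_{L^{6}}$ ($\tfrac13+\tfrac12+\tfrac16=1$), and invoke the embeddings $H^{1/2}\hookrightarrow L^{3}$ and $H^{1}\hookrightarrow L^{6}$, both valid on $\mathbb{T}^{d}$ for $d=2,3$, giving $|b(u,v,w)|\le c\|u\|_{H^{1/2}}\|v\|_{H^{1}}\|w\|_{H^{1}}$. Taking the supremum over $\|w\|_{H^{1}}\le 1$ and using $\|u\|_{H^{1/2}}\le\|u\|_{H^{1}}$ then yields $\|B(u,v)\|_{H^{-1}}\le c\|u\|_{H^{1}}\|v\|_{H^{1}}$, i.e. the asserted continuity. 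The only point needing care is the dimension-dependent choice of H\"older triple and interpolation inequality in \eqref{2.5}--\eqref{2.6}; these interpolation estimates are legitimate on the torus because Poincar\'e's inequality for zero-mean fields makes the homogeneous and inhomogeneous Sobolev norms equivalent (and each component $\partial_{i}v^{k}$ indeed has zero mean by periodicity). I therefore expect the main, and only modest, obstacle to be the bookkeeping of correct exponents in $d=2$ versus $d=3$ rather than any genuine analytic difficulty.
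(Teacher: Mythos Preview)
Your proof is correct and follows the standard route for these classical trilinear estimates. Note, however, that the paper does not supply its own proof of Lemma~\ref{lem2.1}: the lemma is stated with a citation to \cite{R01,T97} and no argument is given. Your H\"older/Sobolev/Ladyzhenskaya bookkeeping is exactly the textbook derivation found in those references, so there is nothing to compare against beyond confirming that your exponents and embeddings are the right ones in each dimension---and they are.
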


We recall the following interpolation inequality, e.g., see \cite{CD00} for proof.
\begin{lemma}[Nirenberg-Gagliardo inequality]\label{lem2.2}
Let $1<p,~p_{1},~p_{2}<\infty$, $0\leq s_{2}<s_{1}<\infty$, $r\in (0,1)$ and $s$, $p$ satisfy $s=s_{1}+(1-r)s_{2}$, $\frac{1}{p}=\frac{r}{p_{1}}+\frac{1-r}{p_{2}}$. Then the following estimate holds:
\begin{equation*}
\|u\|_{W^{s,p}(\mathbb{T}^{d})}
\leq c_{r}\|u\|^{r}_{W^{s_{1},p_{1}}(\mathbb{T}^{d})}\|u\|^{1-r}_{W^{s_{2},p_{2}}(\mathbb{T}^{d})},~~\forall~u\in W^{s_{1},p_{1}}(\mathbb{T}^{d})\cap W^{s_{2},p_{2}}(\mathbb{T}^{d}),
\end{equation*}
where $W^{k,\wp}(\mathbb{T}^{d})$ $(k=s,s_{1},s_{2}, \wp=p,p_{1},p_{2})$ denote the classical Sobolev spaces.
\end{lemma}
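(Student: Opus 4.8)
The plan is to deduce the inequality from the complex interpolation method for $L^p$-based Bessel potential spaces, which is the natural framework once $1<p,p_1,p_2<\infty$; I first remark that the exponent relation should read $s=rs_1+(1-r)s_2$ in order to be consistent with the integrability relation $\frac1p=\frac r{p_1}+\frac{1-r}{p_2}$, and this is exactly what complex interpolation produces. The preliminary reduction is to pass from classical Sobolev norms to Bessel potential norms. Writing $J:=(I-\Delta)^{1/2}$, which on $\mathbb{T}^d$ acts as the Fourier multiplier $\widehat{(Ju)}_j=(1+|j|^2)^{1/2}\widehat u_j$, one has the norm equivalence $\|u\|_{W^{k,q}(\mathbb{T}^d)}\asymp\|J^k u\|_{L^q(\mathbb{T}^d)}$ for every $1<q<\infty$ and $k\ge0$; this follows from the Marcinkiewicz/Mikhlin--H\"ormander multiplier theorem on $\mathbb{Z}^d$ (equivalently from de Leeuw transference of the corresponding $\mathbb{R}^d$ result), and it is precisely here that $1<q<\infty$ is used. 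Thus it suffices to prove $\|J^s u\|_{L^p}\le c_r\|J^{s_1}u\|_{L^{p_1}}^{r}\|J^{s_2}u\|_{L^{p_2}}^{1-r}$.

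Next I would run a Hadamard three-lines argument. Put $\theta:=1-r\in(0,1)$, and on the closed strip $0\le\operatorname{Re}z\le1$ set $s(z):=(1-z)s_1+zs_2$ and $\frac1{p(z)}:=\frac{1-z}{p_1}+\frac z{p_2}$, so that $s(\theta)=s$ and $p(\theta)=p$. For a fixed smooth $u$ in the intersection and a trigonometric polynomial $\phi$ with $\|\phi\|_{L^{p'}}\le1$, I would form the analytic function
$$F(z)=\int_{\mathbb{T}^d}\bigl(J^{s(z)}u\bigr)\,\phi_z\,dx,\qquad \phi_z:=|\phi|^{\,p'/p'(z)}\,\operatorname{sgn}\phi,$$
where the Riesz--Thorin deformation $\phi_z$ is arranged so that $\|\phi_{iy}\|_{L^{p_1'}}\le1$, $\|\phi_{1+iy}\|_{L^{p_2'}}\le1$, and $\phi_\theta=\phi$. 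On the left edge $J^{s(iy)}u=J^{\,iy(s_2-s_1)}J^{s_1}u$ and on the right edge $J^{s(1+iy)}u=J^{\,iy(s_2-s_1)}J^{s_2}u$; bounding the purely imaginary-order multiplier and pairing with $\phi_z$ by H\"older yields $|F(iy)|\le C(1+|y|)^{M}\|J^{s_1}u\|_{L^{p_1}}$ and $|F(1+iy)|\le C(1+|y|)^{M}\|J^{s_2}u\|_{L^{p_2}}$. Since $F$ is analytic of admissible growth, the three-lines theorem (after the standard regularization $F(z)e^{\varepsilon(z^2-\theta^2)}$ to absorb the polynomial factors, which are Poisson-integrable on the strip) gives $|F(\theta)|\le C\|J^{s_1}u\|_{L^{p_1}}^{1-\theta}\|J^{s_2}u\|_{L^{p_2}}^{\theta}$. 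As $F(\theta)=\int(J^s u)\phi\,dx$, taking the supremum over $\phi$ produces the Bessel-potential inequality, the preliminary reduction converts it back to $W^{s,p}$, and density of trigonometric polynomials removes the smoothness hypothesis on $u$.

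The heart of the matter, and the step I expect to be the main obstacle, is the boundedness on $L^q(\mathbb{T}^d)$ of the imaginary-order Bessel potential $J^{i\tau}$ with operator norm growing only polynomially in $\tau$. Its symbol $(1+|\xi|^2)^{i\tau/2}$ has modulus one, and a direct induction shows
$$|\xi|^{|\alpha|}\,\bigl|\partial^{\alpha}_\xi(1+|\xi|^2)^{i\tau/2}\bigr|\le C_\alpha(1+|\tau|)^{|\alpha|},$$
so the Mikhlin--H\"ormander constant is polynomial in $\tau$, and transference gives $\|J^{i\tau}\|_{L^q\to L^q}\le C(1+|\tau|)^{M}$. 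Controlling this growth is exactly what legitimizes the three-lines step (it is harmless because $\log(1+|y|)$ is integrable against the Poisson kernel), and it is also the reason the endpoints $p=1$ and $p=\infty$ must be excluded. An alternative, more self-contained route on $\mathbb{T}^d$ is a Littlewood--Paley argument: decompose $u=\sum_k\Delta_k u$ into dyadic frequency blocks, estimate $\|J^s\Delta_k u\|_{L^p}$ by H\"older in the dyadic parameter after interpolating each block through Bernstein's inequalities, and sum; this avoids complex analysis but relies on the vector-valued square-function theorem on the torus, which once more requires $1<p<\infty$.
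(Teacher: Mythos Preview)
The paper does not prove this lemma at all; it merely states it and cites Cholewa--Dlotko \cite{CD00} for a proof. Your proposal therefore goes well beyond what the paper offers, supplying a complete argument via complex interpolation of Bessel potential spaces. The approach is standard and correct, and you rightly flag the typo in the statement (the exponent should be $s=rs_1+(1-r)s_2$, consistent with the integrability relation). One small caution: the regularization $F(z)e^{\varepsilon(z^2-\theta^2)}$ does make $F$ bounded on vertical lines, but the resulting sup-bounds blow up as $\varepsilon\to0$, so by itself it does not recover the product form $\|J^{s_1}u\|_{L^{p_1}}^{1-\theta}\|J^{s_2}u\|_{L^{p_2}}^{\theta}$. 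What actually works is the Stein interpolation theorem (equivalently Hirschman's lemma), which you correctly identify in spirit when you note that $\log(1+|y|)$ is Poisson-integrable on the strip; that is the genuine mechanism handling the polynomial growth of $\|J^{i\tau}\|_{L^q\to L^q}$. With that adjustment your argument is complete, and the Littlewood--Paley alternative you sketch is also viable.
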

\par
We recall the definition of a global attractor.
\begin{definition}[\cite{T97}]\label{def2.3}
Let $\mathbb{H}$ is a Hilbert space. A set $\mathscr{A}$ is said to be a global attractor for the semigroup $S(t)$ in $\mathbb{H}$ if
\begin{enumerate}[(i)]
  \item $\mathscr{A}$ is compact in $\mathbb{H}$;
  \item $\mathscr{A}$ is invariant, i.e. $S(t)\mathscr{A}=\mathscr{A}$ for all $t\geq 0$;
  \item $\mathscr{A}$ attracts each bounded subset $\mathbb{H}$, that is, for any bounded set $B\subset \mathbb{H}$, $dist(S(t)B,\mathscr{A})\rightarrow 0$ as $t\rightarrow +\infty$, where $dist(A,C)$ denotes the Hausdorrf semidistance between $A$ and $C$ in $\mathbb{H}$, defined as
      \begin{equation*}
       dist(A,C)=\sup_{a\in A}\inf_{c\in C}d_{H}(a,c).
      \end{equation*}
\end{enumerate}
\end{definition}

Now, we introduce some notions on the inertial manifolds. To do this, we fix an integer $N$ and denote $P_{N}$ the orthogonal projector onto the space spanned by the first $N$ eigenvectors of $\mathcal{A}$ in $\mathbb{H}$ (the corresponding eigenvector $\{e_{k}\}_{k=1}^{\infty}$ forms orthonormal basis of $\mathbb{H}$ ), that is,
\begin{equation}\label{2.8}
P_{N}u=\sum_{k\leq N}u_{k}e_{k},~~u_{k}=(u,e_{k}),~~u\in \mathbb{H}.
\end{equation}
Let $Q_{N}=I-P_{N}$, then
\begin{equation*}
Q_{N}u=\sum_{k> N}u_{k}e_{k}
\end{equation*}
and, for every $u\in H$ then $u=p+q$ where $p=P_{N}u$ and $q=Q_{N}u$.
\par
We introduce the following definitions, one can see \cite{FNST88,FST88,T97} and references therein.
\begin{definition}\label{def2.4}
Let $\mathbb{H}$ be a Hilbert space. A subset $\mathcal{M}\subset \mathbb{H}$ is called an inertial manifold for a dynamical system in $\mathbb{H}$ associated with the semigroup $S(t)$ if the following conditions are satisfied:
\begin{enumerate}[(i)]
\item $\mathcal{M}$ is a finite-dimensional Lipschitz manifold, i.e., there exists a Lipschitz continuous function $\Phi: P_{N}\mathbb{H}\rightarrow Q_{N}\mathbb{H}$ such that
   \begin{equation*}
   \mathcal{M}=\{p+\Phi(p): p\in P_{N}\mathbb{H}\};
   \end{equation*}
\item $\mathcal{M}$ is invariant with respect to $S(t)$, i.e., $S(t)\mathcal{M}\subseteq\mathcal{M}$ for all $t\geq 0$;
\item $\mathcal{M}$ is exponentially attracting, i.e., there exist positive constants $\omega$ and $C$ such that for every $u_{0}\in \mathbb{H}$ there exists a $v_{0}\in \mathcal{M}$ such that
    \begin{equation*}
    \|S(t)u_{0}-S(t)v_{0}\|_{\mathbb{H}}\leq Ce^{-\omega t}\|u_{0}-v_{0}\|_{\mathbb{H}},~ for~all~t\geq 0.
    \end{equation*}
\end{enumerate}
\end{definition}
To verify the existence of the IM, we will use the notation of strong cone condition introduced by Kostianko and Zelik in \cite{KZ15} (one see also \cite{Z14}). Firstly, we introduce the following quadratic form in $H$:
\begin{equation*}
V(\xi)=V_{N}(\xi):=\|Q_{N}\xi\|_{\mathbb{H}}^{2}-\|P_{N}\xi\|_{\mathbb{H}}^{2},~~\xi\in \mathbb{H},
\end{equation*}
and set $K^{+}:=\{\xi\in \mathbb{H}, V(\xi)\leq 0\}$ to be the associated cone.
\begin{definition}[Strong cone condition]\label{def2.6}
Let $u_{1},u_{2}$ be two solutions of \eqref{1.1} with initial data $u_{1,0},u_{2,0}\in \mathbb{H}$ respectively. If the following estimate hold:
\begin{equation}\label{2.9}
\frac{1}{2}\frac{d}{dt}V(u_{1}(t)-u_{2}(t))+\gamma V(u_{1}(t)-u_{2}(t))
\leq-\mu\|u_{1}(t)-u_{2}(t)\|_{\mathbb{H}}^{2},
\end{equation}
where $\gamma$, $\mu>0$. Then, we say that \eqref{1.1} satisfies the strong cone condition.
\end{definition}
\begin{definition}\label{def2.5}
 $(i)$ We say that the semigroup $S(t)$ generated by the equation \eqref{1.1} possesses the cone invariance if for any $\xi_{1}$, $\xi_{2} \in \mathbb{H}$ such that
\begin{align}\label{2.10}
&\xi_{1}-\xi_{2}\in K^{+}\nonumber\\
\Longrightarrow~~&S(t)\xi_{1}-S(t)\xi_{2}\in K^{+}~for~all~t\geq 0.
\end{align}
$(ii)$ The solution semigroup $S(t)$ possesses the squeezing property if there exists positive $\vartheta$ and $C$ such that
\begin{align}\label{2.11}
&S(T)\xi_{1}-S(T)\xi_{2}\notin K^{+}\nonumber\\
\Longrightarrow~~&\|S(t)\xi_{1}-S(t)\xi_{2}\|_{\mathbb{H}}\leq Ce^{-\vartheta t}\|\xi_{1}-\xi_{2}\|_{\mathbb{H}},~t\in~[0,T].
\end{align}
\end{definition}
\par
We recall the following formulation of the uniform Gronwall lemma, it's proof can be found in \cite{T97}.
\begin{lemma}\label{lem2.7}
Let $\Psi$, $h$ and $Y$ be nonnegative locally integrable functions on $(t_{0},+\infty)$ such that
\begin{equation*}
\frac{dY}{dt}\leq \Psi Y+h~\mbox{for}~t\geq t_{0}.
\end{equation*}
Then, it holds
\begin{equation*}
Y(t+1)\leq
\exp\left(\int^{t+1}_{t}\Psi(\tau)d\tau\right)\left(\int^{t+1}_{t}Y(\tau)d\tau+\int^{t+1}_{t}h(\tau)d\tau\right),
~\mbox{for~all}~t\geq t_{0}.
\end{equation*}
\end{lemma}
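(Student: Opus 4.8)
The plan is to reduce the statement to the classical (non-uniform) Gronwall inequality applied on subintervals of $[t,t+1]$, and then to average the resulting family of estimates over the starting point. First I would fix $t\geq t_{0}$ and introduce the integrating factor
\begin{equation*}
\mu(s):=\exp\left(-\int_{t}^{s}\Psi(\tau)\,d\tau\right),\qquad s\in[t,t+1],
\end{equation*}
which is well defined and strictly positive because $\Psi$ is nonnegative and locally integrable. Multiplying the differential inequality $\frac{dY}{dt}\leq\Psi Y+h$ by $\mu$ gives $\frac{d}{ds}(\mu Y)\leq\mu h$, and integrating this from an arbitrary $s\in[t,t+1]$ up to $t+1$ yields
\begin{equation*}
Y(t+1)\leq\exp\left(\int_{s}^{t+1}\Psi\right)Y(s)+\int_{s}^{t+1}\exp\left(\int_{\tau}^{t+1}\Psi\right)h(\tau)\,d\tau.
\end{equation*}

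Next I would exploit the two nonnegativity hypotheses to coarsen the exponential weights. Since $\Psi\geq0$ and $t\leq s,\tau\leq t+1$, both inner integrals $\int_{s}^{t+1}\Psi$ and $\int_{\tau}^{t+1}\Psi$ are bounded above by $\int_{t}^{t+1}\Psi$; and since $h\geq0$ the tail integral $\int_{s}^{t+1}h$ is bounded above by $\int_{t}^{t+1}h$. Combining these, the previous display simplifies to the $s$-independent right-hand side
\begin{equation*}
Y(t+1)\leq\exp\left(\int_{t}^{t+1}\Psi(\tau)\,d\tau\right)\left(Y(s)+\int_{t}^{t+1}h(\tau)\,d\tau\right),
\end{equation*}
valid for every $s\in[t,t+1]$.

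The final and conceptually crucial step is to remove the dependence on the unknown value $Y(s)$ by averaging. Integrating the last inequality in $s$ over $[t,t+1]$, an interval of unit length, leaves the left-hand side $Y(t+1)$ unchanged, turns the term $Y(s)$ into $\int_{t}^{t+1}Y(s)\,ds$, and leaves $\int_{t}^{t+1}h$ unchanged; this produces exactly the claimed estimate.

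As for difficulties, the result is essentially standard, so the points requiring care are technical: justifying the integrating-factor computation under the mere local integrability of $\Psi$ and $h$ (so that $\mu Y$ is absolutely continuous and the product rule holds almost everywhere, which is all that the integrated inequality needs), and observing that the argument relies on the normalization that the averaging interval has length one, which is precisely what makes $\int_{t}^{t+1}h$ reappear with coefficient $1$. The substantive idea — and the feature that makes the bound uniform in $t$ — is the averaging in the last step, which replaces pointwise information about $Y$ by its integral and thereby requires no a priori control of $Y$ at any individual time.
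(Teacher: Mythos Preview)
Your proof is correct and is in fact the standard argument for the uniform Gronwall lemma; the paper does not reproduce a proof but simply cites \cite{T97}, where precisely this integrating-factor-plus-averaging argument appears. There is nothing to add or compare.
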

\par
We conclude this section by the following lemma on compactness of sequences. The proof of this lemma can be founded in \cite{A63,D65,L69,C15}.
\begin{lemma}[Aubin-Dubinskii-Lions]\label{lem2.8}
Assume that $X\subset Y\subset Z$ is a triple of Banach spaces such that $X$ is compactly embedded in $Y$. Let $\{u_{n}\}_{n\in\mathbb{Z}}$ be a bounded sequence in $L^{p}([a,b];X)$ for some $1\leq p<\infty$ such that the set $\partial_{t}\{u_{n}\}_{n\in\mathbb{N}}:=\{\partial_{t}u_{n}:~n\in \mathbb{N}\}$ is bounded in $L^{q}([a,b];Z)$ for some $q\geq 1$. Here $\partial_{t}u_{n}$ is the derivative in the distributional sense. Then $\{u_{n}\}_{n\in\mathbb{Z}}$ is relatively compact in $L^{p}([a,b];Y)$. Moreover, if $\{u_{n}\}_{n\in\mathbb{Z}}$ is a bounded set in $L^{\infty}([a,b];X)$ and $\partial_{t}\{u_{n}\}_{n\in\mathbb{Z}}$ is bounded in $L^{r}([a,b];Z)$ for some $r>1$, then $\{u_{n}\}_{n\in\mathbb{Z}}$ is relatively compact in $C([a,b];Y)$. In particular, the above statements are valid whenever $p=q=r=2$.
\end{lemma}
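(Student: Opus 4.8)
The plan is to follow the classical Aubin--Lions--Simon scheme, which rests on two ingredients: an Ehrling-type interpolation inequality that converts the compactness of the embedding $X\subset Y$ into an estimate, and a time-translation estimate in the weak space $Z$ supplied by the uniform bound on the distributional derivatives $\partial_{t}u_{n}$. First I would establish the interpolation inequality: for every $\eta>0$ there is $C_{\eta}>0$ with
\[
\|v\|_{Y}\le \eta\|v\|_{X}+C_{\eta}\|v\|_{Z},\qquad \forall\,v\in X.
\]
This is proved by contradiction. If it failed there would be $\eta_{0}>0$ and $v_{n}\in X$ with $\|v_{n}\|_{Y}=1$ and $\|v_{n}\|_{Y}>\eta_{0}\|v_{n}\|_{X}+n\|v_{n}\|_{Z}$; then $\{v_{n}\}$ is bounded in $X$ while $v_{n}\to 0$ in $Z$, so the compact embedding $X\subset Y$ yields, along a subsequence, a limit $v$ in $Y$, which must vanish by the continuity of $Y\subset Z$, contradicting $\|v\|_{Y}=\lim\|v_{n}\|_{Y}=1$.

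Second, writing $u_{n}(t+h)-u_{n}(t)=\int_{t}^{t+h}\partial_{t}u_{n}(\tau)\,d\tau$ in $Z$ and regarding the right-hand side as a convolution $|\partial_{t}u_{n}|_{Z}\ast\chi_{[-h,0]}$, Young's convolution inequality gives a uniform bound
\[
\|u_{n}(\cdot+h)-u_{n}(\cdot)\|_{L^{p}([a,b-h];Z)}\le C\,h^{\beta}
\]
with some $\beta>0$; the regime $p\ge q$ follows from Young directly, while $p<q$ is handled by first estimating in $L^{q}$ and then using $L^{q}([a,b])\hookrightarrow L^{p}([a,b])$ on the bounded interval, so that $q\ge 1$ suffices. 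Applying the Ehrling inequality to the difference $u_{n}(\cdot+h)-u_{n}(\cdot)$ and integrating the $p$-th power then produces
\[
\|u_{n}(\cdot+h)-u_{n}(\cdot)\|_{L^{p}(Y)}\le \eta\,\|u_{n}(\cdot+h)-u_{n}(\cdot)\|_{L^{p}(X)}+C_{\eta}\|u_{n}(\cdot+h)-u_{n}(\cdot)\|_{L^{p}(Z)}\le 2\eta M+C_{\eta}C h^{\beta},
\]
where $M$ bounds $\|u_{n}\|_{L^{p}(X)}$; letting $h\to 0$ and then $\eta\to 0$ yields uniform smallness of the $Y$-valued time translations. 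I would then conclude by the vector-valued Fréchet--Kolmogorov criterion: the sequence is bounded in $L^{p}([a,b];Y)$ (since $X\subset Y$), its translations are uniformly small in $L^{p}(Y)$ by the previous step, and its integral averages $\int_{t_{1}}^{t_{2}}u_{n}$ are bounded in $X$ (by Hölder in time) hence relatively compact in $Y$. These are precisely the hypotheses giving relative compactness in $L^{p}([a,b];Y)$; equivalently, one mollifies in time, $u_{n}^{\varepsilon}:=\rho_{\varepsilon}\ast u_{n}$, noting that $\sup_{n}\|u_{n}-u_{n}^{\varepsilon}\|_{L^{p}(Y)}\to 0$ as $\varepsilon\to0$ by the translation estimate, while $\{u_{n}^{\varepsilon}\}_{n}$ is precompact for each fixed $\varepsilon$, whence total boundedness.

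For the stronger assertion, assume $\{u_{n}\}$ bounded in $L^{\infty}([a,b];X)$ and $\partial_{t}u_{n}$ bounded in $L^{r}([a,b];Z)$ with $r>1$. Hölder now yields genuine Hölder continuity $\|u_{n}(t)-u_{n}(s)\|_{Z}\le|t-s|^{1-1/r}\|\partial_{t}u_{n}\|_{L^{r}(Z)}$, so $\{u_{n}\}$ is equicontinuous and pointwise bounded in $C([a,b];Z)$, and the scalar Arzelà--Ascoli theorem extracts a subsequence converging in $C([a,b];Z)$. Applying Ehrling to $u_{n}-u_{m}$ and taking the supremum over $t$,
\[
\|u_{n}-u_{m}\|_{C([a,b];Y)}\le 2\eta\sup_{k}\|u_{k}\|_{L^{\infty}(X)}+C_{\eta}\|u_{n}-u_{m}\|_{C([a,b];Z)},
\]
so the $C(Z)$-Cauchy subsequence is also $C(Y)$-Cauchy (let $n,m\to\infty$, then $\eta\to0$), proving relative compactness in $C([a,b];Y)$. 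The case $p=q=r=2$ is then a direct specialization.

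The main obstacle I anticipate is the correct transfer of control from the weak space $Z$, where the time-derivative bound naturally lives, to the stronger space $Y$ where compactness is asserted: this transfer is exactly what the Ehrling inequality combined with the compactness of $X\subset Y$ accomplishes. The accompanying technical care lies in the exponent bookkeeping of the translation estimate when $p\neq q$ and in verifying the integral-average (pointwise precompactness) hypothesis required by the Fréchet--Kolmogorov criterion; once those are in place the argument proceeds routinely.
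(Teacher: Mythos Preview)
Your proposal is correct and follows the classical Aubin--Lions--Simon scheme (Ehrling interpolation plus a translation estimate, then Fr\'echet--Kolmogorov or mollification; Arzel\`a--Ascoli plus Ehrling for the $C([a,b];Y)$ case). The paper does not supply its own proof of this lemma at all---it merely cites \cite{A63,D65,L69,C15}---so your argument is in fact the standard one found in those references and there is nothing to compare.
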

\section{IM for the 2D NSEs}\label{sec3}

The main purpose of this section is to establish the existence of IMs for the 2D NSEs on $\mathbb{T}^{2}$.
\subsection{A priori estimates}\label{sec3.1}
\noindent

In this subsection, we give some a priori estimates, including $H^{3}$-estimate on the solution $v$ of stationary equation of Eqs. \eqref{2.3}, $H^{2}$-estimate on the solution $u$ of Eqs. \eqref{2.3} and $H^{9/2}$-estimate on difference between $u$ and $v$ (see \eqref{3.3}).
\subsubsection{$H^{3}$-estimate on the solution of stationary equation}\label{sec3.1.1}
\noindent

We consider the stationary problem of Eqs. \eqref{2.3}:
\begin{equation}\label{3.1}
\nu Av+B(v,v)=f.
\end{equation}
We intend to prove the $H^{3}$-regularity of the solution of the auxiliary Eqs. \eqref{3.1}.
\begin{lemma}\label{lem3.1}
Let $f\in H^{1}$. Then, there exists at least one weak solution $v\in H^{1}$ of the stationary problem \eqref{3.1}. Moreover, the solution $v$ of \eqref{3.1} belongs to $H^{3}$ such that there exist $\rho_{v}>0$  and a constant $c$ (depends only on $\nu$) satisfies the following estimate:
\begin{equation*}
\|v\|_{H^{3}}\leq \rho_{v},
\end{equation*}
where
\begin{equation*}
\rho_{v}=c\left(\|f\|^{2}_{H}+\|f\|_{H^{1}}\right).
\end{equation*}
\end{lemma}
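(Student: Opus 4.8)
The plan is to construct the weak solution by the Galerkin method and then to bootstrap its regularity from $H^{1}$ up to $H^{3}$ by testing the stationary equation \eqref{3.1} against successively higher powers of the Stokes operator $A$, keeping all estimates uniform in the Galerkin index so that the bounds survive the limit. Fix the orthonormal basis $\{e_{k}\}$ of eigenfunctions of $A$ and, for each $m$, seek $v_{m}=\sum_{k=1}^{m}c_{k}e_{k}$ solving the projected system $\nu(Av_{m},e_{k})+(B(v_{m},v_{m}),e_{k})=(f,e_{k})$ for $k=1,\dots,m$. Testing against $v_{m}$ itself and using the cancellation $(B(v_{m},v_{m}),v_{m})=0$ from \eqref{2.7} shows that any solution of this system stays in a fixed ball of the Galerkin space, so Brouwer's fixed point theorem produces $v_{m}$; the same test gives $\nu\|v_{m}\|_{H^{1}}^{2}=(f,v_{m})\le\lambda_{1}^{-1/2}\|f\|_{H}\|v_{m}\|_{H^{1}}$, whence $\|v_{m}\|_{H^{1}}\le c\|f\|_{H}$ uniformly in $m$.

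For the $H^{2}$ estimate I would test the projected equation against $Av_{m}$, which again lies in the Galerkin space. The decisive point is the two-dimensional periodic identity $(B(v_{m},v_{m}),Av_{m})=0$, valid for divergence-free fields (it follows pointwise by applying Cayley--Hamilton to the traceless gradient matrix of $v_{m}$). This removes the nonlinearity entirely and leaves $\nu\|Av_{m}\|_{H}^{2}=(f,Av_{m})\le\|f\|_{H}\|Av_{m}\|_{H}$, so that $\|v_{m}\|_{H^{2}}\le\nu^{-1}\|f\|_{H}$. It is precisely this linear-in-$\|f\|_{H}$ bound that will generate the $\|f\|_{H}^{2}$ term in $\rho_{v}$; estimating the nonlinearity via \eqref{2.5} instead would only produce a bound involving higher powers of $\|f\|_{H}$ and would spoil the clean final form.

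The heart of the argument is the $H^{3}$ estimate, obtained by testing \eqref{3.1} against $A^{2}v_{m}$. Using self-adjointness I would write $(B(v_{m},v_{m}),A^{2}v_{m})=(A^{1/2}B(v_{m},v_{m}),A^{3/2}v_{m})$ and $(f,A^{2}v_{m})=(A^{1/2}f,A^{3/2}v_{m})$, which is exactly where the hypothesis $f\in H^{1}$ enters. Since $P_{\sigma}$ is bounded on $H^{1}$, it then suffices to control $\|B(v_{m},v_{m})\|_{H^{1}}\le\|(v_{m}\cdot\nabla)v_{m}\|_{H^{1}}$; in two dimensions the embeddings $H^{2}\hookrightarrow L^{\infty}\cap W^{1,4}$ and $H^{1}\hookrightarrow L^{4}$ together with Lemma \ref{lem2.2} give $\|(v_{m}\cdot\nabla)v_{m}\|_{H^{1}}\le c\|v_{m}\|_{H^{2}}^{2}$. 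Hence $|(B(v_{m},v_{m}),A^{2}v_{m})|\le c\|v_{m}\|_{H^{2}}^{2}\|v_{m}\|_{H^{3}}$, and Young's inequality absorbs the $\|v_{m}\|_{H^{3}}$ factors into the left-hand side $\nu\|v_{m}\|_{H^{3}}^{2}$, yielding $\|v_{m}\|_{H^{3}}^{2}\le c(\|v_{m}\|_{H^{2}}^{4}+\|f\|_{H^{1}}^{2})$. Inserting $\|v_{m}\|_{H^{2}}\le\nu^{-1}\|f\|_{H}$ gives $\|v_{m}\|_{H^{3}}\le c(\|f\|_{H}^{2}+\|f\|_{H^{1}})=:\rho_{v}$, uniformly in $m$.

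Finally I would pass to the limit: the uniform $H^{3}$ bound yields a subsequence converging weakly in $H^{3}$ and, by the compact (Rellich) embedding of $H^{3}$ into $H^{2}$, strongly in $H^{2}$; this suffices to identify the limit of $B(v_{m},v_{m})$ through \eqref{2.6} and to check that the limit $v$ solves \eqref{3.1} weakly, while the estimate $\|v\|_{H^{3}}\le\rho_{v}$ is preserved by weak lower semicontinuity of the norm. The main obstacle is concentrated in the $H^{3}$ stage: one must simultaneously exploit the two-dimensional cancellation to keep the $H^{2}$ bound linear in $\|f\|_{H}$ and establish the product estimate $\|(v\cdot\nabla)v\|_{H^{1}}\le c\|v\|_{H^{2}}^{2}$, since any loss at either point would destroy the precise form of $\rho_{v}$.
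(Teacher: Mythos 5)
Your proposal is correct and follows essentially the same route as the paper: the key points — the 2D periodic cancellation $(B(v,v),Av)=0$ yielding an $H^{2}$ bound linear in $\|f\|_{H}$, followed by the product estimate $\|B(v,v)\|_{H^{1}}\leq c\|v\|_{H^{2}}^{2}$ to bootstrap to $H^{3}$ and obtain $\rho_{v}=c(\|f\|_{H}^{2}+\|f\|_{H^{1}})$ — are exactly those of the paper's proof. The only differences are cosmetic: you carry out explicitly the Galerkin construction and limit passage that the paper relegates to a remark, and you obtain the $H^{3}$ estimate variationally by testing against $A^{2}v_{m}$ rather than reading it directly off the elliptic equation $\nu Av=f-B(v,v)$.
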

\begin{proof}
Multiplying \eqref{3.1} through by $Av$ and integrate over $\mathbb{T}^{2}$, we have
\begin{equation*}
\nu\|v\|^{2}_{H^{2}}+(B(v,v),Av)=(f,Av)
\leq \frac{2}{\nu}\|f\|^{2}_{H}+\frac{\nu}{2}\|v\|^{2}_{H^{2}}.
\end{equation*}
Then, thanks to $(B(v,v),Av)=0$ in $\mathbb{T}^{2}$ (see, e.g. \cite{R01}), we obtain
\begin{equation*}
\|v\|^{2}_{H^{2}}\leq\frac{4}{\nu^{2}}\|f\|^{2}_{H}.
\end{equation*}
Thus, combining this with the equality \eqref{3.1}, we estimate
\begin{equation*}
\nu\|Av\|_{H^{1}}\leq\|B(v,v)\|_{H^{1}}+\|f\|_{H^{1}}
\end{equation*}
and
\begin{equation*}
\|B(v,v)\|_{H^{1}}\leq \bar{c}\|v\|_{L^{\infty}}\|\nabla \cdot v\|_{H^{1}}\leq C\|v\|^{2}_{H^{2}}
\leq C_{\nu}\|f\|^{2}_{H},
\end{equation*}
where $\bar{c}$ is the embedding constant of $H^{2}(\mathbb{T}^{2})\hookrightarrow L^{\infty}(\mathbb{T}^{2})$. This implies that
\begin{equation*}
\|v\|_{H^{3}}\leq C_{\nu}(\|f\|^{2}_{H}+\|f\|_{H^{1}}).
\end{equation*}
\end{proof}
\begin{remark}
What we have done in the proof of this lemma is a formal calculation, which can be made rigorous using a Galerkin truncation, e.g., see \cite{T95,R01}. We will perform a formal calculation in the rest parts of this paper as well.
\end{remark}
\subsubsection{$H^{2}$-estimate on the solution of Eqs. (\ref{2.3})}\label{sec3.1.2}
\begin{theorem}\label{thm3.2}
Let $f\in H$ and $u_{0}\in H$. If $u(t)$ is a weak solution of \eqref{1.1}, then there exists  $t_{0}:=t_{0}(\|u_{0}\|_{H})$  such that
\begin{equation*}
\|u(t)\|_{H^{s}}\leq \rho_{s},~~\forall~t\geq t_{0},~~s=1,2,
\end{equation*}
where $\rho_{s}$ depend only on $\nu$ and $\|f\|_{H}$.
\end{theorem}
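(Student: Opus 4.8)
This is the standard absorbing-ball argument for the 2D Navier-Stokes equations, so the plan is to obtain the two estimates in sequence via energy methods and the uniform Gronwall lemma (Lemma 2.7). First I would establish the $H$-estimate (i.e. $s=0$, which is needed to bootstrap): multiply \eqref{2.3} by $u$ in $H$, use the orthogonality \eqref{2.7}, namely $(B(u,u),u)=0$, to kill the nonlinear term, and get $\frac{1}{2}\frac{d}{dt}\|u\|_{H}^{2}+\nu\|u\|_{H^{1}}^{2}=(f,u)$. Since on $H$ the Poincar\'e inequality gives $\|u\|_{H}^{2}\leq\lambda_{1}^{-1}\|u\|_{H^{1}}^{2}$, I would bound the right-hand side by Cauchy-Schwarz and Young to absorb a $\frac{\nu}{2}\|u\|_{H^{1}}^2$ term, producing a differential inequality $\frac{d}{dt}\|u\|_{H}^{2}+\nu\lambda_{1}\|u\|_{H}^{2}\leq C\nu^{-1}\|f\|_{H}^{2}$. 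Gronwall then yields an absorbing ball in $H$ after some time $t_{0}(\|u_{0}\|_{H})$, giving $\|u(t)\|_{H}\le\rho_0$; integrating the same inequality over $[t,t+1]$ also furnishes the crucial time-averaged bound $\int_{t}^{t+1}\|u(\tau)\|_{H^{1}}^{2}\,d\tau\le C$ for $t\ge t_0$.

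Next, for the $H^{1}$ estimate ($s=1$) I would multiply \eqref{2.3} by $Au$ in $H$ to obtain $\frac{1}{2}\frac{d}{dt}\|u\|_{H^{1}}^{2}+\nu\|u\|_{H^{2}}^{2}+(B(u,u),Au)=(f,Au)$. The nonlinear term is now genuinely present; I would estimate $|(B(u,u),Au)|$ using the interpolation-type bound \eqref{2.5} (with $v=u$), giving something like $c\|u\|_{H^{1}}^{3/2}\|u\|_{H^{2}}^{3/2}$, and then apply Young's inequality to split off $\frac{\nu}{2}\|u\|_{H^{2}}^{2}$ and absorb it into the dissipation, leaving a polynomial-in-$\|u\|_{H^1}$ factor multiplying $\|u\|_{H^1}^2$. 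This puts the inequality into the form $\frac{d}{dt}\|u\|_{H^{1}}^{2}\le \Psi\,\|u\|_{H^{1}}^{2}+h$ with $\Psi\sim c\|u\|_{H^{1}}^{2}$, which is exactly the hypothesis of Lemma 2.7; the time-averaged control $\int_{t}^{t+1}\|u\|_{H^1}^2\,d\tau\le C$ from the previous step makes $\int_t^{t+1}\Psi\,d\tau$ finite and uniformly bounded, so the uniform Gronwall lemma delivers a uniform bound $\|u(t)\|_{H^{1}}\le\rho_{1}$ for all $t\ge t_0$, with $\rho_1$ depending only on $\nu$ and $\|f\|_H$.

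Finally, the $H^{2}$ estimate ($s=2$) is obtained by one further bootstrap: once $\|u\|_{H^1}$ is uniformly bounded, I would return to the $H^1$-level differential inequality and integrate it in time over $[t,t+1]$ to extract the time-averaged bound $\int_{t}^{t+1}\|u(\tau)\|_{H^{2}}^{2}\,d\tau\le C$, then differentiate the equation or test against $A^{2}u$ to control $\frac{d}{dt}\|u\|_{H^2}^2$, again estimating the nonlinearity by \eqref{2.5}/\eqref{2.6} and applying Lemma 2.7 in the same manner to turn the time-averaged $H^2$ bound into a uniform-in-time $H^2$ bound $\|u(t)\|_{H^{2}}\le\rho_2$. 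The main obstacle throughout is the nonlinear term $(B(u,u),\cdot)$: the whole argument hinges on the two-dimensionality, which is what makes the interpolation inequality \eqref{2.5} strong enough for Young's inequality to absorb the top-order derivative into the viscous dissipation while leaving only a lower-order factor of the form $\|u\|_{H^1}^{2}$, whose time integral is already controlled from the preceding energy level. (In three dimensions this absorption fails, which is precisely why the authors switch to the hyperviscous index $\theta=5/4$.) I expect the bookkeeping of constants $\rho_s$ and the verification that each $\Psi$ is time-integrable to be routine once the correct interpolation exponents are chosen.
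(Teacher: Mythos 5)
Your overall strategy (an $L^2$ energy estimate, then uniform Gronwall at the $H^{1}$ and $H^{2}$ levels) is the standard textbook argument; note that the paper itself does not write out a proof of Theorem \ref{thm3.2} but simply cites \cite[p.~313]{R01}, which proceeds along exactly these lines. However, your execution of the $H^{1}$ step has a genuine gap. Applying \eqref{2.5} with $u=v$ gives $|(B(u,u),Au)|\le c\|u\|_{H^{1}}^{3/2}\|Au\|_{H}^{3/2}$, and Young's inequality (with exponents $4$ and $4/3$) then leaves the term $C_{\nu}\|u\|_{H^{1}}^{6}$, i.e.\ $\Psi\sim C_{\nu}\|u\|_{H^{1}}^{4}$ --- not $\Psi\sim c\|u\|_{H^{1}}^{2}$ as you assert. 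At that stage of the bootstrap the only time-averaged information available is $\int_{t}^{t+1}\|u\|_{H^{1}}^{2}\,d\tau\le C$, which does not control $\int_{t}^{t+1}\|u\|_{H^{1}}^{4}\,d\tau$, so the integrability hypothesis of Lemma \ref{lem2.7} is not verified and the uniform Gronwall step collapses. The fix is classical and is in fact used by the paper elsewhere: on $\mathbb{T}^{2}$ one has the identity $(B(u,u),Au)=0$ (invoked in the proof of Lemma \ref{lem3.1}), so the nonlinear term vanishes outright and one gets $\frac{d}{dt}\|u\|_{H^{1}}^{2}+\nu\|u\|_{H^{2}}^{2}\le \nu^{-1}\|f\|_{H}^{2}$; alternatively (and necessarily for non-periodic boundary conditions) one uses the Agmon-type bound $|(B(u,u),Au)|\le c\|u\|_{H}^{1/2}\|u\|_{H^{1}}\|Au\|_{H}^{3/2}$, which after Young yields $\Psi\sim C_{\nu}\|u\|_{H}^{2}\|u\|_{H^{1}}^{2}$, and this \emph{is} uniformly integrable thanks to the $L^{2}$ absorbing ball combined with the time-averaged $H^{1}$ bound.

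A second, smaller issue concerns the $H^{2}$ step. Since $f\in H$ only, testing \eqref{2.3} with $A^{2}u$ does not close: the forcing term $(f,A^{2}u)$ cannot be absorbed by the dissipation $\nu\|u\|_{H^{3}}^{2}$ (one would need $f\in H^{1}$ to write it as $(A^{1/2}f,A^{3/2}u)$). Of the two alternatives you hedge between, only the other one works: differentiate the equation in time, bound $\|\partial_{t}u\|_{H}$ by uniform Gronwall (using $(B(u,\partial_{t}u),\partial_{t}u)=0$ together with the already-established bounds and the time-averaged bound on $\int_{t}^{t+1}\|\partial_{t}u\|_{H}^{2}\,d\tau$ extracted from the equation), and then read the $H^{2}$ bound off the equation itself via $\nu\|Au\|_{H}\le \|\partial_{t}u\|_{H}+\|B(u,u)\|_{H}+\|f\|_{H}$, absorbing $\|B(u,u)\|_{H}\le c\|u\|_{H}^{1/2}\|Au\|_{H}^{1/2}\|u\|_{H^{1}}$ into the left-hand side. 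This elliptic-regularity-from-the-equation route is precisely the mechanism the paper uses for its own higher-order estimates in Remark \ref{rem3.3} and Theorem \ref{thm3.4}.
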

\begin{proof}
This proof can be found in the page 313 of \cite{R01}.
\end{proof}
\begin{remark}\label{rem3.3}
It follows from Theorem \ref{thm3.2} that there exists $\bar{\rho}_{0}>0$ such that
\begin{equation}\label{3.2}
\|\partial_{t}u(t)\|_{H}\leq \bar{\rho}_{0},
~~~~\int^{t+1}_{t}\|\partial_{t}u(s)\|^{2}_{H^{1}}ds\leq C_{\nu}\rho^{2}_{2}\bar{\rho}^{2}_{0},~~\forall t\geq t_{0}+1.
\end{equation}
Indeed, differentiating Eqs. (\ref{2.3}) with respect to $t$  and taking the inner product with $\partial_{t}u$, we obtain
\begin{equation*}
\frac{1}{2}\frac{d}{dt}\|\partial_{t}u\|^{2}_{H}+\nu\|\partial_{t}u\|^{2}_{H^{1}}
+\left(B(\partial_{t}u,u),\partial_{t}u\right)+\left(B(u,\partial_{t}u),\partial_{t}u\right)=0.
\end{equation*}
Since $\left(B(u,\partial_{t}u),\partial_{t}u\right)=0$ and thanks to the Young's inequality
\begin{align*}
\left|\left(B(\partial_{t}u,u),\partial_{t}u\right)\right|
\leq& \frac{\nu}{2\rho^{2}_{2}}\|B(\partial_{t}u,u)\|^{2}_{H}+\frac{2\rho^{2}_{2}}{\nu}\|\partial_{t}u\|^{2}_{H}\nonumber\\
\leq& \frac{\nu}{2\rho^{2}_{2}}\|u\|^{2}_{L^{\infty}}\|\partial_{t}u\|^{2}_{H^{1}}
+\frac{2\rho^{2}_{2}}{\nu}\|\partial_{t}u\|^{2}_{H}\nonumber\\
\leq& \frac{\nu}{2}\|\partial_{t}u\|^{2}_{H^{1}}+\frac{2\rho^{2}_{2}}{\nu}\|\partial_{t}u\|^{2}_{H},~~\forall~t\geq t_{0}.
\end{align*}
Then
\begin{equation*}
\frac{d}{dt}\|\partial_{t}u\|^{2}_{H}+\nu\|\partial_{t}u\|^{2}_{H^{1}}
\leq \frac{2\rho^{2}_{2}}{\nu}\|\partial_{t}u\|^{2}_{H},~~\forall t\geq t_{0},
\end{equation*}
which gives the desired result thanks to the uniform Gronwall Lemma \ref{lem2.7}.
\end{remark}
\subsubsection{Asymptotic regularity: $H^{9/2}$-estimate}\label{sec3.1.3}
\noindent

In order to construct an IM for \eqref{2.3} in the subsequent subsections, here we need to prove the existence of an absorbing set in $H^{9/2}$ which is extremely crucial for our technique of construction---we modify the nonlinearity outside the absorbing ball and demonstrate that, in particular,  the modified nonlinearity satisfies the global Lipschitz continuity with some Lipschitz constant (see Theorem \ref{thm3.10} below). Proverbially, the regularity of the solution $u(t)$ is restricted by the regularity of the external force $f$. We emphasize that despite the fact that we cannot expect more regularity than $H^{3}$ because we just assume that the external force $f$ belongs to $H^{1}$, in this paper we can employ the so-called asymptotic regularity method (see, e.g., \cite{S10}) to deal with the difficulty and obtain the desired regularity of solution.  Specifically, let us consider equation on
\begin{equation}\label{3.3}
w:=u-v,
\end{equation}
where $u$ solves problem \eqref{1.1} and $v$ solves the stationary problem \eqref{3.1}, that is, $v$ is the stationary point of \eqref{1.1} (there may be several stationary points of this problem, we just fix one of them). Combining the Eqs. \eqref{2.3} with \eqref{3.1}, we have
\begin{equation}\label{3.4}
\begin{cases}
\nu Aw+B(w,w)+B(v,w)+B(w,v)=-\partial_{t}u,\\
w(0)=w_{0}=u_{0}-v.
\end{cases}
\end{equation}
Since removing a stationary point does not affect its long time behavior, we will study the solution $w$ and it's attractor of the equation \eqref{3.4} instead of the equation \eqref{2.3}. The advantage of this approach is that the solution of \eqref{3.4} is more regular. Indeed, as we will see from the following theorem, the solution $w$ will be at least in $H^{9/2}$ even if $f \in H^{1}$ only. We point out that the difficulty for employing this method is that we need to deal with more complicated nonlinear terms when we construct an IM for \eqref{3.4} in Subsection \ref{sec3.2.2}.
\begin{theorem}\label{thm3.4}
Assume that $f\in H^{1}$. Let $w(t)$ be a solution of \eqref{3.4} with $u_{0}\in H$. Then there exist $t'_{0}=t'_{0}\left(\|u_{0}\|_{H}\right)\geq t_{0}$ and a constant $k$ (depends only on $\nu$) such that
\begin{equation*}
\|\partial_{t}w(t)\|_{H^{s}}=\|\partial_{t}u(t)\|_{H^{s}}\leq \bar{\rho}_{s},~~\forall~t\geq t'_{0}+1,~~s=1,2~\text{or}~5/2,
\end{equation*}
\begin{equation}\label{3.5}
\|w(t)\|_{H^{9/2}}\leq\varrho_{2},~~\forall~t\geq t'_{0}+1,
\end{equation}
where $\bar{\rho}_{s}$ and $\varrho_{2}$ are given by the following formula:
\begin{equation*}
\bar{\rho}^{2}_{1}=k\exp\left\{\rho^{4}_{1}+\rho^{2}_{2}\right\}
\cdot\left\{\rho^{2}_{2}\bar{\rho}^{2}_{0}\right\},
\end{equation*}
\begin{equation*}
\bar{\rho}^{2}_{2}=k\exp\left\{\rho^{4}_{1}+\rho^{2}_{2}\right\}
\left\{\left(1+\rho^{4}_{1}+\rho^{2}_{2}\right)\bar{\rho}^{2}_{1}+\bar{\rho}^{2}_{1}\rho_{2}\rho_{3}\right\},
\end{equation*}
\begin{equation*}
\bar{\rho}^{2}_{5/2}=k\exp\left\{\rho^{4}_{1}+\rho^{2}_{2}\right\}
\left\{\left(1+\rho^{4}_{1}+\rho^{2}_{2}\right)\bar{\rho}^{2}_{2}
+\bar{\rho}^{2}_{1}\rho_{2}\rho_{3}+(\bar{\rho}_{1}+\bar{\rho}_{2})\bar{\rho}_{2}\rho^{2}_{3}\right\},
\end{equation*}
\begin{equation*}
\varrho_{2}=k\left[\left(\rho_{v}+\rho_{3}\right)^{2}+\bar{\rho}_{5/2}\right],
\end{equation*}
where $\rho_{3}=k\left(\bar{\rho}_{1}+\rho^{2}_{2}+\|f\|_{H^{1}}\right)$; $\rho_{v},\rho_{1},\rho_{2}$ were given in Lemma \ref{lem3.1} and Theorem \ref{thm3.2}, respectively.
\end{theorem}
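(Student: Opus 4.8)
The plan is to reduce the whole theorem to estimates on $\psi:=\partial_{t}u$, using that $\partial_{t}w=\partial_{t}u$ because $v$ is a fixed (time-independent) stationary solution. Differentiating \eqref{2.3} in time gives the equation, linear in $\psi$,
\[
\partial_{t}\psi+\nu A\psi+B(\psi,u)+B(u,\psi)=0 ,
\]
and I would run an energy cascade on it: testing successively with $A\psi$, $A^{2}\psi$ and $A^{5/2}\psi$ produces, on the left, $\tfrac12\tfrac{d}{dt}\|\psi\|_{H^{s}}^{2}$ together with the dissipation $\nu\|\psi\|_{H^{s+1}}^{2}$ for $s=1,2,5/2$. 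The two nonlinear terms are moved to the right and split by self-adjointness of $A$ (e.g. $(B(\psi,u),A^{2}\psi)=(A^{1/2}B(\psi,u),A^{3/2}\psi)$), then controlled by the trilinear bounds of Lemma \ref{lem2.1} and the interpolation Lemma \ref{lem2.2}; in each term the highest-order factor is absorbed into the dissipation by Young's inequality, leaving a coefficient built only from the already-controlled $\rho_{1},\rho_{2},\rho_{3}$ and the lower-order $\bar{\rho}_{0},\bar{\rho}_{1},\bar{\rho}_{2}$. Every resulting differential inequality of the form $\tfrac{d}{dt}\|\psi\|_{H^{s}}^{2}\le\Psi\,\|\psi\|_{H^{s}}^{2}+h$ is closed by the uniform Gronwall Lemma \ref{lem2.7}, where the needed integral bound on $\int_{t}^{t+1}\|\psi\|_{H^{s}}^{2}$ is exactly the dissipation integral produced one level below, the whole chain being primed by \eqref{3.2} in Remark \ref{rem3.3}.

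The ordering of these estimates is essential. The $H^{1}$-bound $\bar{\rho}_{1}$ is obtained first, using only $\rho_{1},\rho_{2}$ and the integral control of $\|\psi\|_{H^{1}}^{2}$ from \eqref{3.2}. I would then extract the missing $H^{3}$-bound for $u$ by reading \eqref{2.3} as the elliptic identity $\nu Au=f-\partial_{t}u-B(u,u)$: since $\|\partial_{t}u\|_{H^{1}}\le\bar{\rho}_{1}$ and $\|B(u,u)\|_{H^{1}}\le C\|u\|_{H^{2}}^{2}$ (exactly as in Lemma \ref{lem3.1}), one gets $\|u\|_{H^{3}}\le\rho_{3}=k(\bar{\rho}_{1}+\rho_{2}^{2}+\|f\|_{H^{1}})$. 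This $\rho_{3}$ is precisely what the $H^{2}$-estimate for $\psi$ requires (and is the source of the cross term $\rho_{2}\rho_{3}$ appearing in $\bar{\rho}_{2}^{2}$), and $\bar{\rho}_{2}$ in turn feeds the fractional $H^{5/2}$-estimate for $\psi$, giving $\bar{\rho}_{5/2}$.

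With $\|\partial_{t}u\|_{H^{5/2}}\le\bar{\rho}_{5/2}$ in hand, the spatial estimate \eqref{3.5} follows by a short elliptic bootstrap on \eqref{3.4} rewritten as
\[
\nu Aw=-\partial_{t}u-B(w,w)-B(v,w)-B(w,v).
\]
Since $u,v\in H^{3}$ one has $w\in H^{3}$ with $\|w\|_{H^{3}}\le\rho_{v}+\rho_{3}$; the quadratic terms then land in $H^{2}$, so the right-hand side lies in $H^{2}$ and $A$ gains two derivatives to give $w\in H^{4}$. A second pass, now limited only by the regularity $H^{5/2}$ of $\partial_{t}u$, places the right-hand side in $H^{5/2}$ and hence $w\in H^{9/2}$, with the bound $\varrho_{2}$ assembled from $(\rho_{v}+\rho_{3})^{2}$ (the bilinear terms, estimated quadratically in the $H^{3}$-norm of $w$ after interpolating the top-order factor against $\|w\|_{H^{9/2}}$ and absorbing it) and from $\bar{\rho}_{5/2}$ (the forcing $\partial_{t}u$).

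I expect the genuine difficulty to lie at the fractional level. Both the $H^{5/2}$ energy estimate for $\psi$ and the final bootstrap to $H^{9/2}$ require controlling products such as $\psi\cdot\nabla u$ and $w\cdot\nabla w$ in fractional Sobolev norms, where the integer-order bounds of Lemma \ref{lem2.1} no longer apply directly; one must instead invoke fractional Leibniz (Kato--Ponce) and commutator estimates, combined with a carefully tuned interpolation (Lemma \ref{lem2.2}) so that the top-order derivative is absorbed into $\nu\|\psi\|_{H^{s+1}}^{2}$ (respectively into $\nu\|w\|_{H^{9/2}}$) while the remainder stays quadratic in the previously established bounds. Keeping this absorption clean, and thereby ensuring each Gronwall coefficient depends only on lower-order data, is the crux of the argument.
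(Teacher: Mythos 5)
Your proposal follows essentially the same route as the paper's proof: differentiate \eqref{2.3} in time, run the energy cascade on $\psi=\partial_{t}u$ at the levels $H^{1}$, $H^{2}$, $H^{5/2}$ (each closed by the uniform Gronwall Lemma \ref{lem2.7}, primed by \eqref{3.2}, with the needed integral bounds coming from the dissipation one level below), extract $\|u\|_{H^{3}}\leq\rho_{3}$ from the elliptic identity $\nu Au=f-\partial_{t}u-B(u,u)$, and finish with an elliptic reading of \eqref{3.4} to reach $H^{9/2}$. The only cosmetic differences are that the paper reaches $H^{9/2}$ in a single elliptic pass, bounding $\|B(w,w)\|_{H^{5/2}}$ by $\|w\|_{H^{5/2}}\|\nabla\cdot w\|_{L^{\infty}}$, rather than via your two-pass bootstrap $H^{3}\rightarrow H^{4}\rightarrow H^{9/2}$, and that it handles the fractional levels by formally distributing $A^{1/2}$, $A^{3/4}$ across the bilinear form where you would invoke Kato--Ponce explicitly -- if anything, your version is the more careful one on both points.
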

\begin{proof}
We will continue to estimate the uniform boundedness in more regular space until we get $H^{9/2}$-estimate. However, multiplying (\ref{2.3}) by $A^{k}u$ ($k>3$) can not obtain $H^{k}$ estimate since we assume that $f\in H^{1}$ only. To achieve the desired result, as that in \cite{R01,T95,T97,CHOT05} we set $\psi=\partial_{t}u$ and differentiate Eqs. (\ref{2.3}) with respect to $t$ to leads to the following equation.
\begin{equation}\label{3.6}
\partial_{t}\psi+\nu A\psi+B(\psi,u)+B(u,\psi)=0.
\end{equation}
\par
\textbf{Firstly, we give the $H^{1}$-estimate of $\partial_{t}u$.} Taking the inner product of (\ref{3.6}) with $A\psi$, we obtain
\begin{equation}\label{3.7}
\frac{1}{2}\frac{d}{dt}\|\psi\|^{2}_{H^{1}}+\nu\|\psi\|^{2}_{H^{2}}+(B(\psi,u),A\psi)+(B(u,\psi),A\psi)=0.
\end{equation}
Thanks to \eqref{2.5}, we can estimate
\begin{align}\label{3.8}
\left|(B(\psi,u),A\psi)\right|
\leq  c\|\psi\|_{H^{1}}\|u\|^{1/2}_{H^{1}}\|Au\|^{1/2}_{H}\|A\psi\|_{H}
\leq C_{\nu}\|u\|_{H^{1}}\|u\|_{H^{2}}\|\psi\|^{2}_{H^{1}}+\frac{\nu}{4}\|\psi\|^{2}_{H^{2}}.
\end{align}
Analogously,
\begin{align}\label{3.9}
\left|(B(u,\psi),A\psi)\right|
\leq  c\|u\|_{H^{1}}\|\psi\|^{1/2}_{H^{1}}\|A\psi\|^{3/2}_{H}
\leq C_{\nu}\|u\|^{4}_{H^{1}}\|\psi\|^{2}_{H^{1}}+\frac{\nu}{4}\|A\psi\|^{2}_{H}.
\end{align}
Substituting (\ref{3.8}) and (\ref{3.9}) into (\ref{3.7}), we deduce
\begin{equation}\label{3.10}
\frac{d}{dt}\|\psi\|^{2}_{H^{1}}+\nu\|\psi\|^{2}_{H^{2}}
\leq C_{\nu}(\|u\|^{4}_{H^{1}}+\|u\|^{2}_{H^{2}})\|\psi\|^{2}_{H^{1}}.
\end{equation}
On the other hand, recalling the estimate \eqref{3.2} that
\begin{align*}
\int^{t+1}_{t}\|\psi(s)\|^{2}_{H^{1}}ds
\leq C_{\nu}\rho^{2}_{2}\bar{\rho}^{2}_{0},~~\forall~t\geq t_{0}+1.
\end{align*}

Substituting above estimates into (\ref{3.10}) and applying Gronwall-type Lemma \ref{lem2.7}, we know that there exists $t'_{0}=t'_{0}\left(\|u_{0}\|_{H}\right)\geq t_{0}$ such that
\begin{align}\label{3.11}
\|\psi\|^{2}_{H^{1}}
\leq& \exp\left\{C_{\nu}(\rho^{4}_{1}+\rho^{2}_{2})\right\}
\cdot\left\{C_{\nu}\rho^{2}_{2}\bar{\rho}^{2}_{0}\right\}:=\bar{\rho}^{2}_{1},~~\forall~t\geq t'_{0}+1.
\end{align}
\par
As a result, applying the assumption $f\in H^{1}$, the Eqs. \eqref{2.3} and the estimate \eqref{3.11}, we obtain
\begin{align}\label{3.12}
\|u\|_{H^{3}}=\|Au\|_{H^{1}}
\leq& \frac{1}{\nu}\left(\|\partial_{t}u\|_{H^{1}}+\|B(u,u)\|_{H^{1}}+\|f\|_{H^{1}}\right)\nonumber\\
\leq& C_{\nu}\left(\bar{\rho}_{1}+\rho^{2}_{2}+\|f\|_{H^{1}}\right)
:=\rho_{3},~~\forall~t\geq t'_{0}+1.
\end{align}
\par
\textbf{Now we ready to estimate $\|\psi\|_{H^{2}}$.} To do this, multiplying (\ref{3.6}) by $A^{2}\psi$ and integrating over $x\in\mathbb{T}^{2}$, we get
\begin{equation}\label{3.13}
\frac{1}{2}\frac{d}{dt}\|\psi\|^{2}_{H^{2}}+\nu\|\psi\|^{2}_{H^{3}}+(B(\psi,u),A^{2}\psi)
+(B(u,\psi),A^{2}\psi)=0.
\end{equation}
We can estimate
\begin{align}\label{3.14}
\left|(B(\psi,u),A^{2}\psi)\right|
\leq &|(B(A^{1/2}\psi,u),A^{3/2}\psi)|+|(B(\psi,A^{1/2}u),A^{3/2}\psi)|\nonumber\\
\leq &c\left(\|A^{1/2}\psi\|_{H^{1}}\|u\|^{1/2}_{H^{1}}\|Au\|^{1/2}_{H}
+\|\psi\|_{H^{1}}\|A^{1/2}u\|^{1/2}_{H^{1}}\|A^{3/2}u\|^{1/2}_{H}\right)\|A^{3/2}\psi\|_{H}\nonumber\\
\leq &C_{\nu}\left(\|\psi\|^{2}_{H^{2}}\|u\|_{H^{1}}\|u\|_{H^{2}}+\|\psi\|^{2}_{H^{1}}\|u\|_{H^{2}}\|u\|_{H^{3}}\right)
+\frac{\nu}{4}\|\psi\|^{2}_{H^{3}}
\end{align}
and, similarly,
\begin{align}\label{3.15}
\left|(B(u,\psi),A^{2}\psi)\right|
\leq &|(B(A^{1/2}u,\psi),A^{3/2}\psi)|+|(B(u,A^{1/2}\psi),A^{3/2}\psi)|\nonumber\\
\leq &c\left(\|A^{1/2}u\|_{H^{1}}\|\psi\|^{1/2}_{H^{1}}\|A\psi\|^{1/2}_{H}
+\|u\|_{H^{1}}\|A^{1/2}\psi\|^{1/2}_{H^{1}}\|A^{3/2}\psi\|^{1/2}_{H}\right)\|A^{3/2}\psi\|_{H}\nonumber\\
\leq &C_{\nu}\left(\|u\|^{2}_{H^{2}}+\|u\|^{4}_{H^{1}}\right)\|\psi\|^{2}_{H^{2}}
+\frac{\nu}{4}\|\psi\|^{2}_{H^{3}},
\end{align}
here we have used \eqref{2.5} and H\"{o}lder inequality with exponent $(\frac{1}{4},\frac{3}{4})$. Substituting (\ref{3.14}) and (\ref{3.15}) into (\ref{3.13}) leads to the following differential inequality
\begin{align}\label{3.16}
\frac{d}{dt}\|\psi\|^{2}_{H^{2}}+\nu\|\psi\|^{2}_{H^{3}}
\leq &C_{\nu}\left(\|u\|_{H^{1}}\|u\|_{H^{2}}+\|u\|^{2}_{H^{2}}+\|u\|^{4}_{H^{1}}\right)\|\psi\|^{2}_{H^{2}}
+\|\psi\|^{2}_{H^{1}}\|u\|_{H^{2}}\|u\|_{H^{3}}\nonumber\\
\leq &C_{\nu}(\rho^{4}_{1}+\rho^{2}_{2})\|\psi\|^{2}_{H^{2}}+\bar{\rho}^{2}_{1}\rho_{2}\rho_{3},~~\forall~t\geq t'_{0}+1,
\end{align}
here we have used the result \eqref{3.11} and (\ref{3.12}). Besides, by (\ref{3.10}) and (\ref{3.11}), we know that
\begin{align}\label{3.17}
\int^{t+1}_{t}\|\psi(s)\|^{2}_{H^{2}}ds
\leq C_{\nu}\left(1+\rho^{4}_{1}+\rho^{2}_{2}\right)\bar{\rho}^{2}_{1},~~\forall~t\geq t'_{0}+1.
\end{align}
Combining (\ref{3.16}) with \eqref{3.17} and applying Gronwall-type Lemma \ref{lem2.7}, we deduce
\begin{align*}
\|\psi\|^{2}_{H^{2}}
\leq C_{\nu}\exp\left\{C_{\nu}(\rho^{4}_{1}+\rho^{2}_{2})\right\}
\left[\left(1+\rho^{4}_{1}+\rho^{2}_{2}\right)\bar{\rho}^{2}_{1}+\bar{\rho}^{2}_{1}\rho_{2}\rho_{3}\right]
:=\bar{\rho}^{2}_{2},~~\forall~t\geq t'_{0}+1.
\end{align*}
\par
\textbf{Furthermore, we give the estimate $\|\psi\|_{H^{5/2}}$.} Thus, we multiply (\ref{3.6}) by $A^{5/2}\psi$ and integrating over $x\in\mathbb{T}^{2}$ to obtain
\begin{equation*}
\frac{1}{2}\frac{d}{dt}\|\psi\|^{2}_{H^{5/2}}+\nu\|\psi\|^{2}_{H^{7/2}}+(B(\psi,u),A^{5/2}\psi)
+(B(u,\psi),A^{5/2}\psi)=0.
\end{equation*}
We deal with the nonlinear terms as follows:
\begin{align*}
\left|(B(\psi,u),A^{5/2}\psi)+(B(u,\psi),A^{5/2}\psi)\right|
\leq &|(B(A^{3/4}\psi,u),A^{7/4}\psi)|+|(B(\psi,A^{3/4}u),A^{7/4}\psi)|\nonumber\\
&+|(B(A^{3/4}u,\psi),A^{7/4}\psi)|+|(B(u,A^{3/4}\psi),A^{7/4}\psi)|.
\end{align*}
In addition, \eqref{2.5} and H\"{o}lder inequality with exponent $(\frac{1}{4},\frac{3}{4})$ give us that
\begin{align*}
&|(B(A^{3/4}\psi,u),A^{7/4}\psi)|
+|(B(A^{3/4}u,\psi),A^{7/4}\psi)|+|(B(u,A^{3/4}\psi),A^{7/4}\psi)|\nonumber\\
\leq &c\left(\|A^{3/4}\psi\|_{H^{1}}\|u\|^{1/2}_{H^{1}}\|Au\|^{1/2}_{H}
+\|A^{3/4}u\|_{H^{1}}\|\psi\|^{1/2}_{H^{1}}\|A\psi\|^{1/2}_{H}\right)\|A^{7/4}\psi\|_{H}\nonumber\\
&+c\left(
\|u\|_{H^{1}}\|A^{3/4}\psi\|^{1/2}_{H^{1}}\|A^{7/4}\psi\|^{1/2}_{H}\right)\|A^{7/4}\psi\|_{H}\nonumber\\
\leq &C_{\nu}\left[\left(\|u\|_{H^{1}}\|u\|_{H^{2}}+\|u\|^{4}_{H^{1}}\right)\|\psi\|^{2}_{H^{5/2}}
+\|\psi\|_{H^{1}}\|\psi\|_{H^{2}}\|u\|^{2}_{H^{5/2}}\right]+\frac{\nu}{4}\|\psi\|^{2}_{H^{7/2}}.
\end{align*}
Moreover, we can estimate
\begin{align*}
|(B(\psi,A^{3/4}u),A^{7/4}\psi)|
\leq& c\|B(\psi,A^{3/4}u)\|_{H}\|A^{7/4}\psi\|_{H}
\leq C\|\psi\|_{L^{\infty}}\|A^{3/4}(\nabla\cdot u)\|_{H}\|A^{7/4}\psi\|_{H}\nonumber\\
\leq& c\|\psi\|_{H^{2}}\|A^{3/4}u\|_{H^{1}}\|A^{7/4}\psi\|_{H}
\leq C_{\nu}\|\psi\|^{2}_{H^{2}}\|u\|^{2}_{H^{5/2}}+\frac{\nu}{4}\|\psi\|^{2}_{H^{7/2}}.
\end{align*}
Then
\begin{align*}
&\frac{d}{dt}\|\psi\|^{2}_{H^{5/2}}+\nu\|\psi\|^{2}_{H^{7/2}}\nonumber\\
\leq &C_{\nu}\left[\left(\|u\|_{H^{1}}\|u\|_{H^{2}}+\|u\|^{4}_{H^{1}}\right)\|\psi\|^{2}_{H^{5/2}}
+(\|\psi\|_{H^{1}}+\|\psi\|_{H^{2}})\|\psi\|_{H^{2}}\|u\|^{2}_{H^{5/2}}\right]\nonumber\\
\leq &C_{\nu}\left[(\rho^{4}_{1}+\rho^{2}_{2})\|\psi\|^{2}_{H^{5/2}}
+(\bar{\rho}_{1}+\bar{\rho}_{2})\bar{\rho}_{2}\rho^{2}_{5/2}\right].
\end{align*}

By \eqref{3.16} and the uniform estimate on $\|\psi\|_{H^{2}}$, we see that
\begin{align*}
\int^{t+1}_{t}\|\psi(s)\|^{2}_{H^{5/2}}ds\leq\int^{t+1}_{t}\|\psi(s)\|^{2}_{H^{3}}ds
\leq C_{\nu}\left(1+\rho^{4}_{1}+\rho^{2}_{2}\right)\bar{\rho}^{2}_{2}
+\bar{\rho}^{2}_{1}\rho_{2}\rho_{3},~~\forall~t\geq t'_{0}+1.
\end{align*}
Consequently, the Gronwall lemma gives us that
\begin{align*}
\|\psi\|^{2}_{H^{5/2}}
\leq C_{\nu}\exp\left\{C_{\nu}(\rho^{4}_{1}+\rho^{2}_{2})\right\}
\left[\left(1+\rho^{4}_{1}+\rho^{2}_{2}\right)\bar{\rho}^{2}_{2}
+\bar{\rho}^{2}_{1}\rho_{2}\rho_{3}+(\bar{\rho}_{1}+\bar{\rho}_{2})\bar{\rho}_{2}\rho^{2}_{3}\right]
:=\bar{\rho}^{2}_{5/2}
\end{align*}
for any $t\geq t'_{0}+1$.

\textbf{Finally, we proof the desired result, that is, the uniform boundedness of $w$ in the $H^{\frac{9}{2}}$ norm.} Indeed, by the Lemma \ref{lem3.1} and the estimate \eqref{3.12}, we see that
\begin{equation*}
\|w\|_{L^{\infty}(t,\infty;H^{3})}\leq \|v\|_{L^{\infty}(t,\infty;H^{3})}+\|v\|_{H^{3}}\leq \rho_{3}+\rho_{v},~~\forall t\geq t'_{0}+1.
\end{equation*}
Consequently, by the first equation in \eqref{3.4}, we can estimate
\begin{align*}
\|w\|_{H^{9/2}}=&\|Aw\|_{H^{5/2}}\leq \frac{1}{\nu}\left(\|B(w,w)\|_{H^{5/2}}+\|B(v,w)\|_{H^{5/2}}+\|B(w,v)\|_{H^{5/2}}
+\|\partial_{t}u\|_{H^{5/2}}\right)\nonumber\\
\leq& C_{\nu}\left(\|w\|_{H^{5/2}}\|\nabla \cdot w\|_{L^{\infty}}+2\|w\|_{H^{5/2}}\|\nabla \cdot v\|_{L^{\infty}}+\|\partial_{t}u\|_{H^{5/2}}\right)\nonumber\\
\leq& C_{\nu}\left[\|w\|_{H^{5/2}}\left(2\|v\|_{H^{3}}+\|w\|_{H^{3}}\right)+\|\partial_{t}u\|_{H^{5/2}}\right]\nonumber\\
\leq& C_{\nu}\left[(\rho_{3}+\rho_{v})\left(3\rho_{v}+\rho_{3}\right)+\bar{\rho}_{5/2}\right]
:=\varrho_{2},
\end{align*}
for any $t\geq t'_{0}+1$. Thus we complete the proof.
\end{proof}
\subsubsection{Well-posedness and global attractors}\label{sec3.1.4}
\noindent

Our basic goal in this subsection is to prove that problems \eqref{2.3} and \eqref{3.4} possess a global attractor in $H$ by using some a priori estimates obtained in Subsection \ref{sec3.1.3}. Firstly, we recall the following result of well-posedness of \eqref{2.3} which can be found in \cite{T95}.
\begin{theorem}\label{thm3.5}
Let $f\in H$ and $u_{0}\in H$. Then for any $T>0$ there exists a unique solution $u$ to Eqs. \eqref{2.3} that satisfies $u\in C([0,T];H)\cap L^{2}([0,T];H^{1})$ and $\partial_{t}u\in L^{2}([0,T];H^{-1})$ such that
\begin{equation*}
(\partial_{t}u+\nu Au+B(u,u)-f,\varphi)_{H^{-1},H^{1}}=0
\end{equation*}
for every $\varphi \in H^{1}$ and almost every $t\in (0,T)$. Moreover, this solution depends continuously on the initial data $u_{0}$ in the $H$ norm.
\end{theorem}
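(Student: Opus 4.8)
The plan is to establish existence through a Galerkin scheme combined with the compactness Lemma~\ref{lem2.8}, and then to deduce uniqueness and continuous dependence from a single energy estimate for the difference of two solutions. First I would fix the orthonormal eigenbasis $\{e_k\}$ of $A$ and seek Galerkin approximations $u_N=\sum_{k=1}^{N}g^{N}_{k}(t)e_k\in P_N H$ solving the finite-dimensional system
\begin{equation*}
\partial_t u_N+\nu A u_N+P_N B(u_N,u_N)=P_N f,\qquad u_N(0)=P_N u_0 .
\end{equation*}
Since the right-hand side is a polynomial, hence locally Lipschitz, function of the coefficients $g^{N}_{k}$, the Cauchy--Lipschitz theorem provides a unique solution on a maximal interval $[0,T_N)$.

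Next I would derive the a priori estimates. Testing the Galerkin system with $u_N$ and using the orthogonality \eqref{2.7} to annihilate the nonlinear term, together with Young's inequality applied to $(f,u_N)$, yields
\begin{equation*}
\frac{d}{dt}\|u_N\|_H^{2}+\nu\|u_N\|_{H^1}^{2}\le\frac{1}{\nu}\|f\|_{H^{-1}}^{2},
\end{equation*}
so that $\{u_N\}$ is bounded in $L^{\infty}([0,T];H)\cap L^{2}([0,T];H^1)$ uniformly in $N$; in particular no blow-up occurs and $T_N=+\infty$. To bound the time derivative I would invoke the two-dimensional inequality $\|B(u_N,u_N)\|_{H^{-1}}\le c\|u_N\|_{L^4}^{2}\le c\|u_N\|_H\|u_N\|_{H^1}$, which follows from integration by parts in the trilinear form and the Ladyzhenskaya inequality (a special case of the interpolation Lemma~\ref{lem2.2}); together with $\|Au_N\|_{H^{-1}}=\|u_N\|_{H^1}\in L^2$ and the $H^{-1}$-boundedness of $P_N$, reading $\partial_t u_N$ off the equation shows $\{\partial_t u_N\}$ is bounded in $L^{2}([0,T];H^{-1})$.

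I would then pass to the limit. Applying Lemma~\ref{lem2.8} with the triple $H^1\subset H\subset H^{-1}$ (the embedding $H^1\hookrightarrow H$ being compact because $A^{-1}$ is compact) extracts a subsequence, still denoted $u_N$, converging strongly in $L^{2}([0,T];H)$, weakly in $L^{2}([0,T];H^1)$ and weak-$*$ in $L^{\infty}([0,T];H)$ to some $u$. The linear terms pass to the limit by weak convergence; for the nonlinearity I would test against a fixed $\varphi=e_k$ and use $(B(u_N,u_N),\varphi)=-(B(u_N,\varphi),u_N)$, so that the strong $L^{2}([0,T];H)$ convergence of one factor paired with the convergence of the other identifies the limit as $(B(u,u),\varphi)$. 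This shows $u$ solves the weak formulation, and the regularities $u\in L^{2}([0,T];H^1)$, $\partial_t u\in L^{2}([0,T];H^{-1})$ give $u\in C([0,T];H)$ by the Lions--Magenes lemma, with the initial datum attained.

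Finally, for uniqueness and continuous dependence, I would take two solutions $u_1,u_2$, set $w=u_1-u_2$, subtract the equations using $B(u_1,u_1)-B(u_2,u_2)=B(w,u_1)+B(u_2,w)$, and test with $w$. Here $(B(u_2,w),w)=0$ by \eqref{2.7}, while $(B(w,u_1),w)$ is controlled, through $\|w\|_{L^4}^{2}\le c\|w\|_H\|w\|_{H^1}$ from Lemma~\ref{lem2.2}, by $\tfrac{\nu}{2}\|w\|_{H^1}^{2}+\tfrac{c}{\nu}\|u_1\|_{H^1}^{2}\|w\|_H^{2}$. This gives $\tfrac{d}{dt}\|w\|_H^{2}\le\tfrac{c}{\nu}\|u_1\|_{H^1}^{2}\|w\|_H^{2}$, and since $\|u_1\|_{H^1}^{2}$ is integrable on $[0,T]$, Gronwall's inequality yields $\|w(t)\|_H\le\|w(0)\|_H\exp\!\big(\tfrac{c}{\nu}\int_0^{t}\|u_1\|_{H^1}^{2}\,ds\big)$; taking $w(0)=0$ gives uniqueness, and the inequality for general data is exactly the asserted continuous dependence. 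I expect the genuine obstacle to be the passage to the limit in the nonlinear term, which really needs the compactness of Lemma~\ref{lem2.8}; the restriction to $d=2$ enters only in the last step, where the favorable scaling of the Ladyzhenskaya estimate lets the nonlinear contribution be absorbed into the viscous term---the analogue fails for $d=3$.
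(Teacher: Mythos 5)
Your proposal is correct, and in fact it is the classical argument: the paper does not prove Theorem~\ref{thm3.5} at all, but simply cites Temam \cite{T95}, and your Galerkin scheme with the energy estimate, the Aubin--Lions compactness step (Lemma~\ref{lem2.8}), the Ladyzhenskaya estimate $\|w\|_{L^4}^{2}\le c\|w\|_{H}\|w\|_{H^{1}}$, and the Gronwall argument for uniqueness/continuous dependence is precisely the standard proof found in that reference. The only point worth making explicit is that testing the difference equation with $w$ is legitimate because both solutions satisfy $\partial_t u_i\in L^{2}([0,T];H^{-1})$ and $u_i\in L^{2}([0,T];H^{1})$, so the Lions--Magenes lemma gives the energy identity $\langle\partial_t w,w\rangle=\tfrac12\tfrac{d}{dt}\|w\|_H^2$; you have this regularity available, so the argument is complete.
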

\begin{corollary}\label{cor3.6}
Let $f\in H$ and $u_{0}\in H$. Then the solution of \eqref{3.4} admits a continuous semigroup $S(t)$, and thus $(S(t),H)$ is a continuous dynamical system generated by the solution of \eqref{3.4}.
\end{corollary}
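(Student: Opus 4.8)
The plan is to exploit the fact that passing from $u$ to $w=u-v$ is merely a translation by the fixed element $v\in H$, so that the dynamical system for \eqref{3.4} can be obtained by conjugating the already well-posed system for \eqref{2.3} with this translation. First I would record the exact equivalence of the two problems: substituting $u=w+v$ into \eqref{2.3}, expanding $B(w+v,w+v)$ by bilinearity into $B(w,w)+B(w,v)+B(v,w)+B(v,v)$, and using that $v$ solves the stationary equation \eqref{3.1} (so $\nu Av+B(v,v)=f$), the terms $\nu Av$, $B(v,v)$ and $f$ cancel and one is left precisely with \eqref{3.4}. Hence $u$ solves \eqref{2.3} with datum $u_0$ if and only if $w=u-v$ solves \eqref{3.4} with datum $w_0=u_0-v$; in particular, existence, uniqueness and the regularity class transfer verbatim between the two problems.

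Next I would invoke Theorem \ref{thm3.5}, which guarantees that \eqref{2.3} is well posed for every $u_0\in H$ with a unique solution depending continuously on the datum in the $H$ norm. This produces a continuous solution semigroup $\widetilde{S}(t)$ on $H$ via $\widetilde{S}(t)u_0:=u(t)$, the semigroup identity $\widetilde{S}(t+s)=\widetilde{S}(t)\widetilde{S}(s)$ following from uniqueness. I would then define the candidate semigroup for \eqref{3.4} by
\begin{equation*}
S(t)w_0:=\widetilde{S}(t)(w_0+v)-v,\qquad w_0\in H,
\end{equation*}
so that, by the equivalence of the first step, $S(t)w_0=u(t)-v=w(t)$ is exactly the value at time $t$ of the unique solution of \eqref{3.4} issued from $w_0$; this shows $S(t)$ is well defined.

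Finally I would check the semigroup and continuity properties. One has $S(0)w_0=w_0$ immediately, and for $t,s\geq 0$,
\begin{equation*}
S(t)S(s)w_0=\widetilde{S}(t)\bigl(\widetilde{S}(s)(w_0+v)-v+v\bigr)-v=\widetilde{S}(t)\widetilde{S}(s)(w_0+v)-v=S(t+s)w_0,
\end{equation*}
so $S(t)$ is a semigroup. Since translation by the fixed vector $v$ is a homeomorphism (indeed an isometry) of $H$, both the continuity of $S(t)$ in the initial datum and the continuity of $t\mapsto S(t)w_0$ (inherited from $w\in C([0,T];H)$) follow at once from the corresponding properties of $\widetilde{S}(t)$ supplied by Theorem \ref{thm3.5}. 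The argument is essentially soft, and the only point requiring genuine care is the exact cancellation in the first step, which hinges on $v$ being a true stationary solution of \eqref{3.1} rather than an arbitrary shift.
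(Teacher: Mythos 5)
Your proposal is correct and follows essentially the same route as the paper: the paper's proof likewise transfers the well-posedness of \eqref{2.3} given by Theorem \ref{thm3.5} to \eqref{3.4} via the translation $w=u-v$ by the fixed stationary solution $v$, and then reads off the semigroup properties of $S(t):w_{0}\mapsto w(t)$. You merely make explicit what the paper leaves implicit, namely the cancellation $\nu Av+B(v,v)=f$ underlying the equivalence of the two problems, the conjugation formula $S(t)w_{0}=\widetilde{S}(t)(w_{0}+v)-v$, and the observation that translation by $v$ is an isometry of $H$, so continuity and the semigroup identity are inherited.
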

\begin{proof}
For the initial value problem \eqref{3.4}, we may easily note, as a corollary of Theorem \ref{thm3.5}, that it is uniquely solvable for any $f\in H$ and $w_{0}:=u_{0}-v\in H$. Indeed, Since $u(t)$ solves Eqs. \eqref{2.3} uniquely by Theorem \ref{thm3.5} and $v$ solves Eqs. \eqref{3.1} (which is fixed), thus $w(t):=u(t)-v$ solves Eqs. \eqref{3.4} uniquely. Thus, the solution operator $S(t) : w_{0} \rightarrow w(t)$ yields a nonlinear semigroup
of operator which enjoys the properties
\begin{equation*}
S(t+\tau)=S(t) \cdot S(\tau), \forall \tau, t \geq 0,
\end{equation*}
\begin{equation*}
S(0)=I
\end{equation*}
with
$S(t)$ a continuous nonlinear operator from $H$ into itself, for any $\nu>0, \forall t \geq 0$, where $w(t)$ is the solution of \eqref{3.4} and $w_{0}=u_{0}-v$ is the corresponding initial datum.
\end{proof}
Next, we consider the global attractor for the solution semigroup $S(t)$ associated with equation \eqref{3.4}. In fact, from Corollary \ref{cor3.6} and some a priori estimates in Subsection \ref{sec3.1.3}, we can obtain a $H$-attractor for the equation $\eqref{3.4}$.
\begin{theorem}\label{thm3.7}
Assume that $f\in H^{1}$ and $u_{0}\in H$. Let $(S(t),H)$ be a dynamical system generated by the solution of \eqref{3.4}. Then $(S(t),H)$ possesses a global attractor $\mathscr{A}$ in the phase space $H$. Moreover, it is bounded in the high regular space $H^{9/2}$.
\end{theorem}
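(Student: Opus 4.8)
The plan is to apply the standard abstract theorem on the existence of a global attractor (see Definition \ref{def2.3} and, e.g., \cite{T97}): once we know that the semigroup $S(t)$ generated by \eqref{3.4} is continuous on $H$ (which is Corollary \ref{cor3.6}) and that it possesses a compact absorbing set in $H$, the existence of the global attractor $\mathscr{A}$ follows immediately. Thus the entire argument reduces to producing such a compact absorbing set, and the crucial input is already available: the asymptotic $H^{9/2}$-estimate \eqref{3.5} of Theorem \ref{thm3.4}, which is uniform in time for $t\geq t_{0}'+1$.

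First I would assemble the absorbing structure. Let $\mathscr{B}_{2}$ be the ball in $H^{9/2}$ of radius $\varrho_{2}$ furnished by \eqref{3.5}. By Theorem \ref{thm3.4}, for every bounded set $B\subset H$ there exists a time $t_{0}'(\|w_{0}\|_{H})$ such that $S(t)B\subset \mathscr{B}_{2}$ for all $t\geq t_{0}'+1$; hence $\mathscr{B}_{2}$ is a bounded absorbing set in $H^{9/2}$, and in particular in $H$. The second step is compactness: since $\mathbb{T}^{2}$ is a smooth bounded domain, the embedding $H^{9/2}\hookrightarrow H$ is compact (compact Sobolev embedding on the torus), so $\mathscr{B}_{2}$, being bounded in $H^{9/2}$, is relatively compact in $H$. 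Its closure $\overline{\mathscr{B}_{2}}^{\,H}$ is therefore a compact absorbing set in $H$.

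With these two ingredients in hand, the conclusion is routine. Define
\begin{equation*}
\mathscr{A}:=\bigcap_{s\geq 0}\overline{\bigcup_{t\geq s}S(t)\overline{\mathscr{B}_{2}}^{\,H}}^{\,H}
=\omega\big(\overline{\mathscr{B}_{2}}^{\,H}\big),
\end{equation*}
the $\omega$-limit set of the compact absorbing set. The standard theorem then gives that $\mathscr{A}$ is nonempty, compact, invariant ($S(t)\mathscr{A}=\mathscr{A}$), and attracts every bounded subset of $H$, using the continuity of $S(t)$ from Corollary \ref{cor3.6}. Finally, the extra regularity claim---that $\mathscr{A}$ is bounded in $H^{9/2}$---follows because $\mathscr{A}$ is invariant and contained in the attracting set: since $\mathscr{A}=S(t)\mathscr{A}\subset \mathscr{B}_{2}$ for $t$ large and $\mathscr{A}$ is time-independent, we get $\mathscr{A}\subset \mathscr{B}_{2}$, whence $\|w\|_{H^{9/2}}\leq\varrho_{2}$ for all $w\in\mathscr{A}$.

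I do not expect any genuine obstacle here: all the hard analysis---the uniform-in-time high-order bounds---has been done in Theorem \ref{thm3.4} via the asymptotic regularity method, and the present statement merely packages those estimates through the abstract attractor machinery. The only point requiring a word of care is that the a priori estimates of Subsection \ref{sec3.1.3} were obtained for $f\in H^{1}$ (matching the hypothesis), which guarantees the $H^{9/2}$-absorbing ball rather than merely an $H$-bound; it is precisely this gain in regularity that simultaneously yields compactness of the absorbing set and the boundedness of $\mathscr{A}$ in $H^{9/2}$.
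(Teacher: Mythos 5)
Your proposal is correct and follows essentially the same route as the paper: the $H^{9/2}$ absorbing ball $\mathscr{B}_{2}$ from the dissipative estimate \eqref{3.5}, compactness of the embedding $H^{9/2}\hookrightarrow H$, and the standard abstract attractor theorem, with the $H^{9/2}$-boundedness of $\mathscr{A}$ obtained from invariance plus absorption. The only cosmetic difference is that you phrase the compactness ingredient via a compact absorbing set and an explicit $\omega$-limit construction, while the paper invokes asymptotic compactness of $S(t)$; these are interchangeable formulations of the same argument.
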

\begin{proof}
It follows from the dissipative estimate \eqref{3.5} that the ball
\begin{equation}\label{3.18}
\mathscr{B}_{2}:=\left\{w\in H^{9/2}:~\|w\|_{H^{9/2}}\leq \varrho_{2}\right\}
\end{equation}
in $H^{9/2}$ is an bounded absorbing set in $H$. This infers that $(S(t),H)$ possesses a bounded absorbing set $\mathscr{B}$ and $S(t)$ is asymptotic compact in $H$ by the fact that the embedding $H^{9/2}\hookrightarrow H$ is compact. By the general theory of attractor (see, e.g.,\cite{T97}), we know that $(S(t),H)$ possesses a global attractor $\mathscr{A}$ in the phase space $H$, which is bounded in $H^{9/2}$.
\end{proof}
An immediate consequence of Theorem \ref{thm3.7} is the following result which will be pivotal not only for the analysis of global attractor but for our discussion of inertial manifolds in Section \ref{sec3.2} as well:
\begin{corollary}\label{cor3.8}
Let $f\in H^{1}$ and $u_{0}\in H$. Then, the problem \eqref{2.1} has a global attractor $\mathscr{A}_{0}$. Moreover, we can decompose $\mathscr{A}_{0}=\mathscr{A}+v(x)$, where $v(x)$ is a stationary solution (equilibrium point) of \eqref{2.1} and $\mathscr{A}$ is bounded in $H^{9/2}$.
\end{corollary}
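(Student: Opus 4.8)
The plan is to deduce Corollary~\ref{cor3.8} directly from Theorem~\ref{thm3.7} by exploiting the exact correspondence between the dynamics of the original problem \eqref{2.3} and the shifted problem \eqref{3.4}. Recall that $w=u-v$ where $v$ is a fixed stationary solution of \eqref{3.1}; thus the map $u\mapsto u-v$ is an affine homeomorphism of $H$ conjugating the semigroup $S_{0}(t)$ generated by \eqref{2.3} with the semigroup $S(t)$ generated by \eqref{3.4}. First I would make this conjugacy explicit: since $v$ is time-independent, $u(t)$ solves \eqref{2.3} if and only if $w(t)=u(t)-v$ solves \eqref{3.4} (this was already checked in the proof of Corollary~\ref{cor3.6}), so $S_{0}(t)u_{0}=S(t)(u_{0}-v)+v$ for all $t\geq0$ and all $u_{0}\in H$.

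Next I would transport the attractor through this conjugacy. Because translation by the fixed vector $v$ is an isometry of $H$, the three defining properties of a global attractor in Definition~\ref{def2.3} are all preserved under the map $\mathscr{A}\mapsto\mathscr{A}+v$. Concretely: $\mathscr{A}+v$ is compact since $\mathscr{A}$ is compact and translation is continuous; the invariance $S(t)\mathscr{A}=\mathscr{A}$ gives $S_{0}(t)(\mathscr{A}+v)=S(t)\mathscr{A}+v=\mathscr{A}+v$; and for any bounded $B\subset H$ the set $B-v$ is bounded, so $\mathrm{dist}(S(t)(B-v),\mathscr{A})\to0$ together with the isometry yields $\mathrm{dist}(S_{0}(t)B,\mathscr{A}+v)\to0$ as $t\to+\infty$. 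Hence $\mathscr{A}_{0}:=\mathscr{A}+v$ is the global attractor for \eqref{2.3} (equivalently for the projected problem; the excerpt writes \eqref{2.1} but this should read \eqref{2.3}). Uniqueness of the global attractor then forces the decomposition $\mathscr{A}_{0}=\mathscr{A}+v(x)$.

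Finally, the regularity claim is immediate: Theorem~\ref{thm3.7} already asserts that $\mathscr{A}$ is bounded in $H^{9/2}$, so I would simply restate that here, and note that $v\in H^{3}$ by Lemma~\ref{lem3.1}, so $\mathscr{A}_{0}$ itself lies in $H^{3}$ while its translate $\mathscr{A}=\mathscr{A}_{0}-v$ enjoys the sharper $H^{9/2}$ bound. I do not anticipate a genuine obstacle in this corollary, since it is purely a transport-of-structure argument; the only point requiring a little care is verifying that the affine shift really does intertwine the two semigroups for all times (rather than just formally), but this follows at once from the well-posedness in Theorem~\ref{thm3.5} and the fact that $v$ solves the stationary equation \eqref{3.1}, which guarantees that the translated trajectory satisfies \eqref{3.4} in the same weak sense. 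The substantive analytic work---the dissipative $H^{9/2}$-estimate \eqref{3.5} and the consequent existence of the absorbing ball $\mathscr{B}_{2}$---has already been carried out in Theorem~\ref{thm3.4} and Theorem~\ref{thm3.7}, so this corollary is essentially a bookkeeping step.
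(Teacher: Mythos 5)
Your proposal is correct and follows exactly the route the paper intends: the paper states Corollary~\ref{cor3.8} without proof as an ``immediate consequence'' of Theorem~\ref{thm3.7}, and your transport-of-structure argument (the affine conjugacy $S_{0}(t)u_{0}=S(t)(u_{0}-v)+v$, preservation of compactness, invariance, and attraction under the isometric shift by $v$, plus uniqueness of the global attractor) is precisely the standard reasoning being invoked. You also correctly spot that \eqref{2.1} is a typo for \eqref{2.3}, and your remark that $\mathscr{A}$ enjoys the $H^{9/2}$ bound while $\mathscr{A}_{0}$ itself is only in $H^{3}$ (since $v\in H^{3}$) is consistent with the statement.
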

\subsection{Existence of the inertial manifold}\label{sec3.2}
\noindent

In this section,  we construct an $N$-dimensional IM for the initial-value problem \eqref{3.4} in $H$, namely, prove Theorem \ref{thm1.2}.
\subsubsection{An abstract scheme for IMs}\label{sec3.2.1}
\noindent

We recall, in this subsection, the sequence of supporting results that will lead to the existence of IM for the abstract models, including the cone invariance, squeezing property and strong cone condition. One can refer to \cite{M-PS88,Z14} and the references therein. Let $\mathbb{H}$ be a separable Hilbert space and consider the following abstract model in $\mathbb{H}$,
\begin{equation}\label{3.19}
\partial_{t}w+\mathcal{A}w+\mathcal{F}(w)=0,~~w|_{t=0}=w_{0},
\end{equation}
on a bounded domain $\Omega\subset\mathbb{R}^{d}$ ($d\leq 3$) with periodic boundary condition or Dirichlet boundary condition, where $\mathcal{F}$ is an operator mapping from $\mathbb{H}$ to $\mathbb{H}$, and $\mathcal{A}: D(\mathcal{A})\rightarrow \mathbb{H}$ is a positive definite operator with discrete spectrum in $\mathbb{H}$:
\begin{equation*}
0<\lambda_{1}\leq\lambda_{2}\leq\cdot\cdot\cdot~~\mbox{and}~~\lambda_{j}\rightarrow\infty~~\mbox{as}~~j\rightarrow\infty
\end{equation*}
and the corresponding eigenvector $\{e_{j}\}_{j=1}^{\infty}$ forms orthonormal basis of $\mathbb{H}$ and such that
\begin{align*}
\mathcal{A}e_{j}=\lambda_{j}e_{j}.
\end{align*}
Every $w\in \mathbb{H}$ can thus be presented in the form
\begin{equation*}
w=\sum^{\infty}_{j=1}w_{j}e_{j},~~w_{j}=(w,e_{j}),
\end{equation*}
and, due to the Parseval equality,
\begin{equation*}
\|w\|^{2}_{\mathbb{H}}=\sum^{\infty}_{j=1}w_{j}^{2}.
\end{equation*}
\begin{theorem}[\cite{Z14}]\label{thm3.9}
Consider the abstract equation \eqref{3.19}. Assume that $\mathcal{F}: \mathbb{H}\rightarrow \mathbb{H}$ is globally Lipschitz continuous with Lipschitz constant $L>0$ and $\|\mathcal{F}(w)\|_{\mathbb{H}}\leq C$ for some $C>0$. Suppose that the spectral gap condition
\begin{equation*}
\lambda_{N+1}-\lambda_{N}>2L
\end{equation*}
holds for some $N\in\mathbb{N}$. Then
\begin{enumerate}[(i)]
  \item the strong cone condition \eqref{2.9} holds with some $\gamma$, $\mu>0$, thus, the cone invariance \eqref{2.10} and squeezing property \eqref{2.11} hold;
  \item moreover, the problem \eqref{3.19} possesses an $N$--dimensional inertial manifold $\mathcal{M}$ in sense of Definition \ref{def2.4}.
\end{enumerate}
\end{theorem}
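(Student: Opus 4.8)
The plan is to follow the cone-based scheme behind Definitions \ref{def2.6} and \ref{def2.5}: first establish the strong cone condition \eqref{2.9} by a direct energy computation, then read off the cone invariance \eqref{2.10} and the squeezing property \eqref{2.11} as elementary consequences, and finally realize $\mathcal{M}$ as a Lipschitz graph over $P_N\mathbb{H}$ via a Lyapunov--Perron fixed point, using the squeezing rate to obtain exponential attraction. For the cone condition, let $u_1,u_2$ solve \eqref{3.19}, put $w=u_1-u_2$ and $\ell:=\mathcal{F}(u_1)-\mathcal{F}(u_2)$, so that $\partial_t w+\mathcal{A}w+\ell=0$ with $\|\ell\|_{\mathbb{H}}\le L\|w\|_{\mathbb{H}}$. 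Writing $p=P_Nw$, $q=Q_Nw$ and using that $P_N,Q_N$ commute with $\mathcal{A}$, I would differentiate $V(w)=\|q\|_{\mathbb{H}}^{2}-\|p\|_{\mathbb{H}}^{2}$ to obtain
\begin{equation*}
\tfrac12\tfrac{d}{dt}V(w)=-(\mathcal{A}q,q)+(\mathcal{A}p,p)-(\ell,q-p)\le-\lambda_{N+1}\|q\|_{\mathbb{H}}^{2}+\lambda_{N}\|p\|_{\mathbb{H}}^{2}+L\|w\|_{\mathbb{H}}^{2},
\end{equation*}
where I used $\|q-p\|_{\mathbb{H}}=\|w\|_{\mathbb{H}}$ by orthogonality. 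Choosing $\gamma=\tfrac12(\lambda_N+\lambda_{N+1})$ and any $\mu>0$ with $2\mu\le(\lambda_{N+1}-\lambda_N)-2L$ --- admissible precisely because of the gap $\lambda_{N+1}-\lambda_N>2L$ --- makes the coefficients of $\|q\|_{\mathbb{H}}^{2}$ and $\|p\|_{\mathbb{H}}^{2}$ both nonpositive after adding $\gamma V(w)+\mu\|w\|_{\mathbb{H}}^{2}$, which is exactly \eqref{2.9}.

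From the strong cone condition the remaining items of part (i) follow at once. Integrating $\tfrac12\tfrac{d}{dt}V(w)+\gamma V(w)\le0$ gives $V(w(t))\le e^{-2\gamma t}V(w(0))$, so $V(w(0))\le0$ forces $V(w(t))\le0$ for all $t\ge0$, which is the cone invariance \eqref{2.10}. For the squeezing property, suppose $w(T)\notin K^{+}$, i.e. $V(w(T))>0$; the contrapositive of cone invariance forces $V(w(t))>0$ on all of $[0,T]$, whence $\|w(t)\|_{\mathbb{H}}^{2}<2\|q(t)\|_{\mathbb{H}}^{2}$ there. Combining $(\mathcal{A}w,w)\ge\lambda_{N+1}\|q\|_{\mathbb{H}}^{2}\ge\tfrac12\lambda_{N+1}\|w\|_{\mathbb{H}}^{2}$ with the Lipschitz bound yields $\tfrac12\tfrac{d}{dt}\|w\|_{\mathbb{H}}^{2}\le(L-\tfrac12\lambda_{N+1})\|w\|_{\mathbb{H}}^{2}$, and since $\lambda_{N+1}>\lambda_N+2L>2L$ the rate $\vartheta:=\tfrac12\lambda_{N+1}-L$ is positive, giving \eqref{2.11} with $C=1$. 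This completes (i).

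For part (ii) I would define $\Phi:P_N\mathbb{H}\to Q_N\mathbb{H}$ by $\Phi(p_0)=Q_Nw(0)$, where $w(\cdot)$ is the unique complete backward trajectory on $(-\infty,0]$ solving \eqref{3.19}, lying in the weighted space with $\sup_{t\le0}e^{\eta t}\|w(t)\|_{\mathbb{H}}<\infty$ for a fixed $\eta\in(\lambda_N,\lambda_{N+1})$, and satisfying $P_Nw(0)=p_0$. Existence and uniqueness come from a contraction argument on the Lyapunov--Perron integral operator, in which the spectral gap supplies the contraction factor and the global boundedness together with the global Lipschitz continuity of $\mathcal{F}$ make the operator well defined and the graph global. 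The bound $\mathrm{Lip}(\Phi)\le1$ is a consequence of the cone geometry: if the difference of two such backward trajectories left $K^{+}$ at $t=0$, then the squeezing estimate run backward would force its norm to grow like $e^{2\vartheta|t|}$ as $t\to-\infty$, contradicting boundedness; hence the difference lies in $K^{+}$, i.e. $\|\Phi(p_0^1)-\Phi(p_0^2)\|_{\mathbb{H}}\le\|p_0^1-p_0^2\|_{\mathbb{H}}$. Invariance of $\mathcal{M}$ is immediate since it is built from complete trajectories of \eqref{3.19}.

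The hard part is the exponential attraction in Definition \ref{def2.4}(iii), that is, the asymptotic completeness: every forward orbit $S(t)u_0$ must be shadowed at an exponential rate by an orbit starting on $\mathcal{M}$. I would obtain this by a second fixed point argument producing, for each $u_0$, a trajectory $v(\cdot)$ lying on $\mathcal{M}$ whose difference with $S(\cdot)u_0$ stays outside the cone, so that the squeezing property \eqref{2.11} delivers $\|S(t)u_0-S(t)v_0\|_{\mathbb{H}}\le Ce^{-\vartheta t}\|u_0-v_0\|_{\mathbb{H}}$ with $\vartheta=\tfrac12\lambda_{N+1}-L$. Verifying that this shadowing trajectory can be chosen to \emph{start on} $\mathcal{M}$, rather than merely to approach it, and controlling it uniformly in $u_0$, is the step demanding the most care; the cone invariance together with the squeezing rate $\vartheta$ is precisely what closes this argument.
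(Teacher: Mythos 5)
Part (i) of your proposal is correct, and it is essentially the argument the paper itself uses: Theorem \ref{thm3.9} is quoted from \cite{Z14} without proof in Section \ref{sec3.2.1}, but the Appendix proves the exact analogues for the generalized model \eqref{4.1} — Theorem \ref{thm4.1} derives the strong cone condition from the spectral gap by the same $p$/$q$ energy computation you give (your setting is the case $\alpha=0$), and Proposition \ref{pro4.4} extracts cone invariance and squeezing from it. Your constants check out: $\gamma=\tfrac12(\lambda_N+\lambda_{N+1})$, $2\mu\le(\lambda_{N+1}-\lambda_N)-2L$, and $\vartheta=\tfrac12\lambda_{N+1}-L>0$. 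The only stylistic difference is that the paper proves squeezing via the perturbed functional $V_\epsilon(v)=V(v)+\epsilon\|v\|^2$, whereas you use the pointwise inequality $\|w\|^2<2\|q\|^2$ outside the cone; both are valid, and yours is arguably cleaner.

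Part (ii) is where you genuinely diverge, and where your sketch has a concrete gap. The paper (Theorem \ref{thm4.5}, following Zelik) does not use Lyapunov--Perron: it solves the finite-horizon boundary value problems \eqref{6.52} with $P_Nu(0)=u_0^+$, $Q_Nu(-T)=0$, gets solvability from Brouwer's invariance of domain, and passes to the limit $T\to\infty$. This construction buys the Lipschitz bound for free: differences of approximants vanish in $Q_N$ at $t=-T$, hence lie in $K^+$ there, and \emph{forward} cone invariance carries the containment to $t=0$ (estimate \eqref{6.56}) — no backward growth argument is needed. You, by contrast, must prove a posteriori that differences of backward trajectories in your weighted class lie in $K^+$, and your argument for this fails as stated: leaving the cone forces backward growth of $\|w\|$ only at rate $e^{\vartheta|t|}$ with $\vartheta=\tfrac12\lambda_{N+1}-L$, while your weighted space tolerates growth $e^{\eta|t|}$ with $\eta>\lambda_N$, and the gap condition does not give $\vartheta>\lambda_N$ (take $\lambda_N=100$, $L=1$, $\lambda_{N+1}=103$, so $\vartheta=50.5$); hence no contradiction arises. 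The fix is to run the \emph{strong cone} inequality backward instead of the squeezing inequality — it yields $V(w(t))\ge e^{2\gamma|t|}V(w(0))$, and since $V(w(t))\le\|w(t)\|^2\le Ce^{2\eta|t|}$, choosing $\eta\in(\lambda_N,\gamma)$ forces $V(w(0))\le0$ — or alternatively to use the global boundedness of $\mathcal{F}$ to show the $Q_N$-components of manifold trajectories are uniformly bounded backward in time, so that outside the cone $\|w\|<\sqrt{2}\,\|Q_Nw\|$ is bounded and exponential growth is impossible. Finally, your exponential tracking step is only announced, not carried out; the paper's Step 4 shows what closes it: the shadowing trajectory is obtained as a limit of manifold trajectories $u_T$ matched to $P_Nu(T)$, using the squeezing rate together with finite-dimensionality of $P_N\mathbb{H}$ to extract a convergent subsequence $u_T(0)\to\tilde u(0)\in\mathcal{M}$.
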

\subsubsection{The proof of Theorem \ref{thm1.2}}\label{sec3.2.2}
\noindent

In this part, we set $\mathbb{H}:=H,~\mathcal{A}:=A=-\nu P_{\sigma}\Delta$, $\Omega=\mathbb{T}^{2}$ and $\mathcal{F}(w)=B(w,w)+B(v,w)+B(w,v)$. We will verify the existence of IMs of \eqref{3.4} by using the abstract results stated in Subsection \ref{sec3.2.1}. To this end, firstly, we deal with the nonlinear term to satisfy the conditions in Theorem \ref{thm3.9}.
\par
From the analysis in Section \ref{sec3.1.4}, it follows that there exist absorbing balls in $H^{9/2}$
which attract all the orbits of \eqref{3.4}. We will, in the subsequent discussion, restrict our attention to the dynamics inside an absorbing ball $\mathscr{B}\subseteq H^{9/2}$ established in \eqref{3.18}. Now, we define a vector-valued cut-off function as
\begin{equation*}
\vec{\eta}=(\eta(\zeta_{1}),\eta(\zeta_{2}))\in\mathbb{C}^{2},~~\mbox{for}~\zeta=(\zeta_{1},\zeta_{2}),
\end{equation*}
where $\eta\in C^{\infty}_{0}(\mathbb{C})$ is a smooth cut-off function satisfying
\begin{equation*}
\eta(\zeta)=\zeta~~\mbox{for}~~|\zeta|\leq 1~~\mbox{and}~~|\eta(\zeta)|\leq 2~~\mbox{for all}~~\zeta\in\mathbb{C}.
\end{equation*}
Next, similar to Kostianko \cite{K18}, we shall introduce the truncation operator $W: H \rightarrow H$ defined as
\begin{equation*}
W(w)=\sum_{j\in\mathbb{Z}^{2}_{\ast}}
\frac{\varrho_{2}}{|j|^{9/2}}P^{j}_{\sigma}\vec{\eta}\left(\frac{|j|^{9/2}\widehat{w}_{j}}{\varrho_{2}}\right)e^{ij\cdot x},
\end{equation*}
where $\varrho_{2}$ is the radius of the absorbing ball $\mathscr{B}_{2}$ that given in \eqref{3.18}, $P^{j}_{\sigma}$ are the Leray projector matrices defined by \eqref{2.1} and
\begin{equation*}
w=\sum_{j\in\mathbb{Z}^{2}_{\ast}}\widehat{w}_{j}e^{ij\cdot x},
\end{equation*}
here we recall that $w=(w^{1},w^{2})$ is a vector variable and each coefficient $\widehat{w}_{j}=(\widehat{w}^{1}_{j},\widehat{w}^{2}_{j})\in \mathbb{C}^{2}$ is also a vector.
\par
Now, we define the modified nonlinearity by the following formula
\begin{equation}\label{3.20}
F_{2}(W(w)):=\mathcal{F}(W(w))=B(W(w),W(w))+B(W(w),v)+B(v,W(w)).
\end{equation}
The so-called ``prepared'' equation of \eqref{3.4} is written as follows:
\begin{equation}\label{3.21}
\partial_{t}w+Aw+F_{2}(W(w))=0.
\end{equation}
\par
Then the ``prepared'' equation \eqref{3.21} has the same long-time behavior as the original problem \eqref{1.1} or \eqref{3.4}. In fact,
\begin{proof}[\textbf{Proof of Theorem \ref{thm1.1} in the case $d=2$}]
It is sufficient to show that the truncation procedure does not change the equation on the attractor $\mathscr{A}$ or the absorbing set $\mathscr{B}_{2}$ at least, that is, $W(w)=w$ whenever $w$ lies in the absorbing set $\mathscr{B}_{2}$. Indeed, let $w\in \mathscr{B}_{2}$, i.e. $w\in H^{9/2}$ such that
\begin{equation*}
\|w\|^{2}_{H^{9/2}}=\sum_{j\in\mathbb{Z}^{2}_{\ast}}|j|^{9}|\widehat{w}_{j}|^{2}\leq \varrho_{2}^{2},
\end{equation*}
it follows that
\begin{equation*}
\frac{|j|^{9/2}\widehat{w}_{j}}{\varrho_{2}}\leq 1,~~\forall~j\in\mathbb{Z}^{2},j\neq 0.
\end{equation*}
Thus, by the definition of $\vec{\eta}$, we know that $\vec{\eta}\left(\frac{|j|^{9/2}\widehat{w}_{j}}{\varrho_{2}}\right)=\frac{|j|^{9/2}\widehat{w}_{j}}{\varrho_{2}}$ for all $j\in\mathbb{Z}^{2},j\neq 0$. This shows that $W(w)=w$ whenever $w\in \mathscr{B}_{2}$ according to the definition of $W(w)$.
\end{proof}
\begin{theorem}\label{thm3.10}
The modified nonlinearity $F_{2}(W(\cdot)): H\rightarrow H$, given in \eqref{3.20}, is globally bounded and globally Lipschitz continuous with some Lipschitz constant $L>0$.
\end{theorem}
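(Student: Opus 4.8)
The plan is to extract two uniform properties of the truncation $W$ and then feed them into the bilinear estimates of Lemma \ref{lem2.1}. First I would record that, since $|\eta(\zeta)|\le 2$ and the matrices $P^{j}_{\sigma}$ are orthogonal projections (hence contractions), each coefficient obeys $|\widehat{W(w)}_{j}|\le 2\sqrt{2}\,\varrho_{2}/|j|^{9/2}$, whence $\|W(w)\|^{2}_{H^{s}}\le 8\varrho_{2}^{2}\sum_{j\in\mathbb{Z}^{2}_{\ast}}|j|^{2s-9}$; in $\mathbb{T}^{2}$ this series converges for every $s<7/2$, so $W$ maps $H$ into $H^{7/2-\varepsilon}$ with a bound depending only on $\varrho_{2}$ and $\varepsilon$. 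Second, writing $L_{\eta}:=\sup|\eta'|<\infty$ and using the contraction property of $P^{j}_{\sigma}$, the inequality $|\eta(a)-\eta(b)|\le L_{\eta}|a-b|$ applied frequency by frequency yields the pointwise bound $|\widehat{W(w_{1})}_{j}-\widehat{W(w_{2})}_{j}|\le L_{\eta}|\widehat{w_{1}}_{j}-\widehat{w_{2}}_{j}|$, so that $W$ is globally Lipschitz from $H^{s}$ to $H^{s}$ for every $s$, and in particular from $H$ to $H$.

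Global boundedness of $F_{2}(W(\cdot))$ then follows quickly. Since $7/2-\varepsilon>2$ in two dimensions, the embedding $H^{7/2-\varepsilon}\hookrightarrow W^{1,\infty}\cap H^{1}$ shows that $W(w)$ is bounded in $W^{1,\infty}$ and in $H^{1}$ uniformly in $w$, while $v\in H^{3}$ by Lemma \ref{lem3.1}. Applying $\|B(a,b)\|_{H}\le\|a\|_{L^{\infty}}\|\nabla b\|_{H}$ to each of the three summands in \eqref{3.20}, I would bound $\|F_{2}(W(w))\|_{H}\le C\big(\|W(w)\|_{L^{\infty}}\|\nabla W(w)\|_{H}+\|W(w)\|_{L^{\infty}}\|\nabla v\|_{H}+\|v\|_{L^{\infty}}\|\nabla W(w)\|_{H}\big)\le C(\varrho_{2},\rho_{v})$, uniformly in $w\in H$.

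For the Lipschitz estimate I would set $\delta:=W(w_{1})-W(w_{2})$, expand the difference of the quadratic part as $B(\delta,W(w_{1}))+B(W(w_{2}),\delta)$, and regroup the four resulting terms as $B(\delta,\,W(w_{1})+v)+B(W(w_{2})+v,\,\delta)$. In the first group the gradient falls on a smooth factor, so $\|B(\delta,W(w_{1})+v)\|_{H}\le\|\delta\|_{H}\,\|\nabla(W(w_{1})+v)\|_{L^{\infty}}\le C(\varrho_{2},\rho_{v})L_{\eta}\|w_{1}-w_{2}\|_{H}$, which is already of Lipschitz type by the $H\to H$ Lipschitz bound above. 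The decisive difficulty is the second group $B(W(w_{2})+v,\delta)$, in which the derivative lands on $\delta$: here $W$ is merely Lipschitz, not smoothing, on differences—indeed $\eta$ acts as the identity on the frequencies where the cut-off is inactive, so $W$ cannot be Lipschitz from $H$ into $H^{1}$—and $\|B(W(w_{2})+v,\delta)\|_{H}\le(\|W(w_{2})\|_{L^{\infty}}+\|v\|_{L^{\infty}})\|\delta\|_{H^{1}}$ genuinely requires an intermediate norm of $\delta$. To close it I would exploit that $\delta$ inherits the uniform bound $\|\delta\|_{H^{7/2-\varepsilon}}\le 2C\varrho_{2}$ from the first property together with $\|\delta\|_{H}\le L_{\eta}\|w_{1}-w_{2}\|_{H}$, and interpolate via Lemma \ref{lem2.2}. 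This is exactly where the truncation exponent $9/2$ is used, since it places $W(w)$ above $H^{2}$ in $\mathbb{T}^{2}$ so that the smooth factors lie in $W^{1,\infty}$ and the products close. I expect this derivative-on-difference term to be the main obstacle: naïve interpolation between the two bounds above yields only Hölder dependence on $\|w_{1}-w_{2}\|_{H}$, so recovering the full global Lipschitz continuity should require separating the frequencies where the cut-off is active from those where $\vec{\eta}$ is the identity and estimating the difference more carefully on each piece.
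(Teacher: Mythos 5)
Your first three steps coincide, in substance, with the paper's own proof: your coefficientwise bound on $W$ is the paper's regularization claim \eqref{3.22}, your frequency-by-frequency Lipschitz bound is the paper's claim \eqref{3.23}, your boundedness estimate for $F_{2}(W(\cdot))$ is the paper's Step 1, and your estimate of the group $B(\delta,W(w_{1})+v)$, $\delta:=W(w_{1})-W(w_{2})$, where the derivative falls on the smooth factor, is exactly the paper's estimate of $B(W_{1}-W_{2},W_{1})+B(W_{1}-W_{2},v)$ via \eqref{3.24}. Up to that point you and the paper agree, and both are correct.

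The gap is the one you flag yourself: you never close the estimate for the group $B(W(w_{2})+v,\delta)$, where the derivative lands on $\delta$. This gap cannot be closed, because no estimate of the required form is true. Since $\vec{\eta}$ is the identity on the unit ball, $W$ acts as the identity on any field whose $j$-th coefficient has modulus at most $\varrho_{2}|j|^{-9/2}$. Take $w_{2}=0$ and $w_{1}=2c\cos(j\cdot x)$ with $c=\varrho_{2}|j|^{-9/2}\,j^{\perp}/|j|$, so that $\delta=W(w_{1})-W(w_{2})=w_{1}$. Then $B(w_{1},w_{1})=0$ (a single mode with $c\perp j$), $\|B(w_{1},v)\|_{H}\leq C\rho_{v}\|w_{1}\|_{H}$, while the coefficient of $P_{\sigma}((v\cdot\nabla)w_{1})$ at frequency $k_{0}+j$ is $i(\widehat{v}_{k_{0}}\cdot j)\,P^{k_{0}+j}_{\sigma}c$ with $P^{k_{0}+j}_{\sigma}c\approx c$ for $|k_{0}|\ll|j|$; choosing directions of $j$ not orthogonal to a fixed nonzero mode $\widehat{v}_{k_{0}}$ gives
\begin{equation*}
\|F_{2}(W(w_{1}))-F_{2}(W(w_{2}))\|_{H}\;\geq\;\|B(v,w_{1})\|_{H}-C\rho_{v}\|w_{1}\|_{H}\;\geq\;c_{v}\,|j|\,\|w_{1}-w_{2}\|_{H}
\end{equation*}
for infinitely many $j$, with $c_{v}>0$ whenever $v\not\equiv 0$ (and if $v\equiv0$ the same blow-up occurs in the quadratic part by taking $w_{2}$ a fixed low-frequency field). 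So the Lipschitz quotient is unbounded: neither interpolation (which, as you note, only gives a H\"older modulus) nor the frequency splitting you gesture at can rescue the argument, because on the frequencies where the cut-off is inactive $F_{2}(W(\cdot))$ literally contains the unbounded linear operator $\delta\mapsto B(v,\delta)$.

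You should also know that the paper's own proof founders at exactly this point. It disposes of the bad group by asserting, ``analogously,'' that $\|B(W_{2},\delta)\|_{H}+\|B(v,\delta)\|_{H}\leq C(\|W_{2}\|_{H^{3}}+\|v\|_{H^{3}})\|\delta\|_{H}$, citing \eqref{3.24}; but \eqref{3.24} reads $\|B(X,Y)\|_{H}\leq \tilde{C}\|X\|_{H}\|Y\|_{H^{3}}$, with the strong norm on the \emph{differentiated} (second) argument, so applied to $B(W_{2},\delta)$ it yields $\tilde{C}\|W_{2}\|_{H}\|\delta\|_{H^{3}}$, not $\tilde{C}\|W_{2}\|_{H^{3}}\|\delta\|_{H}$. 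The arguments have been silently interchanged, and by the computation above no inequality of the asserted form can hold. In other words, your diagnosis of the derivative-on-difference term as the decisive obstruction is correct, and it is an obstruction not only to your attempt but to the paper's published argument: the truncation $W$ regularizes individual functions but does not regularize differences, which is precisely why the genuine 2D Navier--Stokes nonlinearity resists this scheme.
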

\begin{proof}
\textbf{Step 1.} To verify $F_{2}(W(\cdot))$ is globally bounded from $H$ to $H$, firstly, we give the \textbf{claim 1} that $W$ is a regularization operator: for any $\varepsilon>0$, $W$ maps $H$ into $H^{7/2-\varepsilon}$ continuously and there exists a constant $M_{\varepsilon}$ such that
\begin{equation}\label{3.22}
\|W(w)\|_{H^{7/2-\varepsilon}}\leq M_{\varepsilon},~~\mbox{for all}~~w\in H.
\end{equation}
Then we can estimate
\begin{align*}
\|F_{2}(W(w))\|_{H}\leq& \|B(W(w),W(w))\|_{H}+\|B(W(w),v)\|_{H}+\|B(v,W(w))\|_{H}\nonumber\\
\leq& C\left(\|W(w)\|_{H^{1}}\|W(w)\|_{H^{2}}+\|W(w)\|_{H^{1}}\|v\|_{H^{2}}+\|v\|_{H^{2}}\|W(w)\|_{H^{1}}\right)\nonumber\\
\leq& C\|W(w)\|_{H^{7/2-\varepsilon}}\left(\|W(w)\|_{H^{7/2-\varepsilon}}+2\|v\|_{H^{2}}\right)\nonumber\\
\leq& CM_{\varepsilon}\left(M_{\varepsilon}+2\|v\|_{H^{2}}\right),~~(0<\varepsilon<\frac{1}{2}),~~\forall~w\in~H,
\end{align*}
where we have used the regularity of the cut-off operator $W$ as in \eqref{3.22} and the following preliminary results:
\begin{equation*}
\|B(X,Y)\|_{H}\leq C\|\nabla X\|_{H}\|Y\|_{L^{\infty}}\leq C'\|X\|_{H^{1}}\|Y\|_{H^{2}},
\end{equation*}
for any $X\in H^{1}$ and $Y\in H^{2}$, where we have used Sobolev embedding $H^{2}\hookrightarrow L^{\infty}$ in $\mathbb{T}^{2}$.
\par
It remains to prove the \textbf{claim 1} holds. Indeed, let $w\in H$. Since $|\eta(\xi)|\leq 2$ for all $\xi\in \mathbb{C}$, then by the definition of $W(w)$, we estimate
\begin{align*}
\|W(w)\|^{2}_{H^{7/2-\varepsilon}}
=&\sum_{j\in\mathbb{Z}^{2}_{\ast}}|j|^{7-2\varepsilon}\cdot
\frac{\varrho^{2}_{2}}{|j|^{9}}\left|P^{j}_{\sigma}\vec{\eta}\left(\frac{|j|^{9/2}\widehat{w}_{j}}{\varrho_{2}}\right)\right|^{2}\nonumber\\
\leq& C\sum_{j\in\mathbb{Z}^{2}_{\ast}}|j|^{7-2\varepsilon}\cdot\frac{\varrho^{2}_{2}}{|j|^{9}}
\leq C\varrho^{2}_{2}\sum_{j\in\mathbb{Z}^{2}_{\ast}}\frac{1}{|j|^{2+2\varepsilon}}\leq M_{\varepsilon}.
\end{align*}
Next, we will show that $W: H\rightarrow H^{7/2-\varepsilon}$ is continuous. Indeed, given $\delta>0$, $w$, $\varpi\in H$ satisfies $\|w-\varpi\|_{H}\leq\sigma$. Then
\begin{align*}
\|W(w)-W(\varpi)\|^{2}_{H^{7/2-\varepsilon}}
=&\sum_{j\in\mathbb{Z}^{2}_{\ast}}
\frac{\varrho^{2}_{2}}{|j|^{2+2\varepsilon}}\left|\vec{\eta}\left(\frac{|j|^{9/2}\widehat{w}_{j}}{\varrho_{2}}\right)
-\vec{\eta}\left(\frac{|j|^{9/2}\widehat{\varpi}_{j}}{\varrho_{2}}\right)\right|^{2}\nonumber\\
\leq& \sum_{\stackrel{j\in\mathbb{Z}^{2}}{1\leq|j|\leq N}}\frac{\varrho^{2}_{2}}{|j|^{2+2\varepsilon}}\left|\vec{\eta}\left(\frac{|j|^{9/2}\widehat{w}_{j}}{\varrho_{2}}\right)
-\vec{\eta}\left(\frac{|j|^{9/2}\widehat{\varpi}_{j}}{\varrho_{2}}\right)\right|^{2}
+C\sum_{\stackrel{j\in\mathbb{Z}^{2}}{|j|\geq N}}\frac{\varrho^{2}_{2}}{|j|^{2+2\varepsilon}},
\end{align*}
where we have used the fact that $|\vec{\eta}|^{2}\leq \frac{C}{2}$ due to $|\eta(\xi)|<2$ for all $\xi\in\mathbb{C}$. Since the series $\sum_{j\in\mathbb{Z}^{2}_{\ast}}\frac{1}{|j|^{2+2\varepsilon}}$ is convergent, thus
\begin{align*}
 C\sum_{\stackrel{j\in\mathbb{Z}^{2}}{|j|\geq N}}\frac{\varrho^{2}_{2}}{|j|^{2+2\varepsilon}}\leq \frac{\delta}{2},
\end{align*}
when we choose $N$ large enough. Moreover, since $\|w-\varpi\|_{H}\leq\sigma$, we have $|\widehat{w}_{j}-\widehat{\varpi}_{j}|\leq\sigma$ for every $j\in\mathbb{Z}^{2},j\neq 0$, then we can select $\sigma$ sufficiently small such that
\begin{align*}
\sum_{\stackrel{j\in\mathbb{Z}^{2}}{1\leq|j|\leq N}}
\frac{\varrho^{2}_{2}}{|j|^{2+2\varepsilon}}\left|\vec{\eta}\left(\frac{|j|^{9/2}\widehat{w}_{j}}{\varrho_{2}}\right)
-\vec{\eta}\left(\frac{|j|^{9/2}\widehat{\varpi}_{j}}{\varrho_{2}}\right)\right|^{2}
\leq& C\sum_{\stackrel{j\in\mathbb{Z}^{2}}{1\leq|j|\leq N}}|j|^{9}|\widehat{w}_{j}-\widehat{\varpi}_{j}|^{2}
\leq \frac{\delta}{2},
\end{align*}
according to the fact that $\vec{\eta}$ is uniformly continuous and $N$ is fixed whenever it is chose above.
\par
\textbf{Step 2.} In order to prove $F_{2}(W(\cdot))$ is globally Lipschitz from $H$ to $H$, we give the \textbf{claim 2} that the operator $W$ is globally Lipschitz from $H$ to $H$, i.e. there exists constant $L_{1}>0$ such that
\begin{equation}\label{3.23}
\|W(w)-W(\varpi)\|_{H}\leq L_{1}\|w-\varpi\|_{H},~~\forall~w,\varpi\in H.
\end{equation}
Indeed, let $w$, $\varpi\in H$. Since $\vec{\eta}'$ is globally bounded, we calculate
\begin{align*}
\|W(w)-W(\varpi)\|^{2}_{H}
\leq& \sum_{j\in\mathbb{Z}^{2}_{\ast}}
\frac{\varrho^{2}_{2}}{|j|^{9}}\left|\vec{\eta}\left(\frac{|j|^{9/2}\widehat{w}_{j}}{\varrho_{2}}\right)
-\vec{\eta}\left(\frac{|j|^{9/2}\widehat{\varpi}_{j}}{\varrho_{2}}\right)\right|^{2}\nonumber\\
\leq& \sum_{j\in\mathbb{Z}^{2}_{\ast}}|\vec{\eta}'(\xi)|^{2}|\widehat{w}_{j}-\widehat{\varpi}_{j}|^{2}
\leq L^{2}_{1}\|w-\varpi\|^{2}_{H}.
\end{align*}
\par
As a result, let $w_{i}\in H$, $i=1,2$ and set $W_{i}=W(w_{i})$, we write
\begin{align*}
F_{2}(W_{1})-F_{2}(W_{2})=& B(W_{1},W_{1})+B(W_{1},v)+B(v,W_{1})-B(W_{2},W_{2})+B(W_{2},v)+B(v,W_{2})\nonumber\\
=& B(W_{1}-W_{2},W_{1})+B(W_{2},W_{1}-W_{2})+B(v,W_{1}-W_{2})+B(W_{1}-W_{2},v).
\end{align*}
Note that the fact that $W: H\rightarrow H$ is globally Lipschitz with Lipschitz constant $L_{1}$ (see \eqref{3.23}), we can estimate
\begin{align*}
\|B(W_{1}&-W_{2},W_{1})\|_{H}+\|B(W_{1}-W_{2},v)\|_{H}
\leq C\|W_{1}-W_{2}\|\left(\|W_{1}\|_{H^{3}}+\|v\|_{H^{3}}\right)\nonumber\\
\leq& C L_{1}\|w_{1}-w_{2}\|_{H}(\|W_{1}\|_{H^{3}}+\|v\|_{H^{3}})
\leq \frac{L}{2}\|w_{1}-w_{2}\|_{H},
\end{align*}
where the constant $L$ depends on $L_{1}$ and $\|v\|_{H^{3}}$, here we have used \eqref{3.23} and the fact that
\begin{equation}\label{3.24}
\|B(X,Y)\|_{H}\leq C\|X\|_{H}\|\nabla Y\|_{L^{\infty}}\leq \tilde{C}\|X\|_{H}\|Y\|_{H^{3}},
\end{equation}
for any $X\in H$ and $Y\in H^{3}$, where we have used Sobolev embedding $H^{2}\hookrightarrow L^{\infty}$ in $\mathbb{T}^{2}$. Analogously,
\begin{align*}
\|B(W_{2},W_{1}-W_{2})\|_{H}+\|B(v,W_{1}-W_{2})\|_{H}
\leq C(\|W_{2}\|_{H^{3}}+\|v\|_{H^{3}})\|W_{1}-W_{2}\|_{H}
\leq \frac{L}{2}\|w_{1}-w_{2}\|_{H},
\end{align*}
where we have used \eqref{3.23} and \eqref{3.24} again. As a result, we conclude that
\begin{align*}
\|F_{2}(W(w_{1}))-F_{2}(W(w_{2}))\|_{H}\leq L\|w_{1}-w_{2}\|_{H},~~\forall w_{1},w_{2}\in H.
\end{align*}
\end{proof}
\par
Notice that for the Stokes operator $A$,  an arbitrarily large spectral gap can be obtained due to that its eigenvalues in the periodic domain $\mathbb{T}^{2}$ are of the form $(2\pi)^{-2}(k^{2}_{1}+k^{2}_{2})$, i.e.,there are some $N\in\mathbb{N}$ such that the eigenvalues of the Stokes operator $A$ satisfy the spectral gap condition:
\begin{equation}\label{3.25}
\limsup_{N\rightarrow\infty}(\lambda_{N+1}-\lambda_{N})=\infty,
\end{equation}
which benefits from a classical number theory result introduced by Richards \cite{R82}, given below.
\begin{lemma}[Richards \cite{R82}]\label{lem3.11}
The sequence
\begin{equation*}
\left\{\omega_{n}=k^{2}_{1}+k^{2}_{2}: k_{1}, k_{2}\in\mathbb{Z}~\text{and}~\omega_{n+1}\geq\omega_{n}\right\}
\end{equation*}
has a subsequence $\{\omega_{n_{j}}\}$ such that $\omega_{n_{j}+1}-\omega_{n_{j}}\geq c\log(\omega_{n_{j}})$ for some $c>0$.
\end{lemma}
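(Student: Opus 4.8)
Writing $\{\omega_n\}$ for the increasing enumeration of the nonnegative integers representable as $k_1^2+k_2^2$, we must exhibit arbitrarily large gaps $\omega_{n+1}-\omega_n$ that are $\ge c\log\omega_n$. My plan is to reduce this to producing \emph{long runs of consecutive non-representable integers that are not positioned too far out}. Recall the Fermat--Gauss criterion: $n$ is a sum of two squares iff every prime $p\equiv 3\pmod 4$ occurs to an even power in $n$; in particular $n$ is non-representable whenever some prime $p\equiv 3\pmod 4$ satisfies $p\,\|\,n$ (exact first power), and whenever $n\equiv 3\pmod 4$. Thus it suffices to find, for each large $L$, an integer $m$ with $m+1,\dots,m+L$ all non-representable and with $\log m=O(L)$. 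Then the largest representable $\omega_{n_0}\le m$ and its successor obey $\omega_{n_0+1}\ge m+L+1$, so $\omega_{n_0+1}-\omega_{n_0}\ge L+1\ge c\log m\ge c\log\omega_{n_0}$; letting $L\to\infty$ yields the required subsequence $\{\omega_{n_j}\}$.

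\textbf{Construction of the run via the Chinese Remainder Theorem.} For the $i$-th position I would fix a prime $q_i\equiv 3\pmod 4$ and impose $q_i\,\|\,m+i$. Taking the $q_i$ distinct, the moduli $q_i^2$ are pairwise coprime, so the congruences $m\equiv q_i-i \pmod{q_i^2}$ for $1\le i\le L$ have a common solution with $0<m<\prod_{i=1}^{L} q_i^2$, and each $m+i$ is then non-representable. Hence $\log m\le 2\sum_{i=1}^{L}\log q_i$, and the entire problem collapses to making this sum $O(L)$.

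\textbf{Size control: the crux and the main obstacle.} Using the smallest $L$ primes $\equiv 3\pmod 4$, the prime number theorem in arithmetic progressions ($q_i\asymp i\log i$) together with Chebyshev/Mertens estimates ($\sum_{i\le L}\log q_i\asymp L\log L$) only give $\log m=O(L\log L)$, hence the weaker bound $L\gtrsim \log\omega_{n_0}/\log\log\omega_{n_0}$. To reach the sharp $c\log\omega_{n_0}$ one must cover most positions \emph{cheaply}: first use all primes $p\equiv 3\pmod 4$ below a threshold $z$ (total logarithmic cost $\approx 2\,\theta(z)$, small) to eliminate a large proportion of the window, exploiting that each such $p$ kills a whole residue class modulo $p$ and that for $p>\sqrt L$ every hit in the window is automatically an exact first power; one also takes the positions $m+i\equiv 3\pmod 4$ for free. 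The residual, much sparser set of positions is then mopped up by distinct larger primes, and the two scales are balanced so that $\sum\log(\text{moduli})=O(L)$. I expect this multi-scale covering to be the hard part: any single-scale choice loses a $\log\log$ factor, and pushing the modulus down to $e^{O(L)}$ requires the delicate sieve/covering estimates for primes $\equiv 3\pmod 4$ that form the number-theoretic heart of Richards's argument. Everything else, namely the CRT bookkeeping and the run-to-gap translation of the first paragraph, is routine.
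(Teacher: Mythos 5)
First, a point of orientation: the paper does not prove this lemma at all — it is quoted from Richards \cite{R82} and used as a black box — so there is no ``paper proof'' to compare against, and your proposal has to stand on its own as a proof of Richards' theorem. The parts you call routine are indeed correct: the reduction of the lemma to producing, for each large $L$, a run $m+1,\dots,m+L$ of non-representable integers with $\log m=O(L)$, and the CRT construction imposing exact divisibility $q_i\,\|\,(m+i)$ with distinct primes $q_i\equiv 3\pmod 4$, are both sound. But, as you yourself flag, what this actually establishes is only $\log m=O(L\log L)$, i.e.\ gaps $\gg \log\omega_n/\log\log\omega_n$ along a subsequence, which is strictly weaker than the lemma. (It is worth noting that this weaker bound would still suffice for the paper's application, since Theorem \ref{thm1.2} only uses the unboundedness statement \eqref{3.25}; but it does not prove the lemma as stated.)

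The genuine gap is that the multi-scale covering you sketch to upgrade $O(L\log L)$ to $O(L)$ does not balance, and the obstruction is precisely the Landau--Ramanujan phenomenon. Impose one congruence modulo $p^{2}$ (or a bounded power of $p$) for every prime $p\equiv 3\pmod 4$ with $p\le z$; the cost is $\asymp 2\theta(z;4,3)\asymp z$. By Mertens' theorem in arithmetic progressions one has $\prod_{p\le z,\,p\equiv 3\,(4)}\bigl(1-\tfrac1p\bigr)\asymp (\log z)^{-1/2}$ — the exponent is $1/2$, not $1$, because $\varphi(4)=2$ — so however the residues are chosen, these congruences leave $\asymp L/\sqrt{\log z}$ positions of the window unforced; these survivors are essentially the integers in the window that are ``locally sums of two squares'' (in particular the genuine sums of two squares among $1,\dots,L$, of which there are $\asymp L/\sqrt{\log L}$ by Landau's theorem, and which no choice of residues at the already-used primes can touch). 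Each survivor then requires its own fresh prime, at cost at least $2\log\max(z,\#\mathrm{survivors})\gtrsim \log L$, so the mop-up costs $\gtrsim L\log L/\sqrt{\log z}$, and the total $z+L\log L/\sqrt{\log z}$ is $\gtrsim L\sqrt{\log L}$ for every choice of $z$; multi-scale refinements do not help, because the Mertens survival factors over successive ranges telescope to the same $(\log z_{\max})^{-1/2}$. Thus your strategy, even executed optimally, yields $\log m \asymp L\sqrt{\log L}$, i.e.\ gaps $\gg \log\omega_n/\sqrt{\log\log\omega_n}$, and not $c\log\omega_n$. Removing that last $\sqrt{\log\log}$ factor is not a matter of ``delicate sieve/covering estimates'' of the kind you describe: it is the actual content of Richards' theorem, and that idea is absent from the proposal.
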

Now, we are ready to prove Theorem \ref{thm1.2} in the following.
\begin{proof}[\textbf{Proof of Theorem \ref{thm1.2}}]
According to Theorem \ref{thm3.10}, we know that $F_{2}(W(\cdot)):H\rightarrow H$ is globally bounded and globally Lipschitz continuous with the Lipschitz constant $L$. In addition, we have clarified that the eigenvalues of the Stokes operator $A$ satisfies the spectral gap condition \eqref{3.25}, which gives that there exists infinitely many $N$ such that
\begin{equation*}
\lambda_{N+1}-\lambda_{N}>2L.
\end{equation*}
As a consequence, applying Theorem \ref{thm3.4}, we can obtain an $N$-dimensional IM for the ``prepared'' equation \eqref{3.21}. This concludes the proof of Theorem \ref{thm1.2}.
\end{proof}

\section{IMs for the abstract model (\ref{1.12})}\label{sec4}
\noindent

Let $\mathbb{H}$ be a separable Hilbert space. We consider the following abstract model
\begin{equation}\label{4.1}
\partial_{t}u+\mathcal{A}^{1+\alpha}u+\mathcal{A}^{\alpha}\mathcal{F}(u)=g,~~u|_{t=0}=u_{0}\in\mathbb{H},
\end{equation}
with $0<\alpha<1$. Here $\mathcal{F}$ is an operator mapping from $\mathbb{H}$ to $\mathbb{H}$, and $\mathcal{A}: D(\mathcal{A})\rightarrow \mathbb{H}$ is a positive definite and self-adjoint operator with compact inverse in the Hilbert space $\mathbb{H}$ (with the norm $\|\cdot\|$). Then there exists discrete spectrum $\{\lambda_{k}\}_{k\in\mathbb{N}}$ such that
\begin{equation*}
0<\lambda_{1}\leq\lambda_{2}\leq\cdot\cdot\cdot~~\mbox{and}~~\lambda_{k}\rightarrow\infty~~\mbox{as}~~k\rightarrow\infty,
\end{equation*}
and the corresponding eigenvector $\{e_{k}\}_{k=1}^{\infty}$ forms an orthonormal basis of $\mathbb{H}$ with
\begin{align*}
\mathcal{A}e_{k}=\lambda_{k}e_{k}.
\end{align*}
Every $u\in \mathbb{H}$ can thus be presented in the form
\begin{equation*}
u=\sum^{\infty}_{k=1}u_{k}e_{k},~~u_{k}=(u,e_{k}),
\end{equation*}
and, due to the Parseval equality,
\begin{equation*}
\|u\|^{2}=\sum^{\infty}_{k=1}u^{2}_{k}.
\end{equation*}
\par
As the usual way, we can define the space $\mathbb{H}^{s}$ as the domain $D(\mathcal{A}^{s})$ endowed with the norm $\|\cdot\|_{\mathbb{H}^{s}}=\|\mathcal{A}^{s}\cdot\|$, in particular $D(\mathcal{A}^{0})=\mathbb{H}$.
\par
We fix an integer $N$ and denote $P_{N}$ the orthogonal projector onto the space spanned by the first $N$ eigenvectors of $\mathcal{A}$, that is,
$$P_{N}u=\sum_{k\leq N}u_{k}e_{k},~u\in \mathbb{H}.$$
Let $Q_{N}=I-P_{N}$, then
\begin{equation*}
Q_{N}u=\sum_{k> N}u_{k}e_{k}
\end{equation*}
and, for every $u\in \mathbb{H}$ then $u=p+q$ where $p=P_{N}u$ and $q=Q_{N}u$.
\subsection{A spectral gap condition}\label{sec4.1}
\begin{theorem}\label{thm4.1}
Assume that there exists $N\in \mathbb{N}$ such that the spectral gap condition
\begin{align}\label{4.2}
\frac{\lambda^{1+\alpha}_{N+1}-\lambda^{1+\alpha}_{N}}{\lambda^{\alpha}_{N+1}+\lambda^{\alpha}_{N}}> L
\end{align}
holds. Then \eqref{4.1} satisfies the strong cone condition \eqref{2.9} in $H^{-\alpha}$, that is,
\begin{equation*}
\frac{1}{2}\frac{d}{dt}\mathcal{V}(u_{1}(t)-u_{2}(t))+\gamma \mathcal{V}(u_{1}(t)-u_{2}(t))
\leq-\mu\|u_{1}(t)-u_{2}(t)\|_{H^{-\alpha}}^{2},
\end{equation*}
with $\gamma=\frac{\lambda_{N}^{\alpha}\lambda_{N+1}^{\alpha}(\lambda_{N+1}+\lambda_{N})}{\lambda^{\alpha}_{N+1}+\lambda^{\alpha}_{N}}$ and $\mu=\frac{\lambda^{1+\alpha}_{N+1}-\lambda^{1+\alpha}_{N}}{\lambda^{\alpha}_{N+1}+\lambda^{\alpha}_{N}}-L$, where $\mathcal{V}(u):=\mathcal{V}_{N}(u)=\|Q_{N}u\|^{2}_{H^{-\alpha}}-\|P_{N}u\|^{2}_{H^{-\alpha}}$. Then \eqref{4.1} possesses an $N$-dimensional IM in the sense of Definition \ref{def2.4}.
\end{theorem}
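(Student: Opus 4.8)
The plan is to establish the strong cone condition \eqref{2.9} in $H^{-\alpha}$ by a direct energy computation on the difference of two trajectories, and then to quote the abstract transference result (strong cone condition $\Rightarrow$ cone invariance and squeezing $\Rightarrow$ Lipschitz-graph inertial manifold) exactly as in Theorem \ref{thm3.9} and \cite{M-PS88,Z14}. First I would set $v=u_1-u_2$ and $\ell=\mathcal{F}(u_1)-\mathcal{F}(u_2)$, so that $v$ solves $\partial_t v+\mathcal{A}^{1+\alpha}v+\mathcal{A}^{\alpha}\ell=0$, with the Lipschitz bound on $\mathcal{F}$ giving $\|\ell\|\le L\|v\|$. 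Since $\mathcal{V}(v)=(\mathcal{A}^{-2\alpha}(Q_N-P_N)v,v)$ and $\mathcal{A}^{-2\alpha}(Q_N-P_N)$ is self-adjoint and time independent, I would differentiate along the flow, $\tfrac12\tfrac{d}{dt}\mathcal{V}(v)=(\mathcal{A}^{-2\alpha}(Q_N-P_N)v,\partial_t v)$, and substitute the equation to split the result into a diagonal dissipative part $-\sum_{k>N}\lambda_k^{1-\alpha}v_k^2+\sum_{k\le N}\lambda_k^{1-\alpha}v_k^2$ and a nonlinear remainder $R=-(\mathcal{A}^{-\alpha}(Q_N-P_N)v,\ell)$. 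As elsewhere in the paper, these formal manipulations are legitimized by a Galerkin truncation.

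Next I would add $\gamma\mathcal{V}(v)+\mu\|v\|_{H^{-\alpha}}^2$ to $\tfrac12\tfrac{d}{dt}\mathcal{V}(v)$ and seek to show the total is $\le 0$ mode by mode. Because both $\mathcal{V}$ and $\|\cdot\|_{H^{-\alpha}}^2$ carry the weight $\lambda_k^{-2\alpha}$, after clearing it each mode contributes a quantity that is (essentially) monotone in $\lambda_k$, so the only two binding modes are $k=N$ (the top low mode) and $k=N+1$ (the bottom high mode). The value of $\gamma$ is dictated precisely by the requirement that these two critical modes balance against one another: indeed the stated $\gamma$ is characterized by $\gamma(\lambda_N^{-\alpha}+\lambda_{N+1}^{-\alpha})=\lambda_N+\lambda_{N+1}$, the natural $\lambda^{-2\alpha}$-weighted analogue of the arithmetic midpoint $\tfrac12(\lambda_N+\lambda_{N+1})$ that appears in the classical spectral gap argument.

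The hard part is the nonlinear remainder $R$, and this is where the factor $\mathcal{A}^{\alpha}$ in front of $\mathcal{F}$ makes the problem genuinely different from the classical case: the cone is measured in $H^{-\alpha}$ whereas the control $\|\ell\|\le L\|v\|$ lives in $\mathbb{H}=H^{0}$, a mismatch of $\alpha$ derivatives, so $R$ couples the two norms and cannot be absorbed by one symmetric use of Young's inequality. The plan is to split $R=-(\mathcal{A}^{-\alpha}Q_Nv,Q_N\ell)+(\mathcal{A}^{-\alpha}P_Nv,P_N\ell)$, estimate each piece by Cauchy--Schwarz, and then apply Young's inequality with weights tuned to $\lambda_N^{\alpha}$ and $\lambda_{N+1}^{\alpha}$, so that the high-mode part is soaked up by the strong dissipation $-\sum_{k>N}\lambda_k^{1-\alpha}v_k^2$ and the low-mode part by $\gamma\mathcal{V}$. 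The aim is for the two critical-mode inequalities to reduce to $\tfrac{\lambda_{N+1}^{1+\alpha}-\lambda_N^{1+\alpha}}{\lambda_{N+1}^{\alpha}+\lambda_N^{\alpha}}\ge L$, i.e. the gap condition \eqref{4.2}, which is exactly what renders $\mu=\tfrac{\lambda_{N+1}^{1+\alpha}-\lambda_N^{1+\alpha}}{\lambda_{N+1}^{\alpha}+\lambda_N^{\alpha}}-L$ strictly positive. Arranging the weight bookkeeping sharply enough to reach \eqref{4.2}, rather than a strictly stronger gap, is the delicate step I expect to consume most of the work.

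Finally, once \eqref{2.9} holds with $\gamma,\mu>0$, the conclusion follows from the general theory: the strong cone condition yields the cone invariance \eqref{2.10} and the squeezing property \eqref{2.11}, and these in turn produce a Lipschitz map $\Phi:P_N\mathbb{H}\to Q_N\mathbb{H}$ whose graph is the exponentially attracting, positively invariant $N$-dimensional manifold of Definition \ref{def2.4}. I would invoke this last step as in Theorem \ref{thm3.9}(ii) and \cite{Z14}, noting that well-posedness of \eqref{4.1} is what allows one to speak of the semigroup $S(t)$ throughout.
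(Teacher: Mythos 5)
Your overall skeleton (energy identity for the cone functional, then the abstract cone-invariance/squeezing machinery of Theorem \ref{thm3.9}, Proposition \ref{pro4.4} and Theorem \ref{thm4.5}) is the same as the paper's, but you have missed the one idea that makes the stated constants come out, and the workaround you propose in its place cannot succeed. The paper reads $H^{-\alpha}$ in the Section \ref{sec2} scale, $\|u\|_{H^{-\alpha}}^{2}=(\mathcal{A}^{-\alpha}u,u)=\sum_{k}\lambda_{k}^{-\alpha}u_{k}^{2}$, so that $\mathcal{V}(v)=(\mathcal{A}^{-\alpha}(Q_{N}-P_{N})v,v)$; differentiating then amounts to testing the difference equation with $\mathcal{A}^{-\alpha}p$ and $\mathcal{A}^{-\alpha}q$ (the paper's \eqref{6.2}), and the crucial point is that $(\mathcal{A}^{\alpha}\ell,\mathcal{A}^{-\alpha}(p-q))=(\ell,p-q)$ with $\ell=\mathcal{F}(u_{1})-\mathcal{F}(u_{2})$: the factor $\mathcal{A}^{\alpha}$ in front of the nonlinearity is annihilated exactly, and a single Cauchy--Schwarz gives $|(\ell,p-q)|\leq L\|v\|^{2}$. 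The dissipative part is handled mode by mode via monotonicity of $x\mapsto x-\gamma x^{-\alpha}$, yielding the margins $\lambda_{N+1}-\gamma\lambda_{N+1}^{-\alpha}=\gamma\lambda_{N}^{-\alpha}-\lambda_{N}=\frac{\lambda_{N+1}^{1+\alpha}-\lambda_{N}^{1+\alpha}}{\lambda_{N}^{\alpha}+\lambda_{N+1}^{\alpha}}$ measured in the \emph{unweighted} norms $\|p\|^{2},\|q\|^{2}$, which combine directly with $L\|v\|^{2}$ to give exactly the stated $\mu$. Your identity $\gamma(\lambda_{N}^{-\alpha}+\lambda_{N+1}^{-\alpha})=\lambda_{N}+\lambda_{N+1}$ is precisely this balance for the weight $\lambda^{-\alpha}$, not for $\lambda^{-2\alpha}$: with the right weight there is no ``mismatch of $\alpha$ derivatives'' at all, and the step you expect to consume most of the work disappears.

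By contrast, your setup with $\mathcal{V}(v)=(\mathcal{A}^{-2\alpha}(Q_{N}-P_{N})v,v)$ creates the mismatch, and no choice of mode-dependent Young weights can then recover the sharp gap \eqref{4.2}. In your convention the remainder is $R=-(\mathcal{A}^{-\alpha}(Q_{N}-P_{N})v,\ell)$, whose only available bound is $|R|\leq L\|\mathcal{A}^{-\alpha}v\|\,\|v\|$; testing the target inequality on two-mode vectors $v=v_{N}e_{N}+v_{N+1}e_{N+1}$ and writing $x=\lambda_{N}^{-2\alpha}v_{N}^{2}$, $y=\lambda_{N+1}^{-2\alpha}v_{N+1}^{2}$, $P=\gamma-\lambda_{N}^{1+\alpha}-\mu$, $Q=\lambda_{N+1}^{1+\alpha}-\gamma-\mu$, it becomes $(Px+Qy)^{2}\geq L^{2}(x+y)(\lambda_{N}^{2\alpha}x+\lambda_{N+1}^{2\alpha}y)$ for all $x,y\geq0$. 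The pure modes force $P\geq L\lambda_{N}^{\alpha}$ and $Q\geq L\lambda_{N+1}^{\alpha}$; at these threshold values the $x^{2}$ and $y^{2}$ terms cancel and the $xy$ term requires $2\lambda_{N}^{\alpha}\lambda_{N+1}^{\alpha}\geq\lambda_{N}^{2\alpha}+\lambda_{N+1}^{2\alpha}$, i.e. $(\lambda_{N+1}^{\alpha}-\lambda_{N}^{\alpha})^{2}\leq0$, which fails whenever $\lambda_{N}<\lambda_{N+1}$. Hence your route necessarily demands a gap strictly larger than $L(\lambda_{N}^{\alpha}+\lambda_{N+1}^{\alpha})$ (a symmetric Young gives, e.g., $\sqrt{2}\,L(\lambda_{N}^{2\alpha}+\lambda_{N+1}^{2\alpha})^{1/2}$), so the ``delicate step'' you flag is a genuine obstruction, not bookkeeping. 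Once you switch to the $\lambda^{-\alpha}$ weight, the cone inequality is a few lines, and your concluding step (strong cone condition $\Rightarrow$ cone invariance and squeezing $\Rightarrow$ inertial manifold) matches the paper; note only that the final passage $-\mu\|v\|^{2}\leq-\mu\|v\|_{H^{-\alpha}}^{2}$ uses that the embedding constant of $\mathbb{H}\hookrightarrow H^{-\alpha}$ can be taken equal to $1$, as the paper remarks at the end of its proof.
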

\par
The sharp spectral gap condition \eqref{4.2} means that there exists globally Lipschitz nonlinearity $\mathcal{F}$ such that \eqref{4.1} does not have an IM if \eqref{4.2} is invalid (see, e.g. \cite{Z14}). One can refers to \cite{M91,R94} for more detail on the sharp spectral gap condition.
The proof of Theorem \ref{thm4.1} is similar to the argument given in \cite{Z14,GG18} (one also see \cite{M91,R94} and reference therein). For the sake of completeness, we will prove it in the Appendix. We mainly focus on the following situations.
\subsection{The spatial averaging scheme}\label{sec4.2}
\noindent

To construct an IM for the 3D hyperviscous NSEs in next section, we extend slightly the SAM to the abstract model \eqref{4.1}. In this subsection, the specific goal is to give the proof of Theorem \ref{thm1.3}, that is, we prove that ($\ref{4.1}$) possesses a $N$-dimensional IM  when the SAC \eqref{1.13} is assumed. To state the precise result, we define the following projectors. Let $N\in\mathbb{N}$ and $k>0$ be such that $k<\lambda_{N}$. Define
\begin{equation}\label{4.3}
\left\{
  \begin{aligned}
    &P_{k,N}u:=\sum_{\lambda_{j}<\lambda_{N}-k}(u,e_{j})e_{j},\\
    &R_{k,N}u:=\sum_{\lambda_{N}-k\leq\lambda_{j}\leq\lambda_{N}+k}(u,e_{j})e_{j},\\
    &Q_{k,N}u:=\sum_{\lambda_{j}>\lambda_{N}+k}(u,e_{j})e_{j},
  \end{aligned}
\right.
\end{equation}
for any $u\in \mathbb{H}$.
\par
The idea of the proof is that if the spectral gap condition (\ref{4.2}) is not fulfilled, but the SAC (\ref{1.13}) is valid, then a strong cone condition can be obtained.
\begin{theorem}\label{thm4.2}
For the abstract problem \eqref{4.1} in the Hilbert space $\mathbb{H}$. Let the assumption of Theorem \ref{thm1.3} be valid. Then the following strong cone condition is valid,
\begin{equation*}
\frac{d}{dt}V(t)+\left(\lambda^{1+\alpha}_{N+1}+\lambda^{1+\alpha}_{N}\right)V(t)
\leq-\frac{(1+\alpha)\lambda^{\alpha}_{N}}{4}\|v(t)\|^{2},~~\mbox{for all}~~t\geq 0.
\end{equation*}
\end{theorem}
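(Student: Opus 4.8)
The plan is to derive the asserted strong cone condition from an energy identity for the quadratic form $V(t)=\|Q_{N}v(t)\|^{2}-\|P_{N}v(t)\|^{2}$ taken in $\mathbb{H}$ itself, where $v=u_{1}-u_{2}$. Subtracting the two copies of \eqref{4.1} and invoking Gateaux differentiability, I would represent $\mathcal{F}(u_{1})-\mathcal{F}(u_{2})=l(t)v$ with $l(t)=\int_{0}^{1}\mathcal{F}'(su_{1}+(1-s)u_{2})\,ds$, so that $\|l(t)\|_{\mathcal{L}(\mathbb{H},\mathbb{H})}\le L$ and, averaging \eqref{1.13}, $\|R_{k,N}l(t)R_{k,N}v\|\le\delta\|R_{k,N}v\|$. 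Differentiating $V$ and using that $\mathcal{A}^{\alpha}$ commutes with every spectral projector gives
\begin{equation*}
\tfrac{d}{dt}V+\big(\lambda_{N+1}^{1+\alpha}+\lambda_{N}^{1+\alpha}\big)V
=\Lambda(v)-2\big(\mathcal{A}^{\alpha}(Q_{N}-P_{N})v,\,l v\big),
\end{equation*}
where $\Lambda(v)=\sum_{j}c_{j}|v_{j}|^{2}$ is the purely linear part, with $c_{j}=2\lambda_{j}^{1+\alpha}-(\lambda_{N+1}^{1+\alpha}+\lambda_{N}^{1+\alpha})$ on the modes $\lambda_{j}\le\lambda_{N}$ and $c_{j}=(\lambda_{N+1}^{1+\alpha}+\lambda_{N}^{1+\alpha})-2\lambda_{j}^{1+\alpha}$ on the modes $\lambda_{j}\ge\lambda_{N+1}$.

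The first key step is a mode-by-mode estimate of $\Lambda$. Using the convexity of $t\mapsto t^{1+\alpha}$ together with $\lambda_{N+1}-\lambda_{N}\ge1$, I would show $c_{j}\le-(1+\alpha)\lambda_{N}^{\alpha}$ for \emph{every} $j$ — the extremal modes being exactly those at $\lambda_{N}$ and $\lambda_{N+1}$ — so that $\Lambda(v)\le-(1+\alpha)\lambda_{N}^{\alpha}\|v\|^{2}$. Crucially, on the low band $P_{k,N}$ (where $\lambda_{j}<\lambda_{N}-k$) and the high band $Q_{k,N}$ (where $\lambda_{j}>\lambda_{N}+k$) the same computation yields a much larger surplus, of order $-(1+\alpha)\lambda_{N}^{\alpha}k$; here I use $\lambda_{N}\ge e^{60L^{2}/\hbar}$ and $k\ge\hbar\log\lambda_{N}$, which force $k\ge60L^{2}$ (hence $k\gg L$). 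This surplus is what will dominate the Lipschitz constant away from the spectral gap.

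The second step is to bound the cross term. I would insert $I=P_{k,N}+R_{k,N}+Q_{k,N}$ both in front of $lv$ and on $(Q_{N}-P_{N})v$, splitting it into nine blocks. The single middle block involving $R_{k,N}lR_{k,N}v$ is controlled directly by the averaged SAC: on $R_{k,N}$ one has $\lambda_{j}\in[\tfrac12\lambda_{N},\tfrac32\lambda_{N}]$ (since $k<\lambda_{N}/2$), so $\mathcal{A}^{\alpha}$ is comparable to $\lambda_{N}^{\alpha}$, and with $\delta\le\tfrac1{30}$ this block contributes at most a small multiple of $\lambda_{N}^{\alpha}\|R_{k,N}v\|^{2}$, swallowed by the $R_{k,N}$ part of $\Lambda$. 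Each of the remaining eight blocks touches $P_{k,N}$ or $Q_{k,N}$ on input or output; I would estimate these using $\|l\|\le L$ and absorb them by Young's inequality against the large surplus of $\Lambda$ on $P_{k,N},Q_{k,N}$, the factor $L^{2}/k\le1/60$ rendering the leftover on $R_{k,N}$ negligible.

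The delicate point, and the main obstacle, is the absorption of the blocks whose output lands in $Q_{k,N}$, where $\mathcal{A}^{\alpha}$ is unbounded and cannot be pulled out as a constant. The remedy is to keep the genuine dissipation $\|\mathcal{A}^{(1+\alpha)/2}Q_{k,N}v\|^{2}$ intact and factor $\mathcal{A}^{\alpha}=\mathcal{A}^{(1+\alpha)/2}\cdot\mathcal{A}^{(\alpha-1)/2}$, so that the negative power $\mathcal{A}^{(\alpha-1)/2}$ — bounded by $(\lambda_{N}+k)^{(\alpha-1)/2}$ on $Q_{k,N}$ — lands on the nonlinearity; a weighted Young inequality then charges half of the high-mode dissipation and leaves only a $(\lambda_{N}+k)^{\alpha-1}L^{2}$-small remainder. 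Carrying out this bookkeeping and collecting the reserved share $(1+\alpha)\lambda_{N}^{\alpha}$ of $\Lambda$ against the small nonlinear leftovers, all constants combine to leave precisely
\begin{equation*}
\tfrac{d}{dt}V+\big(\lambda_{N+1}^{1+\alpha}+\lambda_{N}^{1+\alpha}\big)V\le-\tfrac{(1+\alpha)\lambda_{N}^{\alpha}}{4}\|v\|^{2},
\end{equation*}
as claimed; the numerology ($60L^{2}/\hbar$, $\delta\le\tfrac1{30}$, and the factor $\tfrac14$) is exactly what makes every absorption succeed with room to spare.
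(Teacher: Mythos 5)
Your overall scheme --- the energy identity for $V$, the mode-by-mode negative bound on the linear form $\Lambda$ with an extra surplus of order $(1+\alpha)k\lambda_{N}^{\alpha}$ on the bands $P_{k,N}$ and $Q_{k,N}$, and the band-by-band splitting of the cross term with the averaged SAC controlling the middle block --- is sound and runs parallel to the paper's own proof (Lemmas \ref{lem6.1}--\ref{lem6.6}). The genuine gap is at the very step you single out as delicate: the blocks whose output lies in $Q_{k,N}$. There you factor $\mathcal{A}^{\alpha}=\mathcal{A}^{(1+\alpha)/2}\mathcal{A}^{(\alpha-1)/2}$ and let a weighted Young inequality ``charge half of the high-mode dissipation'', leaving a remainder of size $(\lambda_{N}+k)^{\alpha-1}L^{2}\|v\|^{2}$. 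That weighting is inadmissible. If a fraction $\epsilon\in(0,2)$ of the dissipation $2\|\mathcal{A}^{(1+\alpha)/2}Q_{k,N}v\|^{2}$ is spent on the absorption, what remains of the linear part on the high band has, on a mode $\lambda_{j}$ just above $\lambda_{N}+k$, the coefficient
\begin{equation*}
\left(\lambda_{N+1}^{1+\alpha}+\lambda_{N}^{1+\alpha}\right)-(2-\epsilon)\lambda_{j}^{1+\alpha}
\approx\left(\lambda_{N+1}^{1+\alpha}+\lambda_{N}^{1+\alpha}\right)-(2-\epsilon)(\lambda_{N}+k)^{1+\alpha}.
\end{equation*}
In the regime the hypotheses permit (and the one actually used for the NSE application), $k$ can be as small as $\hbar\log\lambda_{N}\ll\lambda_{N}$, so $(\lambda_{N}+k)^{1+\alpha}\approx\lambda_{N}^{1+\alpha}$ while $\lambda_{N+1}^{1+\alpha}+\lambda_{N}^{1+\alpha}\approx 2\lambda_{N}^{1+\alpha}$; with $\epsilon=1$ the coefficient is $\approx\lambda_{N}^{1+\alpha}>0$, i.e.\ a positive term of order $\lambda_{N}^{1+\alpha}\|Q_{k,N}v\|^{2}$ appears on the right-hand side that nothing in your budget can absorb, since the whole surplus of $\Lambda$ on $Q_{k,N}$ is only of order $k\lambda_{N}^{\alpha}\|Q_{k,N}v\|^{2}$. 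The true constraint is $\epsilon=O(k/\lambda_{N})$: only an asymptotically vanishing fraction of the high-band dissipation may be diverted.

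With the correct fraction the argument does close, and this is exactly the paper's device: in Lemma \ref{lem6.4} only the portion $\frac{\hbar\log\lambda_{N}}{\lambda_{N}}\|Q_{k,N}\mathcal{A}^{(1+\alpha)/2}v\|^{2}\geq\frac{\hbar\log\lambda_{N}}{\lambda_{N}^{\alpha}}\|Q_{k,N}\mathcal{A}^{\alpha}v\|^{2}$ of the dissipation is reserved (estimate \eqref{6.28}), the nonnegativity of what is left of the high-band linear part is verified in the claim \eqref{6.29} --- which is precisely where the hypothesis $\lambda_{N}\geq e^{60L^{2}/\hbar}$ is consumed --- and the Young steps \eqref{6.43}--\eqref{6.44} then produce remainders of size $\frac{2L^{2}\lambda_{N}^{\alpha}}{\hbar\log\lambda_{N}}\|v\|^{2}\leq\frac{1}{30}\lambda_{N}^{\alpha}\|v\|^{2}$. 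Note that after this repair the leftover is of order $L^{2}\lambda_{N}^{\alpha}/(\hbar\log\lambda_{N})$, not the $(\lambda_{N}+k)^{\alpha-1}L^{2}$ you claim; your numerology is an artifact of the inadmissible $O(1)$ weight. So the proposal as written fails at the $Q_{k,N}$ absorption and must be repaired with an $O(k/\lambda_{N})$-weighted reserve (equivalently, the paper's $\hbar\log\lambda_{N}/\lambda_{N}$ reserve); the remaining steps are fine.
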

\par
The proof of Theorem \ref{thm4.2} is similar to the argument in \cite{GG18,Z14}, for the reader's convenience, in Appendix \ref{sec6.1} we prove a strong cone condition for our problem provided that the SAC \eqref{1.13} is satisfied.
\begin{remark}\label{rem4.3}
As proved by Zelik \cite{Z14}, the result of Theorem \ref{thm4.2} can be extend to the more general SAC
\begin{equation*}
\|R_{k,N}\mathcal{F}'(u)R_{k,N}v-a(u)R_{k,N}v\|\leq\delta\|v\|,~~\mbox{for all}~~u,v\in \mathbb{H},
\end{equation*}
for some $\delta\leq\frac{1}{30}$, where $a(u)\in \mathbb{R}$ is a scalar depending continuously on $u\in \mathbb{H}$. The precise statement and proof are shown in the Appendix (see Section \ref{sec6.2}).
\end{remark}
\begin{proposition}\label{pro4.4}
Consider the abstract equation \eqref{4.1}and let $g\in \mathbb{H}$. Assume that $\mathcal{F}: \mathbb{H}\rightarrow \mathbb{H}$ is globally Lipschitz continuous with Lipschitz constant $L$. Suppose that the strong cone condition \eqref{2.9} holds for some $\gamma,~\mu>0$. Then, the cone invariance $(\ref{2.10})$ and squeezing property $(\ref{2.11})$ are valid.
\end{proposition}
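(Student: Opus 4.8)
The plan is to fix two initial data $\xi_1,\xi_2\in\mathbb{H}$, set $v(t):=S(t)\xi_1-S(t)\xi_2$, and observe that $v$ solves the difference equation
\begin{equation*}
\partial_t v+\mathcal{A}^{1+\alpha}v+\mathcal{A}^{\alpha}\bigl(\mathcal{F}(u_1)-\mathcal{F}(u_2)\bigr)=0,\qquad u_i=S(\cdot)\xi_i,
\end{equation*}
where the forcing $g$ cancels because both trajectories solve \eqref{4.1} with the same $g$. The entire argument is driven by the scalar function $t\mapsto V(v(t))$, which by the strong cone condition \eqref{2.9} obeys $\frac{d}{dt}V(v)+2\gamma V(v)\le-2\mu\|v\|^2$.

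First I would prove the cone invariance \eqref{2.10}. Since the right-hand side $-2\mu\|v\|^2$ is nonpositive, multiplying by the integrating factor $e^{2\gamma t}$ gives $\frac{d}{dt}\bigl(e^{2\gamma t}V(v(t))\bigr)\le0$, hence $V(v(t))\le e^{-2\gamma t}V(v(0))$ for all $t\ge0$. Thus $v(0)=\xi_1-\xi_2\in K^{+}$ (that is, $V(v(0))\le0$) forces $V(v(t))\le0$, i.e. $S(t)\xi_1-S(t)\xi_2\in K^{+}$, for every $t\ge0$, which is precisely \eqref{2.10}.

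For the squeezing property \eqref{2.11} I would first extract a dichotomy from the cone invariance. Assume $v(T)\notin K^{+}$, i.e. $V(v(T))>0$. If $V(v(t_0))\le0$ held for some $t_0\in[0,T)$, then restarting the semigroup at $t_0$ and applying the cone invariance just proved would give $V(v(T))\le0$, a contradiction; hence $V(v(t))>0$ on the whole interval $[0,T]$. On this interval $\|P_N v(t)\|<\|Q_N v(t)\|$, so $\|v(t)\|^2<2\|Q_N v(t)\|^2$. It then remains to turn this concentration in the high modes into genuine exponential decay of $\|v\|$, which I would do by testing the difference equation against $v$ in $\mathbb{H}$:
\begin{equation*}
\tfrac12\tfrac{d}{dt}\|v\|^2+\|\mathcal{A}^{(1+\alpha)/2}v\|^2=-\bigl(\mathcal{F}(u_1)-\mathcal{F}(u_2),\mathcal{A}^{\alpha}v\bigr).
\end{equation*}
Using $\|\mathcal{F}(u_1)-\mathcal{F}(u_2)\|\le L\|v\|$, the interpolation $\|\mathcal{A}^{\alpha}v\|\le\|\mathcal{A}^{(1+\alpha)/2}v\|^{\frac{2\alpha}{1+\alpha}}\|v\|^{\frac{1-\alpha}{1+\alpha}}$ (valid since $0<\alpha<1$), and Young's inequality to absorb a factor $\tfrac12\|\mathcal{A}^{(1+\alpha)/2}v\|^2$, I would reach
\begin{equation*}
\tfrac12\tfrac{d}{dt}\|v\|^2+\tfrac12\|\mathcal{A}^{(1+\alpha)/2}v\|^2\le C_L\|v\|^2 .
\end{equation*}
Since $V(v)>0$ pushes the solution into the high modes, $\|\mathcal{A}^{(1+\alpha)/2}v\|^2\ge\lambda_{N+1}^{1+\alpha}\|Q_N v\|^2\ge\tfrac12\lambda_{N+1}^{1+\alpha}\|v\|^2$, so for $N$ large enough that $\tfrac14\lambda_{N+1}^{1+\alpha}\ge C_L+\vartheta$ the Gronwall lemma yields $\|v(t)\|\le e^{-\vartheta t}\|v(0)\|$ on $[0,T]$, which is \eqref{2.11} with $C=1$.

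The cone invariance and the dichotomy are routine and follow immediately from \eqref{2.9}. The main obstacle is the final step of the squeezing: the indefinite form $V$ controls $\|v\|$ only from above, never from below, so the decay of $V$ by itself cannot deliver decay of $\|v\|$. One is forced back to the equation, and the delicate point is to balance the Lipschitz nonlinearity, which carries the unbounded factor $\mathcal{A}^{\alpha}$, against the dissipation $\mathcal{A}^{1+\alpha}$ by interpolation, and to check that the largeness of $\lambda_{N+1}$ built into the hypotheses of Theorem~\ref{thm1.3} makes the coefficient of $\|v\|^2$ negative. The same computation carries over verbatim with the weighted form $\mathcal{V}$ and the $H^{-\alpha}$ norm employed in Theorems~\ref{thm4.1}--\ref{thm4.2}.
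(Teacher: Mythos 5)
Your cone invariance argument is correct (indeed cleaner than the paper's pointwise version), and your dichotomy step matches the paper's. The gap is in the squeezing property. Your energy estimate via interpolation and Young is fine as far as it goes, but the final Gronwall step requires $\tfrac14\lambda_{N+1}^{1+\alpha}\geq C_L+\vartheta$, i.e.\ a largeness assumption on $\lambda_{N+1}$ relative to the Lipschitz constant. Proposition \ref{pro4.4} assumes only that the strong cone condition \eqref{2.9} holds for \emph{some} $\gamma,\mu>0$; no relation between $\lambda_{N+1}$ and $L$ is available, and $N$ is fixed by the quadratic form $V=V_{N}$, so you cannot ``take $N$ large''. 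You acknowledge importing this largeness from the hypotheses of Theorem \ref{thm1.3}, but the proposition is stated (and used) as a self-contained consequence of the strong cone condition, so your argument proves a strictly weaker statement. A symptom of the problem: in your scheme the strong cone condition enters the squeezing step only through the dichotomy, so the quantitative content of \eqref{2.9} --- the term $-\mu\|v\|^{2}$ --- is never used, and it is exactly this term that must replace the spectral largeness.

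The paper closes the gap with a perturbation of the cone functional, which is the idea missing from your proposal. From the equation one gets the crude growth bound $\frac{d}{dt}\|v\|^{2}\leq L^{2}\|v\|^{2}$ (Lipschitz bound plus $\|\mathcal{A}^{\alpha}v\|\leq\|\mathcal{A}^{(1+\alpha)/2}v\|$, valid since $\alpha<1$; no largeness is needed because the dissipation is simply dropped). Setting $V_{\epsilon}(v):=V(v)+\epsilon\|v\|^{2}$ and adding $\epsilon$ times this growth bound to the strong cone inequality gives
\begin{equation*}
\frac{d}{dt}V_{\epsilon}(v(t))+\gamma V_{\epsilon}(v(t))
\leq\left[\epsilon\left(\gamma+L^{2}\right)-\mu\right]\|v(t)\|^{2}\leq 0
\end{equation*}
once $\epsilon\leq\mu/(\gamma+L^{2})$; the $-\mu\|v\|^{2}$ of \eqref{2.9} is precisely what absorbs the growth. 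Hence $V_{\epsilon}(v(t))\leq e^{-\gamma t}V_{\epsilon}(v(0))$. On the interval $[0,T]$ where the dichotomy gives $V(v(t))\geq 0$, one has $\epsilon\|v(t)\|^{2}\leq V_{\epsilon}(v(t))$ and $V_{\epsilon}(v(0))\leq(1+\epsilon)\|v(0)\|^{2}$, so $\|v(t)\|^{2}\leq(1+\epsilon^{-1})e^{-\gamma t}\|v(0)\|^{2}$, which is \eqref{2.11} with $C=(1+\epsilon^{-1})^{1/2}$ and $\vartheta=\gamma/2$. This resolves exactly the obstacle you yourself identified ($V$ controls $\|v\|$ only from above) without any spectral assumption: the remedy is to add a small multiple of $\|v\|^{2}$ to $V$, not to return to the dissipation of the equation.
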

\par
The ideal of proof follows Zelik \cite{Z14}, we write a detail proof in the Appendix (see Section \ref{sec6.1}). In the following, we will give an crucial result about the existence of IMs for problem \eqref{4.1}, which will be used to the problem \eqref{2.4}.
\begin{theorem}\label{thm4.5}
Let $g\in \mathbb{H}$. Assume that $\mathcal{F}: \mathbb{H}\rightarrow \mathbb{H}$ is globally Lipschitz continuous with Lipschitz constant $L>0$ and $\|A^{-3/8}\mathcal{F}(u)\|\leq C$ for any $u\in \mathbb{H}$. If the solution semigroup $S(t)$ satisfies the cone invariance $(\ref{2.10})$ and squeezing property $(\ref{2.11})$, then problem \eqref{4.1} possesses an $N$-dimensional inertial manifold $\mathcal{M}$ in sense of Definition \ref{def2.4}.
\end{theorem}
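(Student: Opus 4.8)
The plan is to realize the inertial manifold as a graph $\mathcal{M}=\{p+\Phi(p):p\in P_N\mathbb{H}\}$ of a globally Lipschitz map $\Phi:P_N\mathbb{H}\to Q_N\mathbb{H}$, taking the cone invariance \eqref{2.10} and the squeezing property \eqref{2.11} as the only dynamical inputs; the auxiliary bound $\|\mathcal{A}^{-3/8}\mathcal{F}(u)\|\le C$ serves only to furnish a bounded absorbing set and the compactness used in the limiting arguments. First I would record dissipativity: testing \eqref{4.1} against a suitable power of $\mathcal{A}$ and combining $\|\mathcal{A}^{-3/8}\mathcal{F}(u)\|\le C$ with the Lipschitz bound on $\mathcal{F}$ produces a dissipative estimate, hence a bounded absorbing ball $\mathscr{B}$ and an asymptotically compact semigroup $S(t)$ on $\mathbb{H}$. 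This is what confines the backward-in-time families built below to a fixed bounded set and permits extraction of convergent subsequences.

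Next I would define $\mathcal{M}$ as the set of all $u_0\in\mathbb{H}$ admitting a complete trajectory $u(t)$, $t\le 0$, that stays in $\mathscr{B}$, and build $\Phi$ concretely by a backward-approximation (Lyapunov--Perron type) scheme: for fixed $p_0\in P_N\mathbb{H}$ let $u_n$ solve \eqref{4.1} on $[-n,0]$ with $P_N u_n(0)=p_0$ and $u_n(-n)\in\mathscr{B}$, and set $\Phi(p_0)=\lim_{n\to\infty}Q_N u_n(0)$. The squeezing property \eqref{2.11} makes $\{Q_N u_n(0)\}_n$ Cauchy and shows the limit is independent of the approximating data, while the absorbing ball together with the $\mathcal{A}^{-3/8}$-smoothing supplies the compactness needed to pass to the limit. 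Forward invariance $S(t)\mathcal{M}\subseteq\mathcal{M}$ is then immediate, since time-shifting a complete bounded trajectory yields another one; this gives (ii) of Definition~\ref{def2.4}.

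The one-Lipschitz property of $\Phi$ then follows from the cone structure. For two points of $\mathcal{M}$ with complete bounded trajectories $u_1,u_2$, the difference $\xi(t)=u_1(t)-u_2(t)$ must stay in $K^+$ for all $t$: if it left $K^+$ at some time, the squeezing property \eqref{2.11} on shifted intervals would force $\|\xi(t)\|$ to grow exponentially as $t\to-\infty$, contradicting boundedness. Evaluating $\xi(0)\in K^+$ gives $\|Q_N(u_1(0)-u_2(0))\|\le\|P_N(u_1(0)-u_2(0))\|$; with $P_N u_1(0)=P_N u_2(0)$ this yields single-valuedness of the graph, and in general it yields $\|\Phi(p_1)-\Phi(p_2)\|\le\|p_1-p_2\|$, establishing (i) of Definition~\ref{def2.4}.

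The main obstacle is the exponential attraction, condition (iii) of Definition~\ref{def2.4} (asymptotic completeness): given arbitrary $u_0\in\mathbb{H}$ I must exhibit $v_0\in\mathcal{M}$ with $\|S(t)u_0-S(t)v_0\|\le Ce^{-\omega t}\|u_0-v_0\|$. The mechanism is again the cone--squeezing dichotomy: for each $v_0\in\mathcal{M}$ the difference $S(t)v_0-S(t)u_0$ either remains in $K^+$ for all $t\ge 0$ or leaves $K^+$ and thereafter decays exponentially by \eqref{2.11}. Exploiting that $\mathcal{M}$ is a graph over $P_N\mathbb{H}\cong\mathbb{R}^N$, I would run a finite-dimensional degree/connectedness argument on the map $v_0\mapsto P_N\big(S(T)u_0-S(T)v_0\big)$ to produce, for each large $T$, a point $v_0^T\in\mathcal{M}$ whose difference from $u_0$ sits on $\partial K^+$ at time $T$; letting $T\to\infty$ and using the absorbing-set compactness yields the tracking point $v_0$, with rate $\omega$ inherited from $\vartheta$ in \eqref{2.11} (equivalently from $\gamma,\mu$ in the strong cone condition). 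The delicate issues are the continuity and properness that make the topological argument run, for which the smoothing bound $\|\mathcal{A}^{-3/8}\mathcal{F}(u)\|\le C$ is essential, and this is where I expect the bulk of the technical work to lie.
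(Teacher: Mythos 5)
Your overall architecture (backward boundary-value approximation, squeezing to make the approximants Cauchy, the cone to get a $1$-Lipschitz graph, and the cone--squeezing dichotomy for tracking) matches the paper's proof, but there is a genuine gap at the foundation of the construction: you never prove that the backward problems you write down are solvable. Your scheme takes, for each $p_{0}\in P_{N}\mathbb{H}$, a solution $u_{n}$ of \eqref{4.1} on $[-n,0]$ with $P_{N}u_{n}(0)=p_{0}$ and $u_{n}(-n)\in\mathscr{B}$; this is a two-point boundary value problem, and its solvability for \emph{every} $p_{0}$ is exactly the statement that the manifold you are building is a graph over all of $P_{N}\mathbb{H}$ --- which Definition \ref{def2.4}(i) requires, and which your own tracking argument invokes (``exploiting that $\mathcal{M}$ is a graph over $P_{N}\mathbb{H}$''). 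This is not automatic: forward well-posedness says nothing about hitting a prescribed $P_{N}$-projection at time $0$ starting from time $-n$. The paper's Step 1 is devoted precisely to this point: cone invariance \eqref{2.10} shows that the finite-dimensional map $G_{T}(v)=P_{N}S(T)v$, $v\in P_{N}\mathbb{H}$, is injective with a Lipschitz inverse on its image; Brouwer's invariance of domain then makes the image open, the Lipschitz inverse estimate makes it closed, and connectedness of $P_{N}\mathbb{H}$ gives surjectivity, so the boundary value problem $P_{N}u(0)=u_{0}^{+}$, $Q_{N}u(-T)=0$ is uniquely solvable. Without this step (or an equivalent topological argument placed here, rather than only at the tracking stage), $\Phi$ is defined merely on some unidentified subset of $P_{N}\mathbb{H}$ and the remainder of the proof collapses.

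A secondary point: once that solvability is established, the exponential attraction does not require the new degree/connectedness argument that you anticipate as ``the bulk of the technical work.'' Since the constructed family of complete trajectories is strictly invariant and is a graph over $P_{N}\mathbb{H}$ at every time, for each $T$ one simply picks the manifold trajectory $u_{T}$ with $P_{N}u_{T}(T)=P_{N}u(T)$; the difference then has vanishing $P_{N}$-part at time $T$, hence lies outside $K^{+}$, so the squeezing property \eqref{2.11} applies on $[0,T]$, and one passes to the limit $T\to\infty$ using boundedness of $u_{T}(0)$ and finite-dimensionality of $P_{N}\mathbb{H}$. Note also that your formulation --- choosing $v_{0}^{T}$ so that the difference ``sits on $\partial K^{+}$ at time $T$'' --- would not trigger \eqref{2.11} as stated, because $\partial K^{+}\subset K^{+}$ (the cone is $\{V\leq 0\}$) and squeezing requires the difference to leave $K^{+}$; the correct choice is vanishing $P_{N}$-part, i.e.\ a difference strictly outside the cone.
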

\par
The proof that the cone invariance $(\ref{2.10})$ and squeezing property $(\ref{2.11})$ leads to an IM for the problem \eqref{2.4} is similar to the classical case, we employ the method of Zelik \cite{KZ15}, a detail construction can be found in the Appendix (see Section \ref{sec6.1}) for our problem.
\begin{proof}[\textbf{Proof of Theorem \ref{thm1.3}}]
Combining Theorem \ref{thm4.2} and Proposition \ref{pro4.4} with Theorem \ref{thm4.5}, we can conclude that Theorem \ref{thm1.3} is valid.
\end{proof}
\begin{remark}\label{rem4.6}
When \eqref{4.2} is invalid for any $N\in\mathbb{N}$, then
\begin{equation*}
\frac{\lambda^{1+\alpha}_{N+1}-\lambda^{1+\alpha}_{N}}{\lambda^{\alpha}_{N+1}+\lambda^{\alpha}_{N}}\leq L,~~N\in\mathbb{N},
\end{equation*}
which implies from the elementary inequality that
\begin{equation*}
\lambda_{N+1}-\lambda_{N}\leq\frac{\lambda^{\alpha}_{N+1}+\lambda^{\alpha}_{N}}{(1+\alpha)\lambda^{\alpha}_{N}}\cdot L
\rightarrow \frac{2L}{1+\alpha}~(<2L),~~\text{as}~\lambda_{N}\rightarrow\infty.
\end{equation*}
Thus there exists $N\in\mathbb{N}$ such that $\lambda_{N+1}-\lambda_{N}\leq 2L$. Based the above analysis, the assumption $\lambda_{N+1}-\lambda_{N}\leq\frac{\lambda^{\alpha}_{N+1}+\lambda^{\alpha}_{N}}{(1+\alpha)\lambda^{\alpha}_{N}}\cdot L$ is replaced by $\lambda_{N+1}-\lambda_{N}\leq 2L$ in Theorem \ref{thm1.3}.
\end{remark}

\section{IMs for the 3D hyperviscous NSEs}\label{sec5}
\noindent

In this section, we devote to prove Theorem \ref{thm1.4}. Specifically, apply the abstract results established in Section \ref{sec4}, we consider the existence of IMs for the 3D incompressible hyperviscous NSEs
\begin{equation}\label{5.1}
\begin{cases}
\partial_{t}u+\nu(-\Delta)^{5/4} u+(u\cdot\nabla)u+\nabla p=f(x),~(t,x)\in\mathbb{R}_{+}\times\mathbb{T}^{3},\\
\nabla\cdot u=0,\\
u(0,x)=u_{0}(x).
\end{cases}
\end{equation}
\subsection{A priori estimates}\label{sec5.1}
\noindent

In this subsection, as that in subsection \ref{sec3.1} for $d=2$, we give some a priori estimates, including $H^{5/2}$-estimate on the stationary  solution $v$ of Eqs. (\ref{2.4}), $H^{\frac{5}{2}}$-estimate on the solution $u$ of Eqs. (\ref{2.4}), and $H^{9/2}$-estimate on the solution of the Eqs. \eqref{1.6} with $\theta=5/4$ (or (\ref{5.33}) below) that is achieved by subtracting between $u(t)$ and $v$.
\subsubsection{$H^{5/2}$-estimate on the solution of stationary equation}\label{sec5.1.1}
\noindent

We consider the stationary problem of Eqs. (\ref{2.4}):
\begin{equation}\label{5.2}
\nu A^{5/4}v+B(v,v)=f.
\end{equation}
Now, we intend to prove the $H^{5/2}$-regularity of the solutions of the auxiliary Eqs. (\ref{5.2}).
\begin{lemma}\label{lem5.1}
Let $f\in H$. Then, there exists at least one weak solution $v\in H^{5/4}$ of the problem \eqref{5.2}. Moreover, any solution $v$ of \eqref{5.2} belongs to $H^{5/2}$ and the following estimate holds
\begin{equation*}
\|v(t)\|_{H^{5/2}}\leq r_{v},
\end{equation*}
where
\begin{equation*}
r_{v}=\tilde{c}\left(\|f\|^{4}_{H^{-5/4}}+\|f\|^{10}_{H^{-5/4}}+\|f\|^{2}_{H^{-1}}\right)\|f\|^{2}_{H^{-5/4}}
+\frac{2}{\nu^{2}}\|f\|^{2}_{H},
\end{equation*}
here the constant $\tilde{c}$ depends only on $\nu$ and is independent of $v$ and $f$.
\end{lemma}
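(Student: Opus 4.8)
The plan is to split the argument into two parts that mirror the two-dimensional Lemma~\ref{lem3.1}: first the existence of a weak solution $v\in H^{5/4}$, and then a bootstrap that upgrades any such solution to $H^{5/2}$ together with the quantitative bound on $\|v\|_{H^{5/2}}$.

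For existence I would run the standard Galerkin scheme, seeking $v_m$ in the span of the first $m$ eigenfunctions of $A$ solving $\nu A^{5/4}v_m+P_mB(v_m,v_m)=P_mf$. Testing with $v_m$ and invoking the orthogonality $(B(v_m,v_m),v_m)=0$ from \eqref{2.7} gives $\nu\|v_m\|_{H^{5/4}}^2=(f,v_m)\le\|f\|_{H^{-5/4}}\|v_m\|_{H^{5/4}}$, hence the uniform a priori bound $\|v_m\|_{H^{5/4}}\le\frac1\nu\|f\|_{H^{-5/4}}$. This coercivity is exactly what is needed to solve each finite-dimensional system by Brouwer's fixed point theorem, and the uniform bound together with the compact embedding $H^{5/4}\hookrightarrow H$ lets me pass to the limit in the quadratic nonlinearity, yielding a weak solution $v\in H^{5/4}$ with the same bound. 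I note that the very same test against $v$ shows that \emph{any} weak solution satisfies $\|v\|_{H^{5/4}}\le\frac1\nu\|f\|_{H^{-5/4}}$, so the regularity step below applies to an arbitrary solution, as the statement requires.

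The heart of the matter is the regularity bootstrap. Since the two-dimensional cancellation $(B(v,v),A^{5/4}v)=0$ is unavailable in $\mathbb{T}^3$, I would not estimate energy directly against $A^{5/4}v$; instead I read the regularity off the equation. Writing $\nu A^{5/4}v=f-B(v,v)$ yields $\|v\|_{H^{5/2}}=\|A^{5/4}v\|_H\le\frac1\nu\big(\|f\|_H+\|B(v,v)\|_H\big)$, so everything reduces to controlling $\|B(v,v)\|_H$ by the already-bounded $H^{5/4}$ norm. Here the hyperviscous index $5/4$ is precisely what rescues the argument: in three dimensions the Sobolev embeddings $H^{5/4}\hookrightarrow L^{12}$ and $H^{1/4}\hookrightarrow L^{12/5}$ combine through H\"older's inequality (with $\tfrac1{12}+\tfrac5{12}=\tfrac12$) to give $\|B(v,v)\|_H\le\|v\cdot\nabla v\|_{L^2}\le C\|v\|_{L^{12}}\|\nabla v\|_{L^{12/5}}\le C\|v\|_{H^{5/4}}^2$, and feeding in the base bound produces a control of $\|v\|_{H^{5/2}}$ by powers of the negative-order norms of $f$ together with $\|f\|_H$.

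The main obstacle, and where the bookkeeping becomes delicate, is reproducing the precise form of $r_v$: the stated bound carries mixed powers of $\|f\|_{H^{-5/4}}$ (up to the tenth) alongside $\|f\|_{H^{-1}}$ and the term $\frac{2}{\nu^2}\|f\|_H^2$, which signals that the single clean step above is refined through one intermediate regularity level (an $H^s$ with $5/4<s<5/2$, at which a different dual norm of $f$ enters) before closing at $H^{5/2}$, with Young's inequality invoked at each stage. I would therefore interpolate the intermediate norm (e.g.\ $\|v\|_{H^{3/2}}$) between $H^{5/4}$ and $H^{5/2}$, absorb the top-order contribution into $\nu\|v\|_{H^{5/2}}^2$, and collect the remaining lower-order terms, each of which becomes a power of a negative-order norm of $f$ by the energy bound. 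All these manipulations are formal, but they are made rigorous on the Galerkin approximations exactly as remarked after Lemma~\ref{lem3.1}, and the bound is preserved upon passing to the limit.
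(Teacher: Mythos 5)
Your proposal is correct, but the regularity step takes a genuinely different route from the paper's. The paper proves the $H^{5/2}$ bound by a three-tier chain of energy estimates, testing the stationary equation successively with $v$, $A^{1/4}v$ and $A^{5/4}v$; the last step splits $(B(v,v),A^{5/4}v)$ into $(B(A^{1/2}v,v),A^{3/4}v)+(B(v,A^{1/2}v),A^{3/4}v)$, invokes the trilinear estimate \eqref{2.6}, the interpolation $\|v\|_{H^{2}}\leq\|v\|_{H}^{1/5}\|v\|_{H^{5/2}}^{4/5}$ and Young's inequality, and it is precisely this chain that produces the stated powers in $r_{v}$ (the $\|f\|^{10}_{H^{-5/4}}$ term, the $\|f\|^{2}_{H^{-1}}$ term coming from the intermediate $H^{3/2}$ level, and the $\frac{2}{\nu^{2}}\|f\|^{2}_{H}$ term). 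You instead read the regularity off the equation: writing $\nu A^{5/4}v=f-B(v,v)$ and using $\|B(v,v)\|_{H}\leq\|v\|_{L^{12}}\|\nabla v\|_{L^{12/5}}\leq C\|v\|^{2}_{H^{5/4}}$ (your embeddings $H^{5/4}\hookrightarrow L^{12}$ and $H^{1/4}\hookrightarrow L^{12/5}$ are exactly right in $\mathbb{T}^{3}$) gives at once $\|v\|_{H^{5/2}}\leq \nu^{-1}\bigl(\|f\|_{H}+C\nu^{-2}\|f\|^{2}_{H^{-5/4}}\bigr)$. This is shorter, avoids the fractional Leibniz-type splitting and the multi-level bookkeeping, and, importantly, it applies verbatim to an arbitrary weak solution: once $v\in H^{5/4}$, the weak formulation forces $A^{5/4}v\in H$, which is exactly the ``any solution belongs to $H^{5/2}$'' claim, whereas the paper's version is a formal a priori computation to be justified through Galerkin approximations. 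What you lose is the literal formula for $r_{v}$: your bound has a different (in fact cleaner) form, and your sketch of how to recover the paper's formula is slightly off, since the paper obtains the $H^{3/2}$ bound not by interpolating between $H^{5/4}$ and $H^{5/2}$ but by testing with $A^{1/4}v$ and estimating $\|B(v,v)\|_{H^{-1}}\leq C\|v\|^{2}_{H^{5/4}}$, which is where $\|f\|^{2}_{H^{-1}}$ enters. Since nothing downstream uses the exact shape of $r_{v}$, only that it is a constant determined by $\nu$ and norms of $f$, this discrepancy is cosmetic; the existence part (Galerkin, Brouwer, compactness, antisymmetry of the trilinear form) coincides with the standard argument the paper leaves implicit.
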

\begin{proof}
Firstly, in order to obtain the $H^{5/4}$-estimate on $v$, we multiply (\ref{5.2}) by $v$ and integrate over $\mathbb{T}^{3}$. Then, we get
\begin{equation}\label{5.3}
\nu\|v\|^{2}_{H^{5/4}}+(B(v,v),v)=(f,v).
\end{equation}
By the property of bilinear form (\ref{2.7}), we have
\begin{equation}\label{5.4}
(B(v,v),v)=0.
\end{equation}
The Young's inequality gives that
\begin{equation}\label{5.5}
|(f,v)|\leq\frac{1}{2\nu}\|f\|^{2}_{H^{-5/4}}+\frac{\nu}{2}\|v\|^{2}_{H^{5/4}}.
\end{equation}
Substituting (\ref{5.4}) and (\ref{5.5}) into (\ref{5.3}), we obtain
\begin{equation*}
\|v\|^{2}_{H^{5/4}}\leq\frac{1}{\nu^{2}}\|f\|^{2}_{H^{-5/4}}.
\end{equation*}
\par
Secondly, on the $H^{3/2}$-estimate of $v$, we multiply (\ref{5.2}) by $A^{1/4}v$ and integrate over $x\in\mathbb{T}^{3}$ to obtain that
\begin{equation}\label{5.6}
\nu\|v\|^{2}_{H^{3/2}}+(B(v,v),A^{1/4}v)=(f,A^{1/4}v).
\end{equation}
Since
\begin{equation}\label{5.7}
|(f,A^{1/4}v)|\leq\frac{1}{\nu}\|f\|^{2}_{H^{-1}}+\frac{\nu}{4}\|v\|^{2}_{H^{3/2}},
\end{equation}
\begin{align}\label{5.8}
|(B(v,v),A^{1/4}v)|
\leq \frac{1}{\nu}\|B(v,v)\|^{2}_{H^{-1}}+\frac{\nu}{4}\|v\|^{2}_{H^{3/2}}
\end{align}
and
\begin{align}\label{5.9}
\|B(v,v)\|^{2}_{H^{-1}}
\leq c\|v\|^{2}_{H^{1}}\|v\|^{2}_{H^{1}}\leq C\|v\|^{4}_{H^{5/4}},
\end{align}
here we have used the Young's inequality. Then, substituting (\ref{5.7})-(\ref{5.9}) into (\ref{5.6}) leads to the fact that
\begin{equation*}
\|v\|^{2}_{H^{3/2}}\leq C_{\nu}\|f\|^{4}_{H^{-5/4}}+\frac{2}{\nu^{2}}\|f\|^{2}_{H^{-1}},
\end{equation*}
where the constant $C_{\nu}$ depends only on $\nu$.
\par
Finally, in order to estimate the $H^{5/2}$-norm on $v$, we multiply (\ref{5.2}) by $A^{5/4}v$ and integrate over $x\in\mathbb{T}^{3}$. Then
\begin{equation}\label{5.10}
\nu\|v\|^{2}_{H^{5/2}}+(B(v,v),A^{5/4}v)=(f,A^{5/4}v).
\end{equation}
We estimate
\begin{align}\label{5.11}
|(B(v,v),A^{5/4}v)|
\leq &|(B(A^{1/2}v,v),A^{3/4}v)|+|(B(v,A^{1/2}v),A^{3/4}v)|\nonumber\\
\leq &c\left(\|A^{1/2}v\|_{H^{1/2}}\|v\|_{H^{1}}+\|v\|_{H^{1/2}}\|A^{1/2}v\|_{H^{1}}\right)\|A^{3/4}v\|_{H^{1}}\nonumber\\
\leq &c\left(\|v\|_{H^{3/2}}\|v\|_{H^{1}}+\|v\|_{H^{1/2}}\|v\|_{H^{2}}\right)\|v\|_{H^{5/2}}\nonumber\\
\leq &C\left(\|v\|_{H^{3/2}}\|v\|_{H^{1}}+\|v\|_{H^{1/2}}\|v\|^{1/5}_{H}\|v\|^{9/5}_{H^{5/2}}\right)\nonumber\\
\leq &C_{\nu}\left(\|v\|^{2}_{H^{3/2}}\|v\|^{2}_{H^{1}}+\|v\|^{10}_{H^{1/2}}\|v\|^{2}_{H}\right)+\frac{\nu}{4}\|v\|^{2}_{H^{5/2}}
\end{align}
here we have used the inequality (\ref{2.6}), the interpolation inequality  $\|v\|_{H^{2}}\leq\|v\|^{1/5}_{H}\|v\|^{4/5}_{H^{5/2}}$ in $\mathbb{T}^{3}$ and Young's inequality with exponent $(\frac{1}{10},\frac{9}{10})$. Note that
\begin{equation}\label{5.12}
|(f,A^{5/4}v)|\leq\frac{1}{\nu}\|f\|^{2}_{H}+\frac{\nu}{4}\|v\|^{2}_{H^{5/2}}.
\end{equation}
Then, substituting (\ref{5.11}) and (\ref{5.12}) into (\ref{5.10}), we deduce
\begin{equation*}
\|v\|^{2}_{H^{5/2}}\leq C_{\nu}\left(\|f\|^{4}_{H^{-5/4}}+\|f\|^{10}_{H^{-5/4}}+\|f\|^{2}_{H^{-1}}\right)\|f\|^{2}_{H^{-5/4}}
+\frac{2}{\nu^{2}}\|f\|^{2}_{H},
\end{equation*}
where the constant $C_{\nu}$ depends only on $\nu$.
\end{proof}
\subsubsection{$H^{5/2}$-estimate on the solution of Eqs. (\ref{2.4})}\label{sec5.1.2}
\noindent

We are ready to give some a priori estimates of the equation (\ref{2.4}).
\begin{theorem}\label{thm5.2}
Let $f\in H$ and $u_{0}\in H$. Then there exist a constant $C>0$ (depends only on $\nu$) such that for any solution $u(t)$ of \eqref{2.4} there is $t_{0}:=t_{0}(\|u_{0}\|_{H},\|f\|_{H^{-5/4}})>0$ satisfies that
\begin{equation*}
\|u(t)\|_{H^{s}}\leq r_{s},~~\forall~t\geq t_{0}+1,~~s=0,1,5/4,3/2~\mbox{or}~5/2,
\end{equation*}
where $r_{s}$ are given by the following formula:
\begin{equation*}
r^{2}_{0}=\frac{2}{\nu}\|f\|^{2}_{H^{-5/4}},
\end{equation*}
\begin{equation*}
r^{2}_{1/2}
=C\exp\left\{r^{2}_{0}+\|f\|^{2}_{H^{-5/4}}\right\}
\cdot\left(r^{2}_{0}+\|f\|^{2}_{H^{-5/4}}\right),
\end{equation*}
\begin{equation*}
r^{2}_{1}
=C\exp\left\{r^{2}_{1/2}\left(r^{2}_{0}+\|f\|^{2}_{H^{-5/4}}\right)\right\}\cdot
\left(r^{2}_{0}+\|f\|^{2}_{H^{-5/4}}+\|f\|_{H^{-1/4}}^{2}\right),
\end{equation*}
\begin{equation*}
r^{2}_{5/4}=C\exp\left\{r^{2}_{0}r^{8}_{1}\right\}
\left(r^{2}_{0}+\|f\|^{2}_{H^{-5/4}}+\|f\|^{2}_{H}\right),
\end{equation*}
\begin{align*}
r_{3/2}=C\left(\bar{r}^{2}_{0}+r^{2}_{1}+\|f\|_{H^{-1}}\right),
\end{align*}
\begin{align*}
r_{2}=C\left(\bar{r}^{2}_{0}+r_{1}r_{3/2}+\|f\|_{H^{-1/2}}\right),
\end{align*}
\begin{align*}
r_{5/2}=C\left(\bar{r}^{2}_{0}+r_{1}r_{2}+\|f\|_{H}\right),
\end{align*}
here the $\bar{r}^{2}_{0}$ denotes the uniform upper bound of $\|\partial_{t}u(t)\|^{2}_{H}$ when $t\geq t_{0}+1$ given by $(\ref{5.31})$ and $t_{0}$ is chosen by $(\ref{5.16})$.
\end{theorem}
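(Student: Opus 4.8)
The plan is to follow the same bootstrap philosophy as in the two-dimensional case (Theorem \ref{thm3.2} and Theorem \ref{thm3.4}), climbing the regularity ladder one step at a time by testing Eqs. \eqref{2.4} against successively higher powers of the Stokes operator $A$. The crucial structural difference from $d=2$ is the hyperviscous term $\nu A^{5/4}$, which supplies a stronger dissipation $\nu\|u\|^2_{H^{s+5/4}}$ at each step and thereby lets us absorb the nonlinear contributions coming from $B(u,u)$ even in the three-dimensional setting where $B$ is less tame. First I would establish the $H^0$-absorbing estimate $r_0$: test \eqref{2.4} against $u$, use $(B(u,u),u)=0$ from \eqref{2.7} and Young's inequality on $(f,u)\le\frac{1}{2\nu}\|f\|^2_{H^{-5/4}}+\frac{\nu}{2}\|u\|^2_{H^{5/4}}$, and apply the standard uniform Gronwall machinery (Lemma \ref{lem2.7}) to obtain $r_0^2=\frac{2}{\nu}\|f\|^2_{H^{-5/4}}$ together with a time-integrated bound on $\|u\|^2_{H^{5/4}}$ over unit intervals. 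This last integral feeds the next step.

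Next I would climb to $H^{1/2},H^{1},H^{5/4}$ by testing successively against $A^{1/2}u$, $A u$ and $A^{5/4}u$. At each level the only work is to control the trilinear term $(B(u,u),A^{s/2}u)$ (or $A^{\theta}u$ with the appropriate power). The plan is to split the derivative between the two slots of $B$ by integration by parts, as is done in the proof of Lemma \ref{lem5.1} (see \eqref{5.11}), and then use the bilinear estimates \eqref{2.5}--\eqref{2.6} of Lemma \ref{lem2.1} together with the Nirenberg-Gagliardo interpolation inequality of Lemma \ref{lem2.2} in $\mathbb{T}^3$ to write each factor as a product of a low norm (already bounded from the previous step) and a fractional power of the top norm $\|u\|_{H^{s+5/4}}$. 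Young's inequality with a carefully chosen exponent pair (the $(\tfrac{1}{10},\tfrac{9}{10})$ split already appearing in \eqref{5.11} is the prototype) then peels off a small multiple $\frac{\nu}{4}\|u\|^2_{H^{s+5/4}}$ to be absorbed by the dissipation, leaving a differential inequality $\frac{d}{dt}\|u\|^2_{H^s}\le \Psi(t)\|u\|^2_{H^s}+h(t)$ whose coefficients $\Psi,h$ are controlled by the previously obtained $r_{s'}$ with $s'<s$. Each application of Lemma \ref{lem2.7} produces the stated exponential expressions for $r_{1/2}^2,r_1^2,r_{5/4}^2$; in particular $r_{5/4}$ closes the "energy-type" tier.

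For the top tier $r_{3/2},r_2,r_{5/2}$ the plan mirrors the asymptotic-regularity argument of Theorem \ref{thm3.4}: since $f\in H$ only, I would not test directly against very high powers of $A$, but instead differentiate \eqref{2.4} in time, set $\psi=\partial_t u$, and first establish a uniform bound $\|\partial_t u\|_H\le\bar r_0$ (the quantity invoked in the statement via \eqref{5.31}) together with a unit-interval integral bound on $\|\partial_t u\|^2_{H^{5/4}}$. Once $\bar r_0$ is in hand, I would read off $\|u\|_{H^{s+5/4}}$ algebraically from the equation $\nu A^{5/4}u=f-\partial_t u-B(u,u)$, estimating $\|B(u,u)\|_{H^{s-5/4}}$ by the bilinear bounds and the lower-tier $r_{s'}$; this yields $r_{3/2},r_2,r_{5/2}$ in the stated Lipschitz-in-$\bar r_0$ form. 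The main obstacle, as in all $d=3$ arguments, is keeping every nonlinear estimate subcritical: because $B$ is only borderline controllable in three dimensions, the interpolation exponents must be chosen so that the power of the top norm stays strictly below $2$ at each step, which is exactly where the hyperviscous gain of $5/4$ derivatives (rather than the classical $1$) is indispensable. I would therefore pay closest attention to verifying, step by step, that the sum of smoothness indices produced by \eqref{2.5}--\eqref{2.6} and Lemma \ref{lem2.2} leaves enough room for the $\frac{\nu}{4}\|u\|^2_{H^{s+5/4}}$ absorption; the remaining Gronwall bookkeeping is then routine.
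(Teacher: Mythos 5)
Your plan follows essentially the same route as the paper's proof: the same energy ladder (testing \eqref{2.4} against $u$, $A^{1/2}u$, $Au$, $A^{5/4}u$ with the trilinear terms absorbed by the hyperviscous dissipation via \eqref{2.5}--\eqref{2.6}, interpolation and Young's inequality, then uniform Gronwall), followed by time-differentiation to obtain $\bar r_0$ and the algebraic bootstrap $\nu A^{5/4}u=f-\partial_t u-B(u,u)$ to read off $r_{3/2}$, $r_2$, $r_{5/2}$. The strategy and the key estimates coincide with the paper's Steps 1--6, so the proposal is correct.
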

\begin{proof}
\textbf{Step 1.} ($H$-estimate on $u$). Take the scalar product of (\ref{2.4}) with $u$. Then, we get
\begin{equation*}
\frac{1}{2}\frac{d}{dt}\|u\|^{2}_{H}+\nu\|u\|^{2}_{H^{5/4}}+(B(u,u),u)=(f,u).
\end{equation*}
Since $f\in H$, due to the Young's inequality, we have
\begin{equation}\label{5.13}
|(f,u)|\leq \frac{1}{2\nu}\|f\|^{2}_{H^{-5/4}}+\frac{\nu}{2}\|u\|^{2}_{H^{5/4}}.
\end{equation}
Combine \eqref{5.13} with the property of the bilinear form $B$: $(B(u,u),u)=0$ to give that
\begin{equation}\label{5.14}
\frac{d}{dt}\|u\|^{2}_{H}+\nu\|u\|^{2}_{H^{5/4}}\leq \frac{1}{\nu}\|f\|^{2}_{H^{-5/4}}.
\end{equation}
By the Poincar\'{e} and Gronwall's inequalities, we know that there exists $\gamma>0$ such that
\begin{equation}\label{5.15}
\|u(t)\|^{2}_{H}\leq e^{-\gamma t}\|u(0)\|^{2}_{H}
+\frac{1}{\nu}\|f\|^{2}_{H^{-5/4}}.
\end{equation}
\par
Consequently, it follows from (\ref{5.15}) that
\begin{equation*}
\limsup_{t\rightarrow +\infty}\|u(t)\|^{2}_{H}\leq\frac{1}{\nu}\|f\|^{2}_{H^{-5/4}},
\end{equation*}
which implies that there exists a $t_{0}=t_{0}(\|u_{0}\|_{H})>0$ such that
\begin{equation}\label{5.16}
\|u(t)\|^{2}_{H}\leq\frac{2}{\nu}\|f\|^{2}_{H^{-5/4}}:=r^{2}_{0},~~\mbox{for}~~t\geq t_{0}.
\end{equation}
\par
\textbf{Step 2.} ($H^{\frac{1}{2}}$-estimate on $u$). Taking the scalar product of (\ref{2.4}) with $A^{1/2}u$ to obtain that
\begin{equation*}
\frac{1}{2}\frac{d}{dt}\|u\|_{H^{\frac{1}{2}}}^{2}+\nu\|u\|_{H^{7/4}}^{2}+(B(u,u),A^{1/2}u)=(f,A^{1/2}u)
\leq \frac{1}{\nu}\|f\|^{2}_{H^{-3/4}}+\frac{\nu}{2}\|u\|_{H^{7/4}}^{2}.
\end{equation*}
We estimate from \eqref{2.6} that
\begin{align*}
\left|(B(u,u),A^{1/2}u)\right|
\leq &\left|(A^{1/8}B(u,u),A^{3/8}u)\right|
\leq c\|u\|_{H^{1/2}}\|A^{1/8}u\|_{H^{1}}\|A^{3/8}u\|_{H^{1}}\nonumber\\
\leq &C\|u\|_{H^{1/2}}\|u\|_{H^{5/4}}\|u\|_{H^{7/4}}
\leq C_{\nu}\|u\|^{2}_{H^{1/2}}\|u\|^{2}_{H^{5/4}}+\frac{\nu}{2}\|u\|_{H^{7/4}}^{2}.
\end{align*}
Thus
\begin{equation}\label{5.17}
\frac{d}{dt}\|u\|_{H^{\frac{1}{2}}}^{2}+\nu\|u\|_{H^{7/4}}^{2}\leq C_{\nu}\|u\|^{2}_{H^{1/2}}\|u\|^{2}_{H^{5/4}},
\end{equation}
which shows from the uniform Gronwall lemma that
\begin{align*}
\|u(t+1)\|^{2}_{H^{1/2}}\leq C_{\nu}\exp\left\{\int^{t+1}_{t}\|u(s)\|^{2}_{H^{5/4}}ds\right\}\cdot\left(\int^{t+1}_{t}\|u(s)\|^{2}_{H^{1/2}}ds\right),
~~\forall~t\geq 0.
\end{align*}
On the other hand, by \eqref{5.14} and \eqref{5.16}, we have
\begin{align}\label{5.18}
\int^{t+1}_{t}\|u(s)\|^{2}_{H^{1/2}}ds\leq\int^{t+1}_{t}\|u(s)\|^{2}_{H^{5/4}}ds
\leq r^{2}_{0}+\frac{1}{\nu}\|f\|^{2}_{H^{-5/4}},~~\forall~t\geq t_{0}.
\end{align}
Then
\begin{align}\label{5.19}
\|u(t)\|^{2}_{H^{1/2}}\leq C_{\nu}\exp\left\{r^{2}_{0}+\frac{1}{\nu}\|f\|^{2}_{H^{-5/4}}\right\}
\cdot\left(r^{2}_{0}+\frac{1}{\nu}\|f\|^{2}_{H^{-5/4}}\right):=r^{2}_{1/2},~~\forall~t\geq t_{0}+1.
\end{align}
\par
\textbf{Step 3.} ($H^{1}$-estimate on $u$). Taking the scalar product of (\ref{2.4}) with $Au$ gives us that
\begin{equation}\label{5.20}
\frac{1}{2}\frac{d}{dt}\|u\|_{H^{1}}^{2}+\nu\|u\|_{H^{9/4}}^{2}+(B(u,u),Au)=(f,Au).
\end{equation}
Note that
\begin{equation}\label{5.21}
|(f,Au)|\leq \frac{1}{\nu}\|f\|_{H^{-1/4}}^{2}+\frac{\nu}{4}\|Au\|^{2}_{H^{1/4}}
=\frac{1}{\nu}\|f\|_{H^{-1/4}}^{2}+\frac{\nu}{4}\|u\|^{2}_{H^{9/4}}
\end{equation}
and
\begin{align}\label{5.22}
|(B(u,u),Au)|
\leq \|B(u,u)\|_{H^{-1/4}}\|Au\|_{H^{1/4}}
\leq \|u\|_{H^{7/4}}\|u\|_{H^{1}}\|u\|_{H^{9/4}}
\leq C_{\nu}\|u\|^{2}_{H^{7/4}}\|u\|^{2}_{H^{1}}+\frac{\nu}{4}\|u\|^{2}_{H^{9/4}},
\end{align}
here we have used Young's inequality. Substituting (\ref{5.21}) and (\ref{5.22}) into (\ref{5.20}), we obtain
\begin{align}\label{5.23}
\frac{d}{dt}\|u\|_{H^{1}}^{2}+\nu\|u\|_{H^{9/4}}^{2}
\leq C_{\nu}\|u\|^{2}_{H^{7/4}}\|u\|^{2}_{H^{1}}+\frac{1}{\nu}\|f\|_{H^{-1/4}}^{2}.
\end{align}
On the other hand, from (\ref{5.17})-(\ref{5.19}), we know that
\begin{align*}
\|u\|^{2}_{L^{2}\left(t,t+1;H^{7/4}\right)}
\leq C_{\nu}r^{2}_{1/2}\left(r^{2}_{0}+\|f\|^{2}_{H^{-5/4}}\right),~~\forall~~t\geq t_{0}+1,
\end{align*}
where the constant $C_{\nu}$ is independent of time. Combining this with (\ref{5.23}) and applying the uniform Gronwall-type lemma \ref{lem2.7}, we obtain
\begin{align*}
\|u(t)\|_{H^{1}}^{2}
\leq& \exp\left\{C_{\nu}\|u\|^{2}_{L^{2}\left(t,t+1;H^{7/4}\right)}\right\}\cdot
\left(\|u\|^{2}_{L^{2}(t,t+1;H^{1})}+\frac{1}{\nu}\|f\|_{H^{-1/4}}^{2}\right)\nonumber\\
\leq& \exp\left\{C_{\nu}r^{2}_{1/2}\left(r^{2}_{0}+\|f\|^{2}_{H^{-5/4}}\right)\right\}\cdot
\left(r^{2}_{0}+\frac{1}{\nu}\|f\|^{2}_{H^{-5/4}}+\frac{1}{\nu}\|f\|_{H^{-1/4}}^{2}\right)
:=r^{2}_{1},~~\forall~t\geq t_{0}+1.
\end{align*}

\textbf{Step 4.} ($H^{5/4}$-estimate on $u$.) Take the scalar product of (\ref{2.4}) with $A^{5/4}u$. Then, we have
\begin{equation*}
\frac{1}{2}\frac{d}{dt}\|u\|_{H^{5/4}}^{2}+\nu\|u\|_{H^{5/2}}^{2}+(B(u,u),A^{5/4}u)
=(f,A^{5/4}u).
\end{equation*}
It is easy to see from the Young's inequality and $f\in H$ that
\begin{equation*}
|(f,A^{5/4}u)|
\leq \frac{1}{\nu}\|f\|^{2}_{H}+\frac{\nu}{4}\|u\|^{2}_{H^{5/2}}.
\end{equation*}
By the definition $B(u,u)$, we estimate
\begin{align*}
|(B(u,u),A^{5/4}u)|
\leq& \|B(u,u)\|_{H}\|A^{5/4}u\|_{H}
\leq c\|u\|_{H^{2}}\|u\|_{H^{1}}\|u\|_{H^{5/2}}\nonumber\\
\leq& C\|u\|^{\frac{1}{5}}_{H}\|u\|^{\frac{4}{5}}_{H^{5/2}}\|u\|_{H^{1}}\|u\|_{H^{5/2}}
\leq C_{\nu}\|u\|^{2}_{H}\|u\|^{10}_{H^{1}}+\frac{\nu}{4}\|u\|^{2}_{H^{5/2}},
\end{align*}
where we have used Nirenberg-Gagliardo inequality
$\|u\|_{H^{2}}\leq \|u\|^{\frac{1}{5}}_{H}\|u\|^{\frac{4}{5}}_{H^{5/2}}$
and the Young's inequality $ab\leq 4\nu^{-1}a^{10/9}+C_{\nu}b^{10},~\forall~a,b>0$. This implies that
\begin{equation}\label{5.24}
\frac{d}{dt}\|u\|_{H^{5/4}}^{2}+\nu\|u\|_{H^{5/2}}^{2}
\leq C_{\nu}\|u\|^{2}_{H}\|u\|^{10}_{H^{1}}+\frac{1}{\nu}\|f\|^{2}_{H}
\leq \tilde{C}_{\nu}\|u\|^{2}_{H}\|u\|^{8}_{H^{1}}\|u\|^{2}_{H^{5/4}}+\frac{1}{\nu}\|f\|^{2}_{H}.
\end{equation}
Let us drop $\nu\|u\|_{H^{5/2}}^{2}$ on the left-hand side of \eqref{5.24} and use the uniform Gronwall lemma and the estimate \eqref{5.18} to obtain that
\begin{align*}
\|u(t)\|^{2}_{H^{5/4}}
\leq& \frac{1}{\nu}\exp\left\{\tilde{C}_{\nu}r^{2}_{0}r^{8}_{1}\right\}
\left(r^{2}_{0}+\nu^{-1}\|f\|^{2}_{H^{-5/4}}+\|f\|^{2}_{H}\right):=r^{2}_{5/4},~~\forall~t\geq t_{0}+1,
\end{align*}
which, in addition, implies that
\begin{equation}\label{5.25}
\int^{t+1}_{t}\|u(s)\|_{H^{5/2}}^{2} ds
\leq C_{\nu}\left(r^{2}_{5/4}\left(1+r^{2}_{0}r^{8}_{1}\right)+\|f\|^{2}_{H}\right),
~~\forall~t\geq t_{0}+1.
\end{equation}
\par
Next, we will continue to estimate the uniform boundedness in more regular space until we get $H^{5/2}$-estimate. Similarly, multiplying (\ref{2.4}) by $A^{k}u$ ($k>5/4$) can not obtain $H^{k}$ estimate since we assume that $f\in H$ only. To achieve the desired result, we set $\psi=\partial_{t}u$ and differentiate Eqs. (\ref{2.4}) with respect to $t$ to consider the following equation.
\begin{equation}\label{5.26}
\partial_{t}\psi+\nu A^{5/4}\psi+B(\psi,u)+B(u,\psi)=0.
\end{equation}
Then we can obtain the desired result step by step.
\par
\textbf{Step 5.} ($H$-estimate on $\partial_{t} u$.) Taking the inner product of (\ref{5.26}) with $\psi$, we can find that
\begin{equation*}
\frac{1}{2}\frac{d}{dt}\|\psi\|^{2}_{H}+\nu\|\psi\|^{2}_{H^{5/4}}+(B(\psi,u),\psi)+(B(u,\psi),\psi)=0.
\end{equation*}
At the same time, considering $(B(u,\psi),\psi)=0$, \eqref{2.7} and
\begin{align*}
\left|(B(\psi,u),\psi)\right|
\leq& C\|\psi\|_{H^{1/2}}\|u\|_{H^{1}}\|\psi\|_{H^{1}}
\leq  C\|\psi\|^{1/2}_{H}\|\psi\|^{1/2}_{H^{1}}\|u\|_{H^{1}}\|\psi\|_{H^{1}}\nonumber\\
\leq& C\|\psi\|^{1/2}_{H}\|u\|_{H^{1}}\|\psi\|^{3/2}_{H^{5/4}}
\leq C_{\nu}\|\psi\|^{2}_{H}\|u\|^{4}_{H^{1}}+\frac{\nu}{2}\|\psi\|^{2}_{H^{5/4}},
\end{align*}
which gives us that
\begin{equation}\label{5.27}
\frac{d}{dt}\|\psi\|^{2}_{H}+\nu\|\psi\|^{2}_{H^{5/4}}
\leq C_{\nu}\|\psi\|^{2}_{H}\|u\|^{4}_{H^{1}}.
\end{equation}
An application of the uniform Gronwall lemma leads to that
\begin{equation}\label{5.28}
\|\psi(t)\|^{2}_{H}
\leq \exp\left\{C_{\nu}r^{4}_{1}\right\}\left\{\|\psi\|_{L^{2}(t,t+1;H)}\right\},~~\forall~t\geq t_{0}+1.
\end{equation}
\par
On the other hand, by the Eqs. \eqref{2.4} and the assumption that $f\in H$, we have
\begin{align}\label{5.29}
\|\psi\|_{H}=&\|\partial_{t}u\|_{H}
\leq \nu\|A^{5/4}u\|_{H}+\|B(u,u)\|_{H}+\|f\|_{H}\nonumber\\
\leq& \nu\|u\|_{H^{5/2}}+C\|u\|_{H^{2}}\|u\|_{H^{1}}+\|f\|_{H}\nonumber\\
\leq& \nu\|u\|_{H^{5/2}}+Cr_{1}\|u\|_{H^{2}}+\|f\|_{H},~~\forall~t\geq t_{0}+1.
\end{align}
So, combining \eqref{5.29} with \eqref{5.25}, we deduce that
\begin{align}\label{5.30}
\|\psi\|^{2}_{L^{2}(t,t+1;H)}
\leq& C_{\nu}\left(1+r^{2}_{1}\right)\int^{t+1}_{t}\|u(s)\|^{2}_{H^{5/2}} ds+\|f\|^{2}_{H}\nonumber\\
\leq& C_{\nu}r^{2}_{5/4}\left(1+r^{2}_{1}\right)\left(1+r^{2}_{0}r^{3}_{1}+\|f\|^{2}_{H}\right),
~~\forall~t\geq t_{0}+1.
\end{align}
\par
Therefore, we can conclude from \eqref{5.28} and \eqref{5.30} that
\begin{equation}\label{5.31}
\|\psi(t)\|^{2}_{H}
\leq \exp\left\{C_{\nu}r^{2}_{1}\right\}
\left\{C_{\nu}r^{2}_{5/4}\left(1+r^{2}_{1}\right)\left(1+r^{2}_{0}r^{8}_{1}+\|f\|^{2}_{H}\right)\right\}
:=\bar{r}^{2}_{0},~~\forall~t\geq t_{0}+1.
\end{equation}
\par
\textbf{Step 6.} ($H^{5/2}$-estimate on $u$.) Firstly, we can obtain the $H^{3/2}$-estimate on $u$. Indeed, by Eqs. \eqref{2.4}, we find that
\begin{align*}
\nu\|A^{5/4}u\|_{H^{-1}}
\leq \|\partial_{t}u\|_{H^{-1}}+\|B(u,u)\|_{H^{-1}}+\|f\|_{H^{-1}}
\leq \|\partial_{t}u\|_{H^{-1}}+C\|u\|^{2}_{H^{1}}+\|f\|_{H^{-1}}.
\end{align*}
This means that
\begin{align*}
\|u(t)\|_{H^{3/2}}
\leq C_{\nu}\left(\bar{r}^{2}_{0}+r^{2}_{1}+\|f\|_{H^{-1}}\right):=r_{3/2}
,~~\forall~t\geq t_{0}+1.
\end{align*}
Then, from the Eqs. \eqref{2.4}, one has
\begin{align*}
\|u(t)\|_{H^{2}}=&\|A^{5/4}u\|_{H^{-1/2}}
\leq \nu^{-1}\left(\|\partial_{t}u\|_{H^{-1/2}}+\|B(u,u)\|_{H^{-1/2}}+\|f\|_{H^{-1/2}}\right)\nonumber\\
\leq& C_{\nu}\left(\|\partial_{t}u\|_{H}+\|u\|^{2}_{H^{1}}\|u\|^{2}_{H^{3/2}}+\|f\|_{H^{-1/2}}\right)
\leq C_{\nu}\left(\bar{r}^{2}_{0}+r_{1}r_{3/2}+\|f\|_{H^{-1/2}}\right):=r_{2},
\end{align*}
for any $t\geq t_{0}+1$. Now, using the Eqs. \eqref{2.4} again, then we can easily to see that
\begin{align*}
\nu\|A^{5/4}u\|_{H}
\leq \|\partial_{t}u\|_{H}+\|B(u,u)\|_{H}+\|f\|_{H}
\leq \|\partial_{t}u\|_{H}+C\|u\|_{H^{2}}\|u\|_{H^{1}}+\|f\|_{H}.
\end{align*}
It follows that
\begin{align}\label{5.32}
\|u(t)\|_{H^{5/2}}
\leq C_{\nu}\left(\bar{r}^{2}_{0}+r_{1}r_{2}+\|f\|_{H}\right):=r_{5/2},~~\forall~t\geq t_{0}+1.
\end{align}
\end{proof}
\subsubsection{Asymptotic regularity: $H^{9/2}$-estimate}\label{sec5.1.3}
\noindent

The existence of an absorbing set in $H^{9/2}$ is extremely crucial for the construction of an IM for (\ref{2.4}) in our frame as we modify the nonlinearity outside the absorbing ball and show that the modified nonlinearity satisfies the SAC in the subsequent sections. Similar to the 2D case, we consider equations on $w:=u-v$, where $u$ solves problem (\ref{5.1}) and $v\in H^{5/2}$ solves the stationary problem (\ref{5.2}), that is, $v$ is the stationary point of (\ref{5.1}) (there may be several stationary points of this problem, we just fix one of them). After applying Helmholtz-Leray projection we get
\begin{equation}\label{5.33}
\begin{cases}
\nu A^{5/4}w+B(w,w)+B(v,w)+B(w,v)=-\partial_{t}u,\\
w(0)=u_{0}-v:=w_{0}.
\end{cases}
\end{equation}
Again, since we remove a stationary point of system (\ref{2.4}) that does not affect its long time behavior, we will study the solutions $w$ and the attractor of the Eqs. (\ref{5.33}) instead of solutions $u$ of Eqs. (\ref{2.4}). The advantage of this approach is that the solutions of (\ref{5.33}) are more regular.
\begin{lemma}\label{lem5.3}
Let $f\in H$ and $u_{0}\in H$ then for any solution $u(t)$ of problem \eqref{2.4} there exists $\bar{r}_{1},\bar{r}_{1}>0$ and $\tilde{C}>0$ (depends only on $\nu$) such that
\begin{equation*}
\|\partial_{t}u\|_{H^{s}}\leq\bar{r}_{s},~~\forall~t\geq t_{0}+1,~~s=1,2
\end{equation*}
with
\begin{equation*}
\bar{r}^{2}_{1}
\leq \tilde{C}\exp\left\{(r_{2})^{9/4}\right\}
\cdot\left\{\left(1+r^{4}_{1}\right)\bar{r}^{2}_{0}\right\}
\end{equation*}
and
\begin{equation*}
\bar{r}^{2}_{2}
\leq \tilde{C}\exp\{r^{2}_{2}\}\left(1+r^{2}_{2}\right)\bar{r}^{2}_{1},
\end{equation*}
where $t_{0}:=t_{0}(\|u_{0}\|_{H},\|f\|_{H^{-5/4}})>0$ is the same as in Theorem \ref{thm5.2}.
\end{lemma}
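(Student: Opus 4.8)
The plan is to bootstrap the regularity of $\psi:=\partial_{t}u$ by testing the time-differentiated equation \eqref{5.26} successively against $A\psi$ and $A^{2}\psi$, in exactly the spirit of the $2$D argument of Theorem \ref{thm3.4}; the hyperviscous dissipation $\nu A^{5/4}$ will at each step supply a gain of $5/4$ derivatives, which is what makes the scheme close. The starting point is already in hand: the $H$-level bound $\|\psi(t)\|_{H}^{2}\le\bar r_{0}^{2}$ for $t\ge t_{0}+1$ was obtained in Step 5 of Theorem \ref{thm5.2} (see \eqref{5.31}), and integrating the $H$-level inequality \eqref{5.27} over $[t,t+1]$ gives $\int_{t}^{t+1}\|\psi(s)\|_{H^{5/4}}^{2}\,ds\lesssim(1+r_{1}^{4})\bar r_{0}^{2}$; since $H^{5/4}\hookrightarrow H^{1}$ this controls $\int_{t}^{t+1}\|\psi(s)\|_{H^{1}}^{2}\,ds$, the quantity needed to launch the uniform Gronwall Lemma \ref{lem2.7} at the next level. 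Throughout I use the uniform bound $\|u(t)\|_{H^{5/2}}\le r_{5/2}$ from Theorem \ref{thm5.2}, and all computations are formal, justified by Galerkin truncation as in the remark after Lemma \ref{lem3.1}.

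For the $H^{1}$-estimate I take the inner product of \eqref{5.26} with $A\psi$, obtaining
\begin{equation*}
\frac{1}{2}\frac{d}{dt}\|\psi\|_{H^{1}}^{2}+\nu\|\psi\|_{H^{9/4}}^{2}
=-(B(\psi,u),A\psi)-(B(u,\psi),A\psi),
\end{equation*}
so the dissipation is the strong norm $\nu\|\psi\|_{H^{9/4}}^{2}$. Since $\|A^{-1/8}B\|=\|B\|_{H^{-1/4}}$ and $\|A^{9/8}\psi\|=\|\psi\|_{H^{9/4}}$, I rewrite each nonlinear term as $(A^{-1/8}B(\cdot,\cdot),A^{9/8}\psi)$ and estimate $\|B(\psi,u)\|_{H^{-1/4}}$ and $\|B(u,\psi)\|_{H^{-1/4}}$ by the bilinear estimates of Lemma \ref{lem2.1} together with the interpolation inequality (Lemma \ref{lem2.2}), distributing the derivatives so that at most $\|u\|_{H^{5/2}}$ and $\|\psi\|_{H^{9/4}}$ appear. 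Absorbing the top-order factor $\|\psi\|_{H^{9/4}}$ into the dissipation by Young's inequality leaves a differential inequality of the form $\frac{d}{dt}\|\psi\|_{H^{1}}^{2}+\nu\|\psi\|_{H^{9/4}}^{2}\le\Psi\,\|\psi\|_{H^{1}}^{2}+h$, whose coefficient and source integrate (using $\|u\|_{H^{2}}\le r_{2}$, $\|u\|_{H^{1}}\le r_{1}$ and the $L^{2}_{t}$-bound above) to the exponential factor $\exp\{(r_{2})^{9/4}\}$ and the source $(1+r_{1}^{4})\bar r_{0}^{2}$ respectively; Lemma \ref{lem2.7} then yields $\bar r_{1}$ in the stated form.

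The $H^{2}$-estimate is entirely parallel: I test \eqref{5.26} with $A^{2}\psi$, which produces the dissipation $\nu\|\psi\|_{H^{13/4}}^{2}$, and now write each nonlinearity as $(A^{3/8}B(\cdot,\cdot),A^{13/8}\psi)$ so that $\|A^{13/8}\psi\|=\|\psi\|_{H^{13/4}}$ is again the only top-order factor (equivalently one may distribute $A^{1/2}$ onto the two arguments as in \eqref{3.14}--\eqref{3.15}). The time-integral $\int_{t}^{t+1}\|\psi\|_{H^{2}}^{2}\,ds$ needed for Gronwall now follows from integrating the $H^{1}$-inequality just established, since $H^{9/4}\hookrightarrow H^{2}$. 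Applying Lemma \ref{lem2.7} gives $\bar r_{2}^{2}\le\tilde C\exp\{r_{2}^{2}\}(1+r_{2}^{2})\bar r_{1}^{2}$.

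The main obstacle is the $3$D bilinear bookkeeping. In the transport term $B(u,\psi)$ the factor $\psi$ must be interpolated sharply between $H$ and the dissipation norm $H^{9/4}$ (resp. $H^{13/4}$) so that, after Young absorption, only $\|\psi\|_{H^{1}}^{2}$ (resp. $\|\psi\|_{H^{2}}^{2}$) times a power of $\|u\|_{H^{5/2}}$ and an $L^{1}_{t}$-integrable remainder survive; in the term $B(\psi,u)$, where a derivative falls on $u$, I must spend the full $H^{5/2}$-regularity of $u$, which is exactly what Theorem \ref{thm5.2} guarantees and no more. Keeping every exponent strictly below the dissipation threshold---so that nothing of order higher than $H^{9/4}$ (resp. $H^{13/4}$) survives on the right-hand side---is the delicate point; once the Sobolev and Young exponents are balanced, the uniform Gronwall lemma closes each step automatically, and the explicit constants $\bar r_{1},\bar r_{2}$ emerge from the integrated coefficients.
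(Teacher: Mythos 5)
Your proposal is correct and follows essentially the same route as the paper's proof: it tests the time-differentiated equation \eqref{5.26} successively with $A\psi$ and $A^{2}\psi$, absorbs the nonlinear terms into the hyperviscous dissipation $\nu\|\psi\|^{2}_{H^{9/4}}$ (resp. $\nu\|\psi\|^{2}_{H^{13/4}}$) via the bilinear estimates of Lemma \ref{lem2.1}, interpolation and Young's inequality, and closes each level with the uniform Gronwall Lemma \ref{lem2.7}, with the needed time-integrals supplied exactly as in the paper by integrating \eqref{5.27} (for the $H^{1}$ level) and the $H^{1}$ differential inequality (for the $H^{2}$ level). The only difference is cosmetic bookkeeping---you pair negative-order norms of $B$ against $A^{9/8}\psi$ (resp. $A^{13/8}\psi$), whereas the paper distributes fractional powers of $A$ across the arguments of $B$ before estimating---and both routes yield the stated constants.
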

\begin{proof}
Firstly, we give the $H^{1}$-estimate of $\partial_{t}u$. Taking the inner product of (\ref{5.26}) with $A\psi$, we obtain
\begin{equation}\label{5.34}
\frac{1}{2}\frac{d}{dt}\|\psi\|^{2}_{H^{1}}+\nu\|\psi\|^{2}_{H^{9/4}}+(B(\psi,u),A\psi)+(B(u,\psi),A\psi)=0.
\end{equation}
Note that
\begin{align}\label{5.35}
\left|(B(\psi,u),A\psi)\right|
\leq& \left|(B(A^{3/8}\psi,u),A^{5/8}\psi)\right|+\left|(B(\psi,A^{3/8}u),A^{5/8}\psi)\right|\nonumber\\
\leq& C\left(\|A^{3/8}\psi\|_{H^{1/2}}\|u\|_{H^{1}}\|A^{5/8}\psi\|_{H^{1}}
+\|\psi\|_{H^{1/2}}\|A^{3/8}u\|_{H^{1}}\|A^{5/8}\psi\|_{H^{1}}\right)\nonumber\\
\leq& C\left(\|\psi\|_{H^{5/4}}\|u\|_{H^{1}}\|\psi\|_{H^{9/4}}
+\|\psi\|_{H^{1/2}}\|u\|_{H^{7/4}}\|\psi\|_{H^{9/4}}\right)\nonumber\\
\leq& C\left(\|\psi\|_{H^{5/4}}\|u\|_{H^{1}}\|\psi\|_{H^{9/4}}
+\|\psi\|_{H^{5/4}}\|u\|_{H^{7/4}}\|\psi\|_{H^{9/4}}\right)\nonumber\\
\leq& C\left(\|u\|_{H^{1}}+\|u\|_{H^{7/4}}\right)\|\psi\|^{8/9}_{H^{1}}\|\psi\|^{10/9}_{H^{9/4}}\nonumber\\
\leq& C_{\nu}\left(\|u\|^{9/4}_{H^{1}}+\|u\|^{9/4}_{H^{7/4}}\right)\|\psi\|^{2}_{H^{1}}+\frac{\nu}{4}\|\psi\|^{2}_{H^{9/4}},
\end{align}
where we have used the property \eqref{2.6}, the interpolation inequality $\|\psi\|_{H^{5/4}}\leq c\|\psi\|^{8/9}_{H^{1}}\|\psi\|^{1/9}_{H^{9/4}}$ and the Young's inequality with exponent $(9/4,9/5)$. Analogously,
\begin{align}\label{5.36}
|(B(u,\psi),A\psi)|
\leq C_{\nu}\left(\|u\|^{9/4}_{H^{1}}+\|u\|^{9/4}_{H^{7/4}}\right)\|\psi\|^{2}_{H^{1}}+\frac{\nu}{4}\|\psi\|^{2}_{H^{9/4}}.
\end{align}
Substituting (\ref{5.35}) and (\ref{5.36}) into (\ref{5.34}), we deduce
\begin{equation}\label{5.37}
\frac{d}{dt}\|\psi\|^{2}_{H^{1}}+\nu\|\psi\|^{2}_{H^{9/4}}
\leq C_{\nu}\left(\|u\|^{9/4}_{H^{1}}+\|u\|^{9/4}_{H^{7/4}}\right)\|\psi\|^{2}_{H^{1}}
\leq C_{\nu}\|u\|^{9/4}_{H^{2}}\|\psi\|^{2}_{H^{1}}.
\end{equation}
On the other hand, by (\ref{5.27}) and \eqref{5.31}, we know that
\begin{align*}
\int^{t+1}_{t}\|\psi(s)\|^{2}_{H^{1}}ds
\leq \int^{t+1}_{t}\|\psi(s)\|^{2}_{H^{5/4}}ds
\leq C_{\nu}\left(1+r^{4}_{1}\right)\bar{r}^{2}_{0},
~~\forall~t\geq t_{0}+1.
\end{align*}
Substituting above estimate into (\ref{5.37}) and applying Gronwall-type Lemma \ref{lem2.7} give that
\begin{align}\label{5.38}
\|\psi\|^{2}_{H^{1}}
\leq& \exp\left\{C_{\nu}(r_{2})^{9/4}\right\}
\cdot\left\{C_{\nu}\left(1+r^{4}_{1}\right)\bar{r}^{2}_{0}\right\}:=\bar{r}^{2}_{1},~~\forall~t\geq t_{0}+1.
\end{align}
\par
Now we ready to estimate $\|\psi\|_{H^{2}}$. To do this, multiplying (\ref{5.26}) by $A^{2}\psi$ and integrating over $x\in\mathbb{T}^{3}$, we obtain
\begin{equation}\label{5.39}
\frac{1}{2}\frac{d}{dt}\|\psi\|^{2}_{H^{2}}+\nu\|\psi\|^{2}_{H^{13/4}}+(B(\psi,u),A^{2}\psi)
+(B(u,\psi),A^{2}\psi)=0.
\end{equation}
We can estimate
\begin{align}\label{5.40}
\left|(B(\psi,u),A^{2}\psi)\right|
\leq& \left|(B(A^{7/8}u,\psi),A^{9/8}\psi)\right|
+\left|(B(u,A^{7/8}\psi),A^{9/8}\psi)\right|\nonumber\\
\leq& \left(\|B(A^{7/8}u,\psi)\|_{H^{-1}}+\|B(u,A^{7/8}\psi)\|_{H^{-1}}\right)
\|A^{9/8}\psi\|_{H^{1}}\nonumber\\
\leq& \left(\|A^{7/8}u\|_{H}\|\psi\|_{L^{\infty}}
+\|u\|_{L^{\infty}}\|A^{7/8}\psi\|_{H}\right)\|\psi\|^{2}_{H^{13/4}}\nonumber\\
\leq& C_{\nu}\left(\|u\|_{H^{7/4}}\|\psi\|_{H^{2}}
+\|u\|_{H^{2}}\|\psi\|_{H^{7/4}}\right)\|\psi\|^{2}_{H^{13/4}}\nonumber\\
\leq& C_{\nu}\|\psi\|^{2}_{H^{2}}\|u\|^{2}_{H^{2}}+\frac{\nu}{4}\|\psi\|^{2}_{H^{13/4}},
\end{align}
here we have used the Sobolev embedding $H^{2}\hookrightarrow L^{\infty}$ in $\mathbb{T}^{3}$. Similarly,
\begin{align}\label{5.41}
\left|(B(u,\psi),A^{2}\psi)\right|
\leq C_{\nu}\|\psi\|^{2}_{H^{2}}\|u\|^{2}_{H^{2}}+\frac{\nu}{4}\|\psi\|^{2}_{H^{13/4}}.
\end{align}
Substituting (\ref{5.40}) and (\ref{5.41}) into (\ref{5.39}) leads to the following differential inequality
\begin{align}\label{5.42}
\frac{d}{dt}\|\psi\|^{2}_{H^{2}}+\nu\|\psi\|^{2}_{H^{13/4}}
\leq C_{\nu}\|u\|^{2}_{H^{2}}\|\psi\|^{2}_{H^{2}}
\leq C_{\nu}r^{2}_{2}\|\psi\|^{2}_{H^{2}},~~\forall~t\geq t_{0}+1,
\end{align}
here we have used the result that established in \ref{thm5.2}. Besides, by (\ref{5.37}) and (\ref{5.38}), we know that
\begin{align}\label{5.43}
\int^{t+1}_{t}\|\psi(s)\|^{2}_{H^{2}}ds\leq\int^{t+1}_{t}\|\psi(s)\|^{2}_{H^{9/4}}ds
\leq C_{\nu}\left(1+r^{2}_{2}\right)\bar{r}^{2}_{1},~~\forall~t\geq t_{0}+1.
\end{align}
Combining (\ref{5.42}) with (\ref{5.43}) and applying the uniform Gronwall-type Lemma \ref{lem2.7}, we deduce
\begin{align*}
\|\psi\|^{2}_{H^{2}}
\leq \exp\{C_{\nu}r^{2}_{2}\}\times\|\psi\|_{L^{2}(t,t+1;H^{2})}
\leq C_{\nu}\exp\{C_{\nu}r^{2}_{2}\}\left(1+r^{2}_{2}\right)\bar{r}^{2}_{1}:=\bar{r}^{2}_{2},
\end{align*}
for any $t\geq t_{0}+1$. Thus we complete the proof.
\end{proof}
\begin{theorem}\label{thm5.4}
Assume that $f\in H$. Let $w(t)$ be a solution of Eqs. \eqref{5.33} with $w(0)\in H$. Then there exists $t'_{0}=t'_{0}(\|w(0)\|_{H},\|f\|_{H^{-5/4}})>0$ such that
\begin{equation}\label{5.44}
\|w(t)\|_{H^{9/2}}\leq\varrho_{3},~~\forall~t\geq t'_{0}+1,
\end{equation}
where $\varrho_{3}$ is given by the following formula
\begin{equation*}
\varrho^{2}_{3}=K\left(r^{2}_{v}+r^{2}_{5/2}+\bar{r}^{2}_{2}\right),
\end{equation*}
here $r^{2}_{5/2}$ denotes the uniform upper bound of $\|u(t)\|^{2}_{H^{5/2}}$ in the large time given in $(\ref{5.32})$ and the constant $K$ depends only on $\nu$.
\end{theorem}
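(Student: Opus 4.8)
The plan is to treat \eqref{5.33} as an elliptic identity for $w$ and to read off the $H^{9/2}$ bound from the regularity of its right-hand side, exactly as in the final step of the proof of Theorem \ref{thm3.4} for $d=2$. Since $A^{5/4}$ is an isomorphism from $H^{s+5/2}$ onto $H^{s}$ (by the definition of the norms $\|\cdot\|_{H^{s}}$ through $A$), we have $\|w\|_{H^{9/2}}=\|A^{5/4}w\|_{H^{2}}$, and solving \eqref{5.33} for $A^{5/4}w$ gives
\begin{equation*}
\nu\|w\|_{H^{9/2}}\leq\|\partial_{t}u\|_{H^{2}}+\|B(w,w)\|_{H^{2}}+\|B(v,w)\|_{H^{2}}+\|B(w,v)\|_{H^{2}}.
\end{equation*}
Thus it suffices to bound the right-hand side in $H^{2}$, uniformly for $t\geq t'_{0}+1$. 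The linear term is immediate, $\|\partial_{t}u\|_{H^{2}}\leq\bar{r}_{2}$ by Lemma \ref{lem5.3}. For the quadratic terms the inputs are the uniform bounds $\|u\|_{H^{5/2}}\leq r_{5/2}$ (Theorem \ref{thm5.2}) and $\|v\|_{H^{5/2}}\leq r_{v}$ (Lemma \ref{lem5.1}), which together control $\|w\|_{H^{5/2}}\leq r_{5/2}+r_{v}$.

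Next I would estimate the three bilinear terms by Moser/product inequalities in $\mathbb{T}^{3}$ together with the Sobolev embedding $H^{2}\hookrightarrow L^{\infty}$. For $B(v,w)=P_{\sigma}((v\cdot\nabla)w)$ the top-order derivatives can be made to fall on the more regular factor $w$, so this term is controlled by products of the type $\|v\|_{H^{5/2}}\|w\|_{H^{9/2}}$, and $B(w,w)$ is handled as a quadratic expression in $w$ in the same way. After distributing derivatives and applying Young's inequality to split every product into a sum of squares, the contributions organize themselves into multiples of $r_{v}^{2}$, $r_{5/2}^{2}$ and $\bar{r}_{2}^{2}$, which is exactly the additive form of $\varrho_{3}^{2}=K(r_{v}^{2}+r_{5/2}^{2}+\bar{r}_{2}^{2})$. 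As in the remark following Lemma \ref{lem3.1}, the computation is formal and is made rigorous through a Galerkin truncation, and the final bound holds for all $t\geq t'_{0}+1$ once $t'_{0}\geq t_{0}$ is chosen so that all the input estimates of Theorem \ref{thm5.2} and Lemma \ref{lem5.3} are in force.

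The hard part will be the cross term $B(w,v)=P_{\sigma}((w\cdot\nabla)v)$. Here the highest derivatives cannot be transferred off $v$, and since $f\in H$ forces $v$ into $H^{5/2}$ only, one has $\nabla v\in H^{3/2}$, precisely the borderline index for which the embedding $H^{3/2}\hookrightarrow L^{\infty}$ fails in three dimensions. The $H^{2}$ estimate of $w^{i}\partial_{i}v$ must therefore be arranged so that every derivative that can act does act on $w$ (which is controlled in high norms) while $v$ is kept at its available order; the genuinely dangerous piece is the one in which all derivatives land on $v$, formally requiring $v\in H^{3}$. I expect closing this to require either a sharp commutator/product inequality that exploits the divergence-free structure of $w$, or a preliminary bootstrap in which \eqref{5.33} is first used to lift $w$ from $H^{5/2}$ to an intermediate space and the argument is then iterated toward $H^{9/2}$. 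Controlling this borderline term is the crux of the estimate, and it is exactly where the $r_{v}$ dependence of $\varrho_{3}$ enters.
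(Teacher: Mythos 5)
Your overall strategy --- reading the $H^{9/2}$ bound off the elliptic identity $\nu A^{5/4}w=-B(w,w)-B(v,w)-B(w,v)-\partial_{t}u$ using the uniform bounds $\|u\|_{H^{5/2}}\leq r_{5/2}$, $\|v\|_{H^{5/2}}\leq r_{v}$ and $\|\partial_{t}u\|_{H^{2}}\leq\bar{r}_{2}$ --- is the same as the paper's, but as written your proposal is not a proof: it stops exactly where the difficulty lies. There are two concrete problems. First, your treatment of the benign terms is circular: bounding $\|B(v,w)\|_{H^{2}}$ by $C\|v\|_{H^{5/2}}\|w\|_{H^{9/2}}$ places the unknown quantity on the right-hand side, and since no smallness of $\|v\|_{H^{5/2}}$ relative to $\nu$ is available it cannot be absorbed, nor can Young's inequality repair a bound in which the left-hand side is linear in $\|w\|_{H^{9/2}}$. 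The paper avoids this by a two-step bootstrap: it first estimates the right-hand side of \eqref{5.33} in $H^{1/2}$, which needs only the $H^{5/2}$ information on $w$ and $v$ and the $H^{2}$ bound on $\partial_{t}u$, obtaining the intermediate uniform bound \eqref{5.49} on $\|w\|_{H^{3}}$, and only then estimates the right-hand side in $H^{2}$ using that already-established $H^{3}$ bound, so the unknown norm never appears on the right.

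Second, and more seriously, you never close the estimate for $B(w,v)$: you only state that you ``expect'' it to follow from a commutator inequality or an iterated bootstrap, and neither route can work. Using $\nabla\cdot w=0$ one may write $B(w,v)=P_{\sigma}\nabla\cdot(w\otimes v)$, but with $v\in H^{5/2}$ only, the tensor $w\otimes v$ lies in $H^{5/2}$ no matter how smooth $w$ is, so $B(w,v)\in H^{3/2}$ and the elliptic identity returns at best $w\in H^{3/2+5/2}=H^{4}$; iterating does not help, because the cap comes from $v$, not from $w$, so your proposed bootstrap saturates at $H^{4}$ and cannot reach $H^{9/2}$. You should also know that the paper's own proof does not supply the missing idea: in its final display it bounds $\|B(w,v)\|_{H^{2}}+\|B(v,w)\|_{H^{2}}$ by $2C\|v\|_{H^{5/2}}\|w\|_{L^{\infty}(t,+\infty;H^{3})}$, i.e.\ it treats $B(w,v)$ as if the derivative could be shifted onto $w$, which amounts to the product estimate $H^{3}\times H^{3/2}\rightarrow H^{2}$ in $\mathbb{T}^{3}$, and that estimate is false. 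So your instinct that this cross term is the crux is correct and well-founded, but your proposal leaves precisely that crux unproved, and the repair strategies you sketch do not fill the gap.
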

\begin{proof}
By Theorem \ref{thm5.2}, we know that
\begin{equation*}
\|u\|^{2}_{L^{\infty}\left(t,+\infty;H^{5/2}\right)}\leq r^{2}_{5/2},~~\forall~ t\geq t_{0}+1,
\end{equation*}
for some $t_{0}>0$ which depends on $\|u_{0}\|_{H}$ and $\|f\|_{H^{-5/4}}$. Then there exists $t'_{0}=t'_{0}(\|w(0)\|_{H})\geq t_{0}$ such that
\begin{equation}\label{5.45}
\|w\|^{2}_{L^{\infty}\left(t,+\infty;H^{5/2}\right)}
\leq 2\left(\|u\|^{2}_{L^{\infty}(t,+\infty;H^{2})}+\|v\|^{2}_{H^{5/2}}\right)
\leq 2\left(r^{2}_{5/2}+r^{2}_{v}\right),~~\forall~t\geq~t'_{0}+1,
\end{equation}
thanks to the equality $w=u-v$ and the fact that $v\in H^{5/2}$ obtained in Lemma \ref{lem5.1}. Due to Theorem \ref{lem5.3}, we see that
\begin{equation}\label{5.46}
\|\partial_{t}u\|^{2}_{L^{\infty}(t,+\infty;H^{2})}\leq \bar{r}^{2}_{2},
\end{equation}
for any $t\geq t'_{0}+1$. Then, one can estimate
\begin{align}\label{5.47}
&\left\|\left(-B(w,w)-B(v,w)-B(w,v)-\partial_{t}u\right)\right\|_{H^{1/2}}\nonumber\\
\leq& c\left(\|w\|_{L^{\infty}}\|\nabla\cdot w\|_{H^{1/2}}+2\|v\|_{L^{\infty}}\|\nabla\cdot w\|_{H^{1/2}}\right)
+\|\partial_{t}u\|_{H^{1/2}}\nonumber\\
\leq& c\left(\|w\|_{H^{2}}\|w\|_{H^{3/2}}+2\|v\|_{H^{2}}\|w\|_{H^{3/2}}\right)+\|\partial_{t}u\|_{H^{2}},
\end{align}
here we have used the Sobolev embedding $H^{2}(\mathbb{T}^{3})\hookrightarrow L^{\infty}(\mathbb{T}^{3})$ again. Therefore, by (\ref{5.45}) and (\ref{5.46}), we deduce
\begin{align*}
\left(-B(w,w)-B(v,w)-B(w,v)-\partial_{t}u\right)\in L^{\infty}(t,+\infty;H^{1/2}),
\end{align*}
for any $t\geq t'_{0}+1$. Moreover, by (\ref{5.45}) and (\ref{5.47}), we can give an uniform upper bound on the $H^{1/2}$ norm in the large time, that is,
\begin{align}\label{5.48}
&\left\|\left(-B(w,w)-B(v,w)-B(w,v)-\partial_{t}u\right)\right\|_{L^{\infty}(t,+\infty;H^{1/2})}\nonumber\\
\leq& C\left(\|w\|^{2}_{L^{\infty}\left(t,+\infty;H^{5/2}\right)}
+2\|v\|_{H^{5/2}}\|w\|_{L^{\infty}\left(t,+\infty;H^{5/2}\right)}\right)
+\left\|\partial_{t}u\right\|_{L^{\infty}\left(t,+\infty;H^{1/2}\right)}\nonumber\\
\leq& C\left(r^{2}_{v}+r^{2}_{5/2}+\bar{r}_{2}\right),~~\mbox{for any}~t\geq t_{0}+1.
\end{align}
Consequently, combining (\ref{5.48}) with the first equation in (\ref{5.34}) and due to $\partial_{t}u=\partial_{t}w$, we know that
\begin{align}\label{5.49}
\|w\|_{L^{\infty}(t,+\infty;H^{3})}=\|A^{5/4}w\|_{L^{\infty}(t,+\infty;H^{1/2})}
\leq \nu^{-1}\left\{C\left(r^{2}_{v}+r^{2}_{5/2}
+\bar{r}^{2}_{2}\right)\right\},
\end{align}
for any $t\geq t'_{0}+1$, which indicates that
\begin{align*}
\|w(t)\|_{H^{9/2}}=&\|A^{5/4}w\|_{L^{\infty}(t,+\infty;H^{2})}\nonumber\\
\leq& \nu^{-1}\left\|\left(-B(w,w)-B(v,w)-B(w,v)-\partial_{t}u\right)\right\|_{L^{\infty}(t,+\infty;H^{2})}\nonumber\\
\leq& C_{\nu}\left(\|w\|^{2}_{L^{\infty}\left(t,+\infty;H^{3}\right)}
+2\|v\|_{H^{5/2}}\|w\|_{L^{\infty}\left(t,+\infty;H^{3}\right)}\right)
+\left\|\partial_{t}u\right\|_{L^{\infty}\left(t,+\infty;H^{2}\right)}\nonumber\\
\leq& C_{\nu}\left[\left(r^{2}_{v}+r^{2}_{5/2}\right)
+\bar{r}_{2}\right]:=\varrho^{2}_{3},
\end{align*}
for any $t\geq t'_{0}+1$, where we have used \eqref{5.49} and the Sobolev embedding $H^{2}(\mathbb{T}^{3})\hookrightarrow L^{\infty}(\mathbb{T}^{3})$ as well.
\end{proof}
\subsection{Well-posedness and attractors}\label{sec5.2}
\noindent

In this subsection, by using the a priori estimate achieved in Subsection \ref{sec5.1}, we prove that problems \eqref{2.4} and \eqref{5.33} possesses a global attractor in $H$. First of all, we recall that,
\begin{theorem}[\cite{GG18,HLT10,ILT06,CD18}]\label{thm5.5}
Let $f\in H^{-5/4}$ and $u_{0}\in H$. Then for any $T>0$ there exists a unique solution $(u,p)$ to Eqs. (\ref{5.1}) that satisfies $u\in C([0,T];H)\cap L^{2}([0,T];H^{5/4})$ and $\partial_{t}u\in L^{2}([0,T];H^{-5/4})$ such that
\begin{equation*}
(\partial_{t}u+\nu A^{5/4}u+B(u,u)-f,\varphi)_{H^{-5/4},H^{5/4}}=0
\end{equation*}
for every $\varphi \in H$ and almost every $t\in (0,T)$. Moreover, this solution depends continuously on the initial data $u_{0}$ in the $H$ norm.
\end{theorem}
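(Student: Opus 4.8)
The plan is to build the solution by the Faedo--Galerkin scheme, close uniform a priori bounds using the energy structure, extract a convergent subsequence via the Aubin--Dubinskii--Lions compactness Lemma~\ref{lem2.8}, and finally obtain uniqueness together with continuous dependence by a difference estimate that closes precisely because the exponent $\theta=5/4$ is critical in $\mathbb{T}^{3}$. First I would let $u_{n}=P_{n}u_{n}$ solve the projected system
\begin{equation*}
\partial_{t}u_{n}+\nu A^{5/4}u_{n}+P_{n}B(u_{n},u_{n})=P_{n}f,\qquad u_{n}(0)=P_{n}u_{0},
\end{equation*}
which is a finite-dimensional ODE with locally Lipschitz right-hand side, hence locally solvable; the a priori bound below then extends the solution to all of $[0,T]$.

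Testing the projected equation with $u_{n}$ and invoking $(B(u_{n},u_{n}),u_{n})=0$ from \eqref{2.7} together with Young's inequality on the forcing term gives
\begin{equation*}
\frac{d}{dt}\|u_{n}\|_{H}^{2}+\nu\|u_{n}\|_{H^{5/4}}^{2}\leq\frac{1}{\nu}\|f\|_{H^{-5/4}}^{2},
\end{equation*}
so $\{u_{n}\}$ is bounded in $L^{\infty}(0,T;H)\cap L^{2}(0,T;H^{5/4})$. To control $\partial_{t}u_{n}$, I would bound the nonlinearity in $H^{-5/4}$: integration by parts, a Sobolev embedding, and the interpolation inequality of Lemma~\ref{lem2.2} yield
\begin{equation*}
\|B(u_{n},u_{n})\|_{H^{-5/4}}\leq c\,\|u_{n}\|_{H}\|u_{n}\|_{H^{5/4}},
\end{equation*}
so that $B(u_{n},u_{n})$, and hence $\partial_{t}u_{n}$, is bounded in $L^{2}(0,T;H^{-5/4})$.

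With the triple $H^{5/4}\hookrightarrow\hookrightarrow H\hookrightarrow H^{-5/4}$, Lemma~\ref{lem2.8} furnishes a subsequence converging strongly in $L^{2}(0,T;H)$ and weakly-$\ast$ in the bounded spaces; the strong $L^{2}(0,T;H)$ convergence is exactly what lets me pass to the limit in $B(u_{n},u_{n})$, while the linear terms pass by weak convergence, producing a limit $u$ in the stated regularity class. The continuity $u\in C([0,T];H)$ follows from $u\in L^{2}(0,T;H^{5/4})$ and $\partial_{t}u\in L^{2}(0,T;H^{-5/4})$ by the Lions--Magenes interpolation (as $H$ is the half-way space between $H^{5/4}$ and $H^{-5/4}$), and the pressure is recovered from the momentum equation \eqref{5.1} by applying $I-P_{\sigma}$ and the de Rham argument, the hyperviscous and time-derivative terms being divergence free.

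The main obstacle is uniqueness and continuous dependence, where the borderline nature of $\theta=5/4$ is decisive. For two solutions $u_{1},u_{2}$ set $w=u_{1}-u_{2}$, so that $\partial_{t}w+\nu A^{5/4}w+B(w,u_{1})+B(u_{2},w)=0$; testing with $w$, using $(B(u_{2},w),w)=0$, estimating $(B(w,u_{1}),w)$ by \eqref{2.6}, and interpolating $\|w\|_{H^{1/2}}\|w\|_{H^{1}}\leq\|w\|_{H}^{4/5}\|w\|_{H^{5/4}}^{6/5}$ before applying Young's inequality leads to
\begin{equation*}
\frac{d}{dt}\|w\|_{H}^{2}+\nu\|w\|_{H^{5/4}}^{2}\leq C\|u_{1}\|_{H^{1}}^{5/2}\|w\|_{H}^{2}.
\end{equation*}
The crucial point is that $\|u_{1}\|_{H^{1}}^{5/2}\leq\|u_{1}\|_{H}^{1/2}\|u_{1}\|_{H^{5/4}}^{2}$ is integrable in time exactly because $u_{1}\in L^{\infty}(0,T;H)\cap L^{2}(0,T;H^{5/4})$---this is where the criticality of $5/4$ makes the estimate close---so Gronwall's lemma gives $\|w(t)\|_{H}^{2}\leq\|w(0)\|_{H}^{2}\exp\!\big(C\!\int_{0}^{T}\|u_{1}\|_{H^{1}}^{5/2}\,ds\big)$, yielding both uniqueness and continuous dependence on $u_{0}$ in the $H$ norm.
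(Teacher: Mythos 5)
Your proposal is correct, and all the exponents check out: the energy bound in $L^{\infty}(0,T;H)\cap L^{2}(0,T;H^{5/4})$, the duality estimate $\|B(u_{n},u_{n})\|_{H^{-5/4}}\leq c\|u_{n}\|_{H}\|u_{n}\|_{H^{5/4}}$ (via $H^{5/8}\hookrightarrow L^{24/7}$ and interpolation), the Aubin--Dubinskii--Lions triple $H^{5/4}\hookrightarrow\hookrightarrow H\hookrightarrow H^{-5/4}$, and the uniqueness step, where $\|w\|_{H^{1/2}}\|w\|_{H^{1}}\leq\|w\|_{H}^{4/5}\|w\|_{H^{5/4}}^{6/5}$, Young with exponents $(5/3,5/2)$, and $\|u_{1}\|_{H^{1}}^{5/2}\leq\|u_{1}\|_{H}^{1/2}\|u_{1}\|_{H^{5/4}}^{2}\in L^{1}(0,T)$ close the Gronwall argument precisely at the critical exponent $\theta=(d+2)/4=5/4$. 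Be aware, though, that the paper itself does not prove Theorem \ref{thm5.5}: it is quoted directly from the references \cite{GG18,HLT10,ILT06,CD18}, so there is no in-paper argument to compare against. What you have written is essentially the classical Lions argument underlying those references --- hyperviscosity at or above the critical index makes the energy class itself a uniqueness class, so the Leray--Hopf construction upgrades to full well-posedness. The only points you pass over silently are routine: justifying $\frac{d}{dt}\|w\|_{H}^{2}=2\langle\partial_{t}w,w\rangle_{H^{-5/4},H^{5/4}}$ for the difference of two weak solutions (legitimate here exactly because $w\in L^{2}(0,T;H^{5/4})$ with $\partial_{t}w\in L^{2}(0,T;H^{-5/4})$, by Lions--Magenes), and checking that the convection terms in the difference equation indeed lie in $L^{2}(0,T;H^{-5/4})$ so that this pairing makes sense; both follow from the same duality estimate you already use.
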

\begin{proposition}\label{pro5.6}
The solution of \eqref{5.33} admits a continuous dynamical system $(S(t),H)$ and $S(t)$ is the corresponding nonlinear continuous semigroup.
\end{proposition}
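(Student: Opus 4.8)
The plan is to mirror the argument used for the two-dimensional case in Corollary \ref{cor3.6}, simply replacing the well-posedness input Theorem \ref{thm3.5} by its hyperviscous counterpart Theorem \ref{thm5.5}. The key observation is that the passage from the original system \eqref{2.4} to the subtracted system \eqref{5.33} amounts to nothing more than an affine shift of the phase space by the fixed element $v$; since $v$ is frozen once and for all, every structural property of the solution map for \eqref{2.4} transfers verbatim to \eqref{5.33}.

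First I would invoke Theorem \ref{thm5.5}: under the hypotheses $f\in H$ (hence $f\in H^{-5/4}$) and $u_{0}\in H$, it provides, for every $T>0$, a unique solution $u\in C([0,T];H)\cap L^{2}([0,T];H^{5/4})$ of \eqref{2.4}, together with continuous dependence of $u$ on the datum $u_{0}$ in the $H$-norm. Since $v\in H^{5/2}\subset H$ is a fixed stationary solution of \eqref{5.2} whose existence is guaranteed by Lemma \ref{lem5.1}, subtracting \eqref{5.2} from \eqref{2.4} shows that $w:=u-v$ solves \eqref{5.33} with datum $w_{0}=u_{0}-v$; conversely, any solution of \eqref{5.33} yields a solution $u=w+v$ of \eqref{2.4}. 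Thus the translation $T_{v}\colon u\mapsto u-v$ is a bijective isometry of $H$ that conjugates the flow of \eqref{2.4} to that of \eqref{5.33}, and uniqueness for \eqref{5.33} follows at once from uniqueness for \eqref{2.4}.

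Next I would define the solution operator $S(t)\colon w_{0}\mapsto w(t)$ on $H$ and verify the semigroup axioms. The identity $S(0)=I$ is immediate from $w(0)=w_{0}$, and the composition law $S(t+\tau)=S(t)\circ S(\tau)$ for all $t,\tau\geq0$ follows from uniqueness together with the autonomy of \eqref{5.33} (the forcing is produced by the time-independent equation \eqref{2.4} and $v$ is fixed). Continuity of $S(t)$ as a map from $H$ into itself is then a direct consequence of the continuous dependence asserted in Theorem \ref{thm5.5}: if $w_{0}^{(n)}\to w_{0}$ in $H$ then $u_{0}^{(n)}=w_{0}^{(n)}+v\to u_{0}=w_{0}+v$ in $H$, so $u^{(n)}(t)\to u(t)$ in $H$, whence $w^{(n)}(t)=u^{(n)}(t)-v\to w(t)=u(t)-v$ in $H$. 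Equivalently, $S(t)=T_{v}\circ S_{\eqref{2.4}}(t)\circ T_{v}^{-1}$ is continuous because it is a composition of continuous maps.

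There is no genuine analytic obstacle here beyond bookkeeping; the entire content is already packaged in Theorem \ref{thm5.5} and Lemma \ref{lem5.1}. The only point demanding a little care is to make explicit that $v$ is fixed once and for all, so that $T_{v}$ is a well-defined homeomorphism of $H$ intertwining the two flows and the semigroup structure descends without loss. Assembling these observations yields that $(S(t),H)$ is a continuous dynamical system with $S(t)$ a nonlinear continuous semigroup generated by the solutions of \eqref{5.33}.
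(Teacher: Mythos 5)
Your proposal is correct and follows essentially the same route as the paper: invoke Theorem \ref{thm5.5} for well-posedness of \eqref{2.4}, use the fact that the stationary solution $v$ is fixed so that $w=u-v$ uniquely solves \eqref{5.33}, define $S(t)w_{0}=w(t)$, and obtain continuity of $S(t)$ from the continuous dependence on initial data in Theorem \ref{thm5.5}. Your write-up is merely more explicit than the paper's (spelling out the conjugation by the translation $u\mapsto u-v$ and the semigroup axioms), but the substance is identical.
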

\begin{proof}
It follows from Theorem \ref{thm5.5} that the problem (\ref{5.33}) is uniquely solvable for any $w_{0}:=u_{0}-v\in H$, because $u(t)$ solves Eqs. (\ref{2.4}) uniquely by Theorem \ref{thm5.5} and $v$ is fixed. As a consequence, we can define a continuous semigroup $S(t): H\rightarrow H$ by
\begin{equation*}
S(t)w_{0}=w(t),~~\forall~~t\in\mathbb{R}^{+},
\end{equation*}
where $w(t)$ is the solution of (\ref{5.33}) and $w_{0}=u_{0}-v$ is the corresponding initial datum.
\end{proof}
Next, we consider the existence of a global attractor for the problem (\ref{5.33}). Indeed, from Theorem \ref{thm5.4} and Proposition \ref{pro5.6}, we can obtain the $H$-attractor for the problem \eqref{2.4} and $\eqref{5.33}$ respectively.
\begin{theorem}\label{thm5.7}
Assume that $f\in H$ and $u_{0}\in H$. Let $(S(t),H)$ be a dynamical system generated by the solution of \eqref{5.33}. Then $(S(t),H)$ possesses a global attractor $\mathscr{A}$ in the phase space $H$. Moreover, it is bounded in $H^{9/2}$.
\end{theorem}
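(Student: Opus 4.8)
The plan is to follow the same route as the two-dimensional result Theorem \ref{thm3.7}, since all the genuinely hard analytic work has already been carried out in the asymptotic regularity estimate of Theorem \ref{thm5.4}. The three ingredients I need are a continuous semigroup, a bounded absorbing set, and asymptotic compactness; once these are in place the existence of the attractor is purely abstract.

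First I would record that Proposition \ref{pro5.6} already supplies the continuous semigroup $(S(t),H)$. Next, I would promote the uniform $H^{9/2}$-bound \eqref{5.44} into the statement that the ball
\begin{equation*}
\mathscr{B}_{3}:=\left\{w\in H^{9/2}:~\|w\|_{H^{9/2}}\leq\varrho_{3}\right\}
\end{equation*}
is a bounded absorbing set in $H$. The key point here is that the entering time $t'_{0}$ in Theorem \ref{thm5.4} depends only on $\|w(0)\|_{H}$ (and on $\|f\|_{H^{-5/4}}$), so it is uniform over any $H$-bounded set of initial data; hence for every bounded $B\subset H$ one has $S(t)B\subseteq\mathscr{B}_{3}$ for all sufficiently large $t$, and $\mathscr{B}_{3}$ is trivially bounded in $H$.

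For asymptotic compactness I would invoke the compactness of the Rellich-type embedding $H^{9/2}\hookrightarrow H$ on the torus $\mathbb{T}^{3}$: since $\mathscr{B}_{3}$ is bounded in $H^{9/2}$, its closure in $H$ is compact, so $\mathscr{B}_{3}$ is in fact a compact (in $H$) absorbing set and $S(t)$ is asymptotically compact. With a continuous, dissipative and asymptotically compact semigroup on $H$, the standard abstract attractor theorem (see \cite{T97}) then yields a global attractor $\mathscr{A}$, realized as the $\omega$-limit set of $\mathscr{B}_{3}$. The final claim that $\mathscr{A}$ is bounded in $H^{9/2}$ follows because $\mathscr{A}$ is invariant and must lie in the $H^{9/2}$-ball that absorbs it, so the uniform bound $\varrho_{3}$ transfers to $\mathscr{A}$.

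I do not expect any genuine obstacle here: the entire difficulty was already resolved in obtaining the $H^{9/2}$ dissipative estimate \eqref{5.44} via the asymptotic regularity method. The only point requiring a little care is the uniformity of the absorption time over bounded sets of initial data, which is exactly what the $\|w(0)\|_{H}$-dependence of $t'_{0}$ in Theorem \ref{thm5.4} provides.
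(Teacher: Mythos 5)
Your proposal is correct and follows essentially the same route as the paper's own proof: the $H^{9/2}$ dissipative estimate \eqref{5.44} makes the ball $\mathscr{B}_{3}$ an absorbing set, the compact embedding $H^{9/2}\hookrightarrow H$ gives asymptotic compactness, and the abstract attractor theorem concludes. The extra remarks you include (uniformity of the absorption time over $H$-bounded sets via the dependence $t'_{0}=t'_{0}(\|w(0)\|_{H},\|f\|_{H^{-5/4}})$, and invariance forcing $\mathscr{A}\subset\mathscr{B}_{3}$) are points the paper leaves implicit, and they are stated correctly.
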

\begin{proof}
We easily see from the dissipative estimate (\ref{5.44}) that the ball
\begin{equation}\label{5.50}
\mathscr{B}_{3}:=\left\{w\in H:~\|w\|^{2}_{H^{9/2}}\leq \varrho_{3}\right\}
\end{equation}
is a bounded absorbing set of $(S(t),H)$. In addition, $S(t)$ is asymptotic compact in $H$ due to the fact that the embedding $H^{9/2}\hookrightarrow H$ is compact. By the global attractor's theory (see, e.g.,\cite{R01}), we know that $(S(t),H)$ possesses a global attractor $\mathscr{A}$ in $H$, which is a bounded set in $H^{9/2}$.
\end{proof}
\begin{corollary}\label{cor5.8}
Under the conditions stated in Theorem \ref{thm5.7}, the problem \eqref{2.4} has a global attractor $\mathscr{A}_{0}$. Moreover, we can decompose $\mathscr{A}_{0}=\mathscr{A}+v(x)$, where $v(x)$ is a stationary solution (equilibrium point) of \eqref{2.4} and $\mathscr{A}$ is bounded in $H^{9/2}$.
\end{corollary}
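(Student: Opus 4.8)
The plan is to exploit the fact that Eqs.~\eqref{2.4} and \eqref{5.33} are conjugate under the single fixed translation $w=u-v$, so that the global attractor for \eqref{2.4} is obtained simply by shifting back the attractor $\mathscr{A}$ furnished by Theorem~\ref{thm5.7}. First I would build the solution semigroup for \eqref{2.4}: by the well-posedness result Theorem~\ref{thm5.5} (applicable since $f\in H\hookrightarrow H^{-5/4}$ and $u_{0}\in H$), for every $u_{0}\in H$ the equation \eqref{2.4} has a unique solution depending continuously on $u_{0}$, and this defines a continuous semigroup $\tilde{S}(t):H\rightarrow H$, $\tilde{S}(t)u_{0}=u(t)$.

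Next I would record the conjugacy explicitly. Since $v$ solves the stationary problem \eqref{5.2} and is fixed once and for all, subtracting \eqref{5.2} from \eqref{2.4} shows that $w(t):=u(t)-v$ solves \eqref{5.33} with $w(0)=u_{0}-v$; by the uniqueness established in Proposition~\ref{pro5.6} this forces $w(t)=S(t)(u_{0}-v)$, whence
\begin{equation*}
\tilde{S}(t)u_{0}=S(t)(u_{0}-v)+v,\qquad \forall\,t\geq 0,\ u_{0}\in H.
\end{equation*}
Writing $T_{v}x:=x+v$, which is an isometric homeomorphism of $H$ with inverse $T_{-v}$, this identity reads $\tilde{S}(t)=T_{v}\circ S(t)\circ T_{v}^{-1}$, so the two dynamical systems are topologically conjugate (and $v$, the image of $0$ under $T_{v}$, is the equilibrium of \eqref{2.4} corresponding to the fixed point $w=0$ of \eqref{5.33}).

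Finally I would transfer the three defining properties of Definition~\ref{def2.3} from $\mathscr{A}$ to $\mathscr{A}_{0}:=T_{v}\mathscr{A}=\mathscr{A}+v$. Compactness is immediate, since $T_{v}$ is continuous and $\mathscr{A}$ is compact. Invariance follows from $\tilde{S}(t)\mathscr{A}_{0}=T_{v}S(t)T_{v}^{-1}T_{v}\mathscr{A}=T_{v}S(t)\mathscr{A}=T_{v}\mathscr{A}=\mathscr{A}_{0}$, using $S(t)\mathscr{A}=\mathscr{A}$. For the attraction property, any bounded $B\subset H$ yields a bounded set $B-v$, and since $T_{v}$ is an isometry,
\begin{equation*}
\mathrm{dist}\big(\tilde{S}(t)B,\mathscr{A}_{0}\big)=\mathrm{dist}\big(T_{v}S(t)(B-v),T_{v}\mathscr{A}\big)=\mathrm{dist}\big(S(t)(B-v),\mathscr{A}\big)\longrightarrow 0
\end{equation*}
as $t\rightarrow+\infty$, by the attraction property of $\mathscr{A}$ in Theorem~\ref{thm5.7}. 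This proves that $\mathscr{A}_{0}=\mathscr{A}+v$ is the global attractor of \eqref{2.4}, while the $H^{9/2}$-boundedness of $\mathscr{A}$ is exactly the last assertion of Theorem~\ref{thm5.7}.

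Because every step is a direct transcription through the isometry $T_{v}$, there is essentially no hard part here; the only point that genuinely requires care is the use of uniqueness (Theorem~\ref{thm5.5}, Proposition~\ref{pro5.6}) to identify $w(t)$ with $S(t)(u_{0}-v)$. It is precisely this uniqueness that makes the semigroup identity $\tilde{S}(t)=T_{v}S(t)T_{v}^{-1}$ an exact conjugacy rather than a merely asymptotic correspondence, and hence lets the whole argument reduce to shifting the attractor by the fixed vector $v(x)$.
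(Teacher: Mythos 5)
Your proposal is correct and is exactly the argument the paper has in mind: the paper states Corollary \ref{cor5.8} without a written proof, treating it as immediate from Theorem \ref{thm5.7} via the fixed translation $w=u-v$ (``removing a stationary point does not affect the long time behavior''), and your explicit conjugacy $\tilde{S}(t)=T_{v}\circ S(t)\circ T_{v}^{-1}$ together with the transfer of compactness, invariance, and attraction through the isometry $T_{v}$ is precisely that argument made rigorous. No gaps; the appeal to Theorem \ref{thm5.5} and Proposition \ref{pro5.6} for uniqueness is the right way to justify the identity $\tilde{S}(t)u_{0}=S(t)(u_{0}-v)+v$.
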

\subsection{Existence of the inertial manifold}\label{sec5.3}
\noindent

In this subsection, we prove the existence of IMs for the problem (\ref{5.33}). Since the exact value of $\nu$ is not essential for the existence of an IM, we assume that without loss of generality that $\nu=1$ in what follows. Firstly, we modify the nonlinearity outside the absorbing ball $\mathscr{B}_{3}$ in the next part.
\subsubsection{Modification of the nonlinearity outside the absorbing ball}\label{sec5.3.1}
\noindent

Although, we can write the hyperviscous NSEs as an abstract model as follows:
\begin{equation}\label{5.51}
\partial_{t}w+A^{5/4}w+A^{1/4}\mathfrak{F}(w)=0,
\end{equation}
where $A$ is the Stokes operator, $\mathfrak{F}(w)=A^{-1/4}\left(B(w,w)+B(v,w)+B(w,v)\right)$ and $w=u-v$,
the nonlinearity $\mathfrak{F}(w)$  still does not satisfy condition of the abstract results established in Section \ref{sec3}. However, we are mainly concerned with the long-time behaviors of a solution of (\ref{5.33}) and thus we may freely modify the nonlinearity outside the absorbing ball in $H^{9/2}$. In this part we adopt again the method introduced by Kostianko in \cite{K18} to truncate the nonlinearity of the Eqs. (\ref{5.33}). To do this, we define a vector-valued cut-off function as
\begin{equation*}
\vec{\eta}=(\eta(\xi_{1}),\eta(\xi_{2}),\eta(\xi_{3}))\in\mathbb{C}^{3},~~\mbox{for}~\xi=(\xi_{1},\xi_{2},\xi_{3}),
\end{equation*}
where $\eta(\cdot)\in C^{\infty}_{0}(\mathbb{C})$ is a smooth cut-off function satisfying
\begin{equation*}
\eta(\xi)=\xi~~\mbox{for}~~|\xi|\leq 1~~\mbox{and}~~|\eta(\xi)|\leq 2~~\mbox{for all}~~\xi\in\mathbb{C}.
\end{equation*}
Now, similar to Kostianko \cite{K18}, we introduce the operator $W: H\rightarrow H$ defined as
\begin{equation*}
W(w)=\sum_{j\in\mathbb{Z}^{3}_{\ast}}
\frac{\varrho_{3}}{|j|^{9/2}}P^{j}_{\sigma}\vec{\eta}\left(\frac{|j|^{9/2}\widehat{w}_{j}}{\varrho_{3}}\right)e^{ij\cdot x},
\end{equation*}
where $\varrho_{3}$ is the radius of the absorbing ball $\mathscr{B}$ defined by (\ref{5.50}), $P^{j}_{\sigma}$ are the Leray projector matrices defined by (\ref{2.2}) and
\begin{equation*}
w=\sum_{j\in\mathbb{Z}^{3}_{\ast}}\widehat{w}_{j}e^{ij\cdot x},
\end{equation*}
here we recall that $w=(w^{1},w^{2},w^{3})$ is a vector variable and each coefficient $\widehat{w}_{j}=(\widehat{w}^{1}_{j},\widehat{w}^{2}_{j},\widehat{w}^{3}_{j})\in \mathbb{C}^{3}$ is also a vector.
\par
Then, we modify $\mathfrak{F}(w)$ outside the absorbing ball in $H^{9/2}$ via replacing $w$ by $W(w)$. Namely, we define $F_{3}(W(\cdot))$ by
\begin{equation}\label{5.52}
F_{3}(W(w)):=\mathfrak{F}(W(w))=A^{-1/4}\left(B(W(w),W(w))+B(v,W(w))+B(W(w),v)\right).
\end{equation}
Hence, we consider the following ``prepared'' equation, which is a modification of \eqref{5.51}:
\begin{equation}\label{5.53}
\partial_{t}w+A^{5/4}w+A^{1/4}F_{3}(W(w))=0.
\end{equation}
Then, the ``prepared'' equation \eqref{5.53} has the same long-time behavior as the original problem \eqref{1.1} or \eqref{5.51}. Indeed,
\begin{proof}[\textbf{Proof of Theorem \ref{thm1.1} in the case $d=3$}]
It is sufficient to show that the truncation procedure does not change the equation on the attractor $\mathscr{A}$ or the absorbing set $\mathscr{B}_{3}$ at least, that is, $W(w)=w$ whenever $w$ lies in the absorbing set $\mathscr{B}_{3}$. Indeed, let $w\in \mathscr{B}_{3}$, i.e. $w\in H^{9/2}$ such that
\begin{equation*}
\|w\|_{H^{9/2}}=\sum_{j\in\mathbb{Z}^{3}_{\ast}}|j|^{9}|\widehat{w}_{j}|^{2}\leq \varrho_{3}^{2},
\end{equation*}
it follows that
\begin{equation*}
\frac{|j|^{9/2}\widehat{w}_{j}}{\varrho_{3}}\leq 1,~~\forall~j\in\mathbb{Z}^{3},j\neq 0.
\end{equation*}
Thus, by the definition of $\vec{\eta}$, we know that $\vec{\eta}\left(\frac{|j|^{9/2}\widehat{w}_{j}}{\varrho_{3}}\right)=\frac{|j|^{9/2}\widehat{w}_{j}}{\varrho_{3}}$ for all $j\in\mathbb{Z}^{3},j\neq 0$. This shows that $W(w)=w$ whenever $w\in \mathscr{B}_{3}$ according to the definition of $W(w)$.
\end{proof}
\subsubsection{Global boundedness and differentiability}\label{sec5.3.2}
\noindent

In this part, we show the truncation nonlinearity $F_{3}(W(w))$ possesses the properties of global boundedness, Lipschitz continuity and differentiability.
\begin{theorem}\label{thm5.11}
The modified nonlinearity $F_{3}(W(\cdot)): H\rightarrow H$, given by \eqref{5.52}, is globally bounded, globally Lipschitz continuous and Gateaux differentiable.
\end{theorem}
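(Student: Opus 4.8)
The plan is to follow the blueprint of Theorem \ref{thm3.10} (the two--dimensional analogue), splitting the work into properties of the smoothing operator $W$ and then transferring them through the bilinear form $B$, but now keeping careful track of the regularizing factor $A^{-1/4}$ sitting in front of the nonlinearity. First I would isolate two auxiliary claims about $W$ itself. \textbf{Claim A} (regularization): for every $\varepsilon>0$, $W$ maps $H$ continuously into $H^{3-\varepsilon}$ with a bound $\|W(w)\|_{H^{3-\varepsilon}}\le M_\varepsilon$ uniform in $w\in H$. This follows from Parseval and $|\vec\eta|\le 2$, since
\[
\|W(w)\|_{H^{3-\varepsilon}}^{2}
=\sum_{j\in\mathbb{Z}^{3}_{\ast}}|j|^{2(3-\varepsilon)}\frac{\varrho_{3}^{2}}{|j|^{9}}
\big|P^{j}_{\sigma}\vec\eta(\cdots)\big|^{2}
\le C\varrho_{3}^{2}\sum_{j\in\mathbb{Z}^{3}_{\ast}}\frac{1}{|j|^{3+2\varepsilon}}<\infty ,
\]
the last sum converging precisely because we are in $\mathbb{Z}^{3}$ and $2(3-\varepsilon)-9=-3-2\varepsilon<-3$; continuity $H\to H^{3-\varepsilon}$ is obtained by the same tail/finite--mode splitting as in Theorem \ref{thm3.10}. \textbf{Claim B} (Lipschitz): $W$ is globally Lipschitz from $H$ to $H$, with constant controlled by $\|\vec\eta'\|_{\infty}$, exactly as in \eqref{3.23}.

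For global boundedness I would use $\|A^{-1/4}g\|_{H}=\|g\|_{H^{-1/2}}\le\|g\|_{H}$ to reduce to estimating the three bilinear terms in $H$. Since by Claim A both arguments are smooth, I can apply $\|B(X,Y)\|_{H}\le C\|X\|_{L^{\infty}}\|\nabla Y\|_{H}$ together with the embeddings $H^{3-\varepsilon}\hookrightarrow L^{\infty}$ and $H^{5/2}\hookrightarrow L^{\infty}$ on $\mathbb{T}^{3}$ (the latter for $v$, via Lemma \ref{lem5.1}), obtaining $\|F_{3}(W(w))\|_{H}\le C(M_\varepsilon,\|v\|_{H^{5/2}})$ for all $w\in H$.

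The Lipschitz estimate is where the essential difficulty lies, and it is genuinely sharper than in two dimensions. Writing $W_i=W(w_i)$ and expanding,
\[
F_{3}(W_{1})-F_{3}(W_{2})
=A^{-1/4}\big[B(W_{1}-W_{2},W_{1})+B(W_{2},W_{1}-W_{2})+B(v,W_{1}-W_{2})+B(W_{1}-W_{2},v)\big].
\]
The two terms in which the difference $W_{1}-W_{2}$ occupies the \emph{first} (undifferentiated) slot are immediately Lipschitz: bounding $\|B(W_{1}-W_{2},Y)\|_{H}\le C\|W_{1}-W_{2}\|_{H}\|\nabla Y\|_{L^{\infty}}$ with $Y=W_{1}$ or $Y=v$ and invoking Claim A and Claim B gives control by $\|w_{1}-w_{2}\|_{H}$. \textbf{The hard part} is the two terms where $W_{1}-W_{2}$ sits in the \emph{second} (differentiated) slot, since there the difference is only controlled in $H$ while a derivative falls on it. Here the factor $A^{-1/4}$ is indispensable: it lets me work at the level of $\|B(\cdot,W_{1}-W_{2})\|_{H^{-1/2}}$ rather than in $H$, recovering the missing half-derivative. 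I would exploit the divergence form $B(a,b)=P_{\sigma}\nabla\!\cdot(a\otimes b)$ valid for divergence--free $a$, together with the antisymmetry $(B(a,b),c)=-(B(a,c),b)$, to shift the derivative onto the smooth factor ($W_{2}$ or $v$, both in $H^{3-\varepsilon}\cap L^{\infty}$), and close the estimate using Claim A and the inequalities \eqref{2.5}--\eqref{2.6}; this is the step that must be carried out with care about the 3D Sobolev exponents and is the crux of the whole theorem.

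Finally, for Gateaux differentiability—needed for the spatial averaging scheme of Section \ref{sec4} and absent from the two--dimensional argument—I would differentiate the cut-off directly. Because $\vec\eta\in C^{\infty}_{0}(\mathbb{C}^{3})$, the difference quotients of $W$ converge in $H$ to
\[
W'(w)h=\sum_{j\in\mathbb{Z}^{3}_{\ast}}P^{j}_{\sigma}\,D\vec\eta\!\left(\frac{|j|^{9/2}\widehat{w}_{j}}{\varrho_{3}}\right)\widehat{h}_{j}\,e^{ij\cdot x},
\]
which is bounded on $H$ uniformly in $w$ since $D\vec\eta$ is bounded (note $W'(w)h$ carries the same regularity as $h$, so no additional smoothing is gained in the direction). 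By bilinearity of $B$ and the chain and product rules, $[F_{3}\circ W]'(w)h=A^{-1/4}\big[B(W'(w)h,W(w))+B(W(w),W'(w)h)+B(v,W'(w)h)+B(W'(w)h,v)\big]$, and each term is estimated uniformly exactly as in the Lipschitz step, yielding $\|\mathcal{F}'(u)\|_{\mathcal{L}(H,H)}\le L$. I emphasize that this explicit representation of $\mathcal{F}'$ as $A^{-1/4}$ applied to a finite sum of bilinear expressions is precisely what will later allow $\mathcal{F}'(W(w))$ to be treated as a finite sum of multiplication operators with zero--mean generators when verifying the spatial averaging condition in Section \ref{sec5.3.3}.
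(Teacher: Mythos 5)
Your overall architecture coincides with the paper's proof: the paper is organized around exactly your Claim A (the $H^{3-\varepsilon}$ regularization bound, proved by the same Parseval computation and tail/low-mode splitting), your Claim B (mode-by-mode Lipschitz bound via $\|\vec\eta\,'\|_\infty$), the same four-term decomposition of $F_3(W_1)-F_3(W_2)$, and the same formula for $W'(w)h$ followed by the chain rule; your boundedness argument is also the paper's. The genuine problem is the step you yourself call the crux, and your proposed mechanism for it does not close. You need $\|A^{-1/4}B(a,d)\|_{H}\le C(a)\|d\|_{H}$ for $a\in\{W_2,W_1,v\}$ smooth and $d=W_1-W_2$ controlled only in $H$, and a derivative count shows this misses by exactly one half: $\|A^{-1/4}B(a,d)\|_{H}=\|B(a,d)\|_{H^{-1/2}}$, while $B(a,d)=P_\sigma\nabla\cdot(a\otimes d)$ with $a\otimes d$ merely in $L^{2}$ only gives membership in $H^{-1}$. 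Equivalently, after your antisymmetry trick $(B(a,d),\psi)=-(B(a,\psi),d)$ the test function $\psi=A^{-1/4}\varphi$ lies only in $H^{1/2}$, so $(a\cdot\nabla)\psi\in H^{-1/2}$ and its pairing against $d\in H$ is unbounded; neither \eqref{2.5} nor \eqref{2.6} applies with the available regularity, and no product estimate $H^{s}\times L^{2}\to H^{1/2}$ can exist, since multiplication cannot gain derivatives. A single-mode test makes the failure concrete: if $d$ is a divergence-free mode at high frequency $l$, then $B(a,d)$ has size of order $|l|\,|\widehat{a}_{k}|\,\|d\|_{H}$ at output frequencies $k+l$, and $A^{-1/4}$ only divides by $|k+l|^{1/2}\sim|l|^{1/2}$, so $\|A^{-1/4}B(a,d)\|_{H}/\|d\|_{H}$ grows like $|l|^{1/2}$. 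Exploiting the extra decay $|\widehat{d}_{j}|\le C\varrho_{3}|j|^{-9/2}$ of the truncated difference, as one might hope, yields only a H\"{o}lder bound after optimizing the frequency splitting, not a Lipschitz one. What your integration-by-parts scheme actually requires is a gain of a full derivative, i.e.\ $A^{-1/2}$, which is precisely the hyperviscosity threshold $\theta\ge 3/2$ of Gal--Guo \cite{GG18}; at $\theta=5/4$ the half-derivative shortfall is structural, not a matter of choosing better Sobolev exponents.

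For comparison, the paper does not fill this hole either: it dispatches the two dangerous terms $A^{-1/4}B(W_{2},W_{1}-W_{2})$ and $A^{-1/4}B(v,W_{1}-W_{2})$ in a single line by quoting \eqref{5.55}--\eqref{5.56}, but \eqref{5.55} bounds $B$ only in the configuration where the differentiated slot is occupied by the smooth factor, which is exactly not the configuration of these terms. So your proposal follows the paper's route faithfully and is more candid about where the real difficulty sits; but as written, neither your sketch nor the paper's citation proves the global Lipschitz continuity, and the uniform bound $\|F_3'(W(w))\|_{\mathcal{L}(H,H)}\le L$ in your final step inherits the identical gap, since $B(W(w),W'(w)h)$ and $B(v,W'(w)h)$ again place the derivative on a factor controlled only in $H$.
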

\begin{proof}
\textbf{Step 1.} To verify $F_{3}(W(\cdot))$ is globally bounded from $H$ to $H$, as that in Section \ref{sec3}, we give firstly the \textbf{claim A} that $W$ is a regularization operator: for any $\epsilon>0$, $W$ maps $H$ into $H^{3-\epsilon}$ continuously and there exists a constant $C_{\epsilon}$ such that
\begin{equation}\label{5.54}
\|W(w)\|_{H^{3-\epsilon}}\leq C_{\epsilon},~~\mbox{for all}~~w\in H.
\end{equation}
Then we can estimate
\begin{align*}
\|F_{3}(W(w))\|_{H}=& \|A^{-1/4}\left(B(W(w),W(w))+B(W(w),v)+B(v,W(w))\right)\|_{H}\nonumber\\
\leq& C\left(\|W(w)\|_{H^{1/2}}\|W(w)\|_{H^{2}}+\|W(w)\|_{H^{2}}\|v\|_{H^{1/2}}
+\|v\|_{H^{2}}\|W(w)\|_{H^{1/2}}\right)\nonumber\\
\leq& C\|W(w)\|_{H^{3-\epsilon}}\left(\|W(w)\|_{H^{3-\epsilon}}+2\|v\|_{H^{5/2}}\right)\nonumber\\
\leq& C_{\epsilon}\left(C_{\epsilon}+2\|v\|_{H^{5/2}}\right),~~(0<\epsilon<\frac{1}{2}),~~\forall~w\in~H,
\end{align*}
where we have used the regularity of the cut-off operator $W$ as in (\ref{5.54}) and the following preliminary results:
\begin{equation}\label{5.55}
\|B(X,Y)\|_{H}\leq C\|\nabla X\|_{H}\| Y\|_{L^{\infty}}\leq C'\|X\|_{H^{1}}\|Y\|_{H^{2}},
\end{equation}
for any $X\in H^{1}$ and $Y\in H^{2}$, where we have used Sobolev embedding $H^{2}\hookrightarrow L^{\infty}$. It remains to prove the \textbf{claim A} holds. Indeed, let $w\in H$. Since $|\eta(\xi)|\leq 2$ for all $\xi\in \mathbb{C}$, then by the definition of $W(w)$, we estimate
\begin{align*}
  \|W(w)\|^{2}_{H^{3-\epsilon}}
=&\sum_{j\in\mathbb{Z}^{3}_{\ast}}|j|^{6-2\epsilon}\cdot
\frac{\varrho_{3}^{2}}{|j|^{9}}\left|P^{j}_{\sigma}\vec{\eta}\left(\frac{|j|^{9/2}\widehat{w}_{j}}{\varrho_{3}}\right)\right|^{2}\nonumber\\
\leq& C\sum_{j\in\mathbb{Z}^{3}_{\ast}}|j|^{6-2\epsilon}\cdot\frac{\varrho_{3}^{2}}{|j|^{9}}
\leq C\varrho_{3}^{2}\sum_{j\in\mathbb{Z}^{3}_{\ast}}\frac{1}{|j|^{3+2\epsilon}}\leq C_{\epsilon}.
\end{align*}
It remains to show that $W: H\rightarrow H^{3-\epsilon}$ is continuous. Indeed, given $\delta>0$, let $w$, $\varpi\in H$ satisfy $\|w-\varpi\|_{H}\leq\sigma$. Then
\begin{align*}
  \|W(w)-W(\varpi)\|^{2}_{H^{3-\epsilon}}
=&\sum_{j\in\mathbb{Z}^{3}_{\ast}}
\frac{\varrho_{3}^{2}}{|j|^{3+2\epsilon}}\left|\vec{\eta}\left(\frac{|j|^{9/2}\widehat{w}_{j}}{\varrho_{3}}\right)
-\vec{\eta}\left(\frac{|j|^{9/2}\widehat{\varpi}_{j}}{\varrho_{3}}\right)\right|^{2}\nonumber\\
\leq& \sum_{\stackrel{j\in\mathbb{Z}^{3}}{1\leq|j|\leq N}}\frac{\varrho_{3}^{2}}{|j|^{3+2\epsilon}}\left|\vec{\eta}\left(\frac{|j|^{9/2}\widehat{w}_{j}}{\varrho_{3}}\right)
-\vec{\eta}\left(\frac{|j|^{9/2}\widehat{\varpi}_{j}}{\varrho_{3}}\right)\right|^{2}
+C\sum_{\stackrel{j\in\mathbb{Z}^{3}}{|j|\geq N}}\frac{\varrho_{3}^{2}}{|j|^{3+2\epsilon}},
\end{align*}
where we have used the fact that $|\vec{\eta}|^{2}\leq \frac{C}{2}$ due to $|\eta(\xi)|<2$ for all $\xi\in\mathbb{C}$. Since the series $\sum_{j\in\mathbb{Z}^{3}_{\ast}}\frac{1}{|j|^{3+2\epsilon}}$ is convergent, thus
\begin{align*}
 C\sum_{\stackrel{j\in\mathbb{Z}^{3}}{|j|\geq N}}\frac{\varrho_{3}^{2}}{|j|^{3+2\epsilon}}\leq \frac{\delta}{2},
\end{align*}
when we choose $N$ large enough. Moreover, since $\|w-\varpi\|_{H}\leq\sigma$, we have $|\widehat{w}_{j}-\widehat{\varpi}_{j}|\leq\sigma$ for every $j\in\mathbb{Z}^{3},j\neq 0$, then we can select $\sigma$ sufficiently small such that
\begin{align*}
\sum_{\stackrel{j\in\mathbb{Z}^{3}}{1\leq|j|\leq N}}
\frac{\varrho_{3}^{2}}{|j|^{3+2\epsilon}}\left|\vec{\eta}\left(\frac{|j|^{9/2}\widehat{w}_{j}}{\varrho_{3}}\right)
-\vec{\eta}\left(\frac{|j|^{9/2}\widehat{\varpi}_{j}}{\varrho_{3}}\right)\right|^{2}
\leq& C\sum_{\stackrel{j\in\mathbb{Z}^{3}}{1\leq|j|\leq N}}|j|^{9}|\widehat{w}_{j}-\widehat{\varpi}_{j}|^{2}
\leq \frac{\delta}{2},
\end{align*}
according to the fact that $\vec{\eta}$ is uniformly continuous and $N$ is fixed whenever it is chose above.
\par
\textbf{Step 2.} In order to prove $F_{3}(W(\cdot))$ is globally Lipschitz from $H$ to $H$, we give the \textbf{claim B} that the operator $W$ is globally Lipschitz from $H$ to $H$, i.e. there exists constant $L_{1}>0$ such that
\begin{equation}\label{5.56}
\|W(w)-W(\varpi)\|_{H}\leq L_{1}\|w-\varpi\|_{H},~~\forall~w,\varpi\in H.
\end{equation}
Indeed, let $w$, $\varpi\in H$. Since $\vec{\eta}'$ is globally bounded, we calculate
\begin{align*}
\|W(w)-W(\varpi)\|^{2}_{H}
\leq& \sum_{j\in\mathbb{Z}^{3}_{\ast}}
\frac{\varrho_{3}^{2}}{|j|^{9}}\left|\vec{\eta}\left(\frac{|j|^{9/2}\widehat{w}_{j}}{\varrho_{3}}\right)
-\vec{\eta}\left(\frac{|j|^{9/2}\widehat{\varpi}_{j}}{\varrho_{3}}\right)\right|^{2}\nonumber\\
\leq& \sum_{j\in\mathbb{Z}^{3}_{\ast}}|\vec{\eta}'(\xi)|^{2}|\widehat{w}_{j}-\widehat{\varpi}_{j}|^{2}
\leq L^{2}_{1}\|w-\varpi\|^{2}_{H}.
\end{align*}
Then, we calculate
\begin{align*}
F_{3}(W_{1})-F_{3}(W_{2})=& A^{-1/4}\left(B(W_{1},W_{1})+B(W_{1},v)+B(v,W_{1})
-B(W_{2},W_{2})+B(W_{2},v)+B(v,W_{2})\right)\nonumber\\
=& A^{-1/4}\left(B(W_{1}-W_{2},W_{1})+B(W_{2},W_{1}-W_{2})
+B(v,W_{1}-W_{2})+B(W_{1}-W_{2},v)\right),
\end{align*}
where $W_{i}=W(w_{i})$, $w_{i}\in H$, $i=1,2$. Note that the fact that $W: H\rightarrow H$ is globally Lipschitz with Lipschitz constant $L_{1}$ (see (\ref{5.56})), we can estimate
\begin{align*}
\|A^{-1/4}B(W_{1}&-W_{2},W_{1})\|_{H}+\|A^{-1/4}B(W_{1}-W_{2},v)\|_{H}
\leq C\|W_{1}-W_{2}\|_{H}\left(\|W_{1}\|_{H^{5/2}}+\|v\|_{H^{5/2}}\right)\nonumber\\
\leq &CL_{1}\|w_{1}-w_{2}\|_{H}\left(\|W_{1}\|_{H^{5/2}}+\|v\|_{H^{5/2}}\right)
\leq \frac{L}{2}\|w_{1}-w_{2}\|_{H},
\end{align*}
where the constant $L$ depends on $\alpha$, $L_{1}$ and $\|v\|_{H^{2}}$, here we have used the following estimate
\begin{equation*}
\|B(M,N)\|_{H}\leq C\|\nabla M\|_{L^{\infty}}\|N\|_{H}\leq C'\|M\|_{H^{3}}\|N\|_{H},~\forall~M\in H^{3},~N\in H,
\end{equation*}
where again we have used Sobolev embedding $H^{r}\hookrightarrow L^{\frac{6}{3-2r}}$ in $\mathbb{T}^{3}$ for any $0< r < 3/2$. In addition,
\begin{align*}
\|A^{-1/4}B(W_{2},W_{1}-W_{2})\|_{H}+&\|A^{-1/4}B(v,W_{1}-W_{2})\|_{H}
\leq C(\|W_{2}\|_{H^{5/2}}+\|v\|_{H^{5/2}})\|W_{1}-W_{2}\|_{H}\nonumber\\
\leq& CL_{1}(\|W_{2}\|_{H^{5/2}}+\|v\|_{H^{5/2}})\|w_{1}-w_{2}\|_{H}
\leq \frac{L}{2}\|w_{1}-w_{2}\|_{H},
\end{align*}
where we have used \eqref{5.56} and \eqref{5.55}. As a result, we conclude that
\begin{align*}
\|F_{3}(W(w_{1}))-F_{3}(W(w_{2}))\|_{H}\leq L\|w_{1}-w_{2}\|_{H},~~\forall~w_{1},w_{2}\in H.
\end{align*}
\par
\textbf{Step 3.} We give \textbf{claim C} that $W$ is Gateaux differentiable from $H$ to $H$, and its derivative $W'$ has the expression
\begin{equation}\label{5.57}
W'(w)z=
\sum_{j\in\mathbb{Z}^{3}_{\ast}}P^{j}_{\sigma}\vec{\eta}\left(\frac{|j|^{9/2}\widehat{w}_{j}}{\varrho_{3}}\right)\widehat{z}_{j}
e^{ij\cdot x},~~\mbox{for any}~~w,z\in H,
\end{equation}
where $\widehat{z}_{j}=(z,e^{ij\cdot x})$. Moreover, there exists $L_{1}>0$ such that
\begin{equation}\label{5.58}
  \|W'(w)\|_{\mathcal{L}(H,H)}\leq L_{1},
\end{equation}
for all $w\in H$. In addition, for each $z\in H$, the map $w\mapsto W'(w)z$ is continuous from $H$ to $H$.
\begin{proof}[Proof of \textbf{claim C}]
Let $w$, $z\in H$. We are ready to calculate the Gateaux differential of $W$. Note that
\begin{align*}
\frac{W(w+hz)-W(w)}{h}
=& \frac{1}{h}\sum_{j\in\mathbb{Z}^{3}_{\ast}}\frac{\varrho_{3}}{|j|^{9/2}}P^{j}_{\sigma}\left[\vec{\eta}
\left(\frac{|j|^{9/2}(\widehat{w}_{j}+h\widehat{z}_{j})}{\varrho_{3}}\right)-
\vec{\eta}\left(\frac{|j|^{9/2}\widehat{w}_{j}}{\varrho_{3}}\right)\right]e^{ij\cdot x}\nonumber\\
=& \sum_{j\in\mathbb{Z}^{3}_{\ast}}P^{j}_{\sigma}\left[\widehat{z}_{j}\int^{1}_{0}\vec{\eta}'
\left(\frac{|j|^{9/2}(\widehat{w}_{j}+sh\widehat{z}_{j})}{\varrho_{3}}\right)ds\right]e^{ij\cdot x},
\end{align*}
where we have used the fundamental theorem of calculus. We calculate
\begin{align*}
&\left\|\frac{W(w+hz)-W(w)}{h}-W'(w)z\right\|^{2}_{H}\nonumber\\
=& \left\|\sum_{j\in\mathbb{Z}^{3}_{\ast}}P^{j}_{\sigma}\left[\widehat{z}_{j}\int^{1}_{0}\left(
\vec{\eta}'\left(\frac{|j|^{9/2}(\widehat{w}_{j}+s h\widehat{z}_{j})}{\varrho_{3}}\right)
-\vec{\eta}'\left(\frac{|j|^{9/2}\widehat{w}_{j}}{\varrho_{3}}\right)\right)ds\right]e^{ij\cdot x}\right\|_{H}\nonumber\\
\leq& \sum_{j\in\mathbb{Z}^{3}_{\ast}}|\widehat{z}_{j}|^{2}\int^{1}_{0}
\left|\vec{\eta}'\left(\frac{|j|^{9/2}(\widehat{w}_{j}+s h\widehat{z}_{j})}{\varrho_{3}}\right)
-\vec{\eta}'\left(\frac{|j|^{9/2}\widehat{w}_{j}}{\varrho_{3}}\right)\right|^{2}ds\nonumber\\
\leq& \sum_{\stackrel{j\in\mathbb{Z}^{3}}{1\leq|j|\leq N}}|\widehat{z}_{j}|^{2}\int^{1}_{0}
\left|\vec{\eta}'\left(\frac{|j|^{9/2}(\widehat{w}_{j}+s h\widehat{z}_{j})}{\varrho_{3}}\right)
-\vec{\eta}'\left(\frac{|j|^{9/2}\widehat{w}_{j}}{\varrho_{3}}\right)\right|^{2}ds
+C\sum_{\stackrel{j\in\mathbb{Z}^{3}}{|j|> N}}|\widehat{z}_{j}|^{2},
\end{align*}
where we have used the fact that $|\vec{\eta}|^{2}\leq \frac{C}{2}$ due to $|\eta(\xi)|<2$ for all $\xi\in\mathbb{C}$. Let $\delta>0$, since $z\in H$, then
\begin{equation*}
 C\sum_{\stackrel{j\in\mathbb{Z}^{3}}{|j|> N}}|\widehat{z}_{j}|^{2}\leq \frac{\delta}{2},
\end{equation*}
when we choose $N$ large enough. Moreover, since $\vec{\eta}'$ is uniformly continuous, we may select $h$ sufficiently small such that
\begin{equation*}
\sum_{\stackrel{j\in\mathbb{Z}^{3}}{1\leq|j|\leq N}}|\widehat{z}_{j}|^{2}\int^{1}_{0}
\left|\vec{\eta}'\left(\frac{|j|^{9/2}(\widehat{w}_{j}+s h\widehat{z}_{j})}{\varrho_{3}}\right)
-\vec{\eta}'\left(\frac{|j|^{9/2}\widehat{w}_{j}}{\varrho_{3}}\right)\right|^{2}ds
\leq\frac{\delta}{2}.
\end{equation*}
This implies that
\begin{equation*}
\left\|\frac{W(w+hz)-W(w)}{h}-W'(w)z\right\|^{2}_{H}\leq \delta,
\end{equation*}
for sufficiently small $h$, provided $W'(w)z$ is given by (\ref{5.57}). Hence, $W$ is Gateaux differentiable from $H$ to $H$ and $W'$ can be given by the formula (\ref{5.57}).
\par
Furthermore, we see that $\|W'(w)z\|_{H}\leq L_{1}\|z\|_{H}$ for some $L_{1}>0$ via the formula (\ref{5.57}) and the uniform boundedness of $\vec{\eta}'$, which shows that (\ref{5.58}) is valid.
\par
Finally, we shall show that for each $z\in H$ the map $w\mapsto W'(w)z$ is continuous from $H$ to $H$. Indeed, given $\delta>0$, $w$, $\varpi\in H$ satisfies $\|w-\varpi\|_{H}\leq\sigma$. Then due to (\ref{5.57}), we estimate
\begin{align*}
\|(W'(w)-W'(\varpi))z\|^{2}_{H}
=&\sum_{j\in\mathbb{Z}^{3}_{\ast}}
\left|\vec{\eta}'\left(\frac{|j|^{9/2}\widehat{w}_{j}}{\varrho_{3}}\right)
-\vec{\eta}'\left(\frac{|j|^{9/2}\widehat{\varpi}_{j}}{\varrho_{3}}\right)\right|^{2}|\widehat{z}_{j}|^{2}\nonumber\\
\leq& \sum_{\stackrel{j\in\mathbb{Z}^{3}}{1\leq|j|\leq N}}\left|\vec{\eta}'\left(\frac{|j|^{9/2}\widehat{w}_{j}}{\varrho_{3}}\right)
-\vec{\eta}'\left(\frac{|j|^{9/2}\widehat{\varpi}_{j}}{\varrho_{3}}\right)\right|^{2}|\widehat{z}_{j}|^{2}
+C\sum_{\stackrel{j\in\mathbb{Z}^{3}}{|j|\geq N}}|\widehat{z}_{j}|^{2},
\end{align*}
where we have used the fact that $|\vec{\eta}|^{2}\leq \frac{C}{2}$ due to $|\eta(\xi)|<2$ for all $\xi\in\mathbb{C}$. Since $z\in H$, thus $C\sum_{j\in\mathbb{Z}^{3},|j|\geq N}\frac{\varrho_{3}^{2}}{|j|^{3+2\epsilon}}\leq \frac{\delta}{2}$ when we choose $N$ large enough. Also, since $\|w-\varpi\|_{H}\leq\sigma$, we have $|\widehat{w}_{j}-\widehat{\varpi}_{j}|\leq\sigma$ for every $j\in\mathbb{Z}^{3},j\neq 0$, then we can select $\sigma$ sufficiently small such that
$$\sum_{j\in\mathbb{Z}^{3},1\leq|j|\leq N}\left|\vec{\eta}'\left(\frac{|j|^{9/2}\widehat{w}_{j}}{\varrho_{3}}\right)
-\vec{\eta}'\left(\frac{|j|^{9/2}\widehat{\varpi}_{j}}{\varrho_{3}}\right)\right|^{2}|\widehat{z}_{j}|^{2}\leq \frac{\delta}{2},$$
according to the fact that $\vec{\eta}'$ is uniformly continuous and $N$ is fixed whenever it is chose above. Therefore, we conclude that $ \|(W'(w)-W'(\varpi))z\|^{2}_{H}\leq \delta$ as $\|w-\varpi\|_{H}\leq\sigma$.
\end{proof}
\emph{Continue to the proof of Theorem \ref{thm5.11}}. By the \textbf{Step 3}, analogously to the proof of (\ref{5.57}) and the chain rule, we know that $F_{3}(W(\cdot)): H\rightarrow H$ is Gateaux differential and the Gateaux derivative of $F_{3}(W)$ can be written as
\begin{equation}\label{5.59}
F'_{3}(W(w))z
=A^{-1/4}P_{\sigma}\left(((W'(w)z)\cdot \nabla)(\bar{W}(w)+\bar{v})+((W(w)+v)\cdot\nabla)(\bar{W}'(w)z)\right),~w,z\in H,
\end{equation}
which completes the proof.
\end{proof}
\par
Similarly to the proof of Theorem \ref{thm5.11}, we know that $F'_{3}(W(w))$ is a bounded linear operator in $H$ satisfying
\begin{equation*}
\|F'_{3}(W(w))\|_{\mathcal{L}(H,H)}\leq L~~~~\text{for all}~w\in H.
\end{equation*}
\subsubsection{Verification of the spatial averaging condition}\label{sec5.3.3}
\noindent

In this part, we will show the validity of the SAC for our problem \eqref{5.53}.
\begin{proposition}[Mallet-Paret and Sell \cite{M-PS88}]\label{pro5.12}
Let $b>0$. There exist arbitrarily large $\lambda$ and $k>C\log\lambda$ for some $C>0$ independent of $\lambda$ such that if $|j|^{2},|\ell|^{2}\in[\lambda-k,\lambda+k]$ with $j\neq\ell\in\mathbb{Z}^{3}$, then $|j-\ell|\geq b$.
\end{proposition}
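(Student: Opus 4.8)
The plan is to reduce the statement to a purely Diophantine condition on a finite set of difference vectors, and then to the known estimates on the distribution of lattice points on and near spheres in $\mathbb{Z}^{3}$. First I would fix $b>0$ and set $M_{b}=\{m\in\mathbb{Z}^{3}:0<|m|<b\}$, which is finite since $b$ is fixed. The observation is that two distinct lattice points $j,\ell$ with $|j|^{2},|\ell|^{2}\in[\lambda-k,\lambda+k]$ and $|j-\ell|<b$ are exactly a pair $\ell=j+m$ with $m\in M_{b}$ whose squared norms differ by at most $2k$; writing this out,
\[
|\ell|^{2}-|j|^{2}=2\,j\cdot m+|m|^{2},\qquad\text{hence}\qquad |2\,j\cdot m+|m|^{2}|\le 2k .
\]
Thus the conclusion of the proposition is equivalent to: for every $m\in M_{b}$ there is \emph{no} $j$ with $|j|^{2}\in[\lambda-k,\lambda+k]$ for which $j\cdot m$ falls in an integer window of length $\le 2k$ and whose translate $j+m$ again lands in $[\lambda-k,\lambda+k]$. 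This recasts the claim as finding arbitrarily large centres $\lambda$, with $k\asymp\log\lambda$, that simultaneously avoid these finitely many slab-type obstructions.

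Next I would isolate the geometric mechanism. Since $k\asymp\log\lambda$, the shell $\{\lambda-k\le|x|^{2}\le\lambda+k\}$ has radial thickness $\asymp k/\sqrt\lambda\to0$, so it is essentially the union of the $\asymp k$ spheres $|x|^{2}=n$, $n\in[\lambda-k,\lambda+k]$. On a single such sphere the lattice points (at most $n^{1/2+\varepsilon}$ of them) are spread over area $\asymp n$, hence are typically $\gg b$ apart; so the only genuine threat is a cross-sphere coincidence, a lattice point $j$ together with a neighbour $j+m$ on a nearby sphere, which by the identity above forces $j\cdot m$ to be abnormally small, i.e. $m$ nearly tangent to the sphere at $j$. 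Controlling these amounts to counting lattice points lying in thin tangential slabs about the sphere of radius $\sqrt\lambda$, and to exhibiting arbitrarily large radii for which no such tangential coincidence occurs anywhere in the whole $\log\lambda$-window.

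The existence of such radii is precisely the arithmetic content of Mallet--Paret and Sell, so in the final step I would invoke their estimates on the representation numbers $r_{3}(n)$ (sums of three squares) and on lattice points near spheres to produce the desired $\lambda$ and $k>C\log\lambda$. I expect this step to be the main obstacle, and not merely for bookkeeping reasons: a naive first-moment count does not suffice. One finds that the expected number of close pairs in a $\log\lambda$-window grows like $(\log\lambda)^{2}$, so \emph{most} centres are in fact bad and no ``typical'' $\lambda$ works; the admissible centres have vanishing density and can be located only by exploiting the strong fluctuation and clustering of $r_{3}(n)$ together with the fine distribution of lattice points near spheres. This delicate counting is exactly what is established in \cite{M-PS88}, and the purpose of the present proposition is to package it in the form needed to verify the spatial averaging condition \eqref{1.13} for the truncated nonlinearity $F_{3}(W(\cdot))$.
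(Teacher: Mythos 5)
The first thing to note is that the paper contains no proof of Proposition \ref{pro5.12} at all: the bracketed attribution in its statement means it is imported verbatim as a known number-theoretic result of Mallet-Paret and Sell \cite{M-PS88}, and the paper then uses it as a black box (together with Weyl's law) in the proof of Theorem \ref{thm5.13}. Your attempt is, in the end, of exactly the same nature. Your reduction is correct and standard: setting $M_b=\{m\in\mathbb{Z}^3: 0<|m|<b\}$, the identity $|\ell|^2-|j|^2=2j\cdot m+|m|^2$ with $\ell=j+m$, and the resulting slab condition $|2j\cdot m+|m|^2|\le 2k$ are precisely how the lemma is set up; and your observation that a first-moment count yields $\asymp(\log\lambda)^2$ expected close pairs per window of width $2k\asymp\log\lambda$ (so that naive counting cannot produce a good $\lambda$) is also accurate. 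But the crucial step — the existence of arbitrarily large pair-free windows — is not proved in your text; it is referred back to \cite{M-PS88}, i.e., to the very statement under discussion. Viewed as a standalone proof this is circular/incomplete; viewed as a justification for quoting the result, it coincides with the paper's treatment.

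Two of your heuristic assertions should also be corrected, even though they do not affect the logic. First, it is not true that ``the only genuine threat is a cross-sphere coincidence'': two distinct lattice points on the \emph{same} sphere $|x|^2=n$ can lie at distance $<b$ (for instance $(a,c,c')$ and $(a,c',c)$ with $|c-c'|$ small), and such pairs must be excluded as well; your slab formulation does cover them (this is the case $2j\cdot m+|m|^2=0$), so only the prose is wrong, not the framework. Second, the inference ``the expected number of close pairs per window is $(\log\lambda)^2$, so most centres are in fact bad'' is a non sequitur: a large first moment is perfectly compatible with the bad pairs being heavily clustered in relatively few windows — and exploiting exactly this clustering is what any correct proof must do — so the computation by itself tells you nothing about the density of admissible $\lambda$, only that the first-moment argument is inconclusive.
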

Next, we will apply this proposition to verify SAC for the nonlinearity $F(W(w))$ given in \eqref{5.52}.
\begin{theorem}\label{thm5.13}
Let $\delta>0$ and $F'_{3}(W(w))$ be the operator given by $(\ref{5.59})$. Then there exist $N\in \mathbb{N}$ and $k\leq \lambda_{N}$ such that
\begin{equation}\label{5.60}
\|\mathcal{R}_{k,N}F'_{3}(W(w))\mathcal{R}_{k,N}z\|_{H}\leq \|z\|_{H}
\end{equation}
for any $w$, $z\in H$.
\end{theorem}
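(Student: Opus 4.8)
The plan is to substitute the explicit Gateaux derivative (5.59) and to read $\mathcal R_{k,N}F'_3(W(w))\mathcal R_{k,N}$ as a finite sum of elementary blocks of the form $A^{-1/4}P_\sigma[\,(\text{multiplication by a scalar generator})\circ(\text{diagonal Fourier multiplier})\,]$, exactly as announced after Theorem \ref{thm1.4}. Writing $U:=W(w)+v$, formula (5.59) separates $F'_3(W(w))z$ into a first group $A^{-1/4}P_\sigma\big((W'(w)z)\cdot\nabla\big)U$, in which the gradient falls on the fixed field $U$, and a second group $A^{-1/4}P_\sigma\big((U\cdot\nabla)(W'(w)z)\big)$, in which the gradient falls on the argument. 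Because $W'(w)$ acts diagonally in Fourier modes through the uniformly bounded matrices $P^j_\sigma\vec\eta'(\cdot)$, each block is, up to the harmless bounded matrices $P_\sigma$ and $P^j_\sigma$, the composition of multiplication by one scalar component of $\nabla U$ (first group) or of $U$ (second group) with a Fourier multiplier acting on $z$. The structural fact that makes the scheme work, precisely as flagged in the Introduction, is that each such generator has zero spatial mean: $\langle\partial_m U^\ell\rangle=0$ as a pure derivative, and $\langle U^\ell\rangle=\langle W(w)^\ell\rangle+\langle v^\ell\rangle=0$ since $W$ maps into the zero-mean divergence-free space $H$ and $v\in H$.

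First I would dispose of the first group, which is of order $-1/2$ on $H$. Passing to Fourier series, its entry from mode $j$ to mode $m$ carries the weight $|m|^{-1/2}|m-j|\,|\widehat U_{m-j}|$; after sandwiching by $\mathcal R_{k,N}$, Proposition \ref{pro5.12} forces $|m-j|\ge b$ for $j\neq m$ while the diagonal $j=m$ is annihilated by $\langle\nabla U\rangle=0$, so a Schur-test estimate bounds the $H$-norm of this group by $C\lambda_N^{-1/4}\sum_{|p|\ge b}|p|\,|\widehat U_p|$. Since $W$ regularizes into $H^{3-\epsilon}$ (claim A of Theorem \ref{thm5.11}) and $v\in H^{5/2}$ (Lemma \ref{lem5.1}), this tail is finite and the whole group is harmlessly small thanks to the decaying prefactor $\lambda_N^{-1/4}$.

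The second group is the heart of the matter and the step I expect to be the main obstacle, for here the gradient falls on $W'(w)z$ and on the band $|j|^2\in[\lambda_N-k,\lambda_N+k]$ contributes a factor $|j|\approx\lambda_N^{1/2}$ that the smoothing $A^{-1/4}\approx\lambda_N^{-1/4}$ only halves, leaving a net growth $\lambda_N^{1/4}$. To control it I would commute $\mathcal R_{k,N}$ with the diagonal operators $\partial_\ell$ and $W'(w)$ and rewrite the block as $\sum_\ell A^{-1/4}\mathcal R_{k,N}\big(U^\ell\cdot\mathcal R_{k,N}\partial_\ell W'(w)z\big)$, isolating the genuinely non-diagonal operator $\mathcal R_{k,N}(U^\ell\,\cdot)\mathcal R_{k,N}$. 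By Proposition \ref{pro5.12} its entries live on gaps $|m-j|\ge b$, its diagonal is killed by $\langle U^\ell\rangle=0$, and hence its operator norm is at most $\sum_{|p|\ge b}|\widehat{U^\ell}_p|\le Cb^{-(3-2\epsilon)/2}\|U\|_{H^{3-\epsilon}}$ (the fixed field $v$ contributing the better rate $b^{-1}\|v\|_{H^{5/2}}$). Multiplying by the derivative factor $\lambda_N^{1/2}$ and the smoothing $\lambda_N^{-1/4}$ then yields the bound $C\lambda_N^{1/4}b^{-(3-2\epsilon)/2}\|z\|_H$ for the second group.

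The proof is completed by balancing the two competing scales. For the prescribed $\delta$ — in particular $\delta\le 1/30$ as demanded by the spatial averaging condition (1.13) — it suffices to invoke Proposition \ref{pro5.12} with $b$ taken so large that $C\lambda_N^{1/4}b^{-(3-2\epsilon)/2}\le\delta$, and to retain among the arbitrarily large admissible radii it furnishes one $\lambda_N$ that simultaneously satisfies the remaining hypotheses of Theorem \ref{thm1.3}, namely $\lambda_N\ge e^{60L^2/\hbar}$, $k\in[\hbar\log\lambda_N,\lambda_N/2)$ and $1\le\lambda_{N+1}-\lambda_N\le 2L$. Adding the two groups then gives $\|\mathcal R_{k,N}F'_3(W(w))\mathcal R_{k,N}z\|_H\le\delta\|z\|_H$ uniformly in $w\in H$, which is (5.60). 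The genuinely delicate point — and the reason the high regularization exponent $9/2$ was built into $W$ — is exactly this balance: the Sobolev class of $U$ must be high enough that the number-theoretic tail $b^{-(3-2\epsilon)/2}$ defeats the derivative-induced growth $\lambda_N^{1/4}$ coming from the hyperviscous scaling.
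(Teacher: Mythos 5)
Your strategy coincides with the paper's up to a point: both substitute \eqref{5.59}, exploit that $F_3'(W(w))$ is a finite sum of multiplication-type operators whose generators (the components of $U=W(w)+v$ and of $\nabla U$) have zero mean, invoke Proposition \ref{pro5.12} so that the sandwich $\mathcal{R}_{k,N}(\cdot)\mathcal{R}_{k,N}$ only sees the high-frequency part $U_{>b}$ of the generators (the paper's \eqref{5.61}), and then estimate what is left by high-frequency tails. The proofs part company at the term you yourself single out as ``the heart of the matter,'' and there your argument has a genuine gap. Your bound for that group is $C\lambda_N^{1/4}b^{-(3-2\epsilon)/2}\|z\|_H$, and you close the proof by ``invoking Proposition \ref{pro5.12} with $b$ taken so large that $C\lambda_N^{1/4}b^{-(3-2\epsilon)/2}\le\delta$.'' This reverses the quantifiers of Proposition \ref{pro5.12}: there $b$ is fixed \emph{first}, and the proposition then yields admissible radii $\lambda$ that are only guaranteed to be \emph{arbitrarily large}. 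For a given $b$ nothing ensures that some admissible $\lambda_N$ lies below the threshold $(\delta b^{(3-2\epsilon)/2}/C)^{4}$; on the contrary, as $\lambda_N$ runs through the admissible values your bound $C\lambda_N^{1/4}b^{-(3-2\epsilon)/2}$ tends to infinity. Since the loss is a positive power of $\lambda_N$ while the gain is a fixed negative power of a $b$ that must be chosen before $\lambda_N$ is known, no Sobolev regularity of $U$ (in particular not the exponent $9/2$ built into $W$) can restore the balance, and the final step does not go through. A smaller flaw: for the first group you claim the tail $\sum_{|p|\ge b}|p|\,|\widehat U_p|$ is finite; this holds for the $W$-part by Theorem \ref{thm5.11}, but for $v$ merely in $H^{5/2}$ this $\ell^1$-type Fourier sum is borderline divergent in dimension three, so even that group should be estimated through an $H^{5/2}$-duality bound rather than an absolute Fourier sum.

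The paper avoids the circularity because its key estimate \eqref{5.66} carries no factor of $\lambda_N$ at all: testing against $A^{-1/4}\varphi$ with $\|\varphi\|_H=1$, it bounds \emph{both} bilinear blocks at once by $C(\alpha,L_1)\|W_{>b}+v_{>b}\|_{H^{5/2}}\|z\|_H$, with a constant independent of $N$, $k$ and $b$. Smallness then comes purely from the tails: $\|W(w)_{>b}\|_{H^{5/2}}\to 0$ as $b\to\infty$ uniformly in $w\in H$ by the regularization property of $W$ (used in \eqref{5.67}), and $\|v_{>b}\|_{H^{5/2}}\to 0$ because $v$ is a fixed element of $H^{5/2}$ (see \eqref{5.68}). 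With such a bound, $b$ can legitimately be fixed first (in terms of $\delta$, $L_1$ and $v$ only) and Proposition \ref{pro5.12} applied afterwards --- exactly the quantifier order your argument needs but cannot have. In other words, the entire content of the theorem is concentrated in obtaining an $N$-independent constant in \eqref{5.66} for the block in which the gradient falls on the band-limited factor $W'(w)\mathcal{R}_{k,N}z$; your computation, which honestly records the $\lambda_N^{1/2}$ from that gradient against only $\lambda_N^{-1/4}$ of smoothing, identifies precisely the estimate that must be improved (not merely balanced against $b$) for the proof to close.
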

\begin{proof}
Let $U\in (L^{2}(\mathbb{T}^{3}))^{3}$ with zero mean, and define the multiplication operator $(T_{U}z)(x):=U(x)z(x)$ for any $z\in (L^{2}(\mathbb{T}^{3}))^{3}$ and call that $U$ is the generator of it. In order to estimate \eqref{5.60}, we let $b>0$ and denote
\begin{equation*}
U_{>b}:=\sum_{|j|^{2}>b}\widehat{U}_{j}e^{ij\cdot x},~~\mbox{and}~~U_{<b}:=\sum_{1\leq|j|^{2}\leq b}\widehat{U}_{j}e^{ij\cdot x}.
\end{equation*}
Thus $U=U_{>b}+U_{<b}$. Then, due to Proposition \ref{pro5.12} and $\langle U \rangle=0$, we know that there exist $N$ and $k$ such that
\begin{equation}\label{5.61}
\mathcal{R}_{k,N}\circ T_{U}\circ \mathcal{R}_{k,N}=\langle U\rangle\mathcal{R}_{k,N}+\mathcal{R}_{k,N}\circ T_{U_{>b}}\circ \mathcal{R}_{k,N}=\mathcal{R}_{k,N}\circ T_{U_{>b}}\circ \mathcal{R}_{k,N}.
\end{equation}
Indeed, by Weyl's law, we deduce that there exist arbitrarily large $\lambda_{N}$ and $k$ such that Proposition \ref{pro5.12} holds, it means that
\begin{align*}
\mathcal{R}_{k,N}\circ T_{U_{<b}}\circ \mathcal{R}_{k,N}z
=\mathcal{R}_{k,N}(U_{<b}\mathcal{R}_{k,N}z)
=\sum_{\lambda_{N}-k\leq|j|^{2}\leq\lambda_{N}+k}
\left(\sum_{\stackrel{\lambda_{N}-k\leq|\ell|^{2}\leq\lambda_{N}+k}{1\leq|j-\ell|\leq b}}\widehat{U}_{j-\ell}\widehat{z}_{\ell}
\right)e^{ij\cdot x}=0.
\end{align*}
One can see, e.g., \cite{M-PS88,K18} and \cite{GG18} for more details.
\par
Now we begin to estimate \eqref{5.60}, for simplicity, we denote by
\begin{equation*}
\hbar=\hbar(w)z:=W'(w)z
\end{equation*}
and then owing to \eqref{5.59},
\begin{align}\label{5.62}
F'_{3}(W(w))z
=A^{-\frac{1}{4}}\left(B(\hbar,W(w)+v)+B(W(w)+v,\hbar)\right).
\end{align}

Although we can not obtain $\langle F'_{3}(W)\rangle=0$ directly, we note that $F'_{3}(W)$ is a sum of multiplication operators which possess the properties that the mean of their generators are equal to zero. Indeed, by virtue of the specific form of $F'_{3}(W)z$ given in \eqref{5.62}, we define
\begin{align*}
\mathcal{J}(z):=\left(B(\hbar,W(w)+v)+B(W(w)+v,\hbar)\right),
\end{align*}
then by the definition of $B(\cdot,\cdot)$, we can write
\begin{align}\label{5.63}
\mathcal{J}(z)
=&\sum^{3}_{m,n=1}\left[\hbar^{m}\partial_{m}\left(W^{n}+v^{n}\right)
+\left(W^{m}+v^{m}\right)\partial_{m}\hbar^{n}\right].
\end{align}
Moreover,  since the mean values of $w(w)$ and $v$ are zero, then $\langle w^{m}\rangle=\langle v^{m}\rangle=0,~m=1,2,3$. In addition, for any $n\in\{1,2,3\}$ we can calculate directly that
\begin{align}\label{5.64}
\langle \partial_{1}W^{n}\rangle
=&\frac{1}{(2\pi)^{3}}\int_{\mathbb{T}^{3}}\partial_{1}W^{n}(x)dx\nonumber\\
=&\frac{1}{(2\pi)^{3}}\int_{\mathbb{T}^{2}}\left[W^{n}(\pi,x^{2},x^{3})-W^{n}(-\pi,x^{2},x^{3})\right]d(x^{2}\times x^{3})\nonumber\\
=&0,
\end{align}
and analogously,
\begin{equation}\label{5.65}
\left\{
  \begin{aligned}
    &\langle\partial_{m} W^{n} \rangle=0~~\text{for any}~m\in\{1,2\},~n\in\{1,2,3\},\\
    &\left\langle\partial_{m} v^{n}\right\rangle= 0~~\text{for any}~m,~n\in\{1,2,3\}.
  \end{aligned}
\right.
\end{equation}

Therefore, thanks to \eqref{5.61}-\eqref{5.65}, the equality \eqref{5.62} can be improved as the following form
\begin{align*}
\mathcal{R}_{k,N}(\mathcal{J}(\mathcal{R}_{k,N}z))
=&\mathcal{R}_{k,N}B(\mathcal{R}_{k,N}\hbar,W+v)+\mathcal{R}_{k,N}B(W+v,\mathcal{R}_{k,N}\hbar)\nonumber\\
=&\mathcal{R}_{k,N}B(\mathcal{R}_{k,N}\hbar,W_{>b}+v_{>b})
+\mathcal{R}_{k,N}B(W_{>b}+v_{>b},\mathcal{R}_{k,N}\hbar).
\end{align*}
Recalling that $W'(w)$ is a bounded operator in $H$, we deduce that
\begin{align}\label{5.66}
&\left\|\mathcal{R}_{k,N}(F'_{3}(W)\mathcal{R}_{k,N}z)\right\|_{H}\nonumber\\
=&\sup_{\stackrel{\varphi\in H}{\|\varphi\|_{H}=1}}\left|(\mathcal{R}_{k,N}(F'(W)\mathcal{R}_{k,N}z),\varphi)\right|\nonumber\\
=&\sup_{\stackrel{\varphi\in H}{\|\varphi\|_{H}=1}}\left|(\mathcal{R}_{k,N}(\mathcal{J}(\mathcal{R}_{k,N}z)),A^{-\frac{1}{4}}\varphi)\right|\nonumber\\
=&\sup_{\stackrel{\varphi\in H}{\|\varphi\|_{H}=1}}\left|(\mathcal{R}_{k,N}A^{-\frac{1}{4}}B(\mathcal{R}_{k,N}\hbar,W_{>b}
+v_{>b})+\mathcal{R}_{k,N}A^{-\frac{1}{4}}B(W_{>b}+v_{>b},\mathcal{R}_{k,N}\hbar),\varphi)\right|\nonumber\\
\leq& C\|W_{>b}+v_{>b}\|_{H^{5/2}}\|\mathcal{R}_{k,N}\hbar\|\nonumber\\
\leq& C(\alpha,L_{1})\|W_{>b}+v_{>b}\|_{H^{5/2}}\|z\|_{H}.
\end{align}
Furthermore, by the interpolation and the regular result (\ref{5.54}) of the operator $W$, we have
\begin{align}\label{5.67}
\|W_{>b}\|_{H^{5/2}}\leq& C \|W_{>b}\|^{1-r}\|W_{>b}\|^{r}_{H^{3-\epsilon}}
\leq C\left(d^{-(3-\epsilon)}\right)^{1-r}
\|W_{>b}\|^{1-r}_{H^{3-\epsilon}}\|W_{>b}\|^{r}_{H^{3-\epsilon}}\nonumber\\
=& d^{-(1-r)(3-\epsilon)}\|W_{>b}\|_{H^{3-\epsilon}}
\leq \tilde{C}d^{-(1-r)(3-\epsilon)},
\end{align}
where $r=\frac{5}{2(3-\epsilon)}$, $0<\epsilon<1/2$. Furthermore, $v_{>b}$ is compact in $H^{5/2}$ owning to the fact that $v$ is bounded in $H^{5/2}$, thus
\begin{equation}\label{5.68}
\|v_{>b}\|_{H^{5/2}}\leq \frac{\delta}{2},
\end{equation}
when $b$ is large enough. Therefore, substituting (\ref{5.67}) and (\ref{5.68}) into (\ref{5.66}) yields that
\begin{align*}
\big\|\mathcal{R}_{k,N}(F'_{3}(W)\mathcal{R}_{k,N}z)\big\|_{H}
\leq \tilde{C}\left(d^{-(1-r)(3-\epsilon)}+\|v_{>b}\|_{H^{5/2}}\right)\big\|z\big\|_{H}
\leq C\left(d^{-(1-r)(3-\epsilon)}+\frac{\delta}{2}\right)\big\|z\big\|_{H}.
\end{align*}
Then, the SAC is verified by taking $b$ large enough, that is,
\begin{align*}
\|\mathcal{R}_{k,N}(F'_{3}(w)\mathcal{R}_{k,N}z)\|_{H}\leq \delta\|z\|_{H},~~\mbox{for any}~~w,z\in H.
\end{align*}
\end{proof}

\subsubsection{Proof of Theorem \ref{thm1.4}}\label{sec5.3.4}

To complete the proof Theorem \ref{thm1.4}, we can employ the following supporting results, which is a corollary of Theorem \ref{thm1.3} (the case of $\alpha=1/4$), that is
\begin{corollary}\label{cor5.9}
Let $u_{1}$ and $u_{2}$ be two solutions of \eqref{5.53} in $H$. Set $v=u_{1}-u_{2}$. Assume that $F(\cdot): H\rightarrow H$ is Gateaux differentiable with $\|F'(u)\|_{\mathcal{L}(H,H)}\leq L$ for any $u\in H$ and for some $L\geq 1$. Suppose there exist $N\in\mathbb{N}$ and $k\in[\hbar \log\lambda_{N},\frac{\lambda_{N}}{2})$ for some $\hbar\in(0,1/2]$ such that $\lambda_{N}\geq e^{60L^{2}/\hbar}$ with $1\leq\lambda_{N+1}-\lambda_{N}\leq 2L$, and the SAC holds:
\begin{equation*}
\|R_{k,N}F'(u)R_{k,N}v\|_{H}\leq\delta\|v\|_{H},~~\mbox{for all}~~u\in H,
\end{equation*}
for some $\delta\leq\frac{1}{30}$. Then the following strong cone condition is valid,
\begin{equation*}
\frac{d}{dt}V(t)+(\lambda^{5/4}_{N+1}+\lambda^{5/4}_{N})V(t)
\leq-\frac{5\lambda^{1/4}_{N}}{16}\|v(t)\|_{H}^{2},~~\mbox{for all}~~t\geq 0.
\end{equation*}
\end{corollary}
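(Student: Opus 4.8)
The plan is to read the statement off from Theorem~\ref{thm4.2} (equivalently the strong cone condition built into Theorem~\ref{thm1.3}) by specializing the abstract exponent to $\alpha=1/4$. Concretely, for the difference $v=u_1-u_2$ of two solutions of the prepared equation \eqref{5.53} I identify the abstract data of \eqref{4.1} as $\mathbb{H}=H$, $\mathcal{A}=A$ the Stokes operator on $\mathbb{T}^3$, $\mathcal{F}=F_3(W(\cdot))$, and $\alpha=1/4$, so that $1+\alpha=5/4$. Once the hypotheses of Theorem~\ref{thm4.2} are in force, its conclusion applies verbatim; substituting $\alpha=1/4$ turns $\lambda_{N+1}^{1+\alpha}+\lambda_N^{1+\alpha}$ into $\lambda_{N+1}^{5/4}+\lambda_N^{5/4}$ and the dissipation coefficient $\tfrac{(1+\alpha)\lambda_N^\alpha}{4}$ into $\tfrac{(5/4)\lambda_N^{1/4}}{4}=\tfrac{5\lambda_N^{1/4}}{16}$, which is exactly the claimed inequality.

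First I would verify the hypotheses. Theorem~\ref{thm5.11} provides that $F_3(W(\cdot)):H\to H$ is globally Lipschitz and Gateaux differentiable with $\|F_3'(W(w))\|_{\mathcal{L}(H,H)}\le L$, and Theorem~\ref{thm5.13} provides the spatial averaging condition \eqref{5.60}, which can be made to hold with $\delta\le\tfrac1{30}$ by choosing the cut-off parameter large. The purely arithmetic requirements $k\in[\hbar\log\lambda_N,\lambda_N/2)$, $\lambda_N\ge e^{60L^2/\hbar}$ and $1\le\lambda_{N+1}-\lambda_N\le2L$ are assumed in the statement and are simultaneously realizable because the eigenvalues of $A$ on $\mathbb{T}^3$ grow like $|j|^2$, so Proposition~\ref{pro5.12} supplies arbitrarily large $\lambda_N$ together with a companion band width $k>C\log\lambda_N$ having the needed separation.

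The substance of Theorem~\ref{thm4.2}, deferred to the Appendix, is the differential inequality for the quadratic form $V(t)=V_N(v(t))=\|Q_Nv\|_H^2-\|P_Nv\|_H^2$. I would differentiate $V$ along $\partial_t v+A^{5/4}v+A^{1/4}\bigl(F_3(W(u_1))-F_3(W(u_2))\bigr)=0$, write the nonlinear increment via Gateaux differentiability as $A^{1/4}L(t)v$ with $\|L(t)\|_{\mathcal{L}(H,H)}\le L$, and then add $(\lambda_{N+1}^{5/4}+\lambda_N^{5/4})V$. The diagonal linear part is handled mode by mode: using $\lambda_{N+1}-\lambda_N\ge1$ and the mean value theorem one gets the uniform bound $\lambda_N^{5/4}-\lambda_{N+1}^{5/4}\le-\tfrac54\lambda_N^{1/4}$, so the linear contribution is at most $-\tfrac54\lambda_N^{1/4}\|v\|_H^2$. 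It remains to absorb the off-diagonal nonlinear coupling $-2(A^{1/4}L(t)v,(Q_N-P_N)v)$ into the surplus $\tfrac54-\tfrac5{16}=\tfrac{15}{16}$ of this margin; to this end I split $v=P_{k,N}v+R_{k,N}v+Q_{k,N}v$, absorb the contributions touching the outer blocks $P_{k,N}v$ and $Q_{k,N}v$ by the large spectral separation of order $k\gg L$ that accompanies $\lambda_N\ge e^{60L^2/\hbar}$, and control the dangerous self-interaction $R_{k,N}L(t)R_{k,N}$ by the spatial averaging bound \eqref{5.60} with $\delta\le\tfrac1{30}$.

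The main obstacle is precisely this middle band $R_{k,N}$: there the available spectral gap is only of logarithmic size $k$ and is far too small to dominate the Lipschitz constant through the naive spectral gap condition \eqref{4.2}, so the estimate cannot close on the linear term alone. This is exactly the point the spatial averaging condition is designed to defeat, and the numerical thresholds $\delta\le\tfrac1{30}$ and $\lambda_N\ge e^{60L^2/\hbar}$ are calibrated so that the middle-band error $\delta$ together with the $O(k/\lambda_N)$ losses from the two outer blocks never consume more than $\tfrac{15}{16}\lambda_N^{1/4}\|v\|_H^2$, leaving the advertised dissipation $-\tfrac{5}{16}\lambda_N^{1/4}\|v\|_H^2$. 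Verifying this balance of constants is the only delicate step; the rest is the bookkeeping already recorded in the proof of Theorem~\ref{thm4.2}.
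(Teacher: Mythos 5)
Your proposal is correct and matches the paper's own route: the paper obtains Corollary \ref{cor5.9} precisely by invoking the abstract strong cone condition of Theorem \ref{thm4.2} (equivalently Theorem \ref{thm1.3}) with $\mathbb{H}=H$, $\mathcal{A}=A$, $\mathcal{F}=F_{3}(W(\cdot))$ and $\alpha=1/4$, so that $\lambda^{1+\alpha}_{N+1}+\lambda^{1+\alpha}_{N}$ becomes $\lambda^{5/4}_{N+1}+\lambda^{5/4}_{N}$ and $\tfrac{(1+\alpha)\lambda^{\alpha}_{N}}{4}$ becomes $\tfrac{5\lambda^{1/4}_{N}}{16}$. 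Your additional remarks on verifying the hypotheses (Theorems \ref{thm5.11}, \ref{thm5.13}) and your sketch of the appendix bookkeeping are not needed for the corollary itself but are consistent with the paper's argument.
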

\begin{remark}\label{rem5.10}
For the problem \eqref{5.53}, we have the following elementary inequality
\begin{align*}
a^{5}-b^{5}=&(a-b)(a^{4}+a^{3}b+a^{2}b^{2}+ab^{3}+b^{4})\nonumber\\
\geq& \frac{1}{6}(a-b)(a+b)^{4}=\frac{1}{6}(a^{2}-b^{2})(a+b)^{3}
\geq \frac{1}{6}(a^{4}-b^{4})(a+b),~\forall~0<a<b,
\end{align*}
where we have used the fact that $(a+b)^{4}=a^{4}+4a^{3}b+6a^{2}b^{2}+4ab^{3}+b^{4}\leq 6(a^{4}+a^{3}b+a^{2}b^{2}+ab^{3}+b^{4})$. This has more advantage than the elementary inequality \eqref{6.13} to construct an IM for \eqref{5.53}, although \eqref{6.13} is enough to verify the existence of an IM.
\end{remark}
\begin{proof}[\textbf{Proof of Theorem \ref{thm1.4}}]
Combining Corollary \ref{cor5.9}, Proposition \ref{pro4.4} and Theorem \ref{thm4.5} with Theorem \ref{thm5.13}, we can conclude that the ``prepared'' equation \eqref{5.53} possesses an $N$-dimensional inertial manifold $\mathcal{M}$ in the sense of Definition \ref{def2.4} which contains the global attractor $\mathscr{A}$, that is, we complete the proof of Theorem \ref{thm1.4}.
\end{proof}

\section{Appendix}\label{sec6}

\subsection{The proofs of the abstract model}\label{sec6.1}
\begin{proof}[Proof of Theorem \ref{thm4.1}]
Let $u_{1}$ and $u_{2}$ be two solutions of \eqref{4.1} in $\mathbb{H}$ and set $v=u_{1}-u_{2}$, then one has
\begin{equation}\label{6.1}
\partial_{t}v+\mathcal{A}^{1+\alpha}v+\mathcal{A}^{\alpha}\left(\mathcal{F}(u_{1})-\mathcal{F}(u_{2})\right)=0.
\end{equation}
Take the scalar product of (\ref{6.1}) with $\mathcal{A}^{-\alpha}p$ and $\mathcal{A}^{-\alpha}q$ respectively:
\begin{equation}\label{6.2}
\left\{
  \begin{aligned}
    &\frac{1}{2}\frac{d}{dt}\|p\|_{\mathbb{H}^{-\alpha}}^{2}+\|\mathcal{A}^{\frac{1}{2}}p\|^{2}
    +(\mathcal{F}(u_{1})-\mathcal{F}(u_{2}),p)=0,\\
    &\frac{1}{2}\frac{d}{dt}\|q\|_{\mathbb{H}^{-\alpha}}^{2}+\|\mathcal{A}^{\frac{1}{2}}q\|^{2}
    +(\mathcal{F}(u_{1})-\mathcal{F}(u_{2}),q)=0.
  \end{aligned}
\right.
\end{equation}
Define $\mathcal{V}(t)=\|q\|_{\mathbb{H}^{-\alpha}}^{2}-\|p\|_{\mathbb{H}^{-\alpha}}^{2}$. By subtracting the two equations in (\ref{6.2}), we have
\begin{equation}\label{6.3}
\frac{1}{2}\frac{d}{dt}\mathcal{V}(t)
+(\|\mathcal{A}^{\frac{1}{2}}q\|^{2}-\|\mathcal{A}^{\frac{1}{2}}p\|^{2})=(\mathcal{F}(u_{1})-\mathcal{F}(u_{2}),p-q)\leq L\|v\|^{2},
\end{equation}
here we have used the fact that $\mathcal{F}:\mathbb{H}\rightarrow \mathbb{H}$ is Lipschitz with Lipschitz constant $L$. Combining (\ref{6.3}) with the definition of $\mathcal{V}(t)$, we obtain
\begin{equation}\label{6.4}
\frac{1}{2}\frac{d}{dt}\mathcal{V}(t)+\gamma\mathcal{V}(t)
\leq-\left(\|\mathcal{A}^{\frac{1}{2}}q\|^{2}-\gamma\|q\|_{\mathbb{H}^{-\alpha}}^{2}\right)
-\left(\gamma\|p\|_{\mathbb{H}^{-\alpha}}^{2}\|\mathcal{A}^{\frac{1}{2}}p\|^{2}\right)+ L\|v\|^{2}.
\end{equation}
Thanks to $\gamma
=\frac{\lambda_{N}^{\alpha}\lambda_{N+1}^{\alpha}(\lambda_{N+1}+\lambda_{N})}{\lambda^{\alpha}_{N+1}+\lambda^{\alpha}_{N}}$, we estimate
\begin{equation}\label{6.5}
\|\mathcal{A}^{\frac{1}{2}}q\|^{2}-\gamma\|q\|_{\mathbb{H}^{-\alpha}}^{2}
\geq \left(\lambda_{N+1}-\gamma\lambda^{-\alpha}_{N+1}\right)\|q\|^{2}
=\frac{\lambda^{1+\alpha}_{N+1}-\lambda^{1+\alpha}_{N}}{\lambda^{\alpha}_{N+1}+\lambda^{\alpha}_{N}}\|q\|^{2}.
\end{equation}
Analogously,
\begin{equation}\label{6.6}
\gamma\|p\|_{\mathbb{H}^{-\alpha}}^{2}-\|\mathcal{A}^{\frac{1}{2}}p\|^{2}
\geq\left(\gamma\lambda^{-\alpha}_{N}-\lambda_{N}\right)\|p\|^{2}
=\frac{\lambda^{1+\alpha}_{N+1}-\lambda^{1+\alpha}_{N}}{\lambda^{\alpha}_{N+1}+\lambda^{\alpha}_{N}}\|p\|^{2}.
\end{equation}
Substituting (\ref{6.5}) and (\ref{6.6}) into (\ref{6.4}), we deduce that
\begin{equation}\label{6.7}
\frac{1}{2}\frac{d}{dt}\mathcal{V}(t)+\gamma\mathcal{V}(t)\leq-\mu\|v\|^{2}\leq-\mu\|v\|_{\mathbb{H}^{-\alpha}},
\end{equation}
where $\mu=\frac{\lambda^{1+\alpha}_{N+1}-\lambda^{1+\alpha}_{N}}{\lambda^{\alpha}_{N+1}+\lambda^{\alpha}_{N}}-L>0$ due to we assume the spectral gap condition (\ref{4.2}) holds. Notice that we can take the embedding constant of $\mathbb{H}\hookrightarrow \mathbb{H}^{-\alpha}$ is 1 in the estimate \eqref{6.7}.
\end{proof}
\par
The proof of Theorem \ref{thm4.2} will be divided into several lemmas:
\begin{lemma}\label{lem6.1}
Let $u_{1}$ and $u_{2}$ be two solutions of \eqref{4.1} and set $v=u_{1}-u_{2}$. Assume that $\mathcal{F}(\cdot): \mathbb{H}\rightarrow \mathbb{H}$ is Gateaux differentiable. Then
\begin{align}\label{6.8}
\frac{d}{dt}V(t)+2\gamma V(t)=:I_{V,p,q}-I_{p}-I_{q}+I_{\mathcal{F}},
\end{align}
where $\gamma=\frac{\lambda^{1+\alpha}_{N+1}+\lambda^{1+\alpha}_{N}}{2}$, $V(t):=V_{N}(u)=\|Q_{N}u\|^{2}-\|P_{N}u\|^{2}$ and
\begin{equation*}
\left\{
  \begin{aligned}
    &I_{V,p,q}=\left[\gamma V(t)-(\|\mathcal{A}^{\frac{1+\alpha}{2}}q\|^{2}-\|\mathcal{A}^{\frac{1+\alpha}{2}}p\|^{2})\right],\\
    &I_{p}=\left(\gamma\|p\|^{2}-\|\mathcal{A}^{\frac{1+\alpha}{2}}p\|^{2}\right),\\
    &I_{q}=\left(\|\mathcal{A}^{\frac{1+\alpha}{2}}q\|^{2}-\gamma\|q\|^{2}\right),\\
    &I_{\mathcal{F}}=2\int^{1}_{0}\left(\mathcal{F}'(su_{1}+(1-s)u_{2})v,\mathcal{A}^{\alpha}p-\mathcal{A}^{\alpha}q\right)ds.
  \end{aligned}
\right.
\end{equation*}
\end{lemma}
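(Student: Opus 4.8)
The plan is to obtain \eqref{6.8} by differentiating the quadratic form $V(t)=\|q\|^{2}-\|p\|^{2}$ (with $p=P_{N}v$, $q=Q_{N}v$) directly along the difference trajectory and then reorganizing the resulting spectral quantities into the three pieces $I_{V,p,q}$, $I_{p}$, $I_{q}$. First I would subtract the two copies of \eqref{4.1} to obtain the difference equation \eqref{6.1} for $v=u_{1}-u_{2}$ (the forcing $g$ cancels, so no hypothesis on $g$ is needed), and then apply the mutually orthogonal projectors $P_{N}$ and $Q_{N}$, which commute with $\mathcal{A}$ and all its powers. This yields two evolution equations, one for $p$ and one for $q$, each still carrying the projection of the common nonlinear increment $\mathcal{A}^{\alpha}(\mathcal{F}(u_{1})-\mathcal{F}(u_{2}))$.

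Next, since $V$ is measured in the plain $\mathbb{H}$-norm here (in contrast with the $\mathbb{H}^{-\alpha}$ form used in Theorem \ref{thm4.1}), I take the $\mathbb{H}$-inner product of the $p$-equation with $p$ and of the $q$-equation with $q$. By self-adjointness of $\mathcal{A}$ the linear terms become $(\mathcal{A}^{1+\alpha}p,p)=\|\mathcal{A}^{\frac{1+\alpha}{2}}p\|^{2}$ and $\|\mathcal{A}^{\frac{1+\alpha}{2}}q\|^{2}$, respectively. Subtracting the $p$-identity from the $q$-identity produces $\tfrac12\frac{d}{dt}V(t)$ together with the dissipation difference $\|\mathcal{A}^{\frac{1+\alpha}{2}}q\|^{2}-\|\mathcal{A}^{\frac{1+\alpha}{2}}p\|^{2}$ and a single nonlinear pairing against $q-p$. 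For that pairing I would move $\mathcal{A}^{\alpha}$ onto the test function by self-adjointness and invoke Gateaux differentiability of $\mathcal{F}$ with the fundamental theorem of calculus, writing $\mathcal{F}(u_{1})-\mathcal{F}(u_{2})=\int_{0}^{1}\mathcal{F}'(su_{1}+(1-s)u_{2})v\,ds$; after the sign change $q-p\mapsto p-q$ and multiplication by $2$ this is exactly the integral $I_{\mathcal{F}}$.

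Finally I would add $2\gamma V(t)$ to both sides with $\gamma=(\lambda^{1+\alpha}_{N+1}+\lambda^{1+\alpha}_{N})/2$ and carry out the bookkeeping: abbreviating $a=\|\mathcal{A}^{\frac{1+\alpha}{2}}q\|^{2}$, $b=\|\mathcal{A}^{\frac{1+\alpha}{2}}p\|^{2}$, $P=\|p\|^{2}$, $Q=\|q\|^{2}$ so that $V=Q-P$, one verifies the purely algebraic identity $2\gamma V-2(a-b)=I_{V,p,q}-I_{p}-I_{q}$ by direct expansion of the three defined quantities. Combining this with the nonlinear contribution $I_{\mathcal{F}}$ gives precisely \eqref{6.8}. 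There is no genuine analytic obstacle in this lemma; the only point requiring care is the sign tracking through the subtraction and the transfer of $\mathcal{A}^{\alpha}$, together with checking that $I_{V,p,q}$, $I_{p}$, $I_{q}$ reassemble exactly the dissipation difference plus $2\gamma V$ rather than some shifted combination.
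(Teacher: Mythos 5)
Your proposal is correct and follows essentially the same route as the paper: form the difference equation, test with $p$ and $q$ in the $\mathbb{H}$-inner product (projecting first, as you do, is equivalent by orthogonality of $P_{N}$ and $Q_{N}$), subtract to get $\tfrac{d}{dt}V$, represent the nonlinear increment via the fundamental theorem of calculus for the Gateaux derivative, and then check the algebraic identity $2\gamma V-2\bigl(\|\mathcal{A}^{\frac{1+\alpha}{2}}q\|^{2}-\|\mathcal{A}^{\frac{1+\alpha}{2}}p\|^{2}\bigr)=I_{V,p,q}-I_{p}-I_{q}$. The only difference is that you make the final bookkeeping explicit, which the paper leaves implicit.
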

\begin{proof}
Let $u_{1}$ and $u_{2}$ be two solutions of \eqref{4.1} and set $v=u_{1}-u_{2}$, we have
\begin{equation}\label{6.9}
\partial_{t}v+\mathcal{A}^{1+\alpha}v+\mathcal{A}^{\alpha}[\mathcal{F}(u_{1})-\mathcal{F}(u_{2})]=0.
\end{equation}
Take the scalar product of (\ref{6.9}) with $p$ and $q$ respectively:
\begin{equation}\label{6.10}
\left\{
  \begin{aligned}
    &\frac{1}{2}\frac{d}{dt}\|p\|^{2}+\|\mathcal{A}^{\frac{1+\alpha}{2}}p\|^{2}
    +(\mathcal{F}(u_{1})-\mathcal{F}(u_{2}),\mathcal{A}^{\alpha}p)=0,\\
    &\frac{1}{2}\frac{d}{dt}\|q\|^{2}+\|\mathcal{A}^{\frac{1+\alpha}{2}}q\|^{2}
    +(\mathcal{F}(u_{1})-\mathcal{F}(u_{2}),\mathcal{A}^{\alpha}q)=0.
  \end{aligned}
\right.
\end{equation}
Then
\begin{equation}\label{6.11}
\frac{d}{dt}V(t)
=-2\left(\|\mathcal{A}^{\frac{1+\alpha}{2}}q\|^{2}-\|\mathcal{A}^{\frac{1+\alpha}{2}}p\|^{2}\right)
+2(\mathcal{F}(u_{1})-\mathcal{F}(u_{2}),\mathcal{A}^{\alpha}p-\mathcal{A}^{\alpha}q)
\end{equation}
The fundamental theorem of calculus for the Gateaux derivative gives that
\begin{equation}\label{6.12}
\mathcal{F}(u_{1})-\mathcal{F}(u_{2})=\int^{1}_{0}\mathcal{F}'(su_{1}+(1-s)u_{2})vds.
\end{equation}
Consequently, due to \eqref{6.11} and \eqref{6.12}, and by setting $\gamma:=\frac{\lambda^{1+\alpha}_{N+1}+\lambda^{1+\alpha}_{N}}{2}$, we see that the assertion in Lemma \ref{lem6.1} is valid.
\end{proof}
Next, we deal with each terms on the right-hand side of \eqref{6.8} by the following elementary inequality
\begin{equation}\label{6.13}
a^{1+\alpha}-b^{1+\alpha}=(1+\alpha)\int^{a}_{b}x^{\alpha}dx=(1+\alpha)\xi^{\alpha}(a-b)
\geq(1+\alpha)(a-b)\cdot b^{\alpha},
\end{equation}
for any real number $a\geq b\geq0$, where $\xi\in [b,a]$.
\begin{lemma}\label{lem6.2}
Let the assumptions of Theorem \ref{thm1.3} hold. Then $I_{V,p,q}$ satisfies the following estimate
\begin{equation}\label{6.14}
I_{V,p,q}\leq-\frac{(1+\alpha)}{2}\lambda^{\alpha}_{N}\|v\|^{2}.
\end{equation}
\end{lemma}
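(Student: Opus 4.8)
The plan is to expand $I_{V,p,q}$ using the definitions $\gamma=\frac{\lambda^{1+\alpha}_{N+1}+\lambda^{1+\alpha}_{N}}{2}$ and $V(t)=\|q\|^{2}-\|p\|^{2}$ (where $p=P_{N}v$, $q=Q_{N}v$), and to regroup it as a sum of one term depending only on the high modes $q$ and one depending only on the low modes $p$:
\begin{equation*}
I_{V,p,q}=\left(\gamma\|q\|^{2}-\|\mathcal{A}^{\frac{1+\alpha}{2}}q\|^{2}\right)+\left(\|\mathcal{A}^{\frac{1+\alpha}{2}}p\|^{2}-\gamma\|p\|^{2}\right).
\end{equation*}
Because $\mathcal{A}$ is diagonal in the orthonormal eigenbasis $\{e_{k}\}$, both brackets are diagonal quadratic forms in the Fourier coefficients of $v$, which is what makes the estimate clean.

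First I would bound the high-mode bracket. Writing $q=\sum_{k>N}v_{k}e_{k}$ gives $\gamma\|q\|^{2}-\|\mathcal{A}^{\frac{1+\alpha}{2}}q\|^{2}=\sum_{k>N}(\gamma-\lambda_{k}^{1+\alpha})v_{k}^{2}$; since $\lambda_{k}\geq\lambda_{N+1}$ for $k>N$, each coefficient is bounded above by $\gamma-\lambda_{N+1}^{1+\alpha}=\tfrac12(\lambda_{N}^{1+\alpha}-\lambda_{N+1}^{1+\alpha})<0$, so the whole bracket is at most $-\tfrac12(\lambda_{N+1}^{1+\alpha}-\lambda_{N}^{1+\alpha})\|q\|^{2}$. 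Symmetrically, for $p=\sum_{k\leq N}v_{k}e_{k}$ with $\lambda_{k}\leq\lambda_{N}$, the low-mode bracket equals $\sum_{k\leq N}(\lambda_{k}^{1+\alpha}-\gamma)v_{k}^{2}\leq(\lambda_{N}^{1+\alpha}-\gamma)\|p\|^{2}=-\tfrac12(\lambda_{N+1}^{1+\alpha}-\lambda_{N}^{1+\alpha})\|p\|^{2}$. The choice of $\gamma$ as the arithmetic mean of $\lambda_{N}^{1+\alpha}$ and $\lambda_{N+1}^{1+\alpha}$ is precisely what makes the two bounds carry the same prefactor.

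Adding the two estimates and using $\|v\|^{2}=\|p\|^{2}+\|q\|^{2}$ yields $I_{V,p,q}\leq-\tfrac12(\lambda_{N+1}^{1+\alpha}-\lambda_{N}^{1+\alpha})\|v\|^{2}$. It then remains to convert the $(1+\alpha)$-power gap into the asserted form. Here I would apply the elementary inequality \eqref{6.13} with $a=\lambda_{N+1}$ and $b=\lambda_{N}$, which gives $\lambda_{N+1}^{1+\alpha}-\lambda_{N}^{1+\alpha}\geq(1+\alpha)(\lambda_{N+1}-\lambda_{N})\lambda_{N}^{\alpha}$, and then invoke the hypothesis $\lambda_{N+1}-\lambda_{N}\geq1$ from Theorem \ref{thm1.3} to obtain $\lambda_{N+1}^{1+\alpha}-\lambda_{N}^{1+\alpha}\geq(1+\alpha)\lambda_{N}^{\alpha}$. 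Substituting this produces exactly \eqref{6.14}.

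The argument is essentially bookkeeping once the regrouping is made, and the sign structure is forced by the definition of $\gamma$. The one step demanding genuine care is the final one: since in this averaging regime the sharp spectral gap \eqref{4.2} is \emph{not} available, the needed coercivity must instead be extracted from the convexity estimate \eqref{6.13} combined with the lower bound $\lambda_{N+1}-\lambda_{N}\geq1$. Tracking that it is this unit-gap hypothesis which ultimately supplies the coefficient $(1+\alpha)\lambda_{N}^{\alpha}$ is the main point to verify.
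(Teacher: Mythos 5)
Your proof is correct and follows essentially the same route as the paper's: both reduce $I_{V,p,q}$ to $-\tfrac12(\lambda_{N+1}^{1+\alpha}-\lambda_{N}^{1+\alpha})\|v\|^{2}$ via the spectral bounds on $p$ and $q$ (your eigenbasis expansion is just the proof of those bounds), and then invoke the convexity inequality \eqref{6.13} together with the hypothesis $\lambda_{N+1}-\lambda_{N}\geq 1$ to obtain \eqref{6.14}.
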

\begin{proof}
Note that
$\|v\|^{2}=\|p\|^{2}+\|q\|^{2},~\|\mathcal{A}^{\frac{1+\alpha}{2}}q\|^{2}\geq\lambda^{1+\alpha}_{N+1}\|q\|^{2}$ and $\|\mathcal{A}^{\frac{1+\alpha}{2}}p\|^{2}\leq\lambda^{1+\alpha}_{N}\|p\|^{2}$. Then
\begin{align}\label{6.15}
I_{V,p,q}\leq&\frac{\lambda^{1+\alpha}_{N+1}+\lambda^{1+\alpha}_{N}}{2}(\|q\|^{2}-\|p\|^{2})
-\lambda^{1+\alpha}_{N+1}\|q\|^{2}+\lambda^{1+\alpha}_{N}\|p\|^{2}\nonumber\\
=&-\frac{\lambda^{1+\alpha}_{N+1}-\lambda^{1+\alpha}_{N}}{2}(\|p\|^{2}+\|q\|^{2}).
\end{align}
According to the elementary inequality (\ref{6.13}), one has
\begin{equation}\label{6.16}
\frac{\lambda_{N+1}^{1+\alpha}-\lambda_{N}^{1+\alpha}}{2}
\geq\frac{(1+\alpha)}{2}(\lambda_{N+1}-\lambda_{N})\cdot \lambda_{N}^{\alpha}\geq\frac{(1+\alpha)}{2}\lambda^{\alpha}_{N},
\end{equation}
owing to the assumption $\lambda_{N+1}-\lambda_{N}\geq 1$. Applying (\ref{6.16}) to (\ref{6.15}) yields the desired result.
\end{proof}
\begin{lemma}\label{lem6.3}
Let the assumptions of Theorem \ref{thm1.3} hold. Then $I_{p}$ can be estimated as follows:
\begin{align}\label{6.17}
I_{p}\geq\frac{(1+\alpha)k}{2}\lambda_{N}^{\alpha}\|P_{k,N}v\|^{2}+\frac{\lambda_{N}^{\alpha}}{2}\|R_{k,N}p\|^{2}.
\end{align}
\end{lemma}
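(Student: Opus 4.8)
The plan is to expand $I_{p}$ in the eigenbasis and split the low-mode sum according to the projectors in \eqref{4.3}. Writing $v_{j}=(v,e_{j})$ and recalling $p=P_{N}v=\sum_{j\le N}v_{j}e_{j}$, the spectral representation gives
\begin{equation*}
I_{p}=\gamma\|p\|^{2}-\|\mathcal{A}^{\frac{1+\alpha}{2}}p\|^{2}=\sum_{j\le N}\bigl(\gamma-\lambda_{j}^{1+\alpha}\bigr)v_{j}^{2},
\end{equation*}
with $\gamma=\tfrac{1}{2}(\lambda_{N+1}^{1+\alpha}+\lambda_{N}^{1+\alpha})$. Since every index contributing to $p$ satisfies $\lambda_{j}\le\lambda_{N}$, I would partition $\{j\le N\}$ into the block $\{\lambda_{j}<\lambda_{N}-k\}$, whose coefficients reassemble into $\|P_{k,N}v\|^{2}$ (note that $P_{k,N}p=P_{k,N}v$, as these modes all lie below $\lambda_{N}$), and the block $\{\lambda_{N}-k\le\lambda_{j}\le\lambda_{N}\}$, whose coefficients give $\|R_{k,N}p\|^{2}$. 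It then suffices to bound $\gamma-\lambda_{j}^{1+\alpha}$ from below, uniformly on each block.

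On the first block I would use $\gamma\ge\lambda_{N}^{1+\alpha}$ (from $\lambda_{N+1}\ge\lambda_{N}$) together with $\lambda_{j}<\lambda_{N}-k$, so that $\gamma-\lambda_{j}^{1+\alpha}\ge\lambda_{N}^{1+\alpha}-(\lambda_{N}-k)^{1+\alpha}$. Applying the elementary inequality \eqref{6.13} with $a=\lambda_{N}$ and $b=\lambda_{N}-k$ yields $\lambda_{N}^{1+\alpha}-(\lambda_{N}-k)^{1+\alpha}\ge(1+\alpha)k(\lambda_{N}-k)^{\alpha}$. The hypothesis $k<\lambda_{N}/2$ forces $\lambda_{N}-k>\lambda_{N}/2$, whence $(\lambda_{N}-k)^{\alpha}\ge\lambda_{N}^{\alpha}/2$ (since $0<\alpha<1$ gives $2^{\alpha}<2$), producing the uniform coefficient $\tfrac{(1+\alpha)k}{2}\lambda_{N}^{\alpha}$ and hence the first term $\tfrac{(1+\alpha)k}{2}\lambda_{N}^{\alpha}\|P_{k,N}v\|^{2}$. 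On the second block, where $\lambda_{j}\le\lambda_{N}$, I would simply bound $\gamma-\lambda_{j}^{1+\alpha}\ge\gamma-\lambda_{N}^{1+\alpha}=\tfrac{1}{2}(\lambda_{N+1}^{1+\alpha}-\lambda_{N}^{1+\alpha})$, and then invoke \eqref{6.16} (which is \eqref{6.13} combined with $\lambda_{N+1}-\lambda_{N}\ge1$) to obtain $\ge\tfrac{1+\alpha}{2}\lambda_{N}^{\alpha}\ge\tfrac{1}{2}\lambda_{N}^{\alpha}$, giving the second term $\tfrac{\lambda_{N}^{\alpha}}{2}\|R_{k,N}p\|^{2}$. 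Adding the two blocks yields \eqref{6.17}.

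There is no genuine obstacle here; the quantitative content is carried entirely by the convexity inequality \eqref{6.13}, applied once on each block. The only points requiring care are the bookkeeping that identifies the two coefficient blocks with $P_{k,N}v$ and $R_{k,N}p$ respectively (so that the \emph{projected} norms appear rather than the full norm of $p$), and keeping the correct direction of the two one-sided estimates on $\gamma-\lambda_{j}^{1+\alpha}$, with $k<\lambda_{N}/2$ invoked on the low block and $\lambda_{N+1}-\lambda_{N}\ge1$ on the middle block.
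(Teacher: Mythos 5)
Your proof is correct and follows essentially the same route as the paper: the paper likewise splits $p=P_{k,N}v+R_{k,N}p$ orthogonally, bounds $\|P_{k,N}\mathcal{A}^{\frac{1+\alpha}{2}}v\|^{2}\le(\lambda_{N}-k)^{1+\alpha}\|P_{k,N}v\|^{2}$ and $\|R_{k,N}\mathcal{A}^{\frac{1+\alpha}{2}}p\|^{2}\le\lambda_{N}^{1+\alpha}\|R_{k,N}p\|^{2}$, and then applies \eqref{6.13} with $k<\lambda_{N}/2$ on the low block and \eqref{6.16} (via $\lambda_{N+1}-\lambda_{N}\ge1$) on the middle block, exactly as you do. Your coefficient-wise eigenbasis bookkeeping is just a pointwise rewriting of the paper's projector estimates, so there is nothing substantively different to compare.
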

\begin{proof}
Note that $p=P_{k,N}v+R_{k,N}p=P_{k,N}v+R_{k,N}(P_{N}v)$ and $P_{k,N}\perp R_{k,N}\circ P_{N}$. Thus, due to
\begin{equation*}
\|P_{k,N}\mathcal{A}^{\frac{1+\alpha}{2}}v\|^{2}\leq(\lambda_{N}-k)^{1+\alpha}\|P_{k,N}v\|^{2}~~\mbox{and}~~ \|R_{k,N}\mathcal{A}^{1+\alpha}p\|^{2}\leq\lambda_{N}^{1+\alpha}\|R_{k,N}p\|^{2},
\end{equation*}
we estimate $I_{p}$ as follows:
\begin{align}\label{6.18}
I_{p}=&\frac{\lambda_{N+1}^{1+\alpha}+\lambda_{N}^{1+\alpha}}{2}\left(\|P_{k,N}v\|^{2}+\|R_{k,N}p\|^{2}\right)
-\left(\|P_{k,N}\mathcal{A}^{\frac{1+\alpha}{2}}v\|^{2}+\|R_{k,N}\mathcal{A}^{\frac{1+\alpha}{2}}p\|^{2}\right)\nonumber\\
\geq&\left[\frac{\lambda_{N+1}^{1+\alpha}+\lambda_{N}^{1+\alpha}}{2}-(\lambda_{N}-k)^{1+\alpha}\right]\|P_{k,N}v\|^{2}
+\frac{\lambda_{N+1}^{1+\alpha}-\lambda_{N}^{1+\alpha}}{2}\|R_{k,N}p\|^{2}.
\end{align}
Recalling that $\lambda_{N+1}\geq\lambda_{N}>2k$ and using the elementary inequality (\ref{6.13}), we estimate
\begin{align}\label{6.19}
&\frac{\lambda_{N+1}^{1+\alpha}+\lambda_{N}^{1+\alpha}}{2}-(\lambda_{N}-k)^{1+\alpha}\nonumber\\
\geq&\lambda_{N}^{1+\alpha}-(\lambda_{N}-k)^{1+\alpha}\geq(1+\alpha)(\lambda_{N}-\lambda_{N}+k)(\lambda_{N}-k)^{\alpha}
\geq\frac{(1+\alpha)k}{2}\lambda_{N}^{\alpha}.
\end{align}
Thus, applying (\ref{6.19}) and (\ref{6.16}) to (\ref{6.18}) deduces that
\begin{align}\label{6.20}
I_{p}\geq& \frac{(1+\alpha)k}{2}\lambda_{N}^{\alpha}\|P_{k,N}v\|^{2}+\frac{(1+\alpha)\lambda_{N}^{\alpha}}{2}\|R_{k,N}p\|^{2},
\end{align}
which gives \eqref{6.17}.
\end{proof}
\begin{lemma}\label{lem6.4}
Let the assumptions of Theorem \ref{thm1.3} hold. Then the term $I_{q}$ satisfies the following estimates:
\begin{align}\label{6.21}
I_{q}
=&\left(\|R_{k,N}\mathcal{A}^{\frac{1+\alpha}{2}}q\|^{2}-\gamma\|R_{k,N}q\|^{2}\right)
+\left(\|Q_{k,N}\mathcal{A}^{\frac{1+\alpha}{2}}v\|^{2}-\gamma\|Q_{k,N}v\|^{2}\right)\nonumber\\
=&:I_{q,1}+I_{q,2}
\end{align}
with
\begin{equation}\label{6.22}
I_{q,1}\geq\frac{(1+\alpha)}{2}\lambda_{N}^{\alpha}\|R_{k,N}q\|^{2}
\end{equation}
and
\begin{align}\label{6.23}
I_{q,2}
\geq(1+\alpha)(k-2L)\lambda_{N+1}^{\alpha}\|Q_{k,N}v\|^{2}
\end{align}
or
\begin{equation}\label{6.24}
I_{q,2}\geq\frac{\hbar\log\lambda_{N}}{\lambda_{N}^{\alpha}}\|Q_{k,N}\mathcal{A}^{\alpha}v\|^{2}.
\end{equation}
\end{lemma}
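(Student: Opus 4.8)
The plan is to build on the identity \eqref{6.8} together with the projector calculus of \eqref{4.3}. The starting observation is that $q=Q_Nv$ is supported entirely on modes with $\lambda_j>\lambda_N$, so it decomposes cleanly: since $Q_{k,N}$ retains only modes with $\lambda_j>\lambda_N+k>\lambda_N$, each such mode already lies in $q$, whence $Q_{k,N}v=Q_{k,N}q$, while $P_{k,N}q=0$. Thus $q=R_{k,N}q+Q_{k,N}v$ with the two summands orthogonal. As $\mathcal A^{(1+\alpha)/2}$ is diagonal in the eigenbasis, it commutes with all three projectors, so both $\|\mathcal A^{(1+\alpha)/2}q\|^2$ and $\|q\|^2$ split along this orthogonal decomposition; grouping the pieces and subtracting $\gamma\|q\|^2$ gives precisely $I_q=I_{q,1}+I_{q,2}$, i.e. \eqref{6.21}. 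This first reduction is routine.

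For $I_{q,1}$ I would use that every mode in the range of $R_{k,N}\circ Q_N$ satisfies $\lambda_j\ge\lambda_{N+1}$, so that $\|R_{k,N}\mathcal A^{(1+\alpha)/2}q\|^2\ge\lambda_{N+1}^{1+\alpha}\|R_{k,N}q\|^2$. Subtracting $\gamma\|R_{k,N}q\|^2$ with $\gamma=(\lambda_{N+1}^{1+\alpha}+\lambda_N^{1+\alpha})/2$ leaves the coefficient $(\lambda_{N+1}^{1+\alpha}-\lambda_N^{1+\alpha})/2$, which is $\ge\frac{(1+\alpha)}{2}\lambda_N^\alpha$ by \eqref{6.16} together with $\lambda_{N+1}-\lambda_N\ge1$; this is \eqref{6.22}.

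The two bounds on $I_{q,2}$ are proved mode by mode over $\lambda_j>\lambda_N+k$. For \eqref{6.23} I would estimate $\lambda_j^{1+\alpha}-\gamma\ge\lambda_j^{1+\alpha}-\lambda_{N+1}^{1+\alpha}\ge(1+\alpha)(\lambda_j-\lambda_{N+1})\lambda_{N+1}^\alpha$ via \eqref{6.13}, and then invoke $\lambda_{N+1}\le\lambda_N+2L$ (from $\lambda_{N+1}-\lambda_N\le2L$) and $\lambda_j>\lambda_N+k$ to replace $\lambda_j-\lambda_{N+1}$ by $k-2L$; summing against $|v_j|^2$ yields \eqref{6.23}. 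The bound \eqref{6.24} is the delicate one. The cleanest route is to factor out $\lambda_j^{2\alpha}$, so that \eqref{6.24} is equivalent to $g(\lambda_j):=\lambda_j^{1-\alpha}-\gamma\lambda_j^{-2\alpha}\ge\hbar\log\lambda_N/\lambda_N^\alpha$ for all $\lambda_j>\lambda_N+k$. Since $g'(\lambda)=(1-\alpha)\lambda^{-\alpha}+2\alpha\gamma\lambda^{-2\alpha-1}>0$, the map $g$ is increasing, so it suffices to check the inequality at the endpoint $\lambda=\lambda_N+k$. There one writes $(\lambda_N+k)^{1+\alpha}-\gamma$ as the average of $(\lambda_N+k)^{1+\alpha}-\lambda_{N+1}^{1+\alpha}$ and $(\lambda_N+k)^{1+\alpha}-\lambda_N^{1+\alpha}$, bounds each below through \eqref{6.13} and $\lambda_{N+1}\le\lambda_N+2L$, and arrives at a lower bound of the form $(1+\alpha)(k-L)\lambda_N^\alpha$.

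The main obstacle, and the step that must be carried out with care, is closing the resulting scalar inequality $(1+\alpha)(k-L)\lambda_N^{\alpha}(\lambda_N+k)^{-2\alpha}\ge\hbar\log\lambda_N/\lambda_N^{\alpha}$, equivalently $(1+\alpha)(k-L)\ge\hbar\log\lambda_N\,(1+k/\lambda_N)^{2\alpha}$. This is where the hypotheses of Theorem \ref{thm1.3} enter quantitatively: the cross factor $(1+k/\lambda_N)^{2\alpha}$ is controlled by $k<\lambda_N/2$, while the drift $-L$ is absorbed using $k\ge\hbar\log\lambda_N$ and $\lambda_N\ge e^{60L^2/\hbar}$, which forces $k\ge60L^2\ge60L$ and hence $k-L\ge\frac{59}{60}k$; the restriction $\hbar\le1/2$ keeps the logarithmic contribution small. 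I expect the constant bookkeeping here to be genuinely tight rather than routine: the margins barely close for the exponent range in play (in particular for $\alpha=1/4$, the case relevant to the $3$D hyperviscous problem), so the smallness requirement $\lambda_N\ge e^{60L^2/\hbar}$ cannot be relaxed, which is exactly why it is built into the statement.
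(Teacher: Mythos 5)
Your handling of \eqref{6.21}, \eqref{6.22} and \eqref{6.23} is correct and essentially coincides with the paper's own argument: the same orthogonal splitting $q=R_{k,N}q+Q_{k,N}v$, the same bound $\|R_{k,N}\mathcal{A}^{(1+\alpha)/2}q\|^{2}\geq\lambda_{N+1}^{1+\alpha}\|R_{k,N}q\|^{2}$ combined with \eqref{6.16}, and for \eqref{6.23} the same use of \eqref{6.13} together with $\lambda_{N+1}\leq\lambda_{N}+2L$ (the paper factors out the smallest admissible eigenvalue $(\lambda_{N}+k)^{1+\alpha}$ rather than arguing mode by mode, a cosmetic difference). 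For \eqref{6.24} your route is genuinely different from the paper's: the paper splits off the coefficient $\frac{\hbar\log\lambda_{N}}{\lambda_{N}}$ and tries to show that the remainder $J_{q}$ in \eqref{6.28} is nonnegative, whereas you divide each mode by $\lambda_{j}^{2\alpha}$, use the monotonicity of $g(\lambda)=\lambda^{1-\alpha}-\gamma\lambda^{-2\alpha}$ to reduce the whole estimate to the endpoint $\lambda=\lambda_{N}+k$, and bound $(\lambda_{N}+k)^{1+\alpha}-\gamma$ below by $(1+\alpha)(k-L)\lambda_{N}^{\alpha}$ by averaging the two applications of \eqref{6.13}. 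That reduction is correct and, as it turns out, asymptotically sharp.

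The genuine gap is in the final scalar inequality, which you assert closes but which does not follow from the hypotheses of Theorem \ref{thm1.3} for all $\alpha\in(0,1)$. Take $L=1$, $\hbar=\tfrac12$, $\lambda_{N}=e^{120}$ (so $\hbar\log\lambda_{N}=60$), $k=60$ and $\lambda_{N+1}=\lambda_{N}+2$: then $(1+k/\lambda_{N})^{2\alpha}=1+o(1)$, and your inequality $(1+\alpha)(k-L)\geq\hbar\log\lambda_{N}\,(1+k/\lambda_{N})^{2\alpha}$ reads $59(1+\alpha)\geq 60(1+o(1))$, which forces $\alpha\geq 1/59$ up to negligible corrections; nothing in your argument covers, say, $\alpha=10^{-2}$. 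You should know, however, that this is not a defect you can repair: testing \eqref{6.24} on a single eigenmode $\lambda_{j}$ just above $\lambda_{N}+k$ in the same configuration gives $\lambda_{j}^{1+\alpha}-\gamma\approx(1+\alpha)(k-L)\lambda_{N}^{\alpha}$ on the left against $\hbar\log\lambda_{N}\,\lambda_{N}^{\alpha}$ on the right, so the claimed bound \eqref{6.24} is itself false for $\alpha<1/59$ under the stated hypotheses; your endpoint computation merely exposes the true constraint. The paper's own proof fares worse at exactly this point: its step \eqref{4.34} invokes $\bigl(1-\frac{\hbar\log\lambda_{N}}{\lambda_{N}}\bigr)^{2/(1+\alpha)}\geq 1-\frac{\hbar\log\lambda_{N}}{\lambda_{N}}$, which is reversed, since the exponent $2/(1+\alpha)$ exceeds $1$ and the base lies in $(0,1)$; and even if that step were repaired, the paper's route would need roughly $\alpha\geq 2L/(\hbar\log\lambda_{N}-2L)$, a strictly worse threshold than yours. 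In summary: your proof is complete and sharper than the paper's for the range actually used downstream (in particular $\alpha=1/4$ in Theorem \ref{thm1.4}, where $(1+\alpha)\tfrac{59}{60}>1$ with room to spare), but to obtain the lemma for every $\alpha\in(0,1)$ the hypothesis $\lambda_{N}\geq e^{60L^{2}/\hbar}$ would have to be strengthened to something like $\hbar\log\lambda_{N}\geq CL^{2}/\alpha$ — both for your argument and for the statement itself.
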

\begin{proof}
Indeed, since $q=Q_{k,N}v+R_{k,N}q=Q_{k,N}v+R_{k,N}(Q_{N}v)$ and $Q_{k,N}\perp R_{k,N}\circ Q_{N}$, we can calculate $I_{q}$ as follows:
\begin{align}\label{6.25}
I_{q}
=&\left(\|R_{k,N}\mathcal{A}^{\frac{1+\alpha}{2}}q\|^{2}-\gamma\|R_{k,N}q\|^{2}\right)
+\left(\|Q_{k,N}\mathcal{A}^{\frac{1+\alpha}{2}}v\|^{2}-\gamma\|Q_{k,N}v\|^{2}\right)\nonumber\\
=&:I_{q,1}+I_{q,2}.
\end{align}
By using (\ref{6.16}) and due to $\|R_{k,N}\mathcal{A}^{\frac{1+\alpha}{2}}q\|^{2}\geq\lambda_{N+1}^{1+\alpha}\|R_{k,N}q\|^{2}$, we get
\begin{equation}\label{6.26}
I_{q,1}\geq\frac{\lambda_{N+1}^{1+\alpha}-\lambda_{N}^{1+\alpha}}{2}\|R_{k,N}q\|^{2}
\geq\frac{(1+\alpha)}{2}\lambda_{N}^{\alpha}\|R_{k,N}q\|^{2}.
\end{equation}
Notice that $\|Q_{k,N}\mathcal{A}^{\frac{1+\alpha}{2}}v\|^{2}\geq(\lambda_{N}+k)^{1+\alpha}\|Q_{k,N}v\|^{2}$. Then using the elementary inequality (\ref{6.13}) and recalling that $\gamma=\frac{\lambda^{1+\alpha}_{N+1}+\lambda^{1+\alpha}_{N}}{2}\leq\lambda^{1+\alpha}_{N+1}$, we obtain
\begin{align}\label{6.27}
I_{q,2}\geq&\left[(\lambda_{N}+k)^{1+\alpha}-\lambda_{N+1}^{1+\alpha}\right]\|Q_{k,N}v\|^{2}\nonumber\\
\geq&(1+\alpha)(\lambda_{N}+k-\lambda_{N+1})\lambda_{N+1}^{\alpha}\|Q_{k,N}v\|^{2}\nonumber\\
\geq&(1+\alpha)(k-2L)\lambda_{N+1}^{\alpha}\|Q_{k,N}v\|^{2},
\end{align}
on account of the assumption $\lambda_{N+1}-\lambda_{N}\leq2L$, where we have used the fact that $\lambda_{N}+k\geq\lambda_{N}+2L\geq\lambda_{N+1}$.
\par
However, in order to control the nonlinearity, we may find another estimate for $I_{q,2}$ different from (\ref{6.27}). Indeed,
\begin{align}\label{6.28}
I_{q,2}=&\frac{\hbar\log\lambda_{N}}{\lambda_{N}}\|Q_{k,N}\mathcal{A}^{\frac{1+\alpha}{2}}v\|^{2}
+\left[\left(1-\frac{\hbar\log\lambda_{N}}{\lambda_{N}}\right)\|Q_{k,N}\mathcal{A}^{\frac{1+\alpha}{2}}v\|^{2}
-\gamma\|Q_{k,N}v\|^{2}\right]\nonumber\\
\geq&\frac{\hbar\log\lambda_{N}}{\lambda_{N}^{\alpha}}\|Q_{k,N}\mathcal{A}^{\alpha}v\|^{2}
+\underbrace{\left[\left(1-\frac{\hbar\log\lambda_{N}}{\lambda_{N}}\right)\|Q_{k,N}\mathcal{A}^{\frac{1+\alpha}{2}}v\|^{2}
-\gamma\|Q_{k,N}v\|^{2}\right]}_{J_{q}}.
\end{align}
Next, we shall show that there exists $N\in\mathbb{N}$ such that
\begin{align}\label{6.29}
J_{q}\geq 0.
\end{align}
Indeed, we can evaluate
\begin{align}\label{4.30}
J_{q}\geq& \left(1-\frac{\hbar\log\lambda_{N}}{\lambda_{N}}\right)(\lambda_{N}+k)^{1+\alpha}\|Q_{k,N}v\|^{2}
-\lambda_{N+1}^{1+\alpha}\|Q_{k,N}v\|^{2}\nonumber\\
\geq& \left[\left(1-\frac{\hbar\log\lambda_{N}}{\lambda_{N}}\right)(\lambda_{N}+\hbar\log\lambda_{N})^{1+\alpha}
-(\lambda_{N}+2L)^{1+\alpha}\right]\|Q_{k,N}v\|^{2},
\end{align}
for which we assume that $\lambda_{N}>k\geq\hbar\log\lambda_{N}$ for some $\hbar\in(0,1]$ and $\lambda_{N+1}\leq\lambda_{N}+2L$.
Due to the elementary inequality (\ref{6.13}), we estimate the coefficient of the last term in (\ref{4.30}) as follows:
\begin{align}\label{4.31}
&\left(1-\frac{\hbar\log\lambda_{N}}{\lambda_{N}}\right)\left(\lambda_{N}+\hbar\log\lambda_{N}\right)^{1+\alpha}
-(\lambda_{N}+2L)^{1+\alpha}\nonumber\\
=&\left[\left(1-\frac{\hbar\log\lambda_{N}}{\lambda_{N}}\right)^{\frac{1}{1+\alpha}}(\lambda_{N}+\hbar\log\lambda_{N})\right]^{1+\alpha}
-(\lambda_{N}+2L)^{1+\alpha}\nonumber\\
\geq&(1+\alpha)\left[\left(1-\frac{\hbar\log\lambda_{N}}{\lambda_{N}}\right)^{\frac{1}{1+\alpha}}(\lambda_{N}+\hbar\log\lambda_{N})
-(\lambda_{N}+2L)\right]\cdot(\lambda_{N}+2L)^{\alpha}.
\end{align}

Now, we claim that
\begin{equation}\label{4.32}
\left(1-\frac{\hbar\log\lambda_{N}}{\lambda_{N}}\right)^{\frac{1}{1+\alpha}}(\lambda_{N}+\hbar\log\lambda_{N})
-(\lambda_{N}+2L)\geq 0
\end{equation}
for some $N\in\mathbb{N}$ large enough. Indeed, $\left(1-\frac{\hbar\log\lambda_{N}}{\lambda_{N}}\right)^{\frac{1}{1+\alpha}}(\lambda_{N}+\hbar\log\lambda_{N})$ and $(\lambda_{N}+2L)$ are both nonnegative, it is sufficient to calculate
\begin{equation}\label{4.33}
\left(1-\frac{\hbar\log\lambda_{N}}{\lambda_{N}}\right)^{\frac{2}{1+\alpha}}(\lambda_{N}+\hbar\log\lambda_{N})^{2}
-(\lambda_{N}+2L)^{2}\geq 0.
\end{equation}
Due to $\alpha\in(0,1)$, we know that
\begin{align}\label{4.34}
&\left(1-\frac{\hbar\log\lambda_{N}}{\lambda_{N}}\right)^{\frac{2}{1+\alpha}}(\lambda_{N}+\hbar\log\lambda_{N})^{2}
-(\lambda_{N}+2L)^{2}\nonumber\\
\geq&\left(1-\frac{\hbar\log\lambda_{N}}{\lambda_{N}}\right)\left(\lambda_{N}^{2}+2\hbar\lambda_{N}\log\lambda_{N}
+(\hbar\log\lambda_{N})^{2}\right)-(\lambda_{N}^{2}+2L\lambda_{N}+4L^{2})\nonumber\\
=&\lambda_{N}^{2}+2\hbar\lambda_{N}\log\lambda_{N}+(\hbar\log\lambda_{N})^{2}-\hbar\lambda_{N}\log\lambda_{N}
-2(\hbar\log\lambda_{N})^{2}-\frac{(\hbar\log\lambda_{N})^{3}}{\lambda_{N}}\nonumber\\
&-(\lambda_{N}^{2}+2L\lambda_{N}+4L^{2})\nonumber\\
\geq&\hbar\lambda_{N}\log\lambda_{N}-2(\hbar\log\lambda_{N})^{2}-2L\lambda_{N}-4L^{2}.
\end{align}
If we set $\mathbb{G}(x)=\hbar x\log x-2(\hbar\log x)^{2}-2Lx-4L^{2}$, then $\mathbb{G}(x)>0$ for any $x\geq e^{16L^{2}/\hbar}$. Thus
\begin{equation}\label{4.35}
\hbar\lambda_{N}\log\lambda_{N}-2(\hbar\log\lambda_{N})^{2}-2L\lambda_{N}-4L^{2}\geq 0,
\end{equation}
since $\lambda_{N}\geq e^{60L^{2}/\hbar}$, where $L\geq 1$. Combining (\ref{4.34}) and (\ref{4.35}) with (\ref{4.33}), we know the claim (\ref{4.32}) is valid. Then the estimates (\ref{4.31})-(\ref{4.33}) indicate that the inequality (\ref{6.29}) is valid, and thus due to (\ref{6.28}), we obtain
\begin{equation}\label{4.36}
I_{q,2}\geq\frac{\hbar\log\lambda_{N}}{\lambda_{N}^{\alpha}}\|Q_{k,N}\mathcal{A}^{\alpha}v\|^{2}.
\end{equation}
\end{proof}
\begin{remark}\label{rem6.5}
By Lemma \ref{lem6.4}, we can conclude that
\begin{equation}\label{4.37}
I_{q}\geq\frac{\hbar\log\lambda_{N}}{2\lambda_{N}^{\alpha}}\|Q_{k,N}\mathcal{A}^{\alpha}v\|^{2}
+\frac{(1+\alpha)(k-2L)}{2}\lambda_{N+1}^{\alpha}\|Q_{k,N}v\|^{2}+\frac{\lambda_{N}^{\alpha}}{2}\|R_{k,N}q\|^{2}.
\end{equation}
Indeed, Substituting the estimates (\ref{6.12}), (\ref{6.27}) and (\ref{4.36}) into the equality (\ref{6.25}), we can see that \eqref{4.37} holds.
\end{remark}
\begin{lemma}\label{lem6.6}
Let the assumptions of Theorem \ref{thm1.3} hold. Then
\begin{align}\label{4.38}
\frac{I_{\mathcal{F}}}{2}\leq&|(\mathcal{F}'(u)v,\mathcal{A}^{\alpha}p-\mathcal{A}^{\alpha}q)|\nonumber\\
\leq& \left[\delta+\frac{2\delta^{2}}{\hbar\log\lambda_{N}}+2^{1+2\alpha}\delta^{2}
+\frac{1}{30}\right]\lambda_{N}^{\alpha}\|v\|^{2}
+\frac{\hbar\log\lambda_{N}}{4\lambda_{N}^{\alpha}}\|Q_{k,N}\mathcal{A}^{\alpha}v\|^{2}\nonumber\\
&+\left(1+2^{1+2\alpha}\right)L^{2}\lambda_{N}^{\alpha}\|Q_{k,N}v\|^{2}
+\left(6+2^{1+2\alpha}\right)L^{2}\lambda_{N}^{\alpha}\|P_{k,N}v\|^{2}\nonumber\\
&+\frac{\lambda_{N}^{\alpha}}{4}\|R_{k,N}q\|^{2}+\frac{\lambda_{N}^{\alpha}}{4}\|R_{k,N}p\|^{2},
\end{align}
for any $u\in \mathbb{H}$.
\end{lemma}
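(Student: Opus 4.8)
The plan is to first dispose of the $s$-integral. Since both hypotheses of Theorem~\ref{thm1.3}---the bound $\|\mathcal{F}'(u)\|_{\mathcal{L}(\mathbb{H},\mathbb{H})}\le L$ and the spatial averaging condition $\|R_{k,N}\mathcal{F}'(u)R_{k,N}v\|\le\delta\|v\|$---hold uniformly in $u\in\mathbb{H}$, the integrand of $I_{\mathcal{F}}=2\int_0^1(\mathcal{F}'(su_1+(1-s)u_2)v,\mathcal{A}^{\alpha}p-\mathcal{A}^{\alpha}q)\,ds$ admits an estimate that does not depend on $s$. Hence by the triangle inequality $\tfrac12 I_{\mathcal{F}}\le|(\mathcal{F}'(u)v,\mathcal{A}^{\alpha}p-\mathcal{A}^{\alpha}q)|$ for the representative $u=su_1+(1-s)u_2$, and the whole lemma reduces to estimating this single bilinear form.

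Next I would split everything into the three spectral bands cut out by $P_{k,N},R_{k,N},Q_{k,N}$. Writing $p=P_{k,N}v+R_{k,N}p$ and $q=Q_{k,N}v+R_{k,N}q$ (so that $R_{k,N}v=R_{k,N}p+R_{k,N}q$), inserting $I=P_{k,N}+R_{k,N}+Q_{k,N}$ on the $\mathcal{F}'(u)v$ factor, and using that $\mathcal{A}^{\alpha}$ commutes with the projectors together with the mutual orthogonality of the bands, the form collapses to a diagonal sum:
\begin{align*}
(\mathcal{F}'(u)v,\mathcal{A}^{\alpha}(p-q))
=&\,(P_{k,N}\mathcal{F}'(u)v,\mathcal{A}^{\alpha}P_{k,N}v)
+(R_{k,N}\mathcal{F}'(u)v,\mathcal{A}^{\alpha}(R_{k,N}p-R_{k,N}q))\\
&-(Q_{k,N}\mathcal{F}'(u)v,\mathcal{A}^{\alpha}Q_{k,N}v).
\end{align*}
In each summand I would then decompose the argument $v=P_{k,N}v+R_{k,N}v+Q_{k,N}v$ once more; the decisive piece is the purely middle-band term $(R_{k,N}\mathcal{F}'(u)R_{k,N}v,\mathcal{A}^{\alpha}(R_{k,N}p-R_{k,N}q))$, which is the only place where the spatial averaging condition is available.

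The estimates then divide into two regimes. For the middle-band SAC term I would use $\|R_{k,N}\mathcal{F}'(u)R_{k,N}v\|\le\delta\|v\|$ and localize $\mathcal{A}^{\alpha}$ on the band, $\|\mathcal{A}^{\alpha}(R_{k,N}p-R_{k,N}q)\|\le(\lambda_N+k)^{\alpha}\|R_{k,N}v\|\le 2^{\alpha}\lambda_N^{\alpha}\|R_{k,N}v\|$ (since $k<\lambda_N$); a Young split against $\|R_{k,N}p\|^2$ and $\|R_{k,N}q\|^2$ with weight chosen so the spectral remainder stays below $\tfrac{\lambda_N^{\alpha}}{4}\|R_{k,N}p\|^2$ and $\tfrac{\lambda_N^{\alpha}}{4}\|R_{k,N}q\|^2$ throws the balance of $(\lambda_N+k)^{2\alpha}/\lambda_N^{\alpha}\le 2^{1+2\alpha}\lambda_N^{\alpha}$ onto $\delta^2\|v\|^2$, which is the source of the $2^{1+2\alpha}\delta^2\lambda_N^{\alpha}\|v\|^2$ contribution. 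Every remaining summand is off-diagonal: there I would discard the projector on $\mathcal{F}'(u)$, use $\|\mathcal{F}'(u)\cdot\|\le L\|\cdot\|$, localize $\mathcal{A}^{\alpha}$ by $(\lambda_N-k)^{\alpha}\le\lambda_N^{\alpha}$ on the low band and $2^{\alpha}\lambda_N^{\alpha}$ on the middle band, and keep $\|Q_{k,N}\mathcal{A}^{\alpha}v\|$ intact on the high band, closing each with Young's inequality. The weights are arranged so that low-band factors land on $L^2\lambda_N^{\alpha}\|P_{k,N}v\|^2$, the high-band factors on either $L^2\lambda_N^{\alpha}\|Q_{k,N}v\|^2$ or the $\mathcal{A}^{\alpha}$-weighted term $\tfrac{\hbar\log\lambda_N}{4\lambda_N^{\alpha}}\|Q_{k,N}\mathcal{A}^{\alpha}v\|^2$, and the leftover scalars---carrying the $\delta$-dependence from the SAC and the $1/(\hbar\log\lambda_N)$ factors from the high-frequency splits---accumulate into the bracketed $\lambda_N^{\alpha}\|v\|^2$ coefficient with the constants $\delta$, $\tfrac{2\delta^2}{\hbar\log\lambda_N}$ and $\tfrac1{30}$.

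The main obstacle is not any single estimate but the bookkeeping of these Young weights, and it is precisely here that the spatial averaging philosophy enters: because the gap $\lambda_{N+1}-\lambda_N$ is only assumed $\le 2L$, the high-frequency residues cannot be absorbed by a plain $\lambda_N^{\alpha}$ factor. They must instead be routed into $\|Q_{k,N}\mathcal{A}^{\alpha}v\|^2$ with exactly the weight $\tfrac{\hbar\log\lambda_N}{4\lambda_N^{\alpha}}$, so that the logarithmic gain in the lower bound for $I_{q,2}$ from Lemma~\ref{lem6.4} (compare Remark~\ref{rem6.5}) can later swallow them; it is this $\hbar\log\lambda_N/\lambda_N^{\alpha}$ weighting of the high-frequency couplings that manufactures the fractional coefficient $\tfrac{2\delta^2}{\hbar\log\lambda_N}$. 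Ensuring that the six structural remainders emerge with coefficients no larger than those supplied by $I_p$ and $I_q$, while the scalar leftovers collapse into a single $\lambda_N^{\alpha}\|v\|^2$ term with the stated constants, is the delicate---though ultimately routine---part of the computation.
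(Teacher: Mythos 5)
Your overall strategy coincides with the paper's: dispose of the $s$-integral by the uniformity of the hypotheses in $u\in\mathbb{H}$, split into the three spectral bands cut out by $P_{k,N},R_{k,N},Q_{k,N}$, reserve the spatial averaging condition for the middle$\times$middle block, hit every other block with $\|\mathcal{F}'(u)\|_{\mathcal{L}(\mathbb{H},\mathbb{H})}\le L$, and close with Young's inequality, using $\hbar\log\lambda_N\ge 60L^2$ to produce the $\tfrac1{30}$. Your preliminary collapse of the right factor by orthogonality (so that the middle-band summand carries $\mathcal{A}^{\alpha}(R_{k,N}p-R_{k,N}q)$) is equivalent to the paper's insertion of the projectors on the left factor in \eqref{4.39}; your low- and high-band diagonal terms are exactly the paper's $J_{\mathcal{F},6}$ and $J_{\mathcal{F},7}$, estimated the same way.

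There is, however, one concrete step where your plan does not close: the treatment of the SAC term. You bound it by $\delta\|v\|\cdot 2^{\alpha}\lambda_N^{\alpha}\|R_{k,N}v\|$ and Young-split it against the \emph{full} allowances $\tfrac{\lambda_N^{\alpha}}{4}\|R_{k,N}p\|^2$ and $\tfrac{\lambda_N^{\alpha}}{4}\|R_{k,N}q\|^2$. But the two off-diagonal middle-band terms $\bigl(R_{k,N}\mathcal{F}'(u)P_{k,N}v,\cdot\bigr)$ and $\bigl(R_{k,N}\mathcal{F}'(u)Q_{k,N}v,\cdot\bigr)$ must dump their Young remainders onto the \emph{same} two quantities --- the right-hand side of \eqref{4.38} offers nothing else that can absorb an $\|R_{k,N}p\|$ or $\|R_{k,N}q\|$ factor. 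If one writes out the three competing Young weights and optimizes, the simultaneous constraints ``$R$-band coefficients $\le\tfrac14\lambda_N^{\alpha}$'', ``SAC payoff $\le 2^{1+2\alpha}\delta^2\lambda_N^{\alpha}$'' and ``cross-term payoffs $\le(1+2^{1+2\alpha})L^2\lambda_N^{\alpha}$'' cannot all be met: the budget is overdrawn by roughly a factor of two. The paper escapes this precisely by \emph{not} localizing the $p$-component of the right factor in the SAC term $J_{\mathcal{F},1}$: it uses the crude bound $\delta\|v\|\,\lambda_N^{\alpha}\|p\|\le\delta\lambda_N^{\alpha}\|v\|^2$ (see \eqref{6.44}), which costs a term \emph{linear} in $\delta$ --- this is exactly where the isolated $\delta$ in the bracket of \eqref{4.38} comes from --- and thereby leaves the entire $\tfrac{\lambda_N^{\alpha}}{4}\|R_{k,N}p\|^2$ allowance to the cross terms $J_{\mathcal{F},2},J_{\mathcal{F},4}$, spending only $\tfrac{\lambda_N^{\alpha}}{8}\|R_{k,N}q\|^2$ on the SAC term. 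Relatedly, your narrative attributes the coefficient $\tfrac{2\delta^2}{\hbar\log\lambda_N}$ to high-frequency splits, but in your decomposition the SAC term has no high-frequency component at all; in the paper that coefficient arises from the $Q_{k,N}\mathcal{A}^{\alpha}q$ part of the un-localized right factor in $J_{\mathcal{F},1}$. So the target constants are being pattern-matched rather than derived. To be fair, your scheme does prove an inequality of the same shape with somewhat larger structural constants, and that would suffice for the downstream use in Theorem \ref{thm4.2} (the final absorption \eqref{6.46}--\eqref{6.49} has slack); but as a proof of Lemma \ref{lem6.6} with the stated coefficients, the bookkeeping as you describe it fails, and the fix is exactly the paper's linear-$\delta$ estimate for the low-frequency part of the SAC term.
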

\begin{proof}
Indeed, we can calculate $( \mathcal{F}'(u)v,\mathcal{A}^{\alpha}p-\mathcal{A}^{\alpha}q)$ for any $u\in \mathbb{H}$ as follows.
\begin{align}\label{4.39}
&(\mathcal{F}'(u)v,\mathcal{A}^{\alpha}p-\mathcal{A}^{\alpha}q)\nonumber\\
=&(R_{k,N}\mathcal{F}'(u)v,\mathcal{A}^{\alpha}p-\mathcal{A}^{\alpha}q)+(P_{k,N}\mathcal{F}'(u)v,\mathcal{A}^{\alpha}p-\mathcal{A}^{\alpha}q)
+(Q_{k,N}\mathcal{F}'(u)v,\mathcal{A}^{\alpha}p-\mathcal{A}^{\alpha}q)\nonumber\\
=&(R_{k,N}\mathcal{F}'(u)R_{k,N}v,\mathcal{A}^{\alpha}p-\mathcal{A}^{\alpha}q)
+(R_{k,N}\mathcal{F}'(u)P_{k,N}v,\mathcal{A}^{\alpha}p-\mathcal{A}^{\alpha}q)\nonumber\\
&+(R_{k,N}\mathcal{F}'(u)Q_{k,N}v,\mathcal{A}^{\alpha}p-\mathcal{A}^{\alpha}q)+(P_{k,N}\mathcal{F}'(u)v,\mathcal{A}^{\alpha}p-\mathcal{A}^{\alpha}q)
+(Q_{k,N}\mathcal{F}'(u)v,\mathcal{A}^{\alpha}p-\mathcal{A}^{\alpha}q)\nonumber\\
=&\underbrace{(R_{k,N}\mathcal{F}'(u)R_{k,N}v,\mathcal{A}^{\alpha}p-\mathcal{A}^{\alpha}q)}_{J_{\mathcal{F},1}}
+\underbrace{(\mathcal{F}'(u)P_{k,N}v,R_{k,N}\mathcal{A}^{\alpha}p)}_{J_{\mathcal{F},2}}\nonumber\\
&\underbrace{-(\mathcal{F}'(u)P_{k,N}v,R_{k,N}\mathcal{A}^{\alpha}q)}_{J_{\mathcal{F},3}}
+\underbrace{(\mathcal{F}'(u)Q_{k,N}v,R_{k,N}\mathcal{A}^{\alpha}p)}_{J_{\mathcal{F},4}}
\underbrace{-(\mathcal{F}'(u)Q_{k,N}v,R_{k,N}\mathcal{A}^{\alpha}q)}_{J_{\mathcal{F},5}}\nonumber\\
&+\underbrace{(\mathcal{F}'(u)v,P_{k,N}\mathcal{A}^{\alpha}v)}_{J_{\mathcal{F},6}}
\underbrace{-(\mathcal{F}'(u)v,Q_{k,N}\mathcal{A}^{\alpha}v)}_{J_{\mathcal{F},7}}.
\end{align}
We deal with each term on the right-hand side of (\ref{4.39}). Firstly, since $\|\mathcal{F}'(u)\|_{\mathcal{L}(\mathbb{H},\mathbb{H})}\leq L$, one has
\begin{align}\label{6.40}
|J_{\mathcal{F},2}|+|J_{\mathcal{F},4}|\leq&L(\|P_{k,N}v\|+\|Q_{k,N}v\|)\|R_{k,N}\mathcal{A}^{\alpha}p\|\nonumber\\
\leq& L\lambda_{N}^{\alpha}(\|P_{k,N}v\|+\|Q_{k,N}v\|)\|R_{k,N}p\|\nonumber\\
\leq&L^{2}\lambda_{N}^{\alpha}(\|P_{k,N}v\|^{2}+\|Q_{k,N}v\|^{2})+\frac{\lambda_{N}^{\alpha}}{4}\|R_{k,N}p\|^{2}.
\end{align}
Analogously,
\begin{align}\label{6.41}
|J_{\mathcal{F},3}|+|J_{\mathcal{F},5}|\leq&L(\|P_{k,N}v\|+\|Q_{k,N}v\|)\|R_{k,N}\mathcal{A}^{\alpha}q\|\nonumber\\
\leq& L(\lambda_{N}+k)^{\alpha}(\|P_{k,N}v\|+\|Q_{k,N}v\|)\|R_{k,N}q\|\nonumber\\
\leq& L(2\lambda_{N})^{\alpha}(\|P_{k,N}v\|+\|Q_{k,N}v\|)\|R_{k,N}q\|\nonumber\\
\leq&2^{1+2\alpha}L^{2}\lambda_{N}^{\alpha}(\|P_{k,N}v\|^{2}+\|Q_{k,N}v\|^{2})+\frac{\lambda_{N}^{\alpha}}{8}\|R_{k,N}q\|^{2},
\end{align}
where we have used the assumption $k<\lambda_{N}$.
\par
In addition,
\begin{align}\label{6.42}
|J_{\mathcal{F},6}|\leq& L\|v\|\|P_{k,N}\mathcal{A}^{\alpha}v\|\leq L(\lambda_{N}-k)^{\alpha}\|v\|\|P_{k,N}v\|\nonumber\\
\leq& L\lambda_{N}^{\alpha}\|v\|\|P_{k,N}v\|
\leq 5L^{2}\lambda_{N}^{\alpha}\|P_{k,N}v\|^{2}+\frac{\lambda_{N}^{\alpha}}{20}\|v\|^{2}
\end{align}
and
\begin{align}\label{6.43}
|J_{\mathcal{F},7}|\leq L\|v\|\|Q_{k,N}\mathcal{A}^{\alpha}v\|
\leq \frac{2L^{2}\lambda_{N}^{\alpha}}{\hbar\log\lambda_{N}}\|v\|^{2}
+\frac{\hbar\log\lambda_{N}}{8\lambda_{N}^{\alpha}}\|Q_{k,N}\mathcal{A}^{\alpha}v\|^{2}.
\end{align}
\par
Furthermore, by the SAC (\ref{1.13}) and the fact that $q=Q_{k,N}v+R_{k,N}q$, we obtain
\begin{align}\label{6.44}
|J_{\mathcal{F},1}|\leq& \delta\|v\|\|\mathcal{A}^{\alpha}p-\mathcal{A}^{\alpha}q\|
\leq \delta\|v\|(\|\mathcal{A}^{\alpha}p\|+\|\mathcal{A}^{\alpha}q\|)\nonumber\\
\leq& \delta\lambda_{N}^{\alpha}\|v\|\|p\|+\delta\|v\|\|Q_{k,N}\mathcal{A}^{\alpha}q\|+\delta\|v\|\|R_{k,N}\mathcal{A}^{\alpha}q\|\nonumber\\
\leq& \delta\lambda_{N}^{\alpha}\|v\|^{2}+\delta\|v\|\|Q_{k,N}\mathcal{A}^{\alpha}q\|
+\delta(2\lambda_{N})^{\alpha}\|v\|\|R_{k,N}q\|\nonumber\\
\leq& \delta\lambda_{N}^{\alpha}\|v\|^{2}+\frac{2\delta^{2}\lambda_{N}^{\alpha}}{\hbar\log\lambda_{N}}\|v\|^{2}
+\frac{\hbar\log\lambda_{N}}{8\lambda_{N}^{\alpha}}\|Q_{k,N}\mathcal{A}^{\alpha}v\|^{2}
+2^{1+2\alpha}\delta^{2}\lambda_{N}^{\alpha}\|v\|^{2}+\frac{\lambda_{N}^{\alpha}}{8}\|R_{k,N}q\|^{2}\nonumber\\
=&\left[\delta+\frac{2\delta^{2}}{\hbar\log\lambda_{N}}+2^{1+2\alpha}\delta^{2}\right]\lambda_{N}^{\alpha}\|v\|^{2}
+\frac{\hbar\log\lambda_{N}}{8\lambda_{N}^{\alpha}}\|Q_{k,N}\mathcal{A}^{\alpha}v\|^{2}+\frac{\lambda_{N}^{\alpha}}{8}\|R_{k,N}q\|^{2}.
\end{align}
\par
Finally, applying these estimates $J_{\mathcal{F},i}~(i=1,2,\cdots,7)$ to the equality (\ref{4.39}), and concerning the fact that $2L^{2}/\hbar\log\lambda_{N}\leq 1/30$, we deduce that
\begin{align}\label{6.45}
|(\mathcal{F}'(u)v,\mathcal{A}^{\alpha}p-\mathcal{A}^{\alpha}q)|
\leq& \left[\delta+\frac{2\delta^{2}}{\hbar\log\lambda_{N}}+2^{1+2\alpha}\delta^{2}
+\frac{1}{30}\right]\lambda_{N}^{\alpha}\|v\|^{2}
+\frac{\hbar\log\lambda_{N}}{4\lambda_{N}^{\alpha}}\|Q_{k,N}\mathcal{A}^{\alpha}v\|^{2}\nonumber\\
&+\left(1+2^{1+2\alpha}\right)L^{2}\lambda_{N}^{\alpha}\|Q_{k,N}v\|^{2}
+\left(6+2^{1+2\alpha}\right)L^{2}\lambda_{N}^{\alpha}\|P_{k,N}v\|^{2}\nonumber\\
&+\frac{\lambda_{N}^{\alpha}}{4}\|R_{k,N}q\|^{2}+\frac{\lambda_{N}^{\alpha}}{4}\|R_{k,N}p\|^{2},
\end{align}
for any $u\in \mathbb{H}$.
\end{proof}
\begin{proof}[\textbf{Proof of Theorem \ref{thm4.2}}]
Applying the Lemmas \ref{lem6.1}-\ref{lem6.6}, we can verify that Theorem \ref{thm1.3} is valid. Indeed, we now substitute (\ref{6.16}), (\ref{6.20}), (\ref{4.37}) and (\ref{6.45}) into the equality (\ref{6.8}). It follows that
\begin{align}\label{6.46}
\frac{d}{dt}V(t)+2\gamma V(t)\leq& -\left[\frac{1+\alpha}{2}-2\delta-\frac{4(\delta^{2}+L^{2})}{\hbar\log\lambda_{N}}
-2^{2(1+\alpha)}\delta^{2}-\frac{1}{15}\right]\cdot\lambda_{N}^{\alpha}\|v\|^{2}\nonumber\\
&-\left[\frac{(1+\alpha)(k-2L)}{2}-2\left(1+2^{1+2\alpha}\right)L^{2}\right]\lambda_{N}^{\alpha}\|Q_{k,N}v\|^{2}\nonumber\\
&-\left[\frac{(1+\alpha)k}{2}-2\left(6+2^{1+2\alpha}\right)L^{2}\right]\lambda_{N}^{\alpha}\|P_{k,N}v\|^{2}.
\end{align}
By the assumption $k\geq\hbar\log\lambda_{N}$ and $\lambda_{N}\geq e^{60L^{2}/\hbar}$, we know that $k\geq 60L^{2}$ ($L\geq 1$), this implies that
\begin{align}\label{6.47}
\frac{(1+\alpha)(k-2L)}{2}-2\left(1+2^{1+2\alpha}\right)L^{2}>0~(\mbox{as}~k\geq 42L^{2}),
\end{align}
\begin{align}\label{6.48}
\frac{(1+\alpha)k}{2}-2(6+2^{1+2\alpha})L^{2}\geq 0~(\mbox{as}~k\geq 56L^{2}).
\end{align}
Since $\delta\leq\frac{1}{30}$ and $\hbar\log\lambda_{N}\geq 60L^{2}$ with $L\geq 1$, then clearly
\begin{equation}\label{6.49}
\frac{1+\alpha}{2}-2\delta-\frac{4(\delta^{2}+L^{2})}{\hbar\log\lambda_{N}}-2^{2(1+\alpha)}\delta^{2}-\frac{1}{15}\geq \frac{1+\alpha}{4}.
\end{equation}
As a consequence, by (\ref{6.46})-(\ref{6.49}), we conclude that
\begin{equation*}
\frac{d}{dt}V(t)+2\gamma V(t)\leq-\frac{(1+\alpha)\lambda_{N}^{\alpha}}{4}\|v\|^{2},~~\mbox{for all}~~t\geq 0,
\end{equation*}
where $\gamma=\frac{\lambda_{N+1}^{1+\alpha}+\lambda_{N}^{1+\alpha}}{2}$ as before. This completes the proof.
\end{proof}

\begin{proof}[The proof of Proposition \ref{pro4.4}]
Assume $V(v(t_{0}))=0$ for some $t_{0}\geq0$, then by (\ref{2.9}), we have
\begin{equation*}
\frac{d}{dt}V(v(t_{0}))\leq-\mu\|v(t_{0})\|^{2}\leq 0.
\end{equation*}
Then, $V(t)$ is non-increasing at $t=t_{0}$, and thus $V(v(t))\leq V(v(t_{0}))\leq 0$ for any $t\geq t_{0}$.
\par
Next, we show the squeezing property for the problem \eqref{4.1}. Let $u_{1}, u_{2}$ be two weak solutions of \eqref{4.1}. Set $v=u_{1}-u_{2}$, then (\ref{6.9}) holds. Moreover, since $\mathcal{F}$ is globally Lipschitz with Lipschitz constant $L$ and thanks to \eqref{6.9}, we have
\begin{align*}
\frac{1}{2}\frac{d}{dt}\|v(t)\|^{2}+\|\mathcal{A}^{(1+\alpha)/2}v(t)\|^{2}
\leq |(\mathcal{F}(u_{1})-\mathcal{F}(u_{2}),\mathcal{A}^{\alpha})|
\leq \frac{L^{2}}{2}\|v(t)\|^{2}+\frac{1}{2}\|\mathcal{A}^{\alpha}v(t)\|^{2}.
\end{align*}
Concerning $\alpha<1$ and the embedding inequality $\|\mathcal{A}^{\alpha}v(t)\|\leq \|\mathcal{A}^{(1+\alpha)/2}v(t)\|$, we see that
\begin{equation}\label{6.50}
\frac{d}{dt}\|v(t)\|^{2}\leq L^{2}\|v(t)\|^{2}.
\end{equation}
Define
\begin{equation*}
V_{\epsilon}(v(t))=V(v(t))+\epsilon\|v(t)\|^{2}.
\end{equation*}
Multiplying (\ref{6.50}) by $\epsilon$ and using the definition of strong cone condition, and due to (\ref{2.9}), we have
\begin{equation*}
\frac{d}{dt}V_{\epsilon}(V(t))+\gamma V_{\epsilon}(v(t))
\leq [\epsilon(\gamma+L^{2})-\mu]\|v(t)\|^{2}\leq0,
\end{equation*}
by choosing $\epsilon$ small enough, which follows that
\begin{equation}\label{6.51}
V_{\epsilon}(v(t))\leq Ce^{-\nu t}V_{\epsilon}(v(0)),
\end{equation}
for any $t\geq 0$ such that $V(t)\geq 0$, where $C$ depends only on $\nu_{0}$.
\par
If $V(T_{0})>0$ for some $T_{0}\geq 0$, then by the cone invariance, we have $V(t)\geq 0$ for all $t\in[0, T_{0}]$. Also, note that $V(v(t))=\|q\|^{2}-\|p\|^{2} \leq \|q\|^{2}+\|p\|^{2}=\|v\|^{2}$. Therefore, due to (\ref{6.51}),
\begin{align*}
\epsilon\|v(t)\|^{2}\leq \epsilon\|v(t)\|^{2}+V(v(t))\leq& e^{-\nu t}(\epsilon\|v(0)\|^{2}+V(v(0)))\\
\leq& C(1+\epsilon)e^{-\nu t}\|v(0)\|^{2},
\end{align*}
for all $t\in[0, T_{0}]$. Then, $\|v(t)\|^{2}\leq(1+\frac{1}{\epsilon})e^{-\nu t}\|v(0)\|^{2}$ for all $t\in [0, T_{0}]$.
\end{proof}

\begin{proof}[Proof of Theorem \ref{thm4.5}]
The proof can be found in \cite{Z14} for the reaction diffusion equation and 3D Cahn-Hilliard equation in \cite{KZ15}. For the problem \eqref{4.1}, we adopt the proof of Zelik in \cite{Z14}.
\par
\textbf{Step 1}. Let us consider the following boundary value problem:
\begin{equation}\label{6.52}
\begin{cases}
\partial_{t}u+\mathcal{A}^{5/4}u+\mathcal{A}^{1/4}\mathcal{F}(u)=g(x),~(t,x)\in \mathbb{R}_{+}\times\mathbb{T}^{3},\\
P_{N}u(0)=u^{+}_{0},~~Q_{N}u(-T)=0.
\end{cases}
\end{equation}
We claim that it has a unique solution for any $T>0$ and $u^{+}_{0}\in P_{N}\mathbb{H}$. Indeed, define the map $G_{T}: P_{N}\mathbb{H}\rightarrow P_{N}\mathbb{H}$ as follows:
\begin{equation}\label{6.53}
G_{T}(v)=P_{N}S(T)v,~~v\in P_{N}\mathbb{H},
\end{equation}
where $S(t)$ is the solution semigroup of problem \eqref{4.1}. Obviously, $G_{T}$ is continuous. For any $v^{1}, v^{2}\in P_{N}\mathbb{H}$ and associated trajectories $u_{i}(t)=S(t)v^{i}$, due to $Q_{N}u_{1}(-T)=Q_{N}u_{2}(-T)$ and the cone invariance, we have
\begin{equation*}
Q_{N}w\in K^{+}~\mbox{for all}~t\in [-T,0],~\mbox{where}~w(t):=u_{1}(t)-u_{2}(t),
\end{equation*}
and thus
\begin{align*}
\frac{1}{2}\frac{d}{dt}\|P_{N}w\|^{2}=&-\langle P_{N}\mathcal{A}^{5/4}w,P_{N}w\rangle+\langle \mathcal{F}(u_{1})-\mathcal{F}(u_{2}),A^{1/4}P_{N}w\rangle\nonumber\\
\geq& -\lambda^{5/8}_{N}\|P_{N}w\|^{2}-\lambda^{1/4}_{N}L(\|P_{N}w\|+\|Q_{N}w\|)\|P_{N}w\|\nonumber\\
\geq& -\lambda^{1/4}_{N}\left(\lambda^{3/8}_{N}+2L\right)\|P_{N}w\|^{2}.
\end{align*}
Integrating this inequality, we get
\begin{align*}
\|v^{1}-v^{2}\|\leq& e^{\lambda^{1/4}_{N}\left(\lambda^{3/8}_{N}+2L\right)T}\|G_{T}(v^{1})-G_{T}(v^{2})\|.
\end{align*}
Thus, the map $G_{T}: P_{N}\mathbb{H}\rightarrow P_{N}\mathbb{H}$ is injective and inverse is Lipschitz on its domain. Since $ P_{N}\mathbb{H}\sim \mathbb{R}$, by using the Brouwer theorem on the invariance of domain \cite{B1912} and noting that $G_{T}(P_{N}\mathbb{H})$ has empty boundary, we conclude that $G_{T}$ is a homeomorphism from $P_{N}\mathbb{H}$ to $P_{N}\mathbb{H}$, and therefore $G_{T}(v)=u^{+}_{0}$ is uniquely solvable for all $u^{+}_{0}\in P_{N}\mathbb{H}$. It remains to note that $u(t)=S(t+T)G^{-1}_{T}(u^{+}_{0})$ solves $(\ref{6.52})$.
\par
\textbf{Sept 2}. Let $u_{T}(t):=u_{T,u^{+}_{0}}(t)$ be the solution of the boundary value problem $(\ref{6.52})$. We claim that the limit
\begin{equation*}
u_{u^{+}_{0}}(t):=\lim\limits_{T\rightarrow \infty}u_{T,u^{+}_{0}}(t)
\end{equation*}
exists for all $t\in (-\infty,0]$ and solves problem (\ref{6.52}) with $T=\infty$. Indeed, let $w(t):=u_{T_{1}, u^{+}_{0}}(t)-u_{T_{2}, u^{+}_{0}}(t)$. Since $P_{N}w(0)=0$, we have $w(0)\notin K^{+}$. Then, by the cone invariance, we obtain that
\begin{equation}\label{6.54}
w(t)\not\in K^{+},~~\forall~~t\in[-\tilde{T},0]~~\mbox{and}~~\tilde{T}:=\min\{T_{1},T_{2}\}.
 \end{equation}
Thus, according to the squeezing property and (\ref{6.54}), we have
\begin{align}\label{6.55}
\left\|u_{T_{1}, u^{+}_{0}}(t)-u_{T_{2}, u^{+}_{0}}(t)\right\|
\leq& Ce^{-\gamma(t+\tilde{T})}\left\|u_{T_{1}, u^{+}_{0}}(-\tilde{T})-u_{T_{2}, u^{+}_{0}}(-\tilde{T})\right\|\nonumber\\
\leq& 2Ce^{-\gamma(t+\tilde{T})}\|Q_{N}w(-\tilde{T})\|\nonumber\\
\leq& 2Ce^{-\gamma(t+\tilde{T})}\|Q_{N}u_{\bar{T}}(-\tilde{T})\|,
\end{align}
for all $t\in[-\tilde{T},0]$, where $\bar{T}=\max\{T_{1},T_{2}\}$ and the last inequality holds for which we have $\|Q_{N}u_{\tilde{T}}(-\tilde{T})=0$. We now use the fact that $\mathcal{A}^{-3/8}\mathcal{F}(\cdot)$ is globally bounded in $\mathbb{H}$ to find a uniform bound for $\|Q_{N}u_{\bar{T}}(-\tilde{T})\|$. For any solution $u_{T}(t)$ of (\ref{6.52}), we obtain
\begin{align*}
\frac{1}{2}\frac{d}{dt}\|Q_{N}u_{T}(t)\|^{2}\leq&-\|\mathcal{A}^{5/8}Q_{N}u_{T}(t)\|^{2}+\langle \mathcal{F}(u_{T}(t)),\mathcal{A}^{1/4}Q_{N}u_{T}(t)\rangle+\langle g,Q_{N}u_{T}(t)\rangle\\
\leq&-\|\mathcal{A}^{5/8}Q_{N}u_{T}(t)\|^{2}+\|\mathcal{A}^{-3/8}\mathcal{F}(u_{T}(t))\|^{2}
+\frac{1}{2}\|A^{5/8}Q_{N}u_{T}(t)\|^{2}+\|\mathcal{A}^{-3/8}g\|^{2}\nonumber\\
\leq&-\frac{1}{2}\|\mathcal{A}^{5/8}Q_{N}u_{T}(t)\|^{2}+\widetilde{C},
\end{align*}
for all $t\in[-T,0]$, according to the assumption that $\|\mathcal{A}^{-3/8}\mathcal{F}(u_{T}(t))\|^{2}\leq C$ and $g\in \mathbb{H}$. As a consequence, by Gronwall-type Lemma, we get
\begin{equation*}
\|Q_{N}u_{T}(t)\|^{2}\leq e^{-(t+T)/2}\|Q_{N}u_{T}(-T)\|^{2}+\widetilde{C},~~t\geq-T.
\end{equation*}
In particular, since the solution of (\ref{6.52}) starts from $Q_{N}u_{T}(-T)=0$, we conclude that
\begin{equation*}
\|Q_{N}(t)u_{T,u^{+}_{0}}(t)\|^{2}\leq \widetilde{C},~~t\geq-T
\end{equation*}
for all $T>0$ and $u^{+}_{0}\in P_{N}\mathbb{H}$. It follows that
\begin{equation*}
\|Q_{N}u_{\bar{T}}(-\tilde{T})\|\leq \widetilde{C}.
\end{equation*}
Therefore, estimate (\ref{6.55}) shows that $u_{T,u^{+}_{0}}(t)$ is a cauchy sequence in $C_{loc}((-\infty,0);\mathbb{H})$. Thus the limit (\ref{6.53}) exists in $C_{loc}((-\infty,0);\mathbb{H})$. This shows that $u_{u^{+}_{0}}(t)$ is a backward solution of (\ref{6.52}) and it has a unique extension $u_{u^{+}_{0}}$ which solves $\partial_{t}u+\mathcal{A}^{5/4}u+\mathcal{A}^{1/4}\mathcal{F}(u)=g$ for all $t\geq 0$.
\par
\textbf{Step 3}. Define a set $\mathcal{N}\subset C_{loc}(\mathbb{R};\mathbb{H})$ as the set of all solutions of (\ref{6.52}) obtained in \textbf{Step 2} by using  the limiting process (\ref{6.53}). By the construction, this set $\mathcal{N}$ is strictly invariant
\begin{equation*}
S(t)\mathcal{N}=\mathcal{N},~(T(h)u)(t):=u(t+h),~h\in\mathbb{R}.
\end{equation*}
For any two trajectories $u_{1}(t),u_{2}(t)\in \mathcal{N}$, we have
\begin{equation*}
u_{1}(t)-u_{2}(t)\in K^{+},~~\mbox{for all}~~t\in \mathbb{R},
\end{equation*}
that is,
\begin{equation}\label{6.56}
\|Q_{N}(u_{1}(t)-u_{2}(t))\|\leq\|P_{N}(u_{1}(t)-u_{2}(t))\|,~~\mbox{for all}~~t\in \mathbb{R},
\end{equation}
The inequality (\ref{6.56}) can easily be shown by using the approximations $u_{T,u^{+}_{0,1}}$ and $u_{T,u^{+}_{0,2}}$ due to $Q_{N}u_{T,u^{+}_{0,1}}(-T)=Q_{N}u_{T,u^{+}_{0,2}}(-T)=0$, and passing to the limits as $T\rightarrow \infty$. Now, we define the map $\Phi: P_{N}\mathbb{H}\rightarrow Q_{N}\mathbb{H}$ by
\begin{equation*}
\Phi(u^{+}_{0}):=Q_{N}u(0),~u\in \mathcal{N},~P_{N}u(0)=u^{+}_{0},~t\geq0,
\end{equation*}
Then by (\ref{6.56}), we have
\begin{equation*}
\|\Phi(u^{+}_{0,1})-\Phi(u^{+}_{0,2}))\|\leq\|u^{+}_{0,1}-u^{+}_{0,2}\|,~~\mbox{for all}~~t\in \mathbb{R},
\end{equation*}
which shows that $\Phi: P_{N}\mathbb{H}\rightarrow Q_{N}\mathbb{H}$ is well defined and Lipschitz continuous with Lipschitz constant 1. Next, we give the Lipschitz manifold $\mathcal{M}$ as follows
\begin{equation*}
\mathcal{M}:=\{u^{+}_{0}+\Phi(u^{+}_{0}),~u^{+}_{0}\in P_{N}\mathbb{H}\}.
\end{equation*}
Then $\mathcal{M}$ is invariant thanks to the invariance of $\mathcal{N}$. As a result, the desired invariant manifold has been constructed.
\par
\textbf{Step 4}. To complete the proof, we need to check the exponential tracking. Indeed, let $u(t),~t\geq0$, be a forward trajectory of (\ref{6.52}). Let $T>0$ and $u_{T}\in \mathcal{N}$ be the solution of (\ref{6.52}) belonging to the invariant manifold such that
\begin{equation*}
P_{N}u(T)=P_{N}u_{T}(T).
\end{equation*}
Then, obviously, $u(T)-u_{T}(T)\not\in K^{+}$, and consequently $u(t)-u_{T}(t)\not\in K^{+}$ for all $t\in [0, T]$, we also know that $Q_{N}u_{T}(0)$ is uniformly bounded with respect to $T$. Moreover, since
\begin{equation*}
\|P_{N}(u(0)-u_{T}(0))\|\leq\|Q_{N}(u(0)-u_{T}(0))\|,
\end{equation*}
we see that the sequence $u_{T}(0)$ is uniformly bounded as $T\rightarrow \infty$. On the other hand, due to the squeezing property, we have
\begin{equation*}
\|u(t)-u_{T}(t)\|\leq Ce^{-\vartheta t}\|u(0)-u_{T}(0)\|,~t\in[0,T].
\end{equation*}
Since $P_{N}H$ is finite dimensional and $u_{T}(0)$ is bounded, we may assume without loss of generality that
\begin{equation}\label{6.57}
 u_{T}(0)\rightarrow \tilde{u}(0)
\end{equation}
and then the corresponding trajectory $\tilde{u}(t)\in \mathcal{N}$ and satisfies (\ref{6.57}) for all $t\geq 0$. Thus, the theorem is proved.
\end{proof}

\subsection{Some notes about the spatial averaging method}\label{sec6.2}

\begin{theorem}\label{thm6.7}
Let $u_{1}$ and $u_{2}$ be two solutions of \eqref{4.1} in $\mathbb{H}$ and set $v=u_{1}-u_{2}$. Assume that $\mathcal{F}(\cdot): \mathbb{H}\rightarrow \mathbb{H}$ is Gateaux differentiable with $\|\mathcal{F}'(u)\|_{\mathcal{L}(\mathbb{H},\mathbb{H})}\leq L$ for any $u\in \mathbb{H}$, and for some $L\geq 1$. Suppose there exist $N\in\mathbb{N}$ and $k\in[\hbar \log\lambda_{N},\frac{\lambda_{N}}{2})$ for some $\hbar\in(0,1/2]$ such that $\lambda_{N}\geq e^{60L^{2}/\hbar}$ with $1\leq\lambda_{N+1}-\lambda_{N}\leq 2L$, and the SAC holds:
\begin{equation*}
\|R_{k,N}\mathcal{F}'(u)R_{k,N}v-a(u)R_{k,N}v\|\leq\delta\|v\|,~~\mbox{for all}~~u\in \mathbb{H},
\end{equation*}
for some $\delta\leq\frac{1}{30}$, where $a(u)\in \mathbb{R}$ is a scalar depending continuously on $u\in \mathbb{H}$. Then the following strong cone condition is valid,
\begin{equation*}
\frac{d}{dt}V(t)+\left(\lambda^{1+\alpha}_{N+1}+\lambda^{1+\alpha}_{N}-L-\delta\right)V(t)
\leq-\frac{(1+\alpha)\lambda^{\alpha}_{N}}{4}\|v(t)\|^{2},~~\mbox{for all}~~t\geq 0.
\end{equation*}
\end{theorem}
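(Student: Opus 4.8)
The plan is to follow the proof of Theorem \ref{thm4.2} (i.e. Lemmas \ref{lem6.1}--\ref{lem6.6}) almost verbatim, since the spatial averaging hypothesis enters only through the nonlinear term $I_{\mathcal{F}}$. First I would set $v=u_{1}-u_{2}$, record the difference equation \eqref{6.9}, and use the fundamental theorem of calculus \eqref{6.12} to write $\mathcal{F}(u_{1})-\mathcal{F}(u_{2})=\mathcal{B}v$ with the time-frozen operator $\mathcal{B}:=\int^{1}_{0}\mathcal{F}'(su_{1}+(1-s)u_{2})\,ds$, $\|\mathcal{B}\|_{\mathcal{L}(\mathbb{H},\mathbb{H})}\leq L$. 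Averaging the generalized SAC over $s\in[0,1]$ (Jensen/Minkowski) produces a single scalar $\bar{a}:=\int^{1}_{0}a(su_{1}+(1-s)u_{2})\,ds$ with $|\bar{a}|\leq L+\delta$ and $\|R_{k,N}\mathcal{B}R_{k,N}v-\bar{a}R_{k,N}v\|\leq\delta\|v\|$ for all $v\in\mathbb{H}$. This collapses the whole issue to the one operator $\mathcal{B}$ and the one scalar $\bar{a}$.

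Next I would invoke Lemma \ref{lem6.1} for the identity \eqref{6.8}, $\frac{d}{dt}V(t)+2\gamma V(t)=I_{V,p,q}-I_{p}-I_{q}+I_{\mathcal{F}}$ with $\gamma=\tfrac{1}{2}(\lambda^{1+\alpha}_{N+1}+\lambda^{1+\alpha}_{N})$. The three purely linear quantities $I_{V,p,q},I_{p},I_{q}$ never see $\mathcal{F}$, so Lemmas \ref{lem6.2}, \ref{lem6.3}, \ref{lem6.4} and Remark \ref{rem6.5} apply unchanged, providing the negative term $-\tfrac{1+\alpha}{2}\lambda^{\alpha}_{N}\|v\|^{2}$ together with the reservoir of good terms in $\|P_{k,N}v\|^{2},\|Q_{k,N}v\|^{2},\|R_{k,N}p\|^{2},\|R_{k,N}q\|^{2}$ and $\|Q_{k,N}\mathcal{A}^{\alpha}v\|^{2}$. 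The only change is in Lemma \ref{lem6.6}: in the decomposition \eqref{4.39} the terms $J_{\mathcal{F},2},\dots,J_{\mathcal{F},7}$ depend only on $\|\mathcal{F}'(\cdot)\|_{\mathcal{L}(\mathbb{H},\mathbb{H})}\leq L$ and are estimated exactly as in \eqref{6.40}--\eqref{6.43}, whereas the band term I would split as
\begin{equation*}
J_{\mathcal{F},1}=\big(R_{k,N}\mathcal{B}R_{k,N}v-\bar{a}R_{k,N}v,\ \mathcal{A}^{\alpha}p-\mathcal{A}^{\alpha}q\big)+\bar{a}\big(R_{k,N}v,\ \mathcal{A}^{\alpha}p-\mathcal{A}^{\alpha}q\big).
\end{equation*}
The first summand is controlled exactly by the computation leading to \eqref{6.44}, now with the averaged SAC in place of \eqref{1.13}, reproducing the same $\delta$-dependent bound.

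The genuinely new object is the scalar term $\bar{a}(R_{k,N}v,\mathcal{A}^{\alpha}p-\mathcal{A}^{\alpha}q)$. Since $R_{k,N}p$ and $R_{k,N}q$ are supported on the disjoint spectral bands with eigenvalues in $[\lambda_{N}-k,\lambda_{N}]$ and $(\lambda_{N},\lambda_{N}+k]$, and $\mathcal{A}^{\alpha}$ commutes with every projector, this term collapses to $\bar{a}\big[(R_{k,N}p,\mathcal{A}^{\alpha}R_{k,N}p)-(R_{k,N}q,\mathcal{A}^{\alpha}R_{k,N}q)\big]$, which is self-adjoint and sign-definite on each band. The key structural observation is that $\bar{a}R_{k,N}$ is a scalar multiple of a spectral projector, so it acts as the \emph{uniform} shift $\lambda_{j}^{1+\alpha}\mapsto\lambda_{j}^{1+\alpha}+\bar{a}\lambda_{j}^{\alpha}$ of the symbol on the band; being uniform, it leaves the gap $\lambda_{N+1}-\lambda_{N}\geq1$ intact. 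I would therefore move this term to the left-hand side and fold it into the coefficient of $V$, paying at most $|\bar{a}|\leq L+\delta$; this is exactly how $2\gamma$ degrades to $\lambda^{1+\alpha}_{N+1}+\lambda^{1+\alpha}_{N}-L-\delta$. This step is the main obstacle: one must verify that after the shift the excess diffusion $\|R_{k,N}\mathcal{A}^{(1+\alpha)/2}q\|^{2}-\gamma\|R_{k,N}q\|^{2}$ from $I_{q,1}$ together with the gap surplus $\tfrac{\lambda^{1+\alpha}_{N+1}-\lambda^{1+\alpha}_{N}}{2}\geq\tfrac{1+\alpha}{2}\lambda^{\alpha}_{N}$ still dominates the band contribution, which forces essential use of $\lambda_{N}\geq e^{60L^{2}/\hbar}$ so that $\lambda_{N}^{\alpha}$ overwhelms $L$.

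Finally I would assemble the pieces exactly as in \eqref{6.46}--\eqref{6.49}: substitute the reused bounds for $I_{V,p,q},I_{p},I_{q}$ and the modified bound for $I_{\mathcal{F}}$ into \eqref{6.8}, then use $\delta\leq\tfrac{1}{30}$, $k\geq\hbar\log\lambda_{N}\geq60L^{2}$ and $\lambda_{N}\geq e^{60L^{2}/\hbar}$ to check that the coefficients of $\|P_{k,N}v\|^{2}$, $\|Q_{k,N}v\|^{2}$ and $\|Q_{k,N}\mathcal{A}^{\alpha}v\|^{2}$ remain nonnegative while the coefficient of $\|v\|^{2}$ stays below $-\tfrac{1+\alpha}{4}\lambda^{\alpha}_{N}$, the extra scalar contribution having already been absorbed into the $V$-coefficient. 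This yields the asserted strong cone condition with the degraded rate $\lambda^{1+\alpha}_{N+1}+\lambda^{1+\alpha}_{N}-L-\delta$. I expect the delicate point to be precisely the scalar-term absorption, where pinning down the exact loss $L+\delta$ in the coefficient of $V$ requires the careful per-band sign bookkeeping sketched above.
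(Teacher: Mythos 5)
Your proposal follows the paper's own proof of Theorem \ref{thm6.7} essentially verbatim: the paper likewise keeps Lemmas \ref{lem6.1}--\ref{lem6.6} unchanged and only re-estimates $J_{\mathcal{F},1}$ by splitting it as $\left(R_{k,N}\mathcal{F}'(u)R_{k,N}v-a(u)R_{k,N}v,\,\mathcal{A}^{\alpha}p-\mathcal{A}^{\alpha}q\right)+\left(a(u)R_{k,N}v,\,\mathcal{A}^{\alpha}p-\mathcal{A}^{\alpha}q\right)$, bounding the first summand via the generalized SAC exactly as in \eqref{6.44} and folding the scalar band term into the coefficient of $V$ using $|a(u)|\leq L+\delta$, which is precisely how $2\gamma$ degrades to $\lambda^{1+\alpha}_{N+1}+\lambda^{1+\alpha}_{N}-L-\delta$. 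Your explicit $s$-averaging (producing $\mathcal{B}$ and $\bar a$) is a refinement the paper silently skips, and the ``delicate point'' you flag --- the per-band bookkeeping needed to absorb the scalar term into the $V$-coefficient --- is exactly the step the paper itself treats only loosely (its bound $a(u)V(t)+|a(u)|\lambda_{N}^{\alpha}(\|P_{k,N}v\|^{2}+\|Q_{k,N}v\|^{2})$ and the final replacement of $a(u)$ by $L+\delta$), so your sketch is at, if not above, the paper's level of rigor there.
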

\begin{proof}
We just need to make a small modification to the proof of Theorem \ref{thm1.3}. Indeed, we recall from \eqref{4.39} that
\begin{equation*}
(\mathcal{F}'(u)v,\mathcal{A}^{\alpha}p-\mathcal{A}^{\alpha}q)
=J_{\mathcal{F},1}+J_{\mathcal{F},2}+J_{\mathcal{F},3}+J_{\mathcal{F},4}+J_{\mathcal{F},5}+J_{\mathcal{F},6}+J_{\mathcal{F},7},
\end{equation*}
where $J_{\mathcal{F},i}$ $(i=1,\cdots, 7)$ are defined by \eqref{4.39}. Now, we just give a different estimate on $J_{\mathcal{F},1}$:
\begin{align*}
|J_{\mathcal{F},1}|=&|\left(R_{k,N}\mathcal{F}'(u)R_{k,N}v,\mathcal{A}^{\alpha}p-\mathcal{A}^{\alpha}q\right)|\nonumber\\
\leq& |\left(R_{k,N}\mathcal{F}'(u)R_{k,N}v-a(u)R_{k,N}v,\mathcal{A}^{\alpha}p-\mathcal{A}^{\alpha}q\right)|
+|\left(a(u)R_{k,N}v,\mathcal{A}^{\alpha}p-\mathcal{A}^{\alpha}q\right)|\nonumber\\
\leq& \|R_{k,N}\mathcal{F}'(u)R_{k,N}v-a(u)R_{k,N}v\|\|\mathcal{A}^{\alpha}p-\mathcal{A}^{\alpha}q\|
+|\left(a(u)R_{k,N}v,\mathcal{A}^{\alpha}p-\mathcal{A}^{\alpha}q\right)|\nonumber\\
\leq& \delta\lambda^{\alpha}_{N}\|v\|+a(u)V(t)+|a(u)|\lambda_{N}^{\alpha}(\|P_{k,N}v\|^{2}+\|Q_{k,N}v\|^{2}),
\end{align*}
Then, applying the Lemmas \ref{lem6.1}-\ref{lem6.6} again, we can obtain
\begin{align*}
\frac{d}{dt}V(t)+(2\gamma-a(u)) V(t)\leq& -\left[\frac{1+\alpha}{2}-2\delta-\frac{4(\delta^{2}+L^{2})}{\hbar\log\lambda_{N}}
-2^{2(1+\alpha)}\delta^{2}-\frac{1}{10}\right]\cdot\lambda_{N}^{\alpha}\|v\|^{2}\nonumber\\
&-\left[\frac{(1+\alpha)(k-2L)}{2}-2\left(1+2^{1+2\alpha}\right)L^{2}-(L+\delta)\right]\lambda_{N}^{\alpha}\|Q_{k,N}v\|^{2}\nonumber\\
&-\left[\frac{(1+\alpha)k}{2}-2\left(6+2^{1+2\alpha}\right)L^{2}-(L+\delta)\right]\lambda_{N}^{\alpha}\|P_{k,N}v\|^{2},
\end{align*}
thanks to the obvious fact that $|a(u)|\leq L+\delta$. It follows that
\begin{align*}
\frac{d}{dt}V(t)+\left[2\gamma-(L+\delta)\right] V(t)\leq-\frac{(1+\alpha)\lambda_{N}^{\alpha}}{4}\|v\|^{2},~~\mbox{for all}~~t\geq 0,
\end{align*}
when we take $N\in \mathbb{N}$ such that $\lambda_{N}\geq e^{60L^{2}/\hbar}$ (which implies that $k\geq 60 L^{2}$), where $\gamma=\frac{\lambda_{N+1}^{1+\alpha}+\lambda_{N}^{1+\alpha}}{2}$ as before.
\end{proof}
\section*{Acknowledgement}
The authors are thankful to Prof. Sergey Zelik and Dr. Anna Kostianko for fruitful discussions and sharing their insights and ideas.

\end{document}